\crefname{thm}{Theorem}{Theorems}
\Crefname{thm}{Theorem}{Theorems}
\crefname{conj}{Conjecture}{Conjectures}
\Crefname{conj}{Conjecture}{Conjectures}
\crefname{prop}{Proposition}{Propositions}
\Crefname{prop}{Proposition}{Propositions}
\crefname{cor}{Corollary}{Corollaries}
\Crefname{cor}{Corollary}{Corollaries}
\crefname{defn}{Definition}{Definitions}
\Crefname{defn}{Definition}{Definitions}
\crefname{rmk}{Remark}{Remarks}
\Crefname{rmk}{Remark}{Remarks}
\crefname{prob}{Problem}{Problems}
\Crefname{prob}{Problem}{Problems}
\crefname{figure}{Figure}{Figures}
\Crefname{figure}{Figure}{Figures}
\DeclareMathOperator*{\convts}{\tilde{*}}
\begin{document}

\newcommand{\C}{{\mathbb{C}}}
\newcommand{\R}{{\mathbb{R}}}
\newcommand{\Q}{{\mathbb{Q}}}
\newcommand{\Z}{{\mathbb{Z}}}
\newcommand{\N}{{\mathbb{N}}}
\newcommand{\M}{{\mathbb{M}}}
\newcommand{\sM}{\mathcal{M}}
\newcommand{\sN}{\mathcal{N}}
\newcommand{\sB}{\mathcal{B}}
\newcommand{\sG}{\mathcal{G}}
\newcommand{\sF}{\mathcal{F}}
\newcommand{\sT}{\mathcal{T}}
\newcommand{\sL}{\mathcal{L}}
\newcommand{\sGt}{\widetilde{\sG}}
\newcommand{\sI}{\mathcal{I}}
\newcommand{\sMt}{\widetilde{\sM}}
\newcommand{\Gt}{\widetilde{G}}
\newcommand{\Gh}{\widehat{G}}
\newcommand{\ft}{\tilde{f}}
\newcommand{\lambdat}{\tilde{\lambda}}
\newcommand{\At}{\tilde{A}}
\newcommand{\Pt}{\widetilde{P}}
\newcommand{\Lt}{\widetilde{L}}
\newcommand{\gt}{\tilde{g}}
\newcommand{\GLm}{\mathrm{GL}_m}
\newcommand{\convt}{\tilde{*}}
\newcommand{\mset}{\{1,\ldots, m\}}
\newcommand{\Lset}{\{1,\ldots,L\}}
\newcommand{\Lmoset}{\{1,\ldots,L-1\}}
\newcommand{\diag}[1]{\mathrm{diag}(#1)}
\newcommand{\Ph}{\widehat{P}}
\newcommand{\Loloc}{L^1_{\mathrm{loc}}}
\newcommand{\Texp}[1]{\overleftarrow{\exp}\left( #1 \right)}
\newcommand{\ip}[2]{\left\langle #1, #2\right\rangle}
\newcommand{\fh}{\hat{f}}
\newcommand{\PDO}{\mathrm{\Psi DO}}
\newcommand{\lap}{\bigtriangleup}

\newcommand{\CjN}[3][]{
\ifthenelse{\isempty{#1}}
{\left\| #3 \right\|_{C^{#2}}}
{\left\| #3 \right\|_{C^{#2}(#1)}}
}

\newcommand{\dw}[1][]{
\ifthenelse{\isempty{#1}}
{\frac{\partial}{\partial w}}
{\frac{\partial^{#1}}{\partial w^{#1}}}
}

\newcommand{\dt}[1][]{
\ifthenelse{\isempty{#1}}
{\frac{\partial}{\partial t}}
{\frac{\partial^{#1}}{\partial t^{#1}}}
}

\newcommand{\dz}[1][]{
\ifthenelse{\isempty{#1}}
{\frac{\partial}{\partial z}}
{\frac{\partial^{#1}}{\partial z^{#1}}}
}

\newcommand{\du}[1][]{
\ifthenelse{\isempty{#1}}
{\frac{\partial}{\partial u}}
{\frac{\partial^{#1}}{\partial u^{#1}}}
}

\newtheorem{thm}{Theorem}[section]
\newtheorem{cor}[thm]{Corollary}
\newtheorem{prop}[thm]{Proposition}
\newtheorem{lemma}[thm]{Lemma}
\newtheorem{conj}[thm]{Conjecture}
\newtheorem{prob}[thm]{Problem}

\theoremstyle{remark}
\newtheorem{rmk}[thm]{Remark}

\theoremstyle{definition}
\newtheorem{defn}[thm]{Definition}

\theoremstyle{definition}
\newtheorem{assumption}[thm]{Assumption}

\theoremstyle{remark}
\newtheorem{example}[thm]{Example}

\crefformat{equation}{(#2#1#3)}
\crefrangeformat{equation}{(#3#1#4) to~(#5#2#6)}
\crefmultiformat{equation}{(#2#1#3)}%
{ and~(#2#1#3)}{, (#2#1#3)}{ and~(#2#1#3)}

\Crefformat{equation}{(#2#1#3)}
\Crefrangeformat{equation}{(#3#1#4) to~(#5#2#6)}
\Crefmultiformat{equation}{(#2#1#3)}%
{ and~(#2#1#3)}{, (#2#1#3)}{ and~(#2#1#3)}

\numberwithin{equation}{section}

\title{Differential Equations with a Difference Quotient}
\author{Brian Street\footnote{The author was partially supported by NSF DMS-1401671.}}
%\author{Brian Street}
%\thanks{The author was partially supported by NSF DMS-1401671.}
%\address{University of Wisconsin-Madison, Department of Mathematics, 480 Lincoln Dr., Madison, WI, 53706}
%\email{street@math.wisc.edu}
\date{}

\maketitle
%\tableofcontents

%input "singular.bib"

\begin{abstract}
% !TEX root =  main.tex
The purpose of this paper is to study a class of ill-posed differential equations.
In some settings, these differential equations exhibit uniqueness but not existence,
while in others they exhibit existence but not uniqueness.
An example of such a differential equation is, for
a polynomial $P$ and continuous functions $f(t,x):[0,1]\times [0,1]\rightarrow \mathbb{R}$,
\begin{equation*}
\frac{\partial}{\partial t} f(t,x) = \frac{ P(f(t,x))-P(f(t,0))}{x}, \quad x>0.
\end{equation*}
These differential equations are related to inverse problems.

\medskip

\noindent MSC 2010 Subject Classification: 34K29, 34K09.
\newline Keywords: ill-posed, differential equation with difference quotient, existence without uniqueness, uniqueness without existence, inverse problem 
\end{abstract}

\section{Introduction}
% !TEX root =  main.tex
The purpose of this paper is to study a family of ill-posed differential equations.  In some instances, these equations exhibit existence, but not uniqueness.
In other instances, they exhibit uniqueness, but not existence.
The questions studied here can be seen as a family of forward and inverse problems, which in special cases become well-known examples
from the literature.  This is discussed more below and detailed in \cref{SectionForInv}.

In this introduction, we informally state the main results, and present their relationship to inverse problems.  However, before we enter into the results in full generality, to help the reader understand our somewhat technical results, we give some very simple special cases, where some of the basic ideas already appear in a simple form:

\begin{example}[Existence without uniqueness]\label{ExampleIntroExistence}
Fix $\epsilon_1,\epsilon_0>0$.  We consider the differential equation, defined for functions $f(t,x)\in C([0,\epsilon_1]\times[0,\epsilon_0])$ by
\begin{equation}\label{EqnIntroLinearExist}
\dt f(t,x) =
\frac{f(t,0)-f(t,x)}{x}, \quad x>0.
\end{equation}
We claim that \cref{EqnIntroLinearExist} has existence:  i.e., given $f_0(x)\in C([0,\epsilon_0])$, there exists a solution $f(t,x)$ to \cref{EqnIntroLinearExist}
with $f(0,x)=f_0(x)$.  Indeed, given $a(t)\in C([0,\epsilon_1])$ with $a(0)=f_0(0)$ set
\begin{equation}\label{EqnIntroExistSolution}
f(t,x) =
\begin{cases}
 e^{-t/x} f_0(x) + \frac{1}{x}\int_0^t e^{(s-t)/x} a(s)\: ds, & x>0,\\
 a(t), &x=0.
 \end{cases}
\end{equation}
It is immediate to verify that $f(t,x)\in C([0,\epsilon_1]\times [0,\epsilon_0])$ and satisfies \cref{EqnIntroLinearExist}.
Furthermore, this is the unique solution, $f(t,x)$, to \cref{EqnIntroLinearExist} with $f(0,x)=f_0(x)$ \textit{and} $f(t,0)=a(t)$.\footnote{Uniqueness is immediate here, since for $x>0$, if $f(t,0)$ is assumed to be $a(t)$, then \cref{EqnIntroLinearExist} is a standard ODE and standard uniqueness theorems apply.}
Thus, to uniquely determine the solution to \cref{EqnIntroLinearExist} one needs to give both $f(0,x)$ \textit{and} $f(t,0)$.
We call this existence without uniqueness, since there are many solutions corresponding to any initial condition $f(0,x)$--one for each
choice of $a(t)$.
\end{example}

\begin{example}[Uniqueness without existence]\label{ExampleIntroUniqueness}
Fix $\epsilon_1,\epsilon_0>0$.  We consider the differential equation, defined for functions $f(t,x)\in C([0,\epsilon_1]\times[0,\epsilon_0])$ by
\begin{equation}\label{EqnIntroLinearUnique}
\dt f(t,x) =
\frac{f(t,x)-f(t,0)}{x}, \quad x>0.
\end{equation}
We claim that \cref{EqnIntroLinearUnique} has uniqueness: i.e., if $f(t,x), g(t,x)\in C([0,\epsilon_1]\times [0,\epsilon_0])$ both
satisfy \cref{EqnIntroLinearUnique} and $f(0,x)=g(0,x)$, $\forall x$, then $f(t,x)=g(t,x)$, $\forall t,x$.
Indeed, suppose $f(t,x)$ satisfies \cref{EqnIntroLinearUnique}.  Then, by reversing time, treating $f(\epsilon_1,x)$ as our initial condition,
and treating $a(t):=f(t,0)$ as a given function, we may solve the differential equation \cref{EqnIntroLinearUnique}, for $x>0$, to see
\begin{equation}\label{EqnIntroUniqueInitial}
f(0,x)=e^{-\epsilon_1/x}f(\epsilon_1,x) + \frac{1}{x}\int_0^{\epsilon_1} e^{-u/x} a(u)\: du,\quad x>0.
\end{equation}
From \cref{EqnIntroUniqueInitial} uniqueness follows.  Indeed, if $f(t,x)$ and $g(t,x)$ are two solutions to \cref{EqnIntroLinearUnique}
with $f(0,x)=g(0,x)$ $\forall x$, then \cref{EqnIntroUniqueInitial} shows
\begin{equation*}
\frac{1}{x}\int_0^{\epsilon_1} e^{-u/x} f(u,0)\: du = \frac{1}{x}\int_0^{\epsilon_1} e^{-u/x} g(u,0)\: du + O(e^{-\epsilon_1/x}).
\end{equation*}
It then follows (see \cref{CorLaplaceAppendixUsed}) that $f(t,0)=g(t,0)$ $\forall t$.  With $f(t,0)=g(t,0)$ in hand, \cref{EqnIntroLinearUnique} is a standard ODE
for $x>0$ and it follows that $f(t,x)=g(t,x)$ $\forall t,x$.  This proves uniqueness.
Furthermore, \cref{EqnIntroUniqueInitial} shows that \cref{EqnIntroLinearUnique} does not have existence:  not every initial condition gives rise to a solution.
In fact, every initial condition that does give rise to a solution must be of the form given by \cref{EqnIntroUniqueInitial}, for some continuous functions
$a(t)$ and $f(\epsilon_1,x)$.  I.e., the initial condition must be of Laplace transform type, modulo an appropriate error.
Furthermore, it is easy to see that for such an initial condition, there exists a solution.
Hence, we have exactly characterized the initial conditions
which give rise to a solution to \cref{EqnIntroLinearUnique}.
\end{example}

%\begin{rmk}\label{RmkIntroForInv}
%In \cref{ExampleIntroExistence,ExampleIntroUniqueness}
%we can already see an instance of forward and inverse problems.
%The forward problem is in \cref{ExampleIntroExistence}:  given
%$f_0(x)$ and $a(t)$, solve the equation \cref{EqnIntroLinearExist} to obtain $f(\epsilon_1,x)$.  By \cref{EqnIntroExistSolution}, this amounts to computing
%the Laplace transform of $a(t)$.  The inverse problem is:  given $f(\epsilon_1,x)$,
%find $a(t)$ and $f_0(x)$.  Because $f(t,x)$ satisfies \cref{EqnIntroLinearExist}
%if and only if $\ft(t,x) = f(\epsilon_1-t,x)$ satisfies \cref{EqnIntroLinearUnique},
%\cref{ExampleIntroUniqueness} shows $f(\epsilon_1,x)$ uniquely determines
%$a(t)$ and $f_0(x)$.  In this case, this amounts to little more than
%taking an inverse Laplace transform.  The results that follow can therefore be considered
%nonlinear analogs of the Laplace transform.  See \cref{SectionForInv} for a further discussion.
%\end{rmk}

The goal of this paper is to extend the above ideas to a nonlinear setting.  Consider the following simplified example.
\begin{example}\label{ExampleIntroOneD}
Let $P(y) = \sum_{j=1}^D c_j y^{j}$ be a polynomial without constant term.  Consider the differential equation,
defined for functions $f(t,x)\in C([0,\epsilon_1]\times[0,\epsilon_0])$, given by
\begin{equation}\label{EqnIntroNonlinearConstant}
\dt f(t,x) = \frac{P(f(t,x))-P(f(t,0))}{x}, \quad x>0.
\end{equation}
\begin{itemize}
\item (Uniqueness without existence) If we restrict our attention to solutions $f(t,x)$ with $P'(f(t,0))>0$ $\forall t$ and we insist that $f(t,0)\in C^2([0,\epsilon_1])$, then \cref{EqnIntroNonlinearConstant}
has uniqueness (but not existence).  I.e., if $f(t,x),g(t,x)\in C([0,\epsilon_1]\times[0,\epsilon_0])$ are two solutions to \cref{EqnIntroNonlinearConstant}
with $f(0,x)=g(0,x)$ $\forall x$, $P'(f(t,0)), P'(g(t,0))>0$ $\forall t$, and $f(t,0),g(t,0)\in C^2([0,\epsilon_1])$, then $f(t,x)=g(t,x)$ $\forall t,x$.
However, not every initial condition gives rise to a solution.  See \cref{SectionResUnique}.
This generalizes\footnote{Since we insisted $f(t,0)\in C^2$, this is not strictly a generalization of \cref{ExampleIntroUniqueness},
however it does generalize the basic ideas of \cref{ExampleIntroUniqueness}.  A similar remark holds for the next part where
we discuss existence without uniqueness.} \cref{ExampleIntroUniqueness} where $P(y)=y$ and therefore $P'(y)\equiv 1>0$.

\item (Existence without uniqueness)  Given $f_0(x)\in C([0,\epsilon_0])$ and $a(t)\in C^2([0,\epsilon_1])$ with $a(0)=f_0(0)$ and $P'(a(t))<0$ $\forall t$,
there exists $\delta>0$ and a unique solution $f(t,x)\in C([0,\epsilon_1]\times [0,\delta])$ to \cref{EqnIntroNonlinearConstant} satisfying
$f(0,x)=f_0(x)$ \textit{and} $f(t,0)=a(t)$.  See \cref{SectionResExist}.  This generalizes \cref{ExampleIntroExistence} where $P(y)=-y$ and therefore $P'(y)\equiv -1<0$.
\end{itemize}
In short, if one has $P'(f(t,0))>0$ $\forall t$, one has uniqueness but not existence, and if one has $P'(f(t,0))<0$ $\forall t$, one has
existence but not uniqueness.
\end{example}

We now turn to the more general setting of our main results.  Fix $m\in \N$ and $\epsilon_0,\epsilon_1>0$.
For $t\in [0,\epsilon_1]$, $x\in [0,\epsilon_0]$, and $y,z\in \R^m$, let $P(t,x,y,z)$ be a polynomial in $y$ given by
\begin{equation*}
P(t,x,y,z)=\sum_{j=1}^m \sum_{|\alpha|\leq D} c_{\alpha,j}(t,x,z) y^{\alpha} e_j,
\end{equation*}
where $e_j\in \R^m$ denotes the $j$th standard basis element.
For $f(t,x)\in C([0,\epsilon_1]\times [0,\epsilon_0];\R^m)$ we consider the differential equation
\begin{equation}\label{EqnIntroFullEqn}
\dt f(t,x) = \frac{P(t,x,f(t,x),f(t,0))-P(t,0,f(t,0),f(t,0))}{x}, \quad x>0.
\end{equation}
We state our assumptions more rigorously in \cref{SectionResults}, but we assume:
\begin{itemize}
\item $c_{\alpha,j}(t,x,z) = \frac{1}{x}\int_0^\infty e^{-w/x} b_{\alpha,j}(t,w,z)\: dw$, where the $b_{\alpha,j}(t,w,z)$ have a certain prescribed level of smoothness.
\item We consider only solutions $f(t,x)\in C([0,\epsilon_1]\times [0,\epsilon_0];\R^m)$ such that $f(t,0)\in C^2([0,\epsilon_1];\R^m)$.
\item For $y\in \R^m$, set $\sM_y(t):=d_y P(t,0,y,y)$, so that $\sM_y(t)$ is an $m\times m$ matrix.
We consider only solutions $f(t,x)$ such that
there exists an invertible matrix $R(t)$ which is $C^1$ in $t$ and such that $R(t)\sM_{f(t,0)}(t)R(t)^{-1}$ is a diagonal matrix.  When $m=1$, this is automatic.
\end{itemize}

Under the above assumptions, we prove the following:
\begin{itemize}
\item (Uniqueness without existence)  Under the above hypotheses, if $\sM_{f(t,0)}(t)$ is assumed to have all strictly positive eigenvalues, then \cref{EqnIntroFullEqn}
has uniqueness, but not existence.  I.e., if $f(t,x),g(t,x)\in C([0,\epsilon_1]\times [0,\epsilon_0];\R^m)$ are solutions to \cref{EqnIntroFullEqn} which satisfy all of the above hypothesis and such that
the eigenvalues of $\sM_{f(t,0)}(t)$ and $\sM_{g(t,0)}(t)$ are strictly positive, for all $t$, then if $f(0,x)=g(0,x)$ $\forall x$, we have $f(t,x)=g(t,x)$ $\forall t,x$.
Furthermore, in this situation we prove stability estimates.  Finally, in analogy to \cref{ExampleIntroUniqueness}, we will see that only certain initial conditions
give rise to solutions.  See \cref{SectionResUnique}.

\item (Existence without uniqueness)  Suppose $f_0(x)\in C([0,\epsilon_0];\R^m)$ and $A(t)\in C^2([0,\epsilon_1];\R^m)$ are given
such that $f_0(0)=A(0)$ and $\sM_{A(t)}(t)$ has all strictly negative eigenvalues.  Suppose further that there exists an invertible matrix $R(t)$, which is $C^1$ in $t$ such that
$R(t)\sM_{A(t)} R(t)^{-1}$ is a diagonal matrix.  Then we show that there exists $\delta>0$ and a unique function $f(t,x)\in C([0,\epsilon_1]\times[0,\delta];\R^m)$ such
that $f(0,x)=f_0(x)$, $f(t,0)=A(t)$, and $f(t,x)$ solves \cref{EqnIntroFullEqn}.  See \cref{SectionResExist}.
\end{itemize}

The main idea is the following.  If $f(t,x)$ were assumed to be of Laplace transform type, $f(t,x)=\frac{1}{x}\int_0^\infty e^{-w/x} A(t,w)\: dw$,
then \cref{EqnIntroFullEqn} can be restated as a partial differential equation on $A(t,w)$--and this partial differential
equation is much easier to study.  As exemplified in \cref{ExampleIntroExistence,ExampleIntroUniqueness}, not every solution is of Laplace
transform type.  However, we will show (under the above discussed hypotheses) that every solution is of Laplace transform type modulo an error which can be controlled.  Once this is done, the above results follow. 

\subsection{Motivation and relation to inverse problems}
% !TEX root =  main.tex
%\begin{rmk}\label{RmkIntroForInv}
%In \cref{ExampleIntroExistence,ExampleIntroUniqueness}
%we can already see an instance of forward and inverse problems.
%The forward problem is in \cref{ExampleIntroExistence}:  given
%$f_0(x)$ and $a(t)$, solve the equation \cref{EqnIntroLinearExist} to obtain $f(\epsilon_1,x)$.  By \cref{EqnIntroExistSolution}, this amounts to computing
%the Laplace transform of $a(t)$.  The inverse problem is:  given $f(\epsilon_1,x)$,
%find $a(t)$ and $f_0(x)$.  Because $f(t,x)$ satisfies \cref{EqnIntroLinearExist}
%if and only if $\ft(t,x) = f(\epsilon_1-t,x)$ satisfies \cref{EqnIntroLinearUnique},
%\cref{ExampleIntroUniqueness} shows $f(\epsilon_1,x)$ uniquely determines
%$a(t)$ and $f_0(x)$.  In this case, this amounts to little more than
%taking an inverse Laplace transform.  The results that follow can therefore be considered
%nonlinear analogs of the Laplace transform.  See \cref{SectionForInv} for a further discussion.
%\end{rmk}
It is likely that the methods of this paper are the most interesting aspect.
The differential equations in this paper seem to not fall under any current methods (the equations
are too unstable),
and the methods in this paper are largely new.
Moreover, as we will see, special cases of the above appear in some inverse problems.
Furthermore, there are other (harder) inverse problems where differential equations
similar to (but more complicated than) the ones studied in this paper appear.  For example, we will see in
\cref{AppendixDefineCalderon} that the anisotropic version
of the famous Calder\'on problem involves a ``non-commutative'' version
of some of these differential equations.  We hope that the ideas
in this paper might shed light on such questions--and, indeed, one of our motivation
for these
results is as a simpler model case for full anisotropic version of the Calder\'on problem.
%the equations that arise in \cref{AppendixCalderon}.

We briefly outline the relationship between these results and inverse problems;
these ideas are discussed in greater detail in \cref{SectionForInv}.
We begin by explaining that the results in this paper can be thought of as a
class of forward and inverse problems.
For simplicity, consider the setting in \cref{ExampleIntroOneD}, with $\epsilon_0=\epsilon_1=1$.  Thus,
we are given a polynomial without constant term, $P(y) = \sum_{j=1}^D c_j y^j$.  We consider
the differential equation, for functions $f(t,x)$, given by
\begin{equation}\label{EqnMotivationOneDEqn}
\dt f(t,x) = \frac{P(f(t,x))-P(f(t,0))}{x}, \quad x>0.
\end{equation}

\noindent \textbf{Forward Problem:}  Given a function $f_0(x)\in C([0,1])$ and $a(t)\in C^2([0,1])$
with $P'(a(t))<0$, $\forall t$ and $f_0(0)=a(0)$, the results below imply that there exists $\delta>0$
and a unique solution $f(t,x)\in C([0,1]\times[0,\delta])$ to \cref{EqnMotivationOneDEqn} with
$f(0,x)=f_0(x)$ and $f(t,0)=a(t)$.  
\begin{center}
The forward problem is the map $(f_0(\cdot),a(\cdot))\mapsto f(1,\cdot)$.
\end{center}
%We may think of the function $f(1,x)$ as a germ of a function in the $x$ variable, near $x=0$.

\noindent\textbf{Inverse Problem:}  The inverse problem is given $f(1,\cdot)$, as above, to find $f_0(\cdot)$ and $a(\cdot)$. 

To see how the inverse problem relates to the main results of the paper, let $f(t,x)$ be the solution
as above.  Set $g(t,x) = f(1-t,x)$.  If $Q(y)=-P(y)$, then $g(t,0)=a(t)$ and  $g(t,x)$ satisfies 
\begin{equation}\label{EqnMotivationForg}
\dt g(t,x) = \frac{Q(g(t,x))-Q(g(t,0))}{x}, \quad x>0.
\end{equation}
Also, $Q'(g(t,0))>0$, $\forall t$.  The main results of this paper imply \cref{EqnMotivationForg} has
uniqueness in this setting:  $g(0,x)\in C([0,\delta])$ uniquely determines $g(t,x)\in C([0,1]\times [0,\delta])$.
Since $g(t,x)=f(1-t,x)$, $f(1,x)\in C([0,\delta])$ uniquely determines both $f_0\big|_{[0,\delta]}$ and $a(t)$.
Thus, the inverse problem has uniqueness.
In short, the map $(f_0\big|_{[0,\delta]}(\cdot), a(\cdot))\mapsto f(1,\cdot)$ is injective (though it
is far from surjective as we explain below).

We go further than just proving existence and uniqueness, though.  We have:
\begin{itemize}
\item In the forward problem, we do the following (see \cref{SectionResExist}):
\begin{itemize}
\item Beyond just proving existence, we show that every solution $f(t,x)$
must be of Laplace transform type, modulo an appropriate error, for every $t>0$.  This is despite
the fact that the initial condition, $f(0,x)=f_0(x)$, can be any continuous function
with $P'(f_0(0))<0$.

\item We reduce the problem to a more stable PDE, so that solutions can be more easily studied.
\end{itemize}

\item In the inverse problem, we do the following (see \cref{SectionResUniqueMore}):
\begin{itemize}
\item We characterize the initial conditions $g(0,x)$ which give rise to solutions to \cref{EqnMotivationForg}.
In other words, we characterize the image of the map $(f_0(\cdot),a(\cdot))\mapsto f(1,\cdot)$.
We see that all such functions are of Laplace transform type, modulo an appropriate error.

\item We give a procedure to reconstruct $a(t)$ and $f_0\big|_{[0,\delta]}$ from $f(1,\cdot)$.  This is
necessarily unstable, but we reduce the instability to the instability of the Laplace transform, which is
well understood.

\item We prove a kind of stability for the inverse problem.  Namely if one has
two solutions $g_1(t,x)$ and $g_2(t,x)$ to \cref{EqnMotivationForg} such
that $g_1(0,x)-g_2(0,x)$ vanishes sufficiently quickly as $x\downarrow 0$,
then $g_1(t,0)=g_2(t,0)$ on a neighborhood of $0$ (the size of the neighborhood
depends on how quickly $g_1(0,x)-g_2(0,x)$ vanishes in a way which is made precise).
In other words if one only knows $f(1,x)$ modulo functions which vanish sufficiently
quickly at $0$, one can still reconstruct $a(t)$ on a neighborhood of $t=1$,
in a way which we make quantitative.
\end{itemize}
\end{itemize}

Some special cases of the main results in this paper can be interpreted as some standard
inverse problems in the following way:
\begin{itemize}
\item When $P(y)=-y$, we saw in \cref{ExampleIntroExistence,ExampleIntroUniqueness}
that the forward problem is essentially taking the Laplace transform, and the inverse problem
is essentially taking the inverse Laplace transform.  See \cref{SectionForInvLaplace} for more details on this.
As a consequence, the results in this paper can be interpreted as nonlinear analogs of the Laplace transform.

\item In our main results, we allow the coefficients of the polynomial to be functions of $x$.  We will
see in \cref{SectionForInvInvSpec} that the special case of $P(x,y) = -y-x^2y^2$
is closely related to Simon's approach \cite{SimonANewApproachToInverseSpectalTheoryI}
to the theorem of Borg \cite{BorgUniquensesTheoremsInTheSpectralTheory} and Mar\v{c}enko \cite{MarchenkoSomeQuestions} that
the principal $m$-function for a finite interval or half-line Schr\"odinger operator determines
the potential.

\item In our main results, we allow $f$ to be vector valued, and also allow the coefficients to depend on
$f(t,0)$.  By doing this, we see in \cref{AppendixTransInvCalderon} that the translation invariant version
of the anisotropic version of Calder\'on's inverse problem can be seen in this framework.
\end{itemize}

Thus, the results in this paper can be viewed as a family of inverse problems which generalize
and unify the above examples, and for which we have good results on uniqueness,
characterization of solutions, a reconstruction procedure, and stability estimates.

Furthermore, as argued in \cref{AppendixDefineCalderon}, a non-commutative analog\footnote{Achieved
by replacing functions with pseudodifferential operators:  here the frequency plays the role
that $x^{-1}$ plays in our main results.}
of these equations arise in the full anisotropic version of the Calder\'on problem.
Thus, a special case of results in this paper can be seen as a simplified model case for the
full Calder\'on problem.  Moreover, by replacing functions in our results with
pseudodifferential operators, one gives rise to an entire family of conjectures
which generalize the Calder\'on problem.

\subsection{Selected Notation}
% !TEX root =  main.tex
\begin{itemize}
\item All functions take their values in real vector spaces
or spaces of real matrices.  Other than in \cref{AppendixLaplace}, there are no complex numbers in this paper.
\item Let $\epsilon_1,\epsilon_2>0$.  For $n_1,n_2\in \N$, we write $b(t,w)\in C^{n_1,n_2}([0,\epsilon_1]\times [0,\epsilon_2])$
if for $0\leq j\leq n_1$, $0\leq k\leq n_2$, $\dt[j]\dw[k]b(t,w)\in C([0,\epsilon_1]\times [0,\epsilon_2])$.
If $U\subseteq \R^m$ is open, and $n_3\in \N$, we write $c(t,w,z)\in C^{n_1,n_2,n_3}([0,\epsilon_1]\times [0,\epsilon_2]\times U)$ if
for $0\leq j\leq n_1$, $0\leq k\leq n_2$, and $0\leq |\alpha|\leq n_3$, we have $\dt[j]\dw[k]\dz[\alpha] c(t,w,z)\in C([0,\epsilon_1]\times [0,\epsilon_2]\times U)$.
We define the norms
\begin{equation*}
\CjN{n_1,n_2}{b}:=\sum_{j=0}^{n_1} \sum_{k=0}^{n_2}\sup_{t,w} \left|\dt[j]\dw[k] b(t,w)\right|,
\end{equation*}
\begin{equation*}
 \CjN{n_1,n_2,n_3}{c}:= \sum_{j=0}^{n_1} \sum_{k=0}^{n_2}\sum_{ |\alpha|\leq n_3} \sup_{t,w,z}\left| \dt[j]\dw[k]\dz[\alpha] c(t,w,z) \right|.
\end{equation*}

\item If $V\subseteq \R^n$ is open, and $U\subseteq\R^m$, we write $C^j(V;U)$ to be the usual space of $C^j$ functions on $V$
taking values in $U$.  We use the norm
\begin{equation*}
\CjN[V;U]{j}{g}:= \sum_{|\alpha|\leq j} \sup_{z\in V} \left|\dz[\alpha] g(z)\right|.
\end{equation*}

\item We write $\M^{m\times n}$ to be the space of $m\times n$ real matrices.  We write $\GLm$ to be the space of $m\times m$
real, invertible matrices.

\item For $a(w),b(w)\in C([0,\epsilon_2])$ we write
\begin{equation}\label{EqnNotationConvt}
(a\convt b)(w):=\int_0^w a(w-r) b(r)\: dr\in C([0,\epsilon_2]).
\end{equation}
Note that $\convt$ is commutative and associative.

\item If $A(w)\in C([0,\epsilon_2];\R^m)$ and $\alpha=(\alpha_1,\ldots, \alpha_m)\in \N^m$ is a multi-index, we write
\begin{equation*}
\convt^{\alpha} A = \underbrace{A_1\convt \cdots \convt A_1}_{\alpha_1\text{ terms}}\convt
\cdots\convt \underbrace{A_j\convt\cdots\convt A_j}_{\alpha_j\text{ terms}}\convt
 \cdots \convt \underbrace{A_m\convt\cdots \convt A_m}_{\alpha_m\text{ terms}}.
\end{equation*}
and with a slight abuse of notation, if $|\alpha|=0$ and $b(w)$ is another function, we write $b \convt (\convt^{\alpha} A) = b$.

\item If $A(t,w)$ is a function of $t$ and $w$, we write $\dot{A}=\dt A$ and $A'=\dw A$.

\item For $\lambda_1,\ldots,\lambda_m\in \R$, we write $\diag{\lambda_1,\ldots, \lambda_m}$ to denote the $m\times m$ diagonal
matrix with diagonal entries $\lambda_1,\ldots, \lambda_m$.

\item We write $A\lesssim B$ to mean $A\leq CB$, where $C$ depends only on certain parameters.  It will always be clear from context
what $C$ depends on.

\item We write $a\wedge b$ to mean $\min\{a,b\}$.
\end{itemize} 

\section{Statement of Results}\label{SectionResults}
% !TEX root =  main.tex
Fix $m\in \N$, $\epsilon_0,\epsilon_1,\epsilon_2\in (0,\infty)$, $U\subseteq \R^m$ open, and $D\in \N$.
For $j\in \mset$, $\alpha\in \N^m$ a multi-index with $|\alpha|\leq D$, let
\begin{equation*}
b_{\alpha,j}(t,w,z)\in C^{0,3,0}([0,\epsilon_1]\times [0,\epsilon_2]\times U),
\end{equation*}
with $b_{\alpha,j}(t,0,z), \left(\dw b_{\alpha,j}\right)(t,0,z)\in C^1([0,\epsilon_1]\times U)$.
Define $c_{\alpha,j}(t,x,z)\in C([0,\epsilon_1]\times [0,\epsilon_0]\times U)$ by
\begin{equation*}
c_{\alpha,j}(t,x,z):=\frac{1}{x}\int_0^{\epsilon_2} e^{-w/x} b_{\alpha,j}(t,w,z)\: dw.
\end{equation*}
We assume there is a $C_0<\infty$ with
\begin{equation*}
\CjN[{[0,\epsilon_1]\times[0,\epsilon_2]\times U}]{0,3,0}{b_{\alpha,j}}, \CjN[{[0,\epsilon_1]\times U}]{1}{b_{\alpha,j}(t,0,z)}, \CjN[{[0,\epsilon_1]\times U}]{1}{\frac{\partial}{\partial w}b_{\alpha,j}(t,0,z)}\leq C_0.
\end{equation*}

\begin{example}\label{ExampleResultsPolynomial}
Because $\frac{1}{x}\int_0^{\epsilon_2} e^{-w/x} \frac{w^l}{l!}\: dw=x^l+e^{-\epsilon_2/x} G(x)$, with $G\in C([0,\infty))$,
any polynomial in $x$ can be written in the form covered by the $c_{\alpha,j}$, modulo error terms of the form $e^{-\epsilon_2/x} G(x)$, $G\in C([0,\infty))$.
The results below are invariant under such error terms, so polynomials in $x$ can be considered as a special case of the $c_{\alpha,j}$.
\end{example}

Define $P(t,x,y,z):=(P_1(t,x,y,z),\ldots, P_m(t,x,y,z))$, where for $y\in \R^m$,
\begin{equation*}
P_j(t,x,y,z)=\sum_{|\alpha|\leq D} c_{\alpha,j}(t,x,z) y^{\alpha}.
\end{equation*}
Let $V\subseteq \R^m$ be an open set with $U\subseteq V$.  Let $G(t,x,y,z)\in C([0,\epsilon_1]\times [0,\epsilon_0]\times V\times U;\R^m)$
be such that for every $\gamma\in (0,\epsilon_2)$, $G(t,x,y,z)=e^{-\gamma/x} G_\gamma(t,x,y,z)$,
where $G_{\gamma}(t,x,y,z)\in C([0,\epsilon_1]\times[0,\epsilon_0]\times V\times U;\R^m)$ satisfies
for any compact sets $K_1\Subset U$, $K_2\Subset V$,
\begin{equation*}
\sup_{\substack{t\in [0,\epsilon_1],x\in[0,\epsilon_0],z\in K_1\\ y_1,y_2\in K_2, y_1\ne y_2}} \frac{\left|G_\gamma(t,x,y_1,z)-G_\gamma(t,x,y_2,z)\right|}{|y_1-y_2|}<\infty.
\end{equation*}

We will be considering the differential equation, defined for $f(t,x)\in C([0,\epsilon_1]\times[0,\epsilon_0];V)$
with $f(t,0)\in C([0,\epsilon_1];U)$,
\begin{equation}\label{EqnResultMainEqn}
\dt f(t,x) = \frac{P(t,x,f(t,x), f(t,0))-P(t,0,f(t,0),f(t,0))}{x}+G(t,x,f(t,x),f(t,0)), \quad x>0.
\end{equation}

Corresponding to $P(t,x,y,z)$, for $\delta\in (0,\epsilon_2]$ and $A\in C^1([0,\delta];\R^m)$, we define
\begin{equation*}
\Ph(t, A(\cdot),z)(w) = \left( \Ph_1(t,A(\cdot),z)(w),\ldots, \Ph_m(t,A(\cdot),z)(w)\right)
\end{equation*}
by
\begin{equation*}
\Ph_j(t,A(\cdot),z)(w)=\sum_{|\alpha|\leq D} \dw[|\alpha|+1] \left(  b_{\alpha,j}(t,\cdot,z) \convt (\convt^{\alpha} A)   \right)(w).
\end{equation*} 

\subsection{Existence without Uniqueness}\label{SectionResExist}
% !TEX root =  main.tex
%Let $V\subseteq \R^m$ be an open set with $U\subseteq V$.  Let $G(t,x,y,z)\in C([0,\epsilon_1]\times [0,\epsilon_0]\times V\times U;\R^m)$
%be such that for every $\gamma\in (0,\epsilon_2)$, $G(t,x,y,z)=e^{-\gamma/x} G_\gamma(t,x,y,z)$,
%where $G_{\gamma}(t,x,y,z)\in C([0,\epsilon_1]\times[0,\epsilon_0]\times V\times U;\R^m)$ satisfies
%for any compact sets $K_1\Subset U$, $K_2\Subset V$,
%\begin{equation*}
%\sup_{\substack{t\in [0,\epsilon_1],x\in[0,\epsilon_0],z\in K_1\\ y_1,y_2\in K_2, y_1\ne y_2}} \frac{\left|G_\gamma(t,x,y_1,z)-G_\gamma(t,x,y_2,z)\right|}{|y_1-y_2|}<\infty.
%\end{equation*}

%In this section, we consider the following differential equation
%\begin{equation}\label{EqnResExistEqn}
%\dt f(t,x) = \frac{P(t,x,f(t,x),f(t,0))-P(t,0,f(t,0),f(t,0))}{x} + G(t,x,f(t,x),f(t,0)), \quad x>0.
%\end{equation}

\begin{thm}\label{ThmResultExist}
Suppose $f_0(x)\in C([0,\epsilon_0];V)$ and $A_0(t)\in C^2([0,\epsilon_1];U)$ are given, with $f_0(0)=A_0(0)$.
Set $\sM(t):=-d_y P(t,0,A_0(t),A_0(t))$.\footnote{Notice the minus sign in the definition of $\sM(t)$.  This is in contrast to the notation in the introduction,
which lacked the minus sign.}
We suppose that there exists $R(t)\in C^1([0,\epsilon_1];\GLm)$
with
\begin{equation*}
R(t)\sM(t)R(t)^{-1} = \diag{\lambda_1(t),\ldots, \lambda_m(t)},
\end{equation*}
where $\lambda_j(t)>0$ for all $j,t$.
Then, there exists $\delta_0>0$ and a unique solution $f(t,x)\in C([0,\epsilon_1]\times [0,\delta_0];\R^m)$ to \cref{EqnResultMainEqn},
satisfying $f(0,x)=f_0(x)$ and $f(t,0)=A_0(t)$.
\end{thm}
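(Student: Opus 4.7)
The plan is to execute the Laplace-transform strategy outlined in the introduction: seek a solution of the form
\begin{equation*}
f(t,x) \;=\; \frac{1}{x}\int_0^{\epsilon_2} e^{-w/x}\, A(t,w)\, dw \;+\; r(t,x),
\end{equation*}
where $A\in C^1([0,\epsilon_1]\times[0,\delta_0];\R^m)$ with $A(t,0)=A_0(t)$ (so that the Laplace-type part tends to $A_0(t)$ as $x\to 0^+$) and $r(t,x)$ is a remainder satisfying $r(t,0)=0$ for $t>0$ and $r(0,x)\to 0$ as $x\to 0^+$. Using the standard Laplace correspondence (products become convolutions in $w$, and multiplication by $1/x$ becomes a $w$-derivative), one checks that whenever $f=\frac{1}{x}\int_0^{\epsilon_2} e^{-w/x}A(t,w)\,dw$,
\begin{equation*}
\frac{P(t,x,f,A_0(t)) - P(t,0,A_0(t),A_0(t))}{x} \;=\; \frac{1}{x}\int_0^{\epsilon_2} e^{-w/x}\,\Ph(t,A(t,\cdot),A_0(t))(w)\,dw \;+\; O\bigl(e^{-\epsilon_2/x}/x\bigr),
\end{equation*}
which is precisely the purpose of the operator $\Ph$.

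Substituting the ansatz into \cref{EqnResultMainEqn} and matching the Laplace-type and exponentially small pieces separately yields a coupled system: a quasilinear first-order PDE in $(t,w)$ for $A$, and a linear (in $r$) ODE in $t$ with singular coefficient for $r$. Schematically,
\begin{equation*}
\dot A(t,w) \;=\; \Ph(t,A(t,\cdot),A_0(t))(w)\;+\;\text{lower-order correction},\qquad A(t,0)=A_0(t),
\end{equation*}
\begin{equation*}
\dot r(t,x) \;+\; \frac{\sM(t)}{x}\,r(t,x) \;=\; \mathcal{N}(t,x,A,r),
\end{equation*}
with $\mathcal{N}$ polynomial in $r$ and carrying an $O(e^{-\gamma/x})$ contribution from the tails and from the $G$ term. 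By the hypothesis, $R(t)\sM(t)R(t)^{-1}=\diag{\lambda_1(t),\ldots,\lambda_m(t)}$ with $\lambda_j>0$; linearizing $\Ph$ around the constant profile $A\equiv A_0$ shows that the characteristic speeds of the PDE for $A$ share the signs of the $\lambda_j$, so characteristics emanate from $\{w=0\}$ into $\{w>0\}$ as $t$ increases --- precisely the direction in which prescribing $A(\cdot,0)=A_0$ is well-posed --- while the integrating factor $\exp\bigl(-\int_0^{t} R\sM R^{-1}/x\,ds\bigr)$ for the ODE has operator norm at most $1$ uniformly in $x>0$.

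The proof then proceeds in three steps. First, pick a convenient $C^1$ initial profile $A(0,w)$ (e.g.\ $A(0,w)\equiv A_0(0)$) satisfying $A(0,0)=A_0(0)=f_0(0)$, and solve the PDE for $A$ on $[0,\epsilon_1]\times[0,\delta_0]$ by Picard iteration along characteristics; the assumed smoothness of the $b_{\alpha,j}$ and of $A_0$, together with diagonalizability of $\sM$, yields $A\in C^1$ provided $\delta_0$ is small enough. Second, with $A$ in hand, set
\begin{equation*}
r(0,x) \;:=\; f_0(x)\;-\;\frac{1}{x}\int_0^{\epsilon_2} e^{-w/x}\,A(0,w)\,dw,
\end{equation*}
which tends to $0$ as $x\to 0^+$ by continuity of $f_0$ at $0$ and a standard Laplace-type limit, and solve the ODE for $r$ in $t$ for each fixed $x>0$ using the integrating factor above; this gives uniform sup-norm bounds on $r(t,x)$. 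Third, close the full coupled system by a contraction mapping on the product of a $C^1$ space for $A$ and a weighted sup-norm space for $r$, shrinking $\delta_0$ as needed.

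The \emph{main obstacle} is this last step: the substitution $f=\frac{1}{x}\int e^{-w/x}A\,dw+r$ into the polynomial $P(t,x,\cdot,A_0)$ produces mixed terms between $A$ and $r$ that must be carefully allocated between the forcing of the PDE for $A$ and the forcing $\mathcal{N}$ of the ODE for $r$, and one has to pick norms under which the combined map is a genuine contraction. Uniqueness in the class $C([0,\epsilon_1]\times[0,\delta_0];\R^m)$ with the prescribed $f(0,x)=f_0(x)$ and $f(t,0)=A_0(t)$ is considerably easier: for each fixed $x>0$, \cref{EqnResultMainEqn} is an ordinary ODE in $t$ whose right-hand side is polynomial in $f$ (hence locally Lipschitz), so $f(\cdot,x)$ is uniquely determined on $[0,\epsilon_1]$ by $f(0,x)=f_0(x)$, and the continuity requirement at $x=0$ then pins the whole solution down.
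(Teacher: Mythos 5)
Your overall ansatz---write $f$ as a Laplace-type piece plus a remainder, derive a PDE for the density $A$ and an ODE in $t$ for the remainder---is exactly the decomposition the paper uses. Where you diverge, and where the key gap lies, is in how the remainder equation is closed. You describe the system for $(A,r)$ as ``coupled'' and propose a joint contraction; in fact the paper decouples it completely: one first solves the $w$-PDE for $A$ alone (no $r$ appears), setting $\fh(t,x)=\frac{1}{x}\int_0^{\delta\wedge\epsilon_2}e^{-w/x}A(t,w)\,dw$ as a fixed reference function, and \emph{only then} studies $g=f-\fh$. Moreover, the ODE for $g$ is not treated by an integrating-factor / Picard argument at all, and a contraction in $\delta_0$ cannot work here: the nonlinear-in-$r$ terms produced by the difference quotient still carry a factor $1/x$, so they are not $O(e^{-\gamma/x})$ as your sketch suggests (for $P(y)=y^2$, the term $r^2/x$ survives), and shrinking $\delta_0$ does nothing to tame them.

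The missing idea is to collapse the entire nonlinear difference into a matrix-vector form
\begin{equation*}
\frac{P(t,x,\fh+g,A_0)-P(t,x,\fh,A_0)}{x}=-\frac{1}{x}\,\sMt(t,x,g)\,g,\qquad
\sMt(t,x,z):=-\int_0^1 d_yP\bigl(t,x,\fh(t,x)+sz,A_0(t)\bigr)\,ds,
\end{equation*}
which, since $\fh(t,0)=A_0(t)$, satisfies $\sMt(t,0,0)=\sM(t)$. The hypothesis that $\sM(t)$ is diagonalizable with positive eigenvalues then makes the ODE $\dot g=-\frac{1}{x}\sMt(t,x,g)g+G_1$ \emph{dissipative} as $x\downarrow 0$. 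Existence with $g(t,0)\equiv 0$ is obtained by an energy argument, not by iteration: one shows $\frac{d}{dt}|g|^2\leq -\frac{c_0}{x}|g|^2 + C|g|$ for $|g|$ small, which forecloses escape from a small ball and gives a bounded continuous solution on all of $[0,\epsilon_1]$ once $x$ and $g_0(x)$ are small (the paper's Proposition~\ref{PropPertubMainProp} via Lemma~\ref{LemmaPerturbSymmFixed}). This is precisely the step your proposal identifies as ``the main obstacle'' but leaves unresolved, and it is the point at which the positivity of the eigenvalues is actually used. Your uniqueness argument (fix $x>0$, use Lipschitz ODE uniqueness in $t$, then close at $x=0$ by the prescribed $f(t,0)=A_0(t)$) is correct and matches what the paper leaves implicit.
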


\begin{rmk}
As in the introduction, we call this existence without uniqueness because one has to specify both $f(0,x)$ and $f(t,0)$ (as opposed to just $f(0,x)$).
\end{rmk}

Beyond proving existence, we can show that the solution given in \cref{ThmResultExist} is of Laplace transform type, modulo an appropriate error,
as shown in the next theorem.

\begin{thm}\label{ThmResultExistCharaterize}
Take the same assumptions as in \cref{ThmResultExist}, and let $f(t,x)$ be the unique solution guaranteed by \cref{ThmResultExist}.
Take $c_0, C_1,C_2,C_3, C_4>0$ such that $\min_{t,j} \lambda_j(t)\geq c_0>0$, $\CjN{1}{R}\leq C_1$, $\CjN{1}{R^{-1}}\leq C_2$, $\CjN{1}{\sM^{-1}}\leq C_3$, $\CjN{2}{A_0}\leq C_4$.  Then, there exists $\delta=\delta(m,D,c_0,C_0,C_1,C_2,C_3,C_4)>0$ and $A(t,w)\in C^{0,2}([0,\epsilon_1]\times [0,\delta\wedge \epsilon_2];\R^m)$ such that
\begin{equation*}
\dt A(t,w) = \Ph(t,A(t,\cdot),A(t,0))(w), \quad A(t,0)=A_0(t),
\end{equation*}
and such that if $\lambda_0(t)=\min_j \lambda_j(t)$, then for all $\gamma\in [0,1)$,
\begin{equation}\label{EqnResulExistChar}
f(t,x)=\frac{1}{x}\int_0^{\delta\wedge \epsilon_2} e^{-w/x} A(t,w)\: dw + O\left( e^{-\gamma(\delta\wedge \epsilon_2)/x} + e^{-(\gamma/x)\int_0^t \lambda_0(s)\: ds} \right),\quad x\in (0,\delta_0],
\end{equation}
where the implicit constant in the $O$ in \cref{EqnResulExistChar} does not depend on $(t,x)\in [0,\epsilon_1]\times (0,\delta_0]$.
Furthermore, the representation \cref{EqnResulExistChar} is unique in the following sense.  Fix $t_0\in [0,\epsilon_1]$.
Suppose there exists $0<\delta'<\delta\wedge \epsilon_2\wedge \left(\int_0^{t_0} \lambda_0(s)\: ds\right)$ and
$B\in C([0,\delta'];\R^m)$ with
\begin{equation*}
f(t_0,x) =\frac{1}{x}\int_0^{\delta'} e^{-w/x} B(w)\: dw + O\left(e^{-\delta'/x}\right),\text{ as }x\downarrow 0.
\end{equation*}
Then, $A(t_0,w)=B(w)$, $\forall w\in [0,\delta']$.
\end{thm}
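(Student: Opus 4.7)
The plan is to build $A$ as the solution to a well-posed quasilinear first-order hyperbolic PDE in $(t,w)$, verify that its truncated Laplace transform $\tilde f(t,x):=\frac{1}{x}\int_0^{\delta\wedge\epsilon_2}e^{-w/x}A(t,w)\,dw$ satisfies \cref{EqnResultMainEqn} modulo exponentially small errors, and deduce $f=\tilde f+(\text{error})$ via a Gronwall argument exploiting the positivity of the eigenvalues of $\sM(t)$.  The uniqueness statement for $A(t_0,\cdot)$ on $[0,\delta']$ then reduces to uniqueness of the Laplace transform via \cref{CorLaplaceAppendixUsed}.

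\textbf{Step 1 (constructing $A$).}  The motivating identity is that if $c(x)=\frac{1}{x}\int_0^\infty e^{-w/x}b(w)\,dw$ and $f_k(x)=\frac{1}{x}\int_0^\infty e^{-w/x}A_k(w)\,dw$, then $c(x)f^{\alpha}(x)=\frac{1}{x}\int_0^\infty e^{-w/x}\dw[|\alpha|](b\convt\convt^\alpha A)(w)\,dw$, and combined with $\frac{h(x)-h(0)}{x}=\frac{1}{x}\int e^{-w/x}H'(w)\,dw$ for a Laplace-transform pair $(h,H)$, this shows the Laplace density of the right-hand side of \cref{EqnResultMainEqn} is exactly $\Ph(t,A,A(t,0))(w)$ (the $G$-term being exponentially small in $x$ and contributing only to the error).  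I would then apply the convolution Leibniz identities $(f\convt g)'(w)=f(0)g(w)+(f'\convt g)(w)=f(w)g(0)+(f\convt g')(w)$ to isolate the principal part of $\Ph_j(t,A,A(t,0))(w)$ as $-(\sM(t)A'(t,w))_j$ plus terms that do not involve $A'(t,w)$ pointwise.  Conjugating by $R(t)$ diagonalizes the principal symbol and produces a quasilinear system $\dt\tilde A_j+\lambda_j(t)\tilde A_j'=\text{l.o.t.}$ with $\lambda_j(t)\geq c_0>0$.  After extending $A_0$ to a $C^2$-function on $[-\eta,\epsilon_1]$ with $\eta$ chosen so that the characteristics emanating from $\{w=0\}$ cover $[0,\epsilon_1]\times[0,\delta\wedge\epsilon_2]$, a Picard iteration along characteristics (whose contraction constant is controlled by $c_0,C_0,C_1,C_2,C_3,C_4$) yields the required $A\in C^{0,2}$.

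\textbf{Step 2 (deriving \cref{EqnResulExistChar}).}  Reversing the Laplace identities of Step 1 shows $\tilde f$ satisfies \cref{EqnResultMainEqn} modulo an additive error $E(t,x)=O(e^{-(\delta\wedge\epsilon_2)/x})$ coming from truncation of the $w$-integrals at $\delta\wedge\epsilon_2$ and from $G$.  Setting $h:=f-\tilde f$ and noting that $f(t,0)=\tilde f(t,0)=A_0(t)$, the difference of the two equations, after a mean-value expansion in $y$ and the observation $c_{\alpha,j}(t,x,z)-c_{\alpha,j}(t,0,z)=O(x)$ (from $C^1$-regularity of $b_{\alpha,j}$ at $w=0$), takes the form
\begin{equation*}
\dt h(t,x)=-\frac{\sM(t)}{x}\,h(t,x)+O(|h|)+O(E(t,x)).
\end{equation*}
Conjugating by $R(t)$ and applying componentwise Gronwall in $t$ with integrating factor $e^{(1/x)\int_0^t\lambda_j(s)\,ds}$ absorbs the $O(|h|)$ remainder (which includes the commutator $\dot R R^{-1}$ arising from the conjugation) into the decay rate at the cost of replacing $1$ by any $\gamma<1$ for $x$ sufficiently small; the bounded initial discrepancy $h(0,x)=f_0(x)-\tilde f(0,x)$ then contributes $e^{-(\gamma/x)\int_0^t\lambda_0}$, while integrating $E$ against the same factor yields $e^{-\gamma(\delta\wedge\epsilon_2)/x}$, giving \cref{EqnResulExistChar}.

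\textbf{Step 3 (uniqueness of the representation) and the main obstacle.}  If $B\in C([0,\delta'])$ gives a second representation of $f(t_0,x)$ with remainder $O(e^{-\delta'/x})$, then because $\delta'<\delta\wedge\epsilon_2\wedge\int_0^{t_0}\lambda_0$ one picks $\gamma$ close enough to $1$ so that both $e^{-(\gamma/x)\int_0^{t_0}\lambda_0}$ and $e^{-\gamma(\delta\wedge\epsilon_2)/x}$ are $O(e^{-\delta'/x})$; subtracting then gives $\frac{1}{x}\int_0^{\delta'}e^{-w/x}[A(t_0,w)-B(w)]\,dw=O(e^{-\delta'/x})$, and \cref{CorLaplaceAppendixUsed} forces $A(t_0,\cdot)=B$ on $[0,\delta']$.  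The main obstacle is the Gronwall step in Step 2:  the linearized operator $-\sM(t)/x$ is singular as $x\downarrow 0$, so the subleading corrections (the $O(x)$-perturbation of $d_yP$, the lower-order contributions to $\Ph$, and the commutator $\dot RR^{-1}$ from diagonalization) all threaten to destroy the exponential decay, and the hypotheses $R\in C^1([0,\epsilon_1];\GLm)$, $\CjN{1}{\sM^{-1}}\leq C_3$, $A_0\in C^2$, and $\lambda_0(t)\geq c_0>0$ are each used to absorb one such correction into the integrating factor in order to produce the sharp rate $e^{-(\gamma/x)\int_0^t\lambda_0}$ valid for every $\gamma<1$.
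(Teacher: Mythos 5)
Your proposal is correct and follows essentially the same route as the paper's. The paper packages Steps 1 and 2 into \cref{PropMainExistProp} (which in turn uses \cref{PropExistAbsExist}, \cref{PropConvPolyIsOperation}, \cref{LemmaConvPolyDifferenceEqn}, and the chronological-exponential bound \cref{PropChronMainTexp}), and reduces \cref{ThmResultExistCharaterize} to that proposition plus \cref{CorLaplaceAppendixUsed}, exactly as you propose. Two small presentational differences: to supply the missing Cauchy data for the transport system, the paper imposes the constant condition $A(0,w)\equiv A_0(0)$ on $\{t=0\}$ rather than extending $A_0$ (and the coefficients) to $t<0$, which saves a Whitney extension; and in Step~2 the paper keeps $-\tfrac{1}{x}\sM(t,x)h$ intact and bounds its time-ordered exponential via an $L^2$ energy estimate (\cref{LemmaChronTexp}) rather than decomposing as $-\tfrac{1}{x}\sM(t)h + O(|h|)$ — note your written remainder is really $\tfrac{o(1)}{x}|h|$ rather than $O(|h|)$ since $\sM(t,x)-\sM(t)$ is only $o(1)$ in general, though your ``for $x$ sufficiently small'' caveat recovers the same $\gamma<1$ bound.
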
 

\subsection{Uniqueness without Existence}\label{SectionResUnique}
% !TEX root =  main.tex
In addition to the above assumptions, for the next result we assume for every compact set $K\Subset U$,
%\begin{equation}\label{EqnResUniqueAddb}
%\sup_{\substack{t\in [0,\epsilon_1], w\in [0,\epsilon_2]  \\ z_1,z_2\in K, z_1\ne z_2} } \frac{|b_{\alpha,j}(t,w,z_1)-b_{\alpha,j}(t,w,z_2)|} {|z_1-z_2|}<\infty, \quad
%\sup_{\substack{t\in [0,\epsilon_1], w\in [0,\epsilon_2]  \\ z_1,z_2\in K, z_1\ne z_2} } \frac{|b_{\alpha,j}'(t,w,z_1)-b_{\alpha,j}'(t,w,z_2)|} {|z_1-z_2|}<\infty.
%\end{equation}
\begin{equation}\label{EqnResUniqueAddb}
\begin{split}
\sup_{\substack{t\in [0,\epsilon_1], w\in [0,\epsilon_2]  \\ z_1,z_2\in K, z_1\ne z_2} } \frac{|b_{\alpha,j}(t,w,z_1)-b_{\alpha,j}(t,w,z_2)|} {|z_1-z_2|}&<\infty, \\
\sup_{\substack{t\in [0,\epsilon_1], w\in [0,\epsilon_2]  \\ z_1,z_2\in K, z_1\ne z_2} } \frac{|\dw b_{\alpha,j}(t,w,z_1)-\dw b_{\alpha,j}(t,w,z_2)|} {|z_1-z_2|}&<\infty.
\end{split}
\end{equation}

\begin{thm}\label{ThmResUnique}
Suppose $f_1(t,x),f_2(t,x)\in C([0,\epsilon_1]\times[0,\epsilon_0];V)$ satisfy $f_j(t,0)\in C^2([0,\epsilon_1];U)$,
both satisfy \cref{EqnResultMainEqn}, and $f_1(0,x)=f_2(0,x)$, $\forall x\in [0,\epsilon_0]$.
Set $\sM_k(t):=d_yP(t,0,f_k(t,0),f_k(t,0))$.  We suppose that there exists $R_k(t)\in C^1([0,\epsilon_1];\GLm)$ with
\begin{equation*}
R_k(t) \sM_k(t) R_k(t)^{-1} = \diag{\lambda_1^k(t),\ldots, \lambda_m^k(t)},
\end{equation*}
where $\lambda_j^k(t)>0$, $\forall j\in \mset$, $t\in [0,\epsilon_1]$.  Then, $f_1(t,x)=f_2(t,x)$, $\forall t\in [0,\epsilon_1], x\in [0,\epsilon_0]$.
\end{thm}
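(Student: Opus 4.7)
The plan is to reduce the theorem to the equality $f_1(t, 0) = f_2(t, 0)$ for all $t \in [0, \epsilon_1]$, then to obtain this equality by a time reversal that moves us into the existence regime, combined with the uniqueness clause of the Laplace-type representation in \cref{ThmResultExistCharaterize}. The initial reduction is straightforward: once the boundary values agree, for each fixed $x > 0$ both $f_k(\cdot, x)$ satisfy the same smooth nonlinear ODE in $t$ (the right-hand side of \cref{EqnResultMainEqn} is polynomial in $f$ and the $G$-term is Lipschitz by hypothesis) with the common initial condition $f_1(0, x) = f_2(0, x)$, so Picard uniqueness yields $f_1(t, x) = f_2(t, x)$ for $x > 0$ and then at $x = 0$ by continuity.

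To obtain equality of the boundary values, I would reverse time by setting $g_k(t, x) := f_k(\epsilon_1 - t, x)$. Each $g_k$ satisfies an equation of the same form as \cref{EqnResultMainEqn} with coefficient functions $\widetilde{b}_{\alpha, j}(t, w, z) := -b_{\alpha, j}(\epsilon_1 - t, w, z)$ and a corresponding $\widetilde{G}$; crucially, the associated matrix $-d_y \widetilde{P}(t, 0, g_k(t, 0), g_k(t, 0)) = \sM_k(\epsilon_1 - t)$ has strictly positive eigenvalues $\lambda_j^k(\epsilon_1 - t)$ with conjugator $R_k(\epsilon_1 - t) \in C^1$. This places the problem for $g_k$ in the regime of \cref{ThmResultExist}; taking $\widetilde{f}_0^k := g_k(0, \cdot) = f_k(\epsilon_1, \cdot)$ and $\widetilde{A}_0^k(t) := g_k(t, 0) \in C^2$, the uniqueness clause of \cref{ThmResultExist} identifies $g_k$ with the solution it constructs on the relevant strip. \cref{ThmResultExistCharaterize} then produces $\delta > 0$ and $A_k(t, w) \in C^{0, 2}$ satisfying $\dt A_k = \Ph(t, A_k(t, \cdot), A_k(t, 0))$ with $A_k(t, 0) = g_k(t, 0)$, together with a uniform Laplace-type representation for $g_k$ (using constants $c_0, C_0, \ldots, C_4$ that work simultaneously for $k = 1, 2$).

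Evaluating these representations at $t = \epsilon_1$ gives $f_k(0, x) = g_k(\epsilon_1, x) = \frac{1}{x} \int_0^{\delta'} e^{-w/x} A_k(\epsilon_1, w)\, dw + O(e^{-\delta'/x})$ for any $\delta' > 0$ sufficiently small (strictly positive since the $\lambda_j^k$ are strictly positive). Since $f_1(0, \cdot) \equiv f_2(0, \cdot)$, the uniqueness clause of \cref{ThmResultExistCharaterize} forces $A_1(\epsilon_1, w) = A_2(\epsilon_1, w)$ for $w \in [0, \delta']$. The remaining task is to propagate this equality back to all $t \in [0, \epsilon_1]$: both $A_k$ satisfy the same nonlinear evolution $\dt A = \Ph(t, A(t, \cdot), A(t, 0))$, which, viewed in an appropriate function space of $w$-profiles, is essentially a Banach-space ODE in $t$ whose Lipschitz constants come from the polynomiality of $\Ph$ in $A$ and the smoothness of the $b_{\alpha, j}$; reversing time once more and invoking standard Banach-space ODE uniqueness yields $A_1(t, w) = A_2(t, w)$ for all $t \in [0, \epsilon_1]$ and $w$ in some sub-interval of $[0, \delta']$. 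Setting $w = 0$ then gives $g_1(t, 0) = g_2(t, 0)$, i.e., $f_1(\cdot, 0) \equiv f_2(\cdot, 0)$, and the first-paragraph reduction completes the proof. The main obstacle I anticipate is this last backward-propagation step: the evolution for $A$ has the subtle feature that $A(t, 0)$ simultaneously serves as an unknown and couples back into the right-hand side through the polynomial coefficients in $\Ph$, so the function space and Lipschitz estimates must be set up carefully to absorb this coupling.
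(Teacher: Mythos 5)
Your overall strategy is sound and matches the paper's at a high level: reduce the theorem to showing $f_1(\cdot,0)=f_2(\cdot,0)$, note that for $x>0$ the equation \cref{EqnResultMainEqn} is a standard ODE in $t$ once the boundary trace is known, obtain a Laplace-transform-type representation of the common initial condition $f_1(0,\cdot)=f_2(0,\cdot)$, and then propagate the resulting equality of the $A_k$'s through the evolution $\dt A = \Ph(t,A(t,\cdot),A(t,0))$. Your time reversal to get into the regime of \cref{ThmResultExist} and \cref{ThmResultExistCharaterize} is a legitimate (if slightly circuitous) way to reach the representation; the paper packages exactly this time reversal into \cref{ThmResCharacterize}, which you could have invoked directly (as in \cref{RmkUniquefDeterminesA}) to land at $A_1(0,w)=A_2(0,w)$ for $w$ in some $[0,\delta']$.

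The genuine gap is the final propagation step, which you flag as the "main obstacle" but then dispose of by appealing to "standard Banach-space ODE uniqueness." That appeal is not justified. By \cref{PropConvPolyIsOperation}, the operator $\Ph$ decomposes as
\begin{equation*}
\Ph(t,A(t,\cdot),A(t,0))(w) = d_yP\big(t,0,A(t,0),A(t,0)\big)\,\dw A(t,w) + \sG(A)(t,w),
\end{equation*}
where only $\sG$ is a smoothing $(2,G_1,G_2,G_3)$ operation. The leading term contains $\dw A$, so the evolution is a first-order transport-type PDE, not a Lipschitz ODE in any Banach space of $w$-profiles: the Lipschitz constant you assert "comes from the polynomiality of $\Ph$ and the smoothness of the $b_{\alpha,j}$" would have to control an unbounded operator, and it cannot. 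Uniqueness instead requires an argument along the characteristics of the vector fields $\dt - \lambda_j^k(t)\dw$ combined with Gr\"onwall; this is precisely what the paper's \cref{ThmAbsUnique} does (via the diagonalization by $R_k(t)$), and what \cref{PropUniqueUniqueA} packages. A second, compounding problem: even with the correct uniqueness mechanism, you do not get $A_1(t,w)=A_2(t,w)$ for all $t\in[0,\epsilon_1]$ in one shot. The characteristics exit the strip $w\in[0,\delta']$ in finite time, so \cref{ThmAbsUnique} only yields $A_1(t,0)=A_2(t,0)$ for $t$ in an interval $[0,\delta_0]$ with $\delta_0$ governed by $\gamma_0^{-1}(\delta')$. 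One must then feed $f_1(\delta_0,0)=f_2(\delta_0,0)$ (and hence $f_1(\delta_0,\cdot)=f_2(\delta_0,\cdot)$ via the $x>0$ ODE step) back into the argument and iterate finitely many times to cover $[0,\epsilon_1]$, as the paper does in \cref{RmkResUniqueReconstruct}. Without the characteristics argument and without the iteration, the propagation step does not go through.
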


\cref{ThmResUnique} shows uniqueness, but we will show more.   We will further investigate the following questions:
\begin{itemize}
\item Stability:  If $f_1(0,x)-f_2(0,x)$ vanishes sufficiently quickly at $0$, and under the hypotheses of \cref{ThmResUnique}, we will prove that $f_1(t,0)$ and $f_2(t,0)$ agree for small $t$, and we will make this quantitative.  See \cref{ThmResUniqueStability}.
\item Reconstruction: Given the initial condition $f(0,x)$ for \cref{EqnResultMainEqn}, and under the hypotheses of \cref{ThmResUnique}, we will
show how to reconstruct the solution $f(t,x)$, for all $t$.  This is an unstable process, but we will reduce the instability to that of inverting the Laplace transform, which is well understood.  See \cref{RmkResUniqueReconstruct}.
\item Characterization:  We will show that if $f(t,x)$ is a solution to \cref{EqnResultMainEqn}, and under the hypotheses of \cref{ThmResUnique}, then
$f(t,x)$ must be of Laplace transform type, modulo an appropriate error term.  In particular, only initial conditions $f(0,x)$ which are of Laplace transform
type modulo an appropriate error give rise to solutions.  See \cref{ThmResCharacterize,RmkUniquefDeterminesA}.
\end{itemize}

We now turn to making these ideas more precise. 

\subsubsection{Stability, Reconstruction, and Characterization}\label{SectionResUniqueMore}
% !TEX root =  main.tex
For our first result, we take $P$ in the start of this section, but we drop the assumption \cref{EqnResUniqueAddb}.

\begin{thm}[Charaterization]\label{ThmResCharacterize}
Suppose $f(t,x)\in C([0,\epsilon_1]\times [0,\epsilon_0];\R^m)$ is such that $\forall \gamma\in [0,\epsilon_2)$,
\begin{equation*}
\dt f(t,x) = \frac{P(t,x,f(t,x),f(t,0))-P(t,0,f(t,0),f(t,0))}{x} +O(e^{-\gamma/x}), \quad x\in [0,\epsilon_0),
\end{equation*}
where the implicit constant in $O$ is independent of $t,x$.  We suppose
\begin{itemize}
\item $f(t,0)\in C^2([0,\epsilon_1];U)$.
\item Set $\sM(t):=d_y P(t,0,f(t,0),f(t,0))$.  We suppose there exists $R(t)\in C^1([0,\epsilon_1];\GLm)$ with
\begin{equation*}
R(t)\sM(t)R(t)^{-1} = \diag{\lambda_1(t),\ldots, \lambda_m(t)},
\end{equation*}
where $\lambda_j(t)>0$, for all $j,t$.
\end{itemize}
Take $c_0, C_1,C_2,C_3,C_4>0$ such that $\min_{t,j} \lambda_j(t)\geq c_0>0$, $\CjN{1}{R}\leq C_1$, $\CjN{1}{R^{-1}}\leq C_2$, $\CjN{1}{\sM^{-1}}\leq C_3$,
$\CjN{2}{f(\cdot,0)}\leq C_4$.  Then, there exists $\delta=\delta(m,D,c_0,C_0,C_1,C_2,C_3,C_4)>0$
and $A(t,w)\in C^{0,2}([0,\epsilon_1]\times [0,\delta\wedge \epsilon_2];\R^m)$ such that
\begin{equation}\label{EqnResUniqueADiffEq}
\dt A(t,w) = \Ph(t,A(t,\cdot),A(t,0)), \quad A(t,0)=f(t,0),
\end{equation}
and such that if $\lambda_0(t)=\min_{j} \lambda_j(t)$, then $\forall \gamma\in (0,1)$,
\begin{equation}\label{EqnResultCharacterIsLaplace}
f(t,x)=\frac{1}{x}\int_0^{\delta\wedge \epsilon_2} e^{-w/x} A(t,w)\: dw + O\left( e^{-\gamma(\delta\wedge \epsilon_2)/x} + e^{-(\gamma/x)\int_0^{\epsilon_1-t} \lambda_0(s)\: ds} \right),
\end{equation}
where the implicit constant in $O$ is independent of $t,x$.
Furthermore, the representation in \cref{EqnResultCharacterIsLaplace} of $f(t,x)$ is unique in the following sense.
Fix $t_0\in [0,\epsilon_0]$.  Suppose there exists $0<\delta'<\delta\wedge \epsilon_2\wedge \int_0^{\epsilon_1-t_0}\lambda(s)\: ds$
and $B\in C([0,\delta'];\R^m)$ with
\begin{equation*}
f(t_0,x) = \frac{1}{x}\int_0^{\delta'} e^{-w/x} B(w)\: dw + O\left(e^{-\delta'/x}\right),\text{ as }x\downarrow 0.
\end{equation*}
Then, $A(t_0,w)=B(w)$, $\forall w\in [0,\delta']$.
\end{thm}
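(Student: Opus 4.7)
The plan is to reduce \cref{ThmResCharacterize} to the existence-side companion \cref{ThmResultExistCharaterize} via time-reversal, and then to close the argument with a stability estimate that converts the $O(e^{-\gamma/x})$ slack in the hypothesis into the claimed error in \cref{EqnResultCharacterIsLaplace}. To begin, set $\tilde f(s,x) := f(\epsilon_1-s,x)$ and $\tilde b_{\alpha,j}(s,w,z) := -b_{\alpha,j}(\epsilon_1-s,w,z)$; through the defining integral this produces a time-reversed polynomial $\tilde P(s,x,y,z) = -P(\epsilon_1-s,x,y,z)$, with $\tilde b_{\alpha,j}$ inheriting the regularity and bounds required of $b_{\alpha,j}$. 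Then $\tilde f$ satisfies the analog of \cref{EqnResultMainEqn} with $\tilde P$ in place of $P$, modulo $O(e^{-\gamma/x})$, and a direct computation gives $-d_y\tilde P(s,0,\tilde f(s,0),\tilde f(s,0)) = \sM(\epsilon_1-s)$, whose eigenvalues $\lambda_j(\epsilon_1-s)$ remain strictly positive. This places the time-reversed problem into the hypothesis of \cref{ThmResultExist}.

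Next, I would apply \cref{ThmResultExist} with data $\tilde f_0(x) := f(\epsilon_1,x)$ and $\tilde A_0(s) := f(\epsilon_1-s,0)$ to obtain an \emph{exact} solution $\tilde g(s,x)$ of the time-reversed equation on $[0,\epsilon_1]\times[0,\delta_0]$, and use \cref{ThmResultExistCharaterize} to produce a density $\tilde A(s,w)\in C^{0,2}$ satisfying the corresponding $\Ph$-equation for $\tilde P$, together with
\[
\tilde g(s,x) = \frac{1}{x}\int_0^{\delta\wedge \epsilon_2} e^{-w/x}\tilde A(s,w)\,dw + O\bigl(e^{-\gamma(\delta\wedge \epsilon_2)/x} + e^{-(\gamma/x)\int_0^s \lambda_0(\epsilon_1-u)\,du}\bigr).
\]
Set $A(t,w):=\tilde A(\epsilon_1-t,w)$. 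Unwinding the sign flip $\tilde b = -b(\epsilon_1-\cdot)$ through the definition of $\Ph$, one checks that $A$ solves $\dt A(t,w) = \Ph(t,A(t,\cdot),A(t,0))(w)$ with $A(t,0)=f(t,0)$, and that the error term, rewritten in $t$, becomes exactly the one in \cref{EqnResultCharacterIsLaplace}.

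The crux is the final identification $\tilde f \approx \tilde g$ up to the claimed error -- unlike \cref{ThmResultExistCharaterize}, which handles exact solutions, here $\tilde f$ only solves the equation modulo $O(e^{-\gamma/x})$. The difference $h := \tilde f - \tilde g$ has vanishing initial and boundary data ($h(0,\cdot)\equiv 0$, $h(\cdot,0)\equiv 0$) and satisfies a linearized inhomogeneous equation whose forcing is the residual $O(e^{-\gamma/x})$. The leading matrix symbol of the linearization at $x = 0$ is $\sM(\epsilon_1-s)$, so the positive-eigenvalue hypothesis furnishes the contracting phase $e^{-(\gamma/x)\int \lambda_0}$. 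I would then run a Gronwall-type energy estimate, mimicking the proof of \cref{ThmResultExist} but with the $O(e^{-\gamma/x})$ term threaded through as an inhomogeneous forcing, to conclude $|h(s,x)|\lesssim e^{-\gamma(\delta\wedge\epsilon_2)/x} + e^{-(\gamma/x)\int_0^s \lambda_0(\epsilon_1-u)\,du}$. This bridging between approximate and exact solutions -- carrying the forcing through whatever fixed-point/contraction scheme proves \cref{ThmResultExist} -- is where I expect the bulk of the technical work to lie.

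Finally, for the uniqueness clause: if $B\in C([0,\delta'];\R^m)$ supplies an alternative representation of $f(t_0,\cdot)$ with $\delta' < \delta\wedge \epsilon_2\wedge \int_0^{\epsilon_1-t_0}\lambda_0(s)\,ds$, subtracting from the representation already produced by \cref{EqnResultCharacterIsLaplace} shows that the Laplace transform of $A(t_0,\cdot)\big|_{[0,\delta']}-B$ is of order $O(e^{-\delta'/x})$ as $x\downarrow 0$, and the injectivity statement \cref{CorLaplaceAppendixUsed} forces $A(t_0,w)=B(w)$ on $[0,\delta']$.
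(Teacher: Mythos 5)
Your plan is correct in substance and uses the same core ideas as the paper (time-reversal plus a Gronwall/chronological-calculus comparison), but it is organized more circuitously than the paper's argument, and the place you flag as the technical crux is in fact already packaged in an existing proposition. The paper proves \cref{ThmResCharacterize} by applying \cref{PropMainExistProp} directly to the time-reversed data $\Pt(t,x,y,z)=-P(\epsilon_1-t,x,y,z)$, $\At_0(t)=f(\epsilon_1-t,0)$. That proposition does three things at once: it constructs $\At\in C^{0,2}$ solving the $\Ph$-equation, it shows the associated Laplace-type function solves the PDE up to $\frac{1}{x^2}e^{-(\delta\wedge\epsilon_2)/x}\Gt$, and -- crucially -- its final clause compares this Laplace-type function against \emph{any} continuous $\ft$ solving the same PDE approximately (up to $O(e^{-\delta_1/x})$) with the same boundary trace $\ft(t,0)=A_0(t)$, yielding exactly the error $O\bigl(e^{-\delta_1/x}+e^{-(\gamma/x)\int_0^t\lambda_0}\bigr)$. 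Because $\ft(t,x):=f(\epsilon_1-t,x)$ satisfies these hypotheses, that final clause immediately gives \cref{EqnResultCharacterIsLaplace} after undoing the time reversal; there is no need to manufacture an intermediate exact solution.

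By contrast you propose to first build an \emph{exact} solution $\tilde g$ via \cref{ThmResultExist}, then characterize $\tilde g$ via \cref{ThmResultExistCharaterize}, and finally prove an ad hoc bridging estimate $|\tilde f-\tilde g|\lesssim \cdots$ by a Gronwall argument. This works, but: (i) the bridging estimate you would prove is essentially a re-derivation of the comparison clause of \cref{PropMainExistProp} (linearize, pass to $\sM(t,x)=-\int_0^1 d_yP(\ldots)\,ds$ with $\sM(t,0)=\sM(t)$, apply \cref{PropChronMainTexp}), so you are re-proving a lemma the paper already isolated; (ii) the reference to ``mimicking the proof of \cref{ThmResultExist}'' points to the wrong ingredient -- \cref{ThmResultExist} is proved via the perturbation argument \cref{PropPertubMainProp}, whereas the bridging estimate comes from the chronological-calculus bound \cref{PropChronMainTexp}; (iii) passing through $\tilde g$ introduces an extra domain restriction from $\delta_0$ in \cref{ThmResultExist} that you then have to carry and discharge, which is avoidable. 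None of these is a fatal gap -- the plan can be made to close -- but the cleaner path is to invoke \cref{PropMainExistProp} directly with $\ft$ in the role of the approximate solution, exactly as the paper does. Your handling of the uniqueness clause via \cref{CorLaplaceAppendixUsed} matches the paper's.
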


\begin{rmk}\label{RmkUniquefDeterminesA}
By taking $t=0$ in \cref{EqnResultCharacterIsLaplace}, we see that $f(0,x)$ is of Laplace transform type, modulo an error:  $\forall \gamma\in (0,1)$,
\begin{equation*}
f(0,x)=\frac{1}{x}\int_0^{\delta\wedge \epsilon_2} e^{-w/x} A(0,w)\: dw + O\left( e^{-\gamma(\delta\wedge \epsilon_2)/x} + e^{-(\gamma/x)\int_0^{\epsilon_1} \lambda_0(s)\: ds} \right).
\end{equation*}
Thus, under the hypotheses of \cref{ThmResCharacterize}, the only initial conditions that give rise to a solution are of Laplace transform type, modulo an appropriate error.
Furthermore, by taking $t_0=0$ in the last conclusion of \cref{ThmResCharacterize}, we see that $f(0,x)$ uniquely determines
$A(0,w)$.
\end{rmk}

For the remainder of the results in this section, we assume \cref{EqnResUniqueAddb}.

\begin{prop}\label{PropUniqueUniqueA}
The differential equation \cref{EqnResUniqueADiffEq} has uniqueness in the following sense.  Let $\delta'>0$ and $A(t,w),B(t,w)\in C^{0,2}([0,\epsilon_1]\times [0,\delta'];\R^m)$ satisfy
\begin{equation}\label{EqnResUniquePropUniqueEqn}
\dt A(t,w) = \Ph(t, A(t,\cdot),A(t,0))(w), \quad \dt B(t,w) = \Ph(t,B(t,\cdot),B(t,0))(w),
\end{equation}
and $A(0,w)=B(0,w)$ for $w\in [0,\delta']$.  Set $A_0(t)=A(t,0)$, and suppose $A_0(t)\in C^2([0,\epsilon_2];\R^m)$ and set
$\sM(t)= d_y P(t,0,A_0(t),A_0(t))$.  Suppose there exists $R(t)\in C^1([0,\epsilon_1];\GLm)$ with
\begin{equation*}
R(t)\sM(t)R(t)^{-1} =\diag{\lambda_1(t),\ldots, \lambda_m(t)},
\end{equation*}
where $\lambda_j(t)>0$ for all $j,t$.  Set $\gamma_0(t):=\max_j \int_0^t \lambda_j(s)\: ds$, and
\begin{equation*}
\delta_0:=\begin{cases}
\gamma_0^{-1}(\delta'),&\text{if }\gamma_0(\epsilon_1)\geq \delta',\\
\epsilon_1,&\text{else.}
\end{cases}
\end{equation*}
Then, $A(t,0)=B(t,0)$ for $t\in [0,\delta_0]$.
\end{prop}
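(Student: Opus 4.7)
The plan is to derive an evolution equation for the difference $D(t,w):=A(t,w)-B(t,w)$, extract its principal first-order transport operator with matrix symbol $\sM(t)$, diagonalize via $R(t)$, and close with a characteristic energy estimate on a wedge determined by the characteristic speeds $\lambda_j$.

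First, I would subtract the two equations in \cref{EqnResUniquePropUniqueEqn} and telescope $\Ph(t,A,A(\cdot,0))(w)-\Ph(t,B,B(\cdot,0))(w)$: the dependence on the second argument $z$ is handled by the Lipschitz hypothesis \cref{EqnResUniqueAddb} and produces a contribution pointwise bounded by $|D(t,0)|$; the remaining multilinear dependence on $\convt^\alpha A$ versus $\convt^\alpha B$ telescopes into a sum of terms of the form $\dw[|\alpha|+1]\bigl(b_{\alpha,j}(t,\cdot,B(t,0))\convt(\text{mixed convolution containing exactly one factor }D_k)\bigr)(w)$. Applying the identity $\dw[k](b\convt h)(w)=\sum_{i=0}^{k-1}b^{(i)}(0)h^{(k-1-i)}(w)+(b^{(k)}\convt h)(w)$, and using $(\convt^\beta A)^{(l)}(0)=0$ for $l<|\beta|-1$ with $(\convt^\beta A)^{(|\beta|-1)}(0)=A(t,0)^\beta$, the single term carrying a $\dw D(t,w)$ has matrix coefficient equal---by exactly the combinatorics that defines $d_yP(t,0,A_0(t),A_0(t))$---to $\sM(t)$. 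Every other contribution is either (i) a pointwise multiple of $D(t,w)$, (ii) a Volterra integral $\int_0^w L(t,w,r)D(t,r)\,dr$ with bounded kernel, or (iii) a pointwise multiple of $D(t,0)$, with constants controlled by $C_0$, $\CjN{2}{A_0}$, and the Lipschitz constants from \cref{EqnResUniqueAddb}.

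Setting $\tilde D:=RD$ and using $R,R^{-1}\in C^1$, the equation becomes a diagonal-principal-part system
\begin{equation*}
\dt\tilde D_j(t,w)=\lambda_j(t)\,\dw\tilde D_j(t,w)+K_j(t,w)\tilde D(t,w)+\int_0^w \tilde L_j(t,w,r)\tilde D(t,r)\,dr+\tilde M_j(t,w)\tilde D(t,0),
\end{equation*}
with coefficients bounded in terms of $c_0$, $C_0$, $C_1$, $C_2$, $C_3$, and $\CjN{2}{A_0}$. The $j$-th backward characteristic from $(t_*,0)$ is $w_j(s;t_*)=\int_s^{t_*}\lambda_j(u)\,du$, which reaches $w_j(0;t_*)=\gamma_j(t_*)\le\gamma_0(t_*)\le\delta'$ precisely when $t_*\le\delta_0$, so for such $t_*$ the characteristic lies entirely in $[0,\delta']$ and terminates where $\tilde D\equiv 0$ by hypothesis. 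Writing $\tilde D_j$ via Duhamel's formula along the characteristic through $(t,0)$, and similarly along the characteristics through interior points of the wedge $\{(s,w):0\le s\le t,\,w\le\delta'-\gamma_0(s)\}$, the bounds from the previous step combine to yield $E(t)\le C\int_0^t E(s)\,ds$ for $E(t):=\max_j\sup_{w\in[0,\delta'-\gamma_0(t)]}|\tilde D_j(t,w)|$; since $E(0)=0$ and $0\in[0,\delta'-\gamma_0(t)]$ for $t\in[0,\delta_0]$ (so $|\tilde D(t,0)|\le E(t)$), Gronwall forces $E\equiv 0$ on $[0,\delta_0]$. In particular $\tilde D(t,0)=0$, hence $A(t,0)=B(t,0)$.

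The main obstacle is the first step: the Leibniz-type expansion of $\dw[|\alpha|+1]$ applied to the nested convolutions $b_{\alpha,j}\convt\convt^\alpha A$ produces a proliferation of boundary terms $(\convt^\beta A)^{(l)}(0)$ and $(\convt^\beta B)^{(l)}(0)$, each a polynomial in $A(t,0),A'(t,0)$ or $B(t,0),B'(t,0)$, which must be organized so that after summing over $\alpha$ the principal coefficient of $\dw D$ collapses exactly to $\sM(t)$ and every residual term lands in one of the three non-principal categories with constants depending only on the listed quantities. The $C^2$-bound on $A_0$ and the Lipschitz hypothesis \cref{EqnResUniqueAddb} are exactly the ingredients needed to control the boundary values and the $z$-variation, respectively; without them the principal-versus-subprincipal split could not be made cleanly enough to close the Gronwall step in the third paragraph.
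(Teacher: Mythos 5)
Your proposal follows the paper's argument step for step: the paper isolates exactly your first paragraph as Proposition~\ref{PropConvPolyDifferencePh} (telescoping $\Ph(t,A,A(t,0))-\Ph(t,B,B(t,0))$ into $\sM(t)\dw g(t,w)+F(t,w)$ with $|F(t,w)|\lesssim\sup_{0\le r\le w}|g(t,r)|$), and your second and third paragraphs are Proposition~\ref{ThmAbsUnique} (conjugation by $R(t)$ to diagonal principal part, then integration along characteristics on the wedge $\{w\le\delta'-\gamma_0(t)\}$ and Gr\"onwall). The only cosmetic difference is that you classify the remainder into three explicit structural types while the paper only records the aggregate bound $|F(t,w)|\lesssim\sup_{0\le r\le w}|g(t,r)|$ (which is what your Gr\"onwall step actually uses, and which also absorbs the cross terms like $D(t,0)\cdot\dw D(t,w)$ arising from the Leibniz expansion when two or more $D$-factors appear); this does not change the argument.
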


\begin{rmk}[Reconstruction]\label{RmkResUniqueReconstruct}
\cref{PropUniqueUniqueA} leads us to the reconstruction procedure, which is as follows:
\begin{enumerate}[(i)]
\item Given a solution $f(t,x)$ to \cref{EqnResultMainEqn}, satisfying the assumptions of \cref{ThmResUnique}, we use \cref{ThmResCharacterize} to see
that $f(t,x)$ can be written in the form \cref{EqnResultCharacterIsLaplace}.  In particular, as discussed in \cref{RmkUniquefDeterminesA},
$f(0,x)$ uniquely determines $A(0,w)$.  Extracting $A(0,w)$ from $f(0,x)$ involves taking an inverse Laplace transform, and this step therefore inherits any instability inherent in
the inverse Laplace transform.
\item\label{ItemUniqueADiffRecon} With $A(0,w)$ in hand, and with the knowledge that $A(t,w)$ satisfies \cref{EqnResUniqueADiffEq}, \cref{PropUniqueUniqueA} shows
that $A(0,w)$ uniquely determines $A(t,0)=f(t,0)$ for $0\leq t\leq \delta'$, for some $\delta'$.
\item With $f(t,0)$ in hand, for $x>0$ \cref{EqnResultMainEqn} is a standard ODE, and so uniquely determines $f(t,x)$ for $0\leq t\leq \delta'$.
\item Iterating his procedure gives $f(t,x)$, $\forall t$.
\end{enumerate}
The above procedure reduces the reconstruction of $f(t,x)$ from $f(0,x)$ to the reconstruction of $A(t,w)$ from $A(0,w)$.
As we will see in the proof of \cref{PropUniqueUniqueA}, the differential equation satisfied by $A$ is much more stable than that satisfied by $f$.
In particular, we will be able to prove \cref{PropUniqueUniqueA} by a straightforward application of Gr\"onwall's inequality.
\end{rmk}

\begin{thm}[Stability]\label{ThmResUniqueStability}
Suppose $f_1(t,x), f_2(t,x)\in C([0,\epsilon_1]\times [0,\epsilon_0];\R^m)$ satisfy, for $k=1,2$, $\forall \gamma\in (0,\epsilon_2)$,
\begin{equation*}
\dt f_k(t,x) = \frac{P(t,x,f_k(t,x),f_k(t,0))-P(t,0,f_k(t,0), f_k(t,0))}{x} + O\left(e^{-\gamma/x}\right), \quad x\in (0,\epsilon_0],
\end{equation*}
where the implicit constant in $O$ may depend on $\gamma$, but not on $t$ or $x$.  Suppose, further, for some $r>0$ and
all $s\in [0,r)$,
\begin{equation}\label{EqnUniqueStabilityInitial}
f_1(0,x)=f_2(0,x)+O\left(e^{-s/x}\right).
\end{equation}
We assume the following for $k=1,2$:
\begin{itemize}
\item $f_k(t,0)\in C^2([0,\epsilon_1];U)$.
\item Set $\sM_k(t):= d_y P(t,0,f_k(t,0), f_k(t,0))$.  We suppose that there exists $R_k(t)\in C^1([0,\epsilon_1];\GLm)$ with
\begin{equation*}
R_k(t) \sM_k(t)R_k(t)^{-1}=\diag{\lambda_1^k(t),\ldots, \lambda_m^k(t)},
\end{equation*}
where $\lambda_j^k(t)>0$ $\forall j,t$.
\end{itemize}
Take $c_0,C_1,C_2,C_3,C_4>0$ such that for $k=1,2$, $\min_{t,j}\lambda_j^k(t)\geq c_0>0$, $\CjN{1}{R_k}\leq C_1$, $\CjN{1}{R_k^{-1}}\leq C_2$,
$\CjN{1}{\sM_k^{-1}}\leq C_3$, $\CjN{2}{f_k(\cdot,0)}\leq C_4$.
Set $\gamma_0(t):=\max_j \int_0^t \lambda_j^{1}(s)\: ds$ and $\lambda_0^k(t)=\min_{j} \lambda_j^k(t)$.
There exists $\delta=\delta(m,D,c_0, C_0, C_1, C_2, C_3,C_4)>0$ such that the following holds.
Define
$$\delta'=\delta\wedge \epsilon_2\wedge \int_0^{\epsilon_1} \lambda_0^1(s)\: ds \wedge \int_0^{\epsilon_1} \lambda_0^2(s)\: ds>0,$$
and set
\begin{equation*}
\delta_0:=
\begin{cases}
\gamma_0^{-1}(r\wedge \delta'), & \text{if }\gamma_0(\epsilon_1)\geq r\wedge \delta',\\
\epsilon_1,&\text{otherwise.}
\end{cases}
\end{equation*}
Then, $f_1(t,0)=f_2(t,0)$ for $t\in [0,\delta_0]$.
\end{thm}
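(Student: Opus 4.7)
The plan is to reduce the stability question for $f_1, f_2$ to the cleaner uniqueness question for the Laplace transform coefficients $A_1, A_2$, and then to apply \cref{PropUniqueUniqueA} to the PDE they satisfy. Concretely, I would proceed in three steps.

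First, apply the Characterization \cref{ThmResCharacterize} to each $f_k$ separately (the hypotheses transfer directly from the statement of \cref{ThmResUniqueStability}, and the constants $c_0, C_0, C_1, C_2, C_3, C_4$ are uniform in $k$, so the resulting $\delta$ is common). This yields $A_k(t,w) \in C^{0,2}([0,\epsilon_1]\times[0,\delta\wedge\epsilon_2];\R^m)$ with $\dot A_k = \Ph(t,A_k(t,\cdot),A_k(t,0))$, $A_k(t,0) = f_k(t,0)$, and
\begin{equation*}
f_k(t,x) = \frac{1}{x}\int_0^{\delta\wedge\epsilon_2} e^{-w/x} A_k(t,w)\,dw + O\left(e^{-\gamma(\delta\wedge\epsilon_2)/x} + e^{-(\gamma/x)\int_0^{\epsilon_1-t}\lambda_0^k(s)\,ds}\right)
\end{equation*}
for every $\gamma\in (0,1)$.

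Second, I would use the uniqueness clause of \cref{ThmResCharacterize} to transfer the initial assumption \cref{EqnUniqueStabilityInitial} from $f_k$ to $A_k$. Fix any $\delta'_0 < r \wedge \delta'$, where $\delta'$ is as defined in the theorem statement. Writing the characterization of $f_1$ at $t=0$, combining with \cref{EqnUniqueStabilityInitial}, and truncating the integral at $\delta'_0$ (the tail is $O(e^{-\delta'_0/x})$), we get
\begin{equation*}
f_2(0,x) = \frac{1}{x}\int_0^{\delta'_0} e^{-w/x} A_1(0,w)\, dw + O\left(e^{-\delta'_0/x}\right),
\end{equation*}
where we pick $s\in (\delta'_0,r)$ and $\gamma$ close enough to $1$ so that $\gamma(\delta\wedge\epsilon_2)$ and $\gamma\int_0^{\epsilon_1}\lambda_0^1(s)\,ds$ both exceed $\delta'_0$; this is possible because $\delta'_0 < \delta\wedge\epsilon_2\wedge\int_0^{\epsilon_1}\lambda_0^1(s)\,ds$. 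The uniqueness portion of \cref{ThmResCharacterize} applied to $f_2$ at $t_0=0$ then gives $A_2(0,w) = A_1(0,w)$ on $[0,\delta'_0]$. Letting $\delta'_0 \uparrow r\wedge\delta'$, we obtain $A_1(0,w) = A_2(0,w)$ for all $w\in [0,r\wedge\delta')$, and by continuity on the closed interval.

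Third, apply \cref{PropUniqueUniqueA} with $A = A_1$, $B = A_2$ on the interval $[0,r\wedge\delta']$. The hypotheses of that proposition are satisfied because $A_k(\cdot,0) = f_k(\cdot,0)$ is $C^2$ by assumption, the relevant $\sM$ is $\sM_1$ with eigenvalues $\lambda_j^1$, and $A_1(0,\cdot) = A_2(0,\cdot)$ on $[0,r\wedge\delta']$. The proposition gives $A_1(t,0) = A_2(t,0)$, i.e., $f_1(t,0) = f_2(t,0)$, for $t\in [0,\delta_0]$ with exactly the $\delta_0$ in the statement (since $\gamma_0$ here matches the one defined in \cref{PropUniqueUniqueA}). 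The main obstacle is the second step, where one must be careful that the error terms from \cref{ThmResCharacterize} applied to both $f_1$ and $f_2$ can simultaneously be absorbed into $O(e^{-\delta'_0/x})$; this is precisely why the definition of $\delta'$ in the theorem must include the minimum of $\int_0^{\epsilon_1}\lambda_0^k\,ds$ for both $k=1,2$, and why the parameter $\gamma\in (0,1)$ is kept free throughout.
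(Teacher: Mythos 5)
Your proposal is correct and follows essentially the same route as the paper's proof: apply \cref{ThmResCharacterize} to both $f_1$ and $f_2$, use the uniqueness clause of that theorem together with \cref{EqnUniqueStabilityInitial} to conclude $A_1(0,\cdot)=A_2(0,\cdot)$ on $[0,r\wedge\delta']$, then invoke \cref{PropUniqueUniqueA} with $\sM=\sM_1$. The paper compresses your second step into a single sentence, whereas you correctly spell out the truncation of the integral to $[0,\delta_0']$, the choice of $s\in(\delta_0',r)$ and $\gamma$ near $1$ to absorb the error terms, and the limiting argument $\delta_0'\uparrow r\wedge\delta'$.
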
 

\section{Forward problems, inverse problems, and past work}\label{SectionForInv}
% !TEX root =  main.tex
The results in this paper can be seen as studying a class of nonlinear
forward and inverse problems.
Indeed, suppose we have the same setup as described at the start of \cref{SectionResults}.
\newline\newline
\noindent\textbf{Forward Problem:}   Given $f_0(x)\in C([0,\epsilon_1];V)$
and $A_0(t)\in C^{2}([0,\epsilon_1];U)$ with $f_0(0)=A_0(0)$.
Let $\sM(t)$ be as in \cref{ThmResultExist}.
Suppose there exists $R(t)\in C^1([0,\epsilon_1];\GLm)$ with
$R(t)\sM(t)R(t)^{-1} = \diag{\lambda_1(t),\ldots, \lambda_m(t)}$,
and $\lambda_j(t)>0$, $\forall t$.  Let $f(t,x)$ be the solution to \cref{EqnResultMainEqn}
described in \cref{ThmResultExist}, with $f(0,x)=f_0(x)$, $f(t,0)=A_0(t)$.
The forward problem is the map:
\begin{equation*}
(f_0, A_0)\mapsto f(\epsilon_1,\cdot).
\end{equation*}
\newline
\noindent\textbf{Inverse Problem:}  The inverse problem is, given $f(\epsilon_1,\cdot)$
as described above, find $f_0$ and $A_0$.
Note that if $f(t,x)$ is the function described above, $\ft(t,x) = f(\epsilon_1-t,x)$
satisfies all the hypotheses of \cref{ThmResUnique} (here we assume
\cref{EqnResUniqueAddb}).
We have the following:
\begin{itemize}
\item The map $(f_0,A_0)\mapsto f(\epsilon_1,\cdot)$ is injective--\cref{ThmResUnique}.
\item The map $(f_0,A_0)\mapsto f(\epsilon_1,\cdot)$ is not surjective.
In fact, the only functions in the image of are Laplace transform type, modulo an appropriate error term--\cref{ThmResultExistCharaterize}.
\item The inverse map $f(\epsilon_1,\cdot)\mapsto (f_0,A_0)$ is unstable, but we
do have some stability results.  Indeed, if one only knows $f(\epsilon_1,x)$ up to error
terms of the form $O(e^{-r/x})$, then $f(\epsilon_1,\cdot)$ determins
$A_0(t)$ for $t\in [\delta_0-\epsilon_1,\epsilon_1]$, where $\delta_0$ is
described in \cref{ThmResUniqueStability}.
\item We have a proceedure to reconstruct $A_0(t)$ and $f_0(x)$ from
$f(\epsilon_1,x)$--\cref{RmkResUniqueReconstruct}.
\end{itemize}

The above class of inverse problems has, as special cases, some already well understood
inverse problems.  We next describe two of these.  For these problems,
we reverse time in the above discussion since we are focusing on the inverse problem.
In addition, the results in this paper are related to the famous Calder\'on problem,
and we describe this connection in \cref{AppendixCalderon}.
%We close this section with some comments on a future direction
%which is related to the Calder\'on problem. 

\subsection{The Laplace Transform}\label{SectionForInvLaplace}
% !TEX root =  main.tex
As see in \cref{ExampleIntroExistence,ExampleIntroUniqueness} the Laplace transform
is closely related to the case $P(t,x,y,z)=y$ studied in this paper.
In fact, the following proposition makes this even more explicit.
For $a\in L^\infty([0,\infty))$ define the Laplace transform:
\begin{equation*}
\sL(a)(x) = \frac{1}{x}\int_0^{\infty} e^{-w/x}a(w)\: dw.
\end{equation*}

\begin{prop}\label{PropLinearDefiningDE}
Let $a\in C([0,\infty))\bigcap L^{\infty}([0,\infty))$.  For each $x>0$ there is a unique solution to the differential equation
\begin{equation}\label{EqnLinearDefiningDEQ}
\frac{\partial}{\partial t} f(t,x) = \frac{f(t,x)-a(t)}{x},
\end{equation}
such that $\sup_{t\geq 0} |f(t,x)|<\infty$.  For $t_0,t\geq 0$ define $a_{t_0}(t)=a(t_0+t)$.
This solution $f(t,x)$  is given by $f(t,x) = \sL(a_t)(x)$.  Furthermore, $f(t,x)$ extends to a continuous
function $f\in C([0,\infty)\times [0,\infty))$ by setting $f(t,0)=a(t)$.
\end{prop}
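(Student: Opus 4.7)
\medskip

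\noindent\textbf{Proof proposal for \cref{PropLinearDefiningDE}.}  Fix $x>0$.  The plan is to treat \cref{EqnLinearDefiningDEQ} as a linear, scalar, first--order ODE in $t$ with constant (in $t$) coefficient $1/x$ and forcing $-a(t)/x$, and use boundedness at $+\infty$ to single out a unique solution.

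First I would multiply \cref{EqnLinearDefiningDEQ} by the integrating factor $e^{-t/x}$ to get
\begin{equation*}
\dt\left( e^{-t/x} f(t,x) \right) = -\frac{1}{x} e^{-t/x} a(t),
\end{equation*}
and integrate from $t$ to $T$, which gives
\begin{equation*}
f(t,x) = e^{(t-T)/x} f(T,x) + \frac{1}{x}\int_t^T e^{(t-s)/x} a(s)\: ds.
\end{equation*}
Since $a\in L^\infty$, the integral converges absolutely as $T\to\infty$, and $\sup_{t\geq 0} |f(t,x)|<\infty$ forces $e^{(t-T)/x}f(T,x)\to 0$.  Hence any bounded solution must equal
\begin{equation*}
f(t,x) = \frac{1}{x}\int_t^\infty e^{(t-s)/x}a(s)\: ds = \frac{1}{x}\int_0^\infty e^{-w/x} a(t+w)\: dw = \sL(a_t)(x),
\end{equation*}
establishing uniqueness.  (Equivalently, two bounded solutions differ by a solution of $\dt g = g/x$, which is $g(0,x)e^{t/x}$; boundedness forces $g\equiv 0$.)

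Next I would verify that the formula $f(t,x) := \sL(a_t)(x)$ actually solves \cref{EqnLinearDefiningDEQ}.  Writing $f(t,x)=\frac{1}{x}\int_t^\infty e^{(t-s)/x}a(s)\: ds$, the bound $|a(s)|\leq \|a\|_\infty$ legitimizes differentiation under the integral; the only care needed is the boundary term from the lower limit.  Applying Leibniz's rule gives
\begin{equation*}
\dt f(t,x) = -\frac{a(t)}{x} + \frac{1}{x^2}\int_t^\infty e^{(t-s)/x} a(s)\: ds = \frac{f(t,x)-a(t)}{x},
\end{equation*}
which is \cref{EqnLinearDefiningDEQ}.  A similar bound $|f(t,x)|\leq \|a\|_\infty$ confirms boundedness.

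For the continuous extension to $[0,\infty)\times [0,\infty)$ with $f(t,0)=a(t)$, I would change variables $w=xu$ to obtain
\begin{equation*}
f(t,x) = \int_0^\infty e^{-u} a(t+xu)\: du, \quad x>0,
\end{equation*}
which displays $f(t,x)$ as a weighted average of $a$ on $[t,\infty)$ against the probability measure $e^{-u}\, du$.  Given $(t_0,0)\in [0,\infty)\times [0,\infty)$ and $(t_n,x_n)\to (t_0,0)$, continuity of $a$ at $t_0$ plus the uniform bound $|e^{-u} a(t_n+x_nu)|\leq \|a\|_\infty e^{-u}$ let me apply dominated convergence to conclude $f(t_n,x_n)\to \int_0^\infty e^{-u} a(t_0)\: du = a(t_0)$.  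Continuity on the open set $\{x>0\}$ follows from the same integral representation.  I do not anticipate any serious obstacles; the only subtlety is the lower--limit boundary term in the Leibniz computation.
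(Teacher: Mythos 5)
Your proof is correct and follows essentially the same approach as the paper: verify the Laplace-transform formula solves the ODE, and use boundedness at infinity to force uniqueness. The only cosmetic difference is that you derive the formula directly via the integrating factor and letting $T\to\infty$, whereas the paper subtracts two bounded solutions and notes the difference must be a bounded multiple of $e^{t/x}$, hence zero — but you give this alternative parenthetically as well, and you supply the Leibniz and dominated-convergence details that the paper leaves as ``clear.''
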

\begin{proof}
If we set
\begin{equation*}
f(t,x) = \sL(a_t)(x)=\frac{1}{x} \int_0^\infty e^{-s/x} a(t+s)\: ds = \frac{1}{x}\int_t^{\infty} e^{(t-s)/x} a(s)\: ds,
\end{equation*}
then it is clear that $f$ satisfies \cref{EqnLinearDefiningDEQ}, $\sup_{t\geq 0} |f(t,x)|<\infty$,
and that $f$ extends to a continuous function $f\in C([0,\infty)\times [0,\infty))$ by setting $f(t,0)=a(t)$.

Suppose $g(t,x)$ is another solution to \cref{EqnLinearDefiningDEQ} such that  $\sup_{t\geq 0} |g(t,x)|<\infty$.
Let $h=f-g$.  Then $h(t,x)$ satisfies $\frac{\partial}{\partial t} h(t,x) = h(t,x)/x$, $\sup_{t\geq 0} |h(t,x)|<\infty$.
This implies that $h(t,x) = e^{t/x} h(0,x)$ and we conclude $h(0,x)=0=h(t,x)$, for all $t$.  Thus $f(t,x)=g(t,x)$,
proving uniqueness.
\end{proof}

In light of \cref{PropLinearDefiningDE} one may define $\sL(a)$ (at least for
$a\in C([0,\infty))\bigcap L^{\infty}([0,\infty))$) in another way:
there is a unique $f(t,x)\in C([0,\infty)\times [0,\infty))$ with $\sup_{t\geq 0} |f(t,x)|<\infty$ and satisfying
\begin{equation*}
\dt f(t,x) = \frac{f(t,x)-f(t,0)}{x}, \quad f(t,0)=a(t).
\end{equation*}
$\sL(a)(x)$ is then defined to be $\sL(a)(x)=f(0,x)$.
Thus, the well known fact that $a\mapsto \sL(a)$ is injective follows
from uniqueness for the differential equation
\begin{equation*}
\dt f(t,x) = \frac{f(t,x)-f(t,0)}{x}.
\end{equation*}

\begin{example}
The above discussion leads naturally to the following ``nonlinear inverse Laplace transform''.  Indeed, let $P(y)$ be a polynomial in $y\in \R$.
Let $f_1(t,x),f_2(t,x)\in C([0,\epsilon_1]\times [0,\epsilon_0])$ satisfy, for $j=1,2$,
\begin{equation*}
\dt f_j(t,x) = \frac{P(f_j(t,x))-P(f_j(t,0))}{x}, \quad x\in (0,\epsilon_0].
\end{equation*}
Suppose:
\begin{itemize}
\item $f_1(0,x)=f_2(0,x)$, $\forall x\in [0,\epsilon_0]$.
\item $f_j(t,0)\in C^2([0,\epsilon_1])$, $j=1,2$.
\item $P'(f_j(t,0))>0$, for $t\in [0,\epsilon_1]$, $j=1,2$.
\end{itemize}
Then, by \cref{ThmResUnique}, $f_1(t,x)=f_2(t,x)$ for $(t,x)\in [0,\epsilon_1]\times [0,\epsilon_0]$.  When $P(y)=y$, this amounts to the inverse Laplace transform as discussed
above.
\end{example}

\subsection{Inverse Spectral Theory}\label{SectionForInvInvSpec}
In this section, we describe the results due to Simon in the influential work \cite{SimonANewApproachToInverseSpectalTheoryI},
where he gave a new approach to the theorem of Borg-Mar\v{c}enko that
the principal $m$-function for a finite interval or half-line Schr\"odinger operator determines
the potential.
 As we will show, this is closely related to the special case $P(t,x,y,z)=x^2y^2+y$ of
 the results studied in this paper.
 We will contrast our theorems and methods with those of Simon.

 Let $q\in L^1_{\mathrm{loc}}([0,\infty))$ with
 $\sup_{y>0} \int_y^{y+1} q(t)\vee 0\: dt<\infty$, and consider
 the Schr\"odinger operator $-\frac{d^2}{dt^2} +q(t)$.
 For each $z\in \C\setminus [\beta,\infty)$ (with $-\beta$ sufficiently large),
 there is a unique solution (up to multiplication by a constant)
 $u(\cdot,z)\in L^2([0,\infty))$ of $-\ddot{u}+qu=zu$.
 The principal $m$-function is defined by
 \begin{equation*}
 m(t,z)=\frac{\dot{u}(t,z)}{u(t,z)}.
 \end{equation*}
 It is a theorem of Borg \cite{BorgUniquensesTheoremsInTheSpectralTheory} and Mar\v{c}enko \cite{MarchenkoSomeQuestions} that $m(0,z)$ uniquely determines
 $q$--Simon \cite{SimonANewApproachToInverseSpectalTheoryI} saw this
 as an instance of uniqueness for a generalized differential equation,
 which we now explain in the framework of this paper.

 Indeed, it is easy to see that $m$ satisfies the Riccati equation
 \begin{equation}\label{EqnForInvRiccSimon}
 \dot{m}(t,z)=q(t)-z-m(t,z)^2,
 \end{equation}
 and well-known that $m$ has the asymptotics
 $m(t,-\kappa^2) = -\kappa - \frac{q(t)}{\kappa}+o(\kappa^{-1})$,
 as $\kappa \uparrow \infty$.  Thus, $q(t)$
 can be obtained from $m(t,\cdot)$ and \cref{EqnForInvRiccSimon}
 is a differential equation involving only $m$.
 Thus, if the equation \cref{EqnForInvRiccSimon} has uniqueness,
 then $m(0,z)$ uniquely determines $q(t)$.

 However, one does not need to full power of uniqueness for \cref{EqnForInvRiccSimon}.
 In fact, one needs only know uniqueness under the additional assumption
 that $m(t,z)$ is a principal $m$-function:  i.e., if $m_1(t,z)$ and $m_2(t,z)$ both
 satisfy \cref{EqnForInvRiccSimon} with $m_1(0,\cdot)=m_2(0,\cdot)$ and are both principal $m$-functions, then $m_1(t,z)=m_2(t,z)$, $\forall t,z$.  Simon proceeds
 via
 this weaker statement.
 %, which simplifies matters somewhat, as we will see.

At this point, we rephrase these ideas into the language used in this paper.
For $x\geq 0$, $y\in \R$ define $P(x,y)=x^2y^2+y$.  Note that $P$ is
of the form covered in this paper (\cref{ExampleResultsPolynomial})
and $d_y P(0,y)=1$.  Given a principal $m$-function as above, define for $x\geq0$ small,
\begin{equation}\label{EqnForInvSimonmf}
f(t,x) :=
\begin{cases}
-\frac{1}{x} \left( m(t,-(2x)^{-2}) + (2x)^{-1} \right),&\text{if }x>0,\\
q(t), &\text{if }x=0.
\end{cases}
\end{equation}
It is easy to see from the above discussion that $f$ satisfies
\begin{equation}\label{EqnForInvSimonf}
\dt f(t,x) = \frac{P(x,f(t,x))-P(0,f(t,0))}{x}, \quad x>0.
\end{equation}
Furthermore, if $q$ is continuous then $f$ is continuous as well.
Thus to show $m(t,z)$ uniquely determines $q(t)$ it suffices to
show that \cref{EqnForInvSimonf} has uniqueness.

In this context, our results and the results of \cite{SimonANewApproachToInverseSpectalTheoryI} are closely related but have a
few differences:
\begin{itemize}
\item As discussed above, \cite{SimonANewApproachToInverseSpectalTheoryI}
 only considers solutions to \cref{EqnForInvRiccSimon} which are principal $m$-functions.  This forces $f(t,\cdot)$ in \cref{EqnForInvSimonmf} to be exactly of Laplace
transform type, $\forall t$.  As we have seen, not all solutions to
\cref{EqnForInvSimonf} are exactly of Laplace transform type.  In this way, our results are stronger than \cite{SimonANewApproachToInverseSpectalTheoryI} in that we prove uniqueness when the initial condition is not necessarily of Laplace transform type--we do not even require any sort of analyticity.\footnote{We learn a posteriori, in \cref{ThmResCharacterize}, that the initial condition must be of Laplace transform type modulo an error, but this is not assumed.}

\item We require $q\in C^2$, while \cite{SimonANewApproachToInverseSpectalTheoryI} requires no additional regularity on $q$.

\item The constant $\delta$ in \cref{ThmResCharacterize,ThmResUniqueStability}
is taken to be $\infty$ in \cite{SimonANewApproachToInverseSpectalTheoryI}.

\item Our results work for much more general polynomials than $P$.
\end{itemize}
The reason for the differences above is that, once $m$ is assumed to be a
principal $m$-function, one is able to use many theorems regarding Schr\"odinger equations to deduce the stronger results, which we did not obtain in our more
general setting.

That we assumed $q\in C^2$ is likely not essential.  For the specific case discussed in this
section, our methods do yield results for $q$ with lower regularity than $C^2$,
though we chose to not pursue this.
%as it seems quite difficult to match
%the regularity required in \cite{SimonANewApproachToInverseSpectalTheoryI}.
Moreover, even for the more general setting of our main results,
it seems likely that a more detailed study of the partial differential equations
which arise in this paper would lead to lower regularity requirements,
though this would require some new ideas.  That
$\delta$ is assumed small in  \cref{ThmResCharacterize,ThmResUniqueStability}
seems much more essential--this has to do with the fact that the
equations studied in this paper are non-linear in nature, unlike
the results in \cite{SimonANewApproachToInverseSpectalTheoryI}
which rested on the underlying linear theory of the Schr\"odinger equation.

\begin{rmk}
Many works followed \cite{SimonANewApproachToInverseSpectalTheoryI},
some of which dealt with $m$ taking values in square matrices;
e.g., \cite{GesztesySimonOnLocalBorgMarchenkoUniqunessResults}.
All of the above discussion applies to these cases as well.
\end{rmk}

\section{Convolution}
% !TEX root =  main.tex
In this section, we record several results on the commutative and associative operation $\convt$ defined in \cref{EqnNotationConvt}.
In \cref{SectionConvPoly} we distill the consequences of these results into the form in which they will be used in the rest of the paper--and the reader
may wish to skip straight to those results on a first reading.
For this section, fix some $\epsilon>0$.

\begin{lemma}\label{LemmaConvOneDeriv}
Let $a\in C([0,\epsilon])$, $b\in C^1([0,\epsilon])$.  Then $\dw (a\convt b)(w)=a(w)b(0) + (a\convt b')(w)$.  In particular, if $b(0)=0$,
then $\dw (a\convt b)(w)= (a\convt b')(w)$.
\end{lemma}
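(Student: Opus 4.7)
The plan is to move the derivative with respect to $w$ off of $a$ and onto $b$ by a change of variables, then apply the Leibniz rule for differentiating under an integral sign with a variable upper limit.

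More concretely, first I would rewrite the convolution in its symmetric form. Starting from the definition
\begin{equation*}
(a\convt b)(w) = \int_0^w a(w-r)b(r)\, dr,
\end{equation*}
substitute $s = w-r$ (so $ds = -dr$, and the limits $r=0,w$ become $s=w,0$), giving
\begin{equation*}
(a\convt b)(w) = \int_0^w a(s)b(w-s)\, ds.
\end{equation*}
The advantage of this second form is that $w$ now appears only in the upper limit of integration and inside $b$, not inside $a$; since we are only assuming $a$ is continuous but $b$ is $C^1$, the $w$-derivative hits the smoother factor and the boundary term uses only a pointwise value of $a$.

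Next, I would apply the Leibniz rule for differentiation under the integral with a variable upper limit. The boundary term contributes $a(w)b(w-w) = a(w)b(0)$, and differentiating under the integral contributes $\int_0^w a(s)b'(w-s)\, ds$, which by the reverse substitution equals $(a\convt b')(w)$. Thus
\begin{equation*}
\dw (a\convt b)(w) = a(w)b(0) + (a\convt b')(w),
\end{equation*}
which is the claim. The ``in particular'' statement is then immediate by setting $b(0)=0$.

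There is no real obstacle here; the only mild technical point is justifying the differentiation under the integral sign, which follows from standard results since $b' \in C([0,\epsilon])$ makes the integrand $a(s)b'(w-s)$ jointly continuous in $(s,w)$, and $a(s)b(w-s)$ is continuous with a continuous $w$-derivative on the closed region $\{0\leq s\leq w\leq \epsilon\}$. So the proof is a short, direct calculation.
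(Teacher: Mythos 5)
Your proof is correct, and it fills in precisely the short computation that the paper summarizes as ``immediate from the definitions'': symmetrize the convolution, differentiate the upper limit to get the boundary term $a(w)b(0)$, and differentiate under the integral to get $(a\convt b')(w)$. The paper gives no details, so there is no genuine difference of approach to compare.
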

\begin{proof}
This is immediate from the definitions.
\end{proof}

%\begin{lemma}
%Let $l\geq -1$ and let $a\in C([0,\epsilon])$, $b\in C^{l+1}([0,\epsilon])$.  Suppose for $0\leq j\leq l$, $\dw[j] b(0)=0$.
%Then $a\convt b\in C^{l+1}([0,\epsilon])$ and for $0\leq j\leq l+1$, $\dw[j] (a\convt b)(0)=0$.  Furthermore, if $a\in C^1([0,\epsilon])$,
%then $a\convt b\in C^{l+2}([0,\epsilon])$.
%\end{lemma}
%\begin{proof}
%By repeated applications of \cref{LemmaConvOneDeriv}, for $0\leq j\leq l+1$, $\dw[j] (a\convt b) = a\convt \dw[j]b$.  This is expression is clearly continuous, so $a\convt b\in C^{l+1}$.  Furthermore, if $a\in C^1$, \cref{LemmaConvOneDeriv} shows that $a\convt \dw[l+1] b\in C^1$,
%and so $a\convt b\in C^{l+2}$, completing the proof.
%\end{proof}

\begin{lemma}\label{LemmaConvAddSmoothing}
Let $l\geq -1$ and let $a\in C([0,\epsilon])$, $b\in C^{l+1}([0,\epsilon])$.  Suppose for $0\leq j\leq l-1$, $\dw[j] b(0)=0$.
Then $a\convt b\in C^{l+1}([0,\epsilon])$ and for $0\leq j\leq l$, $\dw[j] (a\convt b)(0)=0$.  Furthermore, if $a\in C^1([0,\epsilon])$,
then $a\convt b\in C^{l+2}([0,\epsilon])$.
\end{lemma}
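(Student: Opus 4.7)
The plan is to prove both assertions by induction on $l \geq -1$, with the workhorse being \cref{LemmaConvOneDeriv}. The base case $l = -1$ is immediate: the vanishing conditions on $b$ are vacuous, and $a\convt b$ is a standard convolution of continuous functions, hence continuous; the conclusion $\dw[j](a\convt b)(0)=0$ for $0 \leq j \leq -1$ is vacuous as well. The edge case $l = 0$ is handled directly by \cref{LemmaConvOneDeriv}: the formula $\dw(a \convt b)(w) = a(w) b(0) + (a \convt b')(w)$ is continuous in $w$, so $a \convt b \in C^1$, and $(a \convt b)(0) = 0$ trivially.

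For the inductive step with $l \geq 1$, suppose the lemma holds for $l - 1$, and let $a \in C([0,\epsilon])$, $b \in C^{l+1}([0,\epsilon])$ with $\dw[j] b(0) = 0$ for $0 \leq j \leq l - 1$. Since $b(0) = 0$, \cref{LemmaConvOneDeriv} gives $\dw(a \convt b) = a \convt b'$. Now $b' \in C^{l}([0,\epsilon])$ satisfies $\dw[j] b'(0) = \dw[j+1] b(0) = 0$ for $0 \leq j \leq l - 2$, so the induction hypothesis applied to the pair $(a, b')$ with parameter $l - 1$ yields $a \convt b' \in C^{l}$ with $\dw[j](a \convt b')(0) = 0$ for $0 \leq j \leq l - 1$. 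Therefore $\dw(a \convt b) \in C^{l}$, which lifts to $a \convt b \in C^{l+1}$; combined with the trivial fact $(a \convt b)(0) = 0$, one obtains $\dw[j](a \convt b)(0) = 0$ for all $0 \leq j \leq l$.

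For the furthermore clause (when $a \in C^1$), I would run a parallel induction. The base case $l = -1$ uses commutativity of $\convt$ together with \cref{LemmaConvOneDeriv} applied to $b \convt a$, giving $\dw(a \convt b)(w) = b(w) a(0) + (a' \convt b)(w)$, which is continuous since $a' \in C^0$ and $b \in C^0$, so $a \convt b \in C^1$. For the inductive step with $l \geq 1$, reuse the identity $\dw(a \convt b) = a \convt b'$ and apply the furthermore clause at level $l - 1$ to the pair $(a, b')$ to obtain $a \convt b' \in C^{l+1}$, hence $a \convt b \in C^{l+2}$. When $l = 0$, apply the formula $\dw(a \convt b)(w) = a(w) b(0) + (a \convt b')(w)$ and the $l = -1$ case of the furthermore to get $a \convt b' \in C^1$, so $\dw(a\convt b) \in C^1$ and $a\convt b \in C^2$.

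The main obstacle I anticipate is purely bookkeeping: the indices shift by one at each step, and the boundary cases $l = -1$ and $l = 0$ have vacuous or degenerate vanishing hypotheses that need to be handled separately from the generic inductive step. The mathematical content is a routine application of \cref{LemmaConvOneDeriv}, based on the principle that differentiating $a \convt b$ transfers one derivative from $b$ onto the convolution while consuming one vanishing condition at $0$, and that having $a \in C^1$ provides one extra derivative to spend through the symmetric version of the same formula.
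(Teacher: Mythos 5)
Your proof is correct and follows essentially the same route as the paper: the paper also peels off derivatives one at a time via \cref{LemmaConvOneDeriv}, obtaining $\dw[j](a\convt b)=a\convt \dw[j]b$ for $0\leq j\leq l$ by ``repeated application'' before taking one or two final derivatives; you have simply recast that iteration as a formal induction on $l$ with explicit edge cases. The mathematical content is identical.
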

\begin{proof}
By repeated applications of \cref{LemmaConvOneDeriv}, for $0\leq j\leq l$, $\dw[j] (a\convt b) = a\convt \dw[j]b$, and this expression clearly vanishes at $0$.
Applying \cref{LemmaConvOneDeriv} again, we see
$\dw[l+1] (a\convt b) =\dw (a\convt \dw[l] b) = a(w) \frac {\partial^{l} b}{\partial w^l}(0) + (a\convt \dw[l+1]b)$.
This expression is continuous, so $a\convt b\in C^{l+1}$.  Furthermore, if $a\in C^1$, it follows from one more application
of \cref{LemmaConvOneDeriv} that $\dw[l+2] (a\convt b) = \dw \left(a(w) \frac {\partial^{l} b}{\partial w^l}(0) + (a\convt \dw[l+1]b)\right)$
is continuous, and therefore $a\convt b\in C^{l+2}$.
%This is expression is clearly continuous, so $a\convt b\in C^{l+1}$.  Furthermore, if $a\in C^1$, \cref{LemmaConvOneDeriv} shows that $a\convt \dw[l+1] b\in C^1$,
%and so $a\convt b\in C^{l+2}$, completing the proof.
\end{proof}

For the next few results, suppose $a_1,\ldots, a_L\in C^1([0,\epsilon])$ are given.  For $J=\{j_1,\ldots, j_k\}\subseteq \Lset$, we define
\begin{equation*}
\convts_{j\in J} a = a_{j_1}\convt \cdots \convt a_{j_k}.
\end{equation*}
With an abuse of notation, for $b\in C([0,\epsilon])$, we define $b\convt \left(\convts_{j\in \emptyset} a\right)=b$.

\begin{lemma}\label{LemmaConvnVanishes}
For each $n\in \Lset$, $a_1\convt \cdots \convt a_n\in C^n([0,\epsilon])$ and if $0\leq j\leq n-2$, $\dw[j]\left(a_1\convt\cdots \convt a_n\right)(0)=0$.
\end{lemma}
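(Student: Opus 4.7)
The plan is to prove the lemma by induction on $n$, using \cref{LemmaConvAddSmoothing} as the main engine. The base case $n=1$ is trivial: we have $a_1 \in C^1([0,\epsilon])$ by assumption, and the vanishing condition is vacuous since it requires $0 \le j \le -1$.

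For the inductive step, assume that $b := a_1 \convt \cdots \convt a_n \in C^n([0,\epsilon])$ and that $\dw[j] b(0) = 0$ for $0 \le j \le n-2$. Then write
\begin{equation*}
a_1 \convt \cdots \convt a_{n+1} = a_{n+1} \convt b,
\end{equation*}
using commutativity and associativity of $\convt$. I now apply \cref{LemmaConvAddSmoothing} with $a := a_{n+1}$ and $b$ as above, and with $l := n-1$. The hypotheses check out: $b \in C^{l+1} = C^n$ and $a \in C^1$ by the inductive hypothesis and the standing assumption, and $\dw[j] b(0) = 0$ for $0 \le j \le l - 1 = n-2$ is precisely the inductive hypothesis. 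Since $a \in C^1$, the last sentence of \cref{LemmaConvAddSmoothing} gives $a_{n+1} \convt b \in C^{l+2} = C^{n+1}$, and the middle sentence gives $\dw[j](a_{n+1}\convt b)(0) = 0$ for $0 \le j \le l = n-1$. This is exactly the inductive claim at stage $n+1$, completing the induction.

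There is essentially no obstacle here: the lemma is a bookkeeping consequence of \cref{LemmaConvAddSmoothing}, whose repeated application shows that each additional factor of $a_i$ adds one derivative of smoothness and one additional vanishing order at $w = 0$. The only point requiring minor care is the off-by-one indexing between the smoothness index $l+1$ and the vanishing range $0 \le j \le l-1$ in \cref{LemmaConvAddSmoothing}; choosing $l = n-1$ aligns the hypotheses with the inductive statement and makes the step go through with no calculation.
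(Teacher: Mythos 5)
Your proof is correct and follows essentially the same route as the paper: induction on $n$ with \cref{LemmaConvAddSmoothing} supplying the inductive step. The only cosmetic difference is that the paper treats $n=2$ as a separate base case via \cref{LemmaConvOneDeriv} before invoking \cref{LemmaConvAddSmoothing} with $l=n-2$ for larger $n$, whereas you start the induction at $n=1$ and uniformly apply \cref{LemmaConvAddSmoothing} with $l=n-1$, which works since the vanishing hypothesis is vacuous at the first step; this is a marginally cleaner packaging of the same argument.
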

\begin{proof}
For $n=1$, the result is trivial.  We prove the result by induction on $n$, the base case being $n=2$ which follows from \cref{LemmaConvOneDeriv}.
We assume the result for $n-1$ and prove it for $n$.  By the inductive hypothesis, $a_1\convt \cdots \convt a_{n-1}\in C^{n-1}$ and vanishes to order
$n-3$ at $0$.  From here, the result follows from \cref{LemmaConvAddSmoothing} with $l=n-2$.
\end{proof}

Define
\begin{equation*}
I_L(a_1,\ldots, a_L):= \sum_{J\subsetneq \Lset} \left(\prod_{j\in J} a_j(0)\right) \left(\convts_{k\in J^c} a_k'\right),
\end{equation*}
and let $I_0=0$.

\begin{lemma}\label{LemmaConvDerivIL}
\begin{equation}\label{EqnConvILm1}
\dw[L-1] (a_1\convt \cdots \convt a_L) = \left( \prod_{j=1}^{L-1} a_j(0) \right) a_L + I_{L-1}(a_1,\ldots, a_{L-1}) \convt a_L.
\end{equation}
\begin{equation}\label{EqnConvIL}
\dw[L] (a_1\convt \cdots \convt a_L) = I_L(a_1,\ldots, a_L).
\end{equation}
\end{lemma}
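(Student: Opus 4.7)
The plan is to prove both identities (4.3) and (4.4) simultaneously by induction on $L$. Since (4.4) will fall out of (4.3) after one more differentiation, I would first set up the base case $L=1$: equation (4.3) reads $a_1 = 1\cdot a_1 + 0$ (since $I_0 = 0$ and the empty product is $1$), and equation (4.4) reads $a_1' = I_1(a_1) = a_1'$ (the only summand has $J=\emptyset$). The inductive hypothesis is that both (4.3) and (4.4) hold for length $L-1$.

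To derive (4.3) at length $L$, I would write $a_1 \convt \cdots \convt a_L = b \convt a_L$, where $b = a_1\convt\cdots\convt a_{L-1}$. By \cref{LemmaConvnVanishes}, $b\in C^{L-1}$ with $\dw[j] b(0) = 0$ for $0\le j\le L-3$, so a straightforward induction on $k$ (using the commutative form of \cref{LemmaConvOneDeriv}, $\dw(a_L \convt g) = a_L(w) g(0) + a_L \convt g'$) shows $\dw[k](b\convt a_L) = a_L \convt \dw[k] b$ for $k\le L-2$. One more differentiation via the same lemma gives
\begin{equation*}
\dw[L-1](b\convt a_L) = a_L(w)\,\dw[L-2] b(0) + a_L \convt \dw[L-1] b.
\end{equation*}
By the inductive hypothesis (4.4) at length $L-1$, $\dw[L-1] b = I_{L-1}(a_1,\ldots,a_{L-1})$. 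The inductive hypothesis (4.3) at length $L-1$ evaluated at $w=0$ collapses the convolutional part (by \cref{LemmaConvnVanishes}) and yields $\dw[L-2] b(0) = \prod_{j=1}^{L-1} a_j(0)$. Combining these and using commutativity of $\convt$ delivers (4.3) at length $L$.

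Next I would differentiate (4.3) at length $L$ once more, applying \cref{LemmaConvOneDeriv} to the convolution term $I_{L-1}(a_1,\ldots,a_{L-1})\convt a_L$. This produces the three summands
\begin{equation*}
\Bigl(\prod_{j=1}^{L-1} a_j(0)\Bigr) a_L' \;+\; I_{L-1}(a_1,\ldots,a_{L-1})(w)\cdot a_L(0) \;+\; I_{L-1}(a_1,\ldots,a_{L-1})\convt a_L'.
\end{equation*}
The remaining step, which is the only place that really needs attention, is a combinatorial rearrangement verifying that this sum equals $I_L(a_1,\ldots,a_L)$. I would split the defining sum for $I_L$ according to whether $L\in J$ or not: the summands with $L\in J$ (so $J = J'\cup\{L\}$, $J'\subsetneq\{1,\ldots,L-1\}$) reassemble to $a_L(0) \cdot I_{L-1}(a_1,\ldots,a_{L-1})$, while the summands with $L\notin J$ further split by whether $J = \{1,\ldots,L-1\}$ (yielding $\prod_{j=1}^{L-1} a_j(0)\cdot a_L'$) or $J\subsetneq\{1,\ldots,L-1\}$ (yielding $I_{L-1}(a_1,\ldots,a_{L-1})\convt a_L'$). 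This matches the expression above, completing the induction.

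The main (and really only) obstacle is keeping the bookkeeping honest in the combinatorial step for (4.4); everything else is a mechanical application of \cref{LemmaConvOneDeriv} together with the vanishing-at-$0$ information from \cref{LemmaConvnVanishes}.
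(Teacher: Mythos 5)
Your proof is correct and takes essentially the same route as the paper: both argue by induction on $L$, peel off $a_L$ and apply \cref{LemmaConvOneDeriv} and \cref{LemmaConvnVanishes} to reduce $\dw[L-1]$ of the $L$-fold convolution to derivatives of $a_1\convt\cdots\convt a_{L-1}$, and then invoke the inductive hypotheses for both \cref{EqnConvILm1} and \cref{EqnConvIL}. The one difference is that you spell out the combinatorial split of $I_L$ according to whether $L\in J$, which the paper leaves implicit after ``taking $\dw$ of \cref{EqnConvILm1}''; your verification is correct.
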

\begin{proof}
We prove the result by induction on by induction on $L$.  The base case, $L=1$, is trivial.  We assume \cref{EqnConvILm1,EqnConvIL}
for $L-1$ and prove them for $L$.  We have, using repeated applications of \cref{LemmaConvOneDeriv,LemmaConvnVanishes},
\begin{equation*}
\begin{split}
&\dw[L-1] (a_1\convt \cdots \convt a_L) = \dw \left( \left(\dw[L-2](a_1\convt \cdots \convt a_{L-1}\right)\convt a_L \right)
\\&=\left(\dw[L-2] (a_1\convt \cdots \convt a_{L-1})\right)(0)a_L + \left(\dw[L-1] (a_1\convt\cdots \convt a_{L-1}\right)\convt a_L
\end{split}
\end{equation*}
Using our inductive hypothesis for \cref{EqnConvILm1} and the fact that $(b\convt c)(0)=0$ for any $b,c$,
\begin{equation*}
\begin{split}
\left(\dw[L-2] (a_1\convt \cdots \convt a_{L-1})\right)(0)a_L
=\left[\prod_{j=1}^{L-1} a_j(0)\right]a_L,
\end{split}
\end{equation*}
and using our inductive hypothesis for \cref{EqnConvIL},
\begin{equation*}
\left(\dw[L-1] (a_1\convt\cdots \convt a_{L-1})\right)\convt a_L = I_{L-1}(a_1,\ldots, a_{L-1})\convt a_L.
\end{equation*}
Combining the above equations yields \cref{EqnConvILm1}.
Taking $\dw$ of \cref{EqnConvILm1} and applying \cref{LemmaConvOneDeriv}, \cref{EqnConvIL} follows, completing the proof.
\end{proof}

\begin{cor}\label{CorConvDerivOfMon}
Let $A\in C^1([0,\epsilon];\R^m)$, $b\in C^1([0,\epsilon])$.  Then, for a multi-index $\alpha\in \N^m$,
\begin{equation*}
\dw[|\alpha|+1] \left(b\convt (\convt^{\alpha} A )\right) = \sum_{\substack{\beta\leq \alpha \\ |\beta|<|\alpha|}} \binom{\alpha}{\beta} b(0)A(0)^{\beta} \left(\convt^{\alpha-\beta} A'\right) + \sum_{\beta\leq \alpha} \binom{\alpha}{\beta} A(0)^{\beta} \left(b'\convt \left( \convt^{\alpha-\beta} A' \right)\right).
\end{equation*}
\end{cor}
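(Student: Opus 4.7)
The plan is to derive the formula by specializing Lemma \ref{LemmaConvDerivIL}, specifically equation \cref{EqnConvIL}, to the convolution $b\convt(\convt^\alpha A)$, and then reorganize the sum $I_L$ according to the two ways the factor $b$ can appear.

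First, I would set $L=|\alpha|+1$ and identify $a_1,\ldots,a_L$ with the sequence $b, A_1,\ldots,A_1, A_2,\ldots,A_2,\ldots,A_m,\ldots,A_m$, where $A_i$ appears $\alpha_i$ times. This expresses $b\convt(\convt^\alpha A)$ as $a_1\convt\cdots\convt a_L$, and \cref{EqnConvIL} immediately yields
\begin{equation*}
\dw[|\alpha|+1]\left(b\convt(\convt^\alpha A)\right)=\sum_{J\subsetneq \Lset}\left(\prod_{j\in J}a_j(0)\right)\left(\convts_{k\in J^c}a_k'\right).
\end{equation*}

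Next, I would split this sum according to whether $1\in J$ or $1\notin J$. In the first case $a_1(0)=b(0)$ factors out and $b'$ does not appear in the convolution; since $J\subsetneq\Lset$ and $1\in J$, the complement $J^c$ is a nonempty subset of $\{2,\ldots,L\}$. In the second case $a_1'=b'$ sits inside the convolution, and the constraint $J\subsetneq\Lset$ is automatically satisfied. In both cases, to each subset $J'\subseteq\{2,\ldots,L\}$ I associate the multi-index $\beta\in\N^m$ counting how many copies of each $A_i$ lie in $J'$; by commutativity of $\convt$, the inner convolution depends only on $\beta$ and equals either $\convt^{\alpha-\beta}A'$ or $b'\convt(\convt^{\alpha-\beta}A')$ accordingly, while the $(0)$-factors collapse to $A(0)^\beta$. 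The number of subsets $J'$ producing a given $\beta$ is $\prod_i\binom{\alpha_i}{\beta_i}=\binom{\alpha}{\beta}$.

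Assembling these: Case $1\in J$ produces $\sum_{\beta\leq\alpha,\,|\beta|<|\alpha|}\binom{\alpha}{\beta}b(0)A(0)^{\beta}(\convt^{\alpha-\beta}A')$, where the strict inequality $|\beta|<|\alpha|$ comes precisely from the requirement $J\ne\Lset$ (equivalently $J^c\ne\emptyset$). Case $1\notin J$ produces $\sum_{\beta\leq\alpha}\binom{\alpha}{\beta}A(0)^{\beta}\bigl(b'\convt(\convt^{\alpha-\beta}A')\bigr)$ with no restriction on $\beta$. Adding the two gives exactly the claimed identity.

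The proof is essentially bookkeeping, so the only place requiring care is the combinatorial translation from subsets of $\{2,\ldots,L\}$ (where the same $A_i$ appears $\alpha_i$ times in specified positions) to multi-indices $\beta\leq\alpha$, along with verifying that the commutativity of $\convt$ really does collapse all subsets with the same $\beta$ into a single term with multiplicity $\binom{\alpha}{\beta}$. There is no analytic obstruction, since $a_1,\ldots,a_L\in C^1([0,\epsilon])$ under the hypotheses on $b$ and $A$, so Lemma \ref{LemmaConvDerivIL} applies directly.
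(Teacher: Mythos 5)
Your proof is correct and follows essentially the same route the paper intends: the paper's proof of this corollary consists of the single line ``This follows immediately from \cref{LemmaConvDerivIL},'' and what you have written out is precisely the omitted bookkeeping -- specializing \cref{EqnConvIL} with $a_1=b$ and $a_2,\ldots,a_L$ the coordinates of $A$ with multiplicities $\alpha_i$, splitting on whether $1\in J$, and collapsing subsets to multi-indices via commutativity of $\convt$, with multiplicity $\binom{\alpha}{\beta}$. The treatment of the boundary cases (the $\beta=\alpha$ term under the paper's convention $b'\convt(\convt^0 A')=b'$, and the degenerate case $|\alpha|=0$) is handled correctly by the convention noted in the Selected Notation section.
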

\begin{proof}
This follows immediately from \cref{LemmaConvDerivIL}.
\end{proof}

\begin{lemma}\label{LemmaConvMultilinDiff}
Let $b_1,\ldots, b_L, c_1,\ldots, c_L\in C([0,\epsilon])$.  Then,
\begin{equation*}
b_1\convt \cdots \convt b_L - c_1\convt \cdots \convt c_L = \sum_{\emptyset \ne J\subseteq \Lset} (-1)^{|J|+1} \left(\convts_{l\in J} (b_l-c_l)\right)\convt\left(\convts_{l\not \in J} b_l\right)
\end{equation*}
\end{lemma}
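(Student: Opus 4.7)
The plan is to reduce this to a standard multilinear expansion using bilinearity (hence multilinearity) of $\convt$. First I would write each $c_l$ as $c_l = b_l - (b_l - c_l)$, so that the $L$-fold convolution $c_1 \convt \cdots \convt c_L$ becomes the $\convt$-product of $L$ binomials.

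Next I would apply multilinearity of $\convt$ (which follows immediately from bilinearity and associativity, both of which are already noted in the notation section): expanding $(b_1 - (b_1 - c_1)) \convt \cdots \convt (b_L - (b_L - c_L))$ yields
\begin{equation*}
c_1 \convt \cdots \convt c_L \;=\; \sum_{J\subseteq\Lset} (-1)^{|J|} \left(\convts_{l\in J}(b_l - c_l)\right) \convt \left(\convts_{l\notin J} b_l\right),
\end{equation*}
where for $J=\emptyset$ the term is interpreted as $b_1\convt\cdots\convt b_L$ via the convention $b\convt(\convts_{j\in\emptyset}a) = b$.

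Finally, I would isolate the $J=\emptyset$ term on the right, which is exactly $b_1\convt\cdots\convt b_L$, and move it to the left. This gives
\begin{equation*}
b_1\convt\cdots\convt b_L - c_1\convt\cdots\convt c_L \;=\; -\sum_{\emptyset\ne J\subseteq\Lset} (-1)^{|J|} \left(\convts_{l\in J}(b_l-c_l)\right)\convt\left(\convts_{l\notin J} b_l\right),
\end{equation*}
and absorbing the outer minus sign into $(-1)^{|J|+1}$ yields the claimed formula.

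There is essentially no obstacle here: the only things to verify carefully are the sign bookkeeping and the fact that $\convt$ is multilinear, which is a direct consequence of the bilinearity evident from the integral definition in \cref{EqnNotationConvt} together with commutativity and associativity. The lemma is purely a combinatorial identity analogous to the standard ``telescoping via inclusion-exclusion'' expansion used for products in commutative algebras, adapted to the $\convt$-product.
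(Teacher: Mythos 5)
Your argument is correct, and it matches the spirit of what the paper intends: the paper merely asserts that the identity is ``standard, uses only the multilinearity of $\convt$, and can be proved using a simple induction,'' without supplying details. Your direct route---substituting $c_l = b_l - (b_l - c_l)$, expanding the resulting $\convt$-product of binomials by multilinearity (and commutativity), and then isolating the $J=\emptyset$ term---is a clean way to make that assertion concrete, and the sign bookkeeping $(-1)^{|J|} \mapsto (-1)^{|J|+1}$ is handled correctly. The one place an induction quietly lives is in justifying the $L$-fold multilinear expansion itself, but that is indeed standard, so there is no gap.
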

\begin{proof}
This is standard, uses only the multilinearity of $\convt$, and can be proved using a simple induction.
\end{proof}

\begin{lemma}\label{LemmaConvLongdw}
Suppose $a_1,\ldots, a_L\in C^2([0,\epsilon])$.  Then,
\begin{equation*}
\begin{split}
&\dw[L] (a_1\convt\cdots\convt a_L) = \left(\prod_{l=1}^{L-1} a_l(0)\right) a_L' + \left( \sum_{l=1}^{L-1} \left( \prod_{\substack{1\leq k\leq L-1 \\ k\ne l } }a_k(0) \right) a_l'(0) \right)a_L
\\&+\left(  \sum_{l=1}^{L-1} \sum_{\substack{J\subsetneq  \Lmoset\\ l=\min J^c}} \left(\prod_{j\in J} a_j(0)\right) \left(    a_l'(0)\left( \convts_{\substack{k\in J^c \\ k\ne l}}a_k'\right) + a_l''\convt \left(  \convts_{\substack{ k\in J^c \\ k\ne l }} a_k' \right)  \right) \right) \convt a_L
\end{split}
\end{equation*}
\end{lemma}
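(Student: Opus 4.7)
The plan is to apply $\dw$ to the identity in \cref{EqnConvILm1} of \cref{LemmaConvDerivIL}, namely
$$\dw[L-1](a_1 \convt \cdots \convt a_L) = \Bigl(\prod_{j=1}^{L-1} a_j(0)\Bigr) a_L + I_{L-1}(a_1,\ldots,a_{L-1}) \convt a_L.$$
Differentiating the first summand on the right yields the first claimed term $\bigl(\prod_{j=1}^{L-1} a_j(0)\bigr) a_L'$. Since each $a_k$ is $C^2$, every $a_k'$ lies in $C^1$, so $I_{L-1}$ is a finite sum of convolutions of $C^1$ functions and is itself $C^1$. Combining \cref{LemmaConvOneDeriv} with commutativity of $\convt$ gives the variant $\dw(a\convt b) = a(0)\,b(w) + a'\convt b$ valid when $a \in C^1$; applied with $a = I_{L-1}$ and $b = a_L$ this yields
$$\dw(I_{L-1}\convt a_L) = I_{L-1}(0)\, a_L + I_{L-1}'\convt a_L,$$
reducing the proof to identifying $I_{L-1}(0)$ and $I_{L-1}'$.

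For $I_{L-1}(0)$, \cref{LemmaConvnVanishes} implies that every convolution of two or more functions vanishes at $w=0$, so only the summands of $I_{L-1}$ with $|J^c|=1$ contribute. These are indexed by the unique element $l\in J^c$, i.e.\ $J=\Lmoset\setminus\{l\}$, and produce $\sum_{l=1}^{L-1}\bigl(\prod_{k\ne l}a_k(0)\bigr)a_l'(0)$, matching the coefficient of $a_L$ in the second term of the claim. For $I_{L-1}'$, differentiate each summand of $I_{L-1}$ separately. Fixing $J\subsetneq \Lmoset$, setting $l=\min J^c$, and peeling off $a_l'$, write $\convts_{k\in J^c} a_k' = a_l'\convt(\convts_{k\in J^c,\,k\ne l} a_k')$, where the convention $b\convt(\text{empty convolution}) = b$ is used when $|J^c|=1$. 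Applying the commutative form of \cref{LemmaConvOneDeriv} with $a_l'\in C^1$ gives
$$\dw\bigl(\convts_{k\in J^c} a_k'\bigr) = a_l'(0)\bigl(\convts_{k\in J^c,\,k\ne l} a_k'\bigr) + a_l''\convt \bigl(\convts_{k\in J^c,\,k\ne l} a_k'\bigr).$$
Reindexing the resulting sum over $J$ by $l=\min J^c$ produces exactly the double sum inside the third term of the claim, and convolving with $a_L$ completes the formula.

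No individual step is delicate; the main thing to get right is the bookkeeping at the boundary case $|J^c|=1$, where $\{k\in J^c: k\ne l\}$ is empty. There the convention yields $a_l''\convt(\text{empty convolution}) = a_l''$, while the scalar expression $a_l'(0)\cdot(\text{empty convolution})$ is vacuous and contributes nothing, consistent with the direct computation $\dw a_l' = a_l''$ for these $J$. The main obstacle is therefore really just the careful indexing and ensuring that the $C^2$ hypothesis on each $a_k$ yields the $C^1$ regularity needed to apply \cref{LemmaConvOneDeriv} twice.
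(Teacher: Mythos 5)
Your proof is correct and follows the same route as the paper's: differentiate \cref{EqnConvILm1} once more via \cref{LemmaConvOneDeriv}, identify $I_{L-1}(0)$ by noting only the singleton-$J^c$ summands survive, and compute $I_{L-1}'$ termwise by peeling off $a'_{\min J^c}$. Your explicit remark that $I_{L-1}\in C^1$ (so the derivative rule applies), and your note about the $|J^c|=1$ boundary case, are helpful points the paper glosses over, but the argument is otherwise the same.
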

\begin{proof}
Using \cref{LemmaConvOneDeriv,LemmaConvDerivIL}, we have
\begin{equation*}
\begin{split}
&\dw[L] (a_1\convt \cdots \convt a_L) = \dw\left( \left(\prod_{j=1}^{L-1} a_j(0)\right) a_L + I_{L-1}(a_1,\ldots, a_{L-1})\convt a_L \right)
\\&= \left(\prod_{j=1}^{L-1} a_j(0)\right) a_L' + I_{L-1}(a_1,\ldots, a_{L-1})(0) a_L + \left(\dw I_{L-1}(a_1,\ldots, a_{L-1})\right)\convt a_L
\end{split}
\end{equation*}
Since $(b\convt c)(0)=0$ for any $b,c$,
\begin{equation*}
I_{L-1}(a_1,\ldots, a_L)(0) = \sum_{l=1}^{L-1} \left(\prod_{\substack{1\leq k\leq L-1 \\ k\ne l }} a_k(0)\right)a_l'(0).
\end{equation*}
Using \cref{LemmaConvOneDeriv},
\begin{equation*}
\dw I_{L-1}(a_1,\ldots, a_{L-1}) = \sum_{l=1}^{L-1} \sum_{\substack{J\subsetneq \Lmoset \\ l=\min J^c }} \left(\prod_{j\in J} a_j(0)\right) \left( a_l'(0) \left(\convts_{\substack{k\in J^c \\ k\ne l } } a_k'\right) + a_l'' \convt\left( \convts_{\substack{ k\in J^c \\ k\ne l } } a_k'  \right) \right).
\end{equation*}
Combining the above equations yields the result.
\end{proof}

\begin{cor}
Let $a_1,\ldots, a_L\in C^2([0,\epsilon])$.
\begin{equation}\label{EqnConvDefnF1}
\dw[L] (a_1\convt \cdots \convt a_L) (w) = \left(\prod_{l=1}^{L-1} a_l(0)\right) a_L'(w) + F_1(w),
\end{equation}
\begin{equation}\label{EqnConvDefnF2}
\dw[L] (a_1\convt \cdots \convt a_L) (w) = F_2(w),
\end{equation}
where
\begin{equation*}
|F_1(w)|\lesssim \sup_{0\leq r\leq w} |a_L(r)|, \quad |F_2(w)|\lesssim \left(|a_{L-1}(0)| + \sup_{0\leq r\leq w} |a_L(r)|\right)\wedge \left( |a_L'(w)| + \sup_{0\leq r\leq w} |a_L(r)|\right),
\end{equation*}
where the implicit constants may depend on $L$, and upper bounds for $\epsilon$ and $\CjN{2}{a_j}$, $1\leq j\leq L$.
\end{cor}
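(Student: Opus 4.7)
The plan is to read both equations directly off~\cref{LemmaConvLongdw}, which expresses $\dw[L](a_1\convt\cdots\convt a_L)(w)$ as the sum of three pieces: the ``main term'' $\bigl(\prod_{l=1}^{L-1}a_l(0)\bigr)a_L'(w)$, a ``middle term'' $M\cdot a_L(w)$ where $M$ is a scalar built from the $a_l(0)$'s and $a_l'(0)$'s, and a ``convolution term'' $(S\convt a_L)(w)$ where $S(w)$ is an iterated combination of the $a_l(0)$, $a_l'(0)$, $a_l''$, and $a_k'$. Under the hypothesis $a_j\in C^2([0,\epsilon])$, both $|M|$ and $\|S\|_\infty$ are bounded by a constant depending only on $L$, $\epsilon$, and upper bounds for the $\CjN{2}{a_j}$.

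To obtain~\cref{EqnConvDefnF1} I set $F_1:=M a_L+S\convt a_L$, so that $\dw[L](a_1\convt\cdots\convt a_L)=(\prod_{l=1}^{L-1}a_l(0))a_L'+F_1$. The pointwise bound $|M\cdot a_L(w)|\lesssim \sup_{0\leq r\leq w}|a_L(r)|$ is immediate, and for the convolution
\begin{equation*}
|(S\convt a_L)(w)|\leq \int_0^w |S(w-r)||a_L(r)|\,dr\leq \epsilon\|S\|_\infty\sup_{0\leq r\leq w}|a_L(r)|,
\end{equation*}
which yields the claimed bound on $F_1$.

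For~\cref{EqnConvDefnF2} with $F_2:=\dw[L](a_1\convt\cdots\convt a_L)$, I verify the two halves of the minimum separately. The bound $|F_2(w)|\lesssim |a_L'(w)|+\sup_{0\leq r\leq w}|a_L(r)|$ is immediate from $|F_2|\leq |\prod_{l=1}^{L-1}a_l(0)|\cdot|a_L'(w)|+|F_1|$ together with $|\prod_{l=1}^{L-1}a_l(0)|\leq C$. For the other bound, $|F_2(w)|\lesssim |a_{L-1}(0)|+\sup_{0\leq r\leq w}|a_L(r)|$, the key is to factor $a_{L-1}(0)$ out of the prefactor of $a_L'$:
\begin{equation*}
\left|\left(\prod_{l=1}^{L-1}a_l(0)\right)a_L'(w)\right|=|a_{L-1}(0)|\left(\prod_{l=1}^{L-2}|a_l(0)|\right)|a_L'(w)|\lesssim |a_{L-1}(0)|,
\end{equation*}
where the implicit constant absorbs $\prod_{l=1}^{L-2}|a_l(0)|$ and $\|a_L'\|_\infty$, both controlled by the $C^2$ norms of the $a_j$. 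Adding this to the $F_1$ estimate completes the proof.

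There is no substantive obstacle; once~\cref{LemmaConvLongdw} is in hand the proof is pure bookkeeping. The only real observation is that $a_{L-1}(0)$ appears as a factor inside $\prod_{l=1}^{L-1}a_l(0)$, which is precisely what lets the estimate $|F_2|\lesssim |a_{L-1}(0)|+\sup|a_L|$ survive the presence of the otherwise-uncontrolled factor $a_L'(w)$.
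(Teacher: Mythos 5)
Your proof is correct and follows the same route as the paper: read $F_1$ off \cref{LemmaConvLongdw} as the sum of the middle and convolution terms (bounded by $\sup_{0\le r\le w}|a_L(r)|$ since $M$ and $\|S\|_\infty$ are controlled by the $C^2$ data), and then bound $F_2=\bigl(\prod_{l=1}^{L-1}a_l(0)\bigr)a_L'+F_1$ two ways — once via $|a_L'(w)|$ and once by factoring $a_{L-1}(0)$ out of the prefactor — to obtain the stated minimum. The paper compresses this into one line, but the underlying argument and key observation are identical.
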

\begin{proof}
The bound for $F_1$ follows immediately from \cref{LemmaConvLongdw}.  The bound for $F_2$ follows from \cref{EqnConvDefnF1} and the bound
for $F_1$.
\end{proof}

\begin{lemma}\label{LemmaConvProdQuotOne}
Let $a,b\in C^1([0,\epsilon])$.  Let $f(x)=\frac{1}{x}\int_0^{\epsilon} e^{-w/x} a(w)\: dw$ and $g(x)=\frac{1}{x}\int_0^\epsilon e^{-w/x} b(w)\: dw$.
Then, there exists $G\in C([0,\infty))$ such that
\begin{equation}\label{EqnConvMultiply}
f(x)g(x)=\frac{1}{x}\int_0^{\epsilon} e^{-w/x} \dw (a\convt b)(w)\: dw + \frac{1}{x}e^{-\epsilon/x} G(x).
\end{equation}
Also,
\begin{equation}\label{EqnConvDiffQuot}
\frac{f(x)-f(0)}{x} = \frac{1}{x}\int_0^{\epsilon} e^{-w/x} \frac{\partial a }{\partial w} (w) \: dw -\frac{1}{x}e^{-\epsilon/x} a(\epsilon).
\end{equation}
\end{lemma}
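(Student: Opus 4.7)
The plan is to prove \cref{EqnConvDiffQuot} directly via integration by parts, and then to derive \cref{EqnConvMultiply} by Fubini combined with \cref{EqnConvDiffQuot} applied to the convolution $a\convt b$. Morally, \cref{EqnConvMultiply} is the classical convolution identity for Laplace transforms; the new wrinkle is that our transforms are truncated at $\epsilon$, so the product $f(x)g(x)$ naturally lives on the doubled interval $[0,2\epsilon]$, and the tail beyond $\epsilon$ must be absorbed into the remainder $\frac{1}{x}e^{-\epsilon/x}G(x)$.

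For \cref{EqnConvDiffQuot}, I integrate by parts in the defining integral for $f$ with $u=a(w)$ and $dv=e^{-w/x}\,dw$. The boundary terms yield $xa(0)-xe^{-\epsilon/x}a(\epsilon)$, and the surviving integral is $x\int_0^\epsilon e^{-w/x}a'(w)\,dw$. Since $f$ extends continuously to $x=0$ with $f(0)=a(0)$ (the standard fact that $\frac{1}{x}\int_0^\epsilon e^{-w/x}a(w)\,dw\to a(0)$ as $x\downarrow 0$, consistent with the convention used in \cref{ExampleResultsPolynomial}), dividing by $x$ and rearranging gives the claim.

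For \cref{EqnConvMultiply}, I write
\[
f(x)g(x)=\frac{1}{x^2}\iint_{[0,\epsilon]^2} e^{-(u+v)/x}a(u)b(v)\,du\,dv
\]
and change variables $(u,v)\mapsto(w-r,r)$, whose Jacobian is $1$ and whose image is $\{(w,r):0\le r\le\epsilon,\ r\le w\le r+\epsilon\}$. Fubini then gives
\[
f(x)g(x)=\frac{1}{x^2}\int_0^{2\epsilon} e^{-w/x}h(w)\,dw,\qquad h(w):=\int_{\max(0,w-\epsilon)}^{\min(w,\epsilon)} a(w-r)b(r)\,dr.
\]
On $[0,\epsilon]$ one has $h(w)=(a\convt b)(w)$; the two defining formulas for $h$ match at $w=\epsilon$, giving $h\in C([0,2\epsilon])$. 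I split the $w$-integral at $\epsilon$. For the piece over $[0,\epsilon]$, I apply \cref{EqnConvDiffQuot} with $a\convt b$ in place of $a$; since $(a\convt b)(0)=0$ by \cref{LemmaConvnVanishes}, this piece equals
\[
\frac{1}{x}\int_0^\epsilon e^{-w/x}\tfrac{\partial}{\partial w}(a\convt b)(w)\,dw \;-\;\frac{1}{x}e^{-\epsilon/x}(a\convt b)(\epsilon),
\]
producing the main term of \cref{EqnConvMultiply} together with a boundary contribution.

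For the piece over $[\epsilon,2\epsilon]$, I substitute $v=w-\epsilon$ to obtain $\frac{1}{x}e^{-\epsilon/x}\cdot\frac{1}{x}\int_0^\epsilon e^{-v/x}h(v+\epsilon)\,dv$. Setting
\[
G(x):=\frac{1}{x}\int_0^\epsilon e^{-v/x}h(v+\epsilon)\,dv \;-\; (a\convt b)(\epsilon),
\]
the two boundary contributions collapse into $\frac{1}{x}e^{-\epsilon/x}G(x)$. The one nontrivial point, and the place where some care is needed, is the verification that $G\in C([0,\infty))$: continuity on $(0,\infty)$ is routine by dominated convergence, while at $x=0^+$ the Laplace-type integral $\frac{1}{x}\int_0^\epsilon e^{-v/x}h(v+\epsilon)\,dv$ converges to $h(\epsilon)=(a\convt b)(\epsilon)$ by continuity of $h$ at $\epsilon$, forcing the continuous extension $G(0)=0$. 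Assembling the two pieces yields \cref{EqnConvMultiply}; the main conceptual obstacle is simply bookkeeping the truncation effects and tail cancellation at $w=\epsilon$.
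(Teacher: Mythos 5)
Your proof is correct and follows essentially the same path as the paper's: both establish \cref{EqnConvDiffQuot} by an integration by parts (using $f(0)=a(0)$) and establish \cref{EqnConvMultiply} by converting the product to a double integral, changing variables to exhibit the convolution on $[0,2\epsilon]$, and splitting at $\epsilon$, with the tail and the boundary term at $\epsilon$ absorbed into $\frac{1}{x}e^{-\epsilon/x}G(x)$. The only difference is organizational: you prove \cref{EqnConvDiffQuot} first and then invoke it (with $a\convt b$ in place of $a$, using $(a\convt b)(0)=0$) to handle the $[0,\epsilon]$ piece, whereas the paper proves \cref{EqnConvMultiply} first and repeats the same integration by parts inline; your ordering is a modest streamlining but not a different argument.
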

\begin{proof}
A straightforward computation shows
\begin{equation*}
f(x) g(x) = \frac{1}{x^2} \int_0^{\epsilon} e^{-u/x} \int_0^u a(w_1) b(u-w_1)\: dw_1 \: du + \frac{1}{x^2}\int_\epsilon^{2\epsilon} e^{-u/x}\int_{u-\epsilon}^{\epsilon} a(w_1) b(u-w_1)\: dw_1 \: du.
\end{equation*}
We have
\begin{equation*}
\begin{split}
&\frac{1}{x^2} \int_\epsilon^{2\epsilon} e^{-u/x} \int_{u-\epsilon}^{\epsilon} a(w_1)b(u-w_1)\: dw_1\: du
=\frac{1}{x^2}e^{-\epsilon/x} \int_0^\epsilon e^{-u/x} \int_{u}^{\epsilon} a(w_1) b(u+\epsilon-w_1)\: dw_1 \: du
=:\frac{1}{x} e^{-\epsilon/x} G_1(x),
\end{split}
\end{equation*}
where
 $G_1\in C([0,\epsilon])$.
  %with $\CjN{0}{G_1}\lesssim \CjN{0}{a}\CjN{0}{b}$.
 Also, using that $(a\convt b)(0)=0$,
 \begin{equation*}
 \begin{split}
 &\frac{1}{x^2} \int_0^{\epsilon} e^{-u/x} \int_0^u a(w_1) b(u-w_1)\:dw_1\: du= -\frac{1}{x}\int_0^\epsilon \left(\du e^{-u/x}\right) (a\convt b)(u)\: du
 \\&
 =-\frac{1}{x}e^{-\epsilon/x} (a\convt b)(\epsilon) + \frac{1}{x} \int_0^{\epsilon} e^{-u/x} \du (a\convt b)(u)\: du.
 \end{split}
 \end{equation*}
Combining the above equations yields \cref{EqnConvMultiply}.

We have,
\begin{equation*}
\frac{1}{x}\int_0^{\epsilon} e^{-w/x} \frac{\partial a}{\partial w} (w) \: dw = \frac{1}{x} e^{-\epsilon/x} a(\epsilon)-\frac{1}{x}a(0)+\frac{1}{x^2} \int_0^{\epsilon} e^{-w/x}a(w)\: dw = \frac{f(x)-f(0)}{x} +\frac{1}{x}e^{-\epsilon/x} a(\epsilon),
\end{equation*}
which proves \cref{EqnConvDiffQuot}.
\end{proof}

\begin{lemma}\label{LemmaConvProdQuotient}
Let $a_1,\ldots, a_n\in C^1([0,\epsilon])$.
Define $f_j(x)= \frac{1}{x}\int_0^{\epsilon} e^{-w/x} a_j(w)\: dw$.
Then, there are continuous functions $G_1, G_2\in C([0,\infty))$ such that
\begin{equation}\label{EqnConvToShowProd}
\prod_{j=1}^n f_j(x)  = \frac{1}{x}\int_0^{\epsilon} e^{-w/x} \dw[n-1] (a_1\convt\cdots \convt a_n)(w)\: dw + \frac{1}{x} e^{-\epsilon/x} G_1(x),
\end{equation}
\begin{equation}\label{EqnConvToShowProdQuot}
\frac{\prod_{j=1}^n f_j(x) - \prod_{j=1}^n f_j(0)}{x} = \frac{1}{x} \int_0^{\epsilon} e^{-w/x} \dw[n] (a_1\convt \cdots \convt a_n)(w)\: dw + \frac{1}{x^2} e^{-\epsilon/x}G_2(x).
\end{equation}
\end{lemma}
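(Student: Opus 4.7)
The plan is to prove both identities by induction on $n$, using Lemma \ref{LemmaConvProdQuotOne} (the $n=2$ case) as the workhorse and Lemma \ref{LemmaConvnVanishes} to supply the vanishing needed at $0$. I first handle \cref{EqnConvToShowProd}; then \cref{EqnConvToShowProdQuot} follows by applying \cref{EqnConvDiffQuot} to the representation just obtained.

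For \cref{EqnConvToShowProd}, the base case $n=1$ is the definition of $f_1$ (take $G_1\equiv 0$). Assume the identity for $n-1$, so
\begin{equation*}
\prod_{j=1}^{n-1} f_j(x) = \frac{1}{x}\int_0^\epsilon e^{-w/x}\alpha(w)\, dw + \frac{1}{x}e^{-\epsilon/x}\tilde{G}_1(x), \qquad \alpha := \dw[n-2]\bigl(a_1\convt\cdots\convt a_{n-1}\bigr).
\end{equation*}
By Lemma \ref{LemmaConvnVanishes}, $a_1\convt\cdots\convt a_{n-1}\in C^{n-1}$, so $\alpha\in C^1$. Multiplying by $f_n(x)$ and applying \cref{EqnConvMultiply} with $a=\alpha$, $b=a_n$ to the main term yields
\begin{equation*}
\prod_{j=1}^n f_j(x) = \frac{1}{x}\int_0^\epsilon e^{-w/x}\dw(\alpha\convt a_n)(w)\, dw + \frac{1}{x}e^{-\epsilon/x}\bigl[G(x)+\tilde{G}_1(x) f_n(x)\bigr],
\end{equation*}
and the bracketed error lies in $C([0,\infty))$ after extending $f_n$ by $f_n(0):=a_n(0)$. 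The key algebraic identity to close the induction is
\begin{equation*}
\dw(\alpha\convt a_n) = \dw[n-1]\bigl(a_1\convt\cdots\convt a_n\bigr).
\end{equation*}
Setting $c:=a_1\convt\cdots\convt a_{n-1}$, this reduces to showing $\dw[k](c\convt a_n) = (\dw[k]c)\convt a_n$ for $0\leq k\leq n-2$, which I prove by an inner induction on $k$ via Lemma \ref{LemmaConvOneDeriv}: the would-be boundary term at each step is $(\dw[k]c)(0)\,a_n$, and Lemma \ref{LemmaConvnVanishes} forces $(\dw[k]c)(0)=0$ for $0\leq k\leq n-3$, exactly the range needed.

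For \cref{EqnConvToShowProdQuot}, apply \cref{EqnConvDiffQuot} to $a:=\dw[n-1](a_1\convt\cdots\convt a_n)$, which lies in $C^1$ because $a_1\convt\cdots\convt a_n\in C^n$ by Lemma \ref{LemmaConvnVanishes}. Writing $\hat F(x):=\frac{1}{x}\int_0^\epsilon e^{-w/x} a(w)\,dw$, one has $\hat F(0)=a(0)$; evaluating \cref{EqnConvILm1} at $w=0$ and using that any $\convt$-product vanishes at $0$ gives $a(0)=\prod_{j=1}^n a_j(0)=\prod_{j=1}^n f_j(0)$, so the constants match up. Hence
\begin{equation*}
\frac{\prod_{j=1}^n f_j(x)-\prod_{j=1}^n f_j(0)}{x} = \frac{\hat F(x)-\hat F(0)}{x} + \frac{1}{x^2}e^{-\epsilon/x}G_1(x),
\end{equation*}
and inserting \cref{EqnConvDiffQuot} absorbs the boundary term $-\frac{1}{x}e^{-\epsilon/x}a(\epsilon)$ into the error, giving \cref{EqnConvToShowProdQuot} with $G_2(x):=G_1(x)-x\,a(\epsilon)\in C([0,\infty))$. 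The main obstacle is the inductive-step identity $\dw[n-2](c\convt a_n)=(\dw[n-2]c)\convt a_n$; once it is in hand, the remainder is routine bookkeeping on continuous error factors and the evaluation of Laplace-type integrals at $x=0$.
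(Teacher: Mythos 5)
Your proof is correct and follows essentially the same path as the paper's: induction on $n$ for \cref{EqnConvToShowProd} via \cref{LemmaConvProdQuotOne,LemmaConvOneDeriv,LemmaConvnVanishes}, and then \cref{EqnConvToShowProdQuot} by combining the result with \cref{EqnConvDiffQuot}. The only differences are cosmetic: you spell out the commutation identity $\dw[k](c\convt a_n)=(\dw[k]c)\convt a_n$ as an explicit inner induction where the paper simply cites ``repeated applications of \cref{LemmaConvOneDeriv},'' and you verify $a(0)=\prod_j f_j(0)$ via \cref{EqnConvILm1} where the paper leaves this (correct) matching of constants implicit.
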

\begin{proof}
We prove \cref{EqnConvToShowProd} by induction on $n$.  $n=1$ is trivial
and $n=2$ is contained in \cref{LemmaConvProdQuotOne}.
We assume the result for $n-1$ and prove it for $n$.  Thus, we assume
\begin{equation}\label{EqnConvtProdInduct1}
\prod_{j=1}^{n-1} f_j(x) = \frac{1}{x}\int_0^{\epsilon} \dw[n-2] (a_1\convt \cdots \convt a_{n-1})(w)\: dw + \frac{1}{x} e^{-\epsilon/x}\Gt_1(x),
\end{equation}
where $\Gt_1\in C([0,\infty))$.
By \cref{LemmaConvnVanishes}, $a_1\convt \cdots \convt a_{n-1}\in C^{n-1}$ and vanishes to order $n-3$ at $0$.  Using this, and repeated applications \cref{LemmaConvOneDeriv}, we have
\begin{equation*}
\left(\dw[n-2] (a_1\convt \cdots \convt a_{n-1})\right)\convt a_n  = \dw[n-2] (a_1\convt \cdots \convt a_{n}).
\end{equation*}
Using this and \cref{LemmaConvProdQuotOne} we have, for some $\Gt_2\in C([0,\infty))$,
\begin{equation}\label{EqnConvtProdInduct2}
\begin{split}
&\left(  \frac{1}{x}\int_0^{\epsilon} \dw[n-2] (a_1\convt \cdots \convt a_{n-1})(w)\: dw \right) f_n(x)\\& = \frac{1}{x} \int_0^{\epsilon} e^{-w/x}\dw \left( \dw[n-2] (a_1\convt\cdots a_{n-1}) \convt a_n \right)(w)\: dw + \frac{1}{x} e^{-\epsilon/x} \Gt_2(x)
\\&=\frac{1}{x} \int_0^{\epsilon} \dw[n-1] (a_1\convt\cdots \convt a_n)(w) + \frac{1}{x}e^{-\epsilon/x} \Gt_2(x).
\end{split}
\end{equation}
Combining \cref{EqnConvtProdInduct1,EqnConvtProdInduct2}, we have
\begin{equation*}
\prod_{j=1}^n f_j(x) = \frac{1}{x} \int_0^{\epsilon} \dw[n-1] (a_1\convt\cdots \convt a_n)(w) + \frac{1}{x}e^{-\epsilon/x} \Gt_2(x) + \frac{1}{x} e^{-\epsilon/x} \Gt_1(x) f_n(x),
\end{equation*}
which proves \cref{EqnConvToShowProd}.

We turn to \cref{EqnConvToShowProdQuot}.  Using \cref{EqnConvDiffQuot,EqnConvToShowProd},
\begin{equation*}
\begin{split}
&\frac{\prod_{j=1}^n f_j(x) - \prod_{j=1}^n f_j(0)}{x} 
\\&= \frac{1}{x} \int_0^{\epsilon} e^{-w/x} \dw[n] (a_1\convt \cdots \convt a_n)(w)\: dw + \frac{1}{x^2}e^{-\epsilon/x} G_1(x) - \frac{1}{x}e^{-\epsilon/x} \dw[n-1]\bigg|_{w=\epsilon} (a_1\convt \cdots \convt a_n)(w).
\end{split}
\end{equation*}
Since $a_1\convt \cdots \convt a_n\in C^n$ (by \cref{LemmaConvnVanishes}), this completes the proof.
\end{proof}

\subsection{Smoothing}
% !TEX root =  main.tex
The operation $\convt$ has smoothing properties, and this section is devoted to discussing the instances of these smoothing properties
which are used in this paper.  Fix $m\in \N$, $\epsilon_1,\epsilon_2>0$.

\begin{defn}\label{DefnSmoothingOperatation}
For $L\geq 0$, $n\geq 1$, and increasing functions $G_1,G_2,G_3:(0,\infty)\rightarrow (0,\infty)$, we say
\begin{equation*}
\sG:C^{0,L}([0,\epsilon_1]\times [0,\epsilon_2];\R^m)\rightarrow C^{0,L}([0,\epsilon_1]\times [0,\epsilon_2];\R^n)
\end{equation*}
is an $(L, G_1, G_2,G_3)$ operation if:
\begin{itemize}
\item $\sG(A)(t,w)$ depends only on the values of $A(t,r)$ for $r\in [0,w]$.  As a result, for $\delta\in (0,\epsilon_2]$, $\sG$ defines
a map
\begin{equation*}
\sG:C^{0,L}([0,\epsilon_1]\times [0,\delta];\R^m)\rightarrow C^{0,L}([0,\epsilon_1]\times [0,\delta];\R^n).
\end{equation*}

\item  For $0\leq k\leq L-1$, there are functions $\sG^k : C([0,\epsilon_1];\R^m)^{k+1}\rightarrow C([0,\epsilon_1];\R^m)$
such that
\begin{equation*}
\sG^k:C^L([0,\epsilon_1];\R^m)\times C^{L-1}([0,\epsilon_1];\R^m)\times \cdots \times C^{L-1}([0,\epsilon_1];\R^m)\rightarrow C^{L-k-1}([0,\epsilon_1];\R^n),
\end{equation*}
and
\begin{equation*}
\dw[k] \sG(A)(t,w)\bigg|_{w=0} = \sG^k\left( A(\cdot,0), \frac{\partial A}{\partial w} (\cdot,0),\ldots, \frac{\partial^k A}{\partial w^k}(\cdot, 0) \right)(t).
\end{equation*}

\item The following hold $\forall M\in (0,\infty)$, $\delta\in (0,\epsilon_2]$.
\begin{itemize}
\item $\forall A\in C^{0,L}([0,\epsilon_1]\times [0,\delta];\R^m)$ with $\CjN{0,L}{A}\leq M$, $\CjN{0,L}{\sG(A)}\leq G_1(M)$.
\item $\forall A, B\in C^{0,L}([0,\epsilon_1]\times[0,\delta];\R^m)$ with $\CjN{0,L}{A}, \CjN{0,L}{B}\leq M$,
\begin{equation*}
\CjN{0,L}{\sG(A)-\sG(B)}\leq G_2(M) \CjN{0,L}{A-B}.
\end{equation*}
\item For $0\leq k\leq L-1$, and $g_1,\ldots, g_k$ with $g_j\in C^{L-j}([0,\epsilon_1];\R^m)$ and $\CjN{L-j}{g_j}\leq M$,
\begin{equation*}
\CjN{L-k-1}{\sG^k(g_1,\ldots, g_k)}\leq G_3(M).
\end{equation*}
\end{itemize}
\end{itemize}
\end{defn}

Below we use $\convt$ to construct several examples, in the case $n=1$, of $(2,G_1,G_2,G_3)$ operations.

\begin{lemma}\label{LemmaSmoothing1}
Let $\alpha\in \N^m$ be a multi-index with $|\alpha|\geq 2$, and let $b(t,w)\in C([0,\epsilon_1]\times [0,\epsilon_2])$.
For $A\in C^{0,2}([0,\epsilon_1]\times [0,\epsilon_2];\R^m)$ set
$$\sG(A)(t,w):=(b(t,\cdot)\convt \left(\convt^{\alpha} A'(t,\cdot) ) \right)(w).$$
Then, $\sG$ is a $(2,G_1,G_2,G_3)$ operation, where the functions $G_1$, $G_2$, and $G_3$ can be chosen to depend only on
$\alpha$, $m$, and upper bounds for $\epsilon_2$ and $\CjN{0}{b}$.
\end{lemma}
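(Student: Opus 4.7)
The plan is to verify each clause of \cref{DefnSmoothingOperatation} in turn. Causality is immediate from the formula $(a\convt b)(w)=\int_0^w a(w-r)b(r)\,dr$: $\sG(A)(t,w)$ depends only on $A(t,r)$ for $r\in[0,w]$, so $\sG$ automatically restricts to a map $C^{0,2}([0,\epsilon_1]\times[0,\delta];\R^m)\to C^{0,2}([0,\epsilon_1]\times[0,\delta];\R)$ for every $\delta\in(0,\epsilon_2]$.

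For the $w$-regularity I would analyse the composition in two steps: first form $c(t,\cdot):=\convt^{\alpha} A'(t,\cdot)$, then convolve with $b(t,\cdot)$. Since $A\in C^{0,2}$, each of the $|\alpha|$ factors $A_j'(t,\cdot)$ lies in $C^1([0,\epsilon_2])$, so \cref{LemmaConvnVanishes} yields $c(t,\cdot)\in C^{|\alpha|}([0,\epsilon_2])$ with $\dw[j]c(t,0)=0$ for $0\leq j\leq|\alpha|-2$. Since $|\alpha|\geq 2$, in particular $c(t,0)=0$, so \cref{LemmaConvAddSmoothing} (with $l=1$) gives $\sG(A)(t,\cdot)\in C^2([0,\epsilon_2])$, and iterating \cref{LemmaConvOneDeriv} yields the explicit identities
\[
\dw\sG(A)(t,w)=\bigl(b(t,\cdot)\convt c'(t,\cdot)\bigr)(w),\qquad \dw[2]\sG(A)(t,w)=b(t,w)\,c'(t,0)+\bigl(b(t,\cdot)\convt c''(t,\cdot)\bigr)(w),
\]
both of which are jointly continuous in $(t,w)$ by the joint continuity of $A,A',A''$ and $b$ together with a standard dominated-convergence argument for the convolution integrals. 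Evaluating at $w=0$ gives $\sG(A)(t,0)=0=\dw\sG(A)(t,0)$, so I would simply take $\sG^0\equiv 0\equiv \sG^1$; this trivially discharges both the regularity requirements on the $\sG^k$ and the $G_3$-bound.

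For the $G_1$-bound, the elementary estimate $\sup_{w\in[0,\epsilon_2]}|a\convt b|(w)\leq \epsilon_2\|a\|_\infty\|b\|_\infty$, applied iteratively inside $\convt^{\alpha}$, shows that $c$, $c'$, and $c''$ are each bounded in sup-norm by an expression depending only on $|\alpha|$, $m$, $\epsilon_2$, and $M=\CjN{0,2}{A}$; the explicit formulas for $\dw[j]c$ in terms of $A'$, $A''$, and their values at $w=0$ are furnished by \cref{CorConvDerivOfMon}. Convolving once more with $b$ (or multiplying by $c'(t,0)$ in the boundary term of $\dw[2]\sG(A)$) then yields $\CjN{0,2}{\sG(A)}\leq G_1(M)$ with $G_1$ depending only on $\alpha$, $m$, $\epsilon_2$, and $\CjN{0}{b}$. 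For the $G_2$-bound I would write $\sG(A)-\sG(B)=b(t,\cdot)\convt\bigl(\convt^{\alpha}A'(t,\cdot)-\convt^{\alpha}B'(t,\cdot)\bigr)$, expand the inner difference using \cref{LemmaConvMultilinDiff} as a sum over nonempty $J$ of terms containing at least one factor $A_j'-B_j'$ and remaining factors uniformly bounded by $M$, and apply the same convolution estimates (together with the analogous expansion of the boundary term $c'(t,0)$) to get $\CjN{0,2}{\sG(A)-\sG(B)}\leq G_2(M)\CjN{0,2}{A-B}$. The main obstacle is purely bookkeeping: keeping track of which boundary terms at $w=0$ arise in the successive applications of \cref{LemmaConvOneDeriv}, and doing the analogous expansion for the second $w$-derivative of the Lipschitz estimate. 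The hypothesis $|\alpha|\geq 2$ is used exactly to force $c(t,0)=0$, killing the boundary term at first order in $w$ and making $\sG^0=\sG^1=0$.
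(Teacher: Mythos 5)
Your proof is correct and follows essentially the same route as the paper's: use \cref{LemmaConvOneDeriv} (or its iterates) to compute the explicit first and second $w$-derivatives, observe that both vanish at $w=0$ (the latter because $|\alpha|\geq 2$ forces $(\convt^\alpha A')(t,0)=0$), take $\sG^0=\sG^1=0$, and use \cref{LemmaConvMultilinDiff} together with the elementary sup-norm convolution estimate for the $G_1$- and $G_2$-bounds. The only difference from the paper is cosmetic: the paper peels off factors $A_{k_1}'$ and $A_{k_2}'$ one at a time directly from the full expression $b\convt(\convt^\alpha A')$, whereas you first group the inner convolution as $c=\convt^\alpha A'$ and differentiate $b\convt c$. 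One small imprecision in your write-up: \cref{CorConvDerivOfMon} gives the $(|\alpha|+1)$-th derivative of $b\convt(\convt^\alpha A)$, not the first and second derivatives of $c$ for general $|\alpha|$; when $|\alpha|>2$ you should instead obtain the formulas for $c'$ and $c''$ (and the fact that $c'(t,0)=0$) by iterating \cref{LemmaConvOneDeriv} or directly from \cref{LemmaConvnVanishes}, exactly as the paper's proof does implicitly by peeling off $A_{k_1}'$ and $A_{k_2}'$. This does not affect the validity of the argument.
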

\begin{proof}
Let $k_1=\min\{l : \alpha_l\ne 0\}$ and $k_2 =\min\{l:(\alpha-e_{k_1})_l\ne 0\}$.
Using \cref{LemmaConvOneDeriv}, we have
\begin{equation*}
\dw \sG(A)(t,w) = A_{k_1}'(t,0) \left( b(t,\cdot)\convt \left(  \convt^{\alpha-e_k} A'(t,\cdot)\right) \right)(w) + \left(b(t,\cdot) \convt A_{k_1}''(t,\cdot)\convt \left( \convt^{\alpha-e_{k_1}} A'(t,\cdot) \right)\right)(w),
\end{equation*}
and
\begin{equation*}
\begin{split}
\dw[2] \sG(A)(t,w) &= A_{k_1}'(t,0) A_{k_2}'(t,0) \left( b(t,\cdot)\convt \left( \convt^{\alpha-e_{k_1}-e_{k_2}} A'(t,\cdot) \right) \right)(w)
\\&+ A_{k_1}'(t,0) \left( b(t,\cdot) \convt A_{k_2}''(t,\cdot)\convt \left( \convt^{\alpha-e_{k_1}-e_{k_2}} A'(t,\cdot) \right) \right)(w)
\\&+A_{k_2}'(t,0) \left(b(t,\cdot) \convt A_{k_1}''(t,\cdot) \convt \left( \convt^{\alpha-e_{k_1}-e_{k_2}} A'(t,\cdot) \right)\right)(w)
\\&+ \left( b(t,\cdot)\convt A_{k_1}''(t,\cdot)\convt A_{k_2}''(t,\cdot) \convt \left( \convt^{\alpha-e_{k_1}-e_{k_2}} A'(t,\cdot)\right) \right)(w).
\end{split}
\end{equation*}
For any $c_1, c_2$, we have $(c_1\convt c_2)(0)=0$, we may therefore take $\sG^0=0$ and $\sG^1=0$.
Using the above formulas, combined with \cref{LemmaConvMultilinDiff}, the result follows.
\end{proof}

\begin{lemma}\label{LemmaSmoothing2}
Suppose $|\alpha|=1$ and $b(t,w)\in C^{0,1}([0,\epsilon_1]\times [0,\epsilon_2])$.  For $A\in C^{0,2}([0,\epsilon_1]\times [0,\epsilon_2];\R^m)$
set $$\sG(A)(t,w):= \left(b(t,\cdot)\convt \left(\convt^{\alpha} A'(t,\cdot)\right)\right)(w).$$  Then $\sG$ is a $(2,G_1,G_2,G_3)$ operation,
where the functions $G_1$, $G_2$, and $G_3$ can be chosen to depend only on $m$ and upper bounds for $\epsilon_2$ and $\CjN{0,1}{b}$.
\end{lemma}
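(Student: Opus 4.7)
Since $|\alpha|=1$, write $\alpha=e_k$, so
\[
\sG(A)(t,w) = \bigl(b(t,\cdot)\convt A_k'(t,\cdot)\bigr)(w).
\]
The plan is to compute $\dw\sG(A)$ and $\dw[2]\sG(A)$ directly via \cref{LemmaConvOneDeriv}, and then read off everything from the resulting explicit formulas. The reason the hypothesis on $b$ has been strengthened from $C^0$ (as in \cref{LemmaSmoothing1}) to $C^{0,1}$ is that with only one factor of $A'$ we cannot generate an $A''$ that vanishes at $0$; instead the extra derivative on $b$ is what supplies the regularity on the second differentiation.

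For the first derivative, apply \cref{LemmaConvOneDeriv} with $A_k'(t,\cdot)\in C^1$ (which we have since $A\in C^{0,2}$) to obtain
\[
\dw \sG(A)(t,w) = b(t,w)\,A_k'(t,0) + \bigl(b(t,\cdot)\convt A_k''(t,\cdot)\bigr)(w).
\]
For the second derivative, the first summand gives $b'(t,w)A_k'(t,0)$, which is continuous exactly because $b\in C^{0,1}$; for the second summand, use the commutativity of $\convt$ and \cref{LemmaConvOneDeriv} applied with $b(t,\cdot)\in C^1$ as the smooth factor (and $A_k''$ as the merely continuous factor), yielding
\[
\dw\bigl(b(t,\cdot)\convt A_k''(t,\cdot)\bigr)(w) = b(t,0)\,A_k''(t,w) + \bigl(b'(t,\cdot)\convt A_k''(t,\cdot)\bigr)(w).
\]
Combining these gives the formula
\[
\dw[2]\sG(A)(t,w) = b'(t,w)A_k'(t,0) + b(t,0)A_k''(t,w) + \bigl(b'(t,\cdot)\convt A_k''(t,\cdot)\bigr)(w),
\]
each term of which is continuous on $[0,\epsilon_1]\times[0,\epsilon_2]$ under the stated hypotheses. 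Thus $\sG(A)\in C^{0,2}$.

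From these explicit formulas the three required bounds are immediate. For (i), a termwise sup estimate shows $\CjN{0,2}{\sG(A)}\lesssim (1+\epsilon_2)\CjN{0,1}{b}\CjN{0,2}{A}$, so $G_1(M)$ may be chosen linear in $M$ with constant depending only on $m$, $\epsilon_2$, and $\CjN{0,1}{b}$. For (ii), observe that $\sG$ is linear in $A$, so $\sG(A)-\sG(B)=\sG(A-B)$ and (ii) follows at once from (i), with $G_2$ independent of $M$. For the boundary data, $\sG(A)(t,0)=0$ forces $\sG^0\equiv 0$, while the formula for $\dw\sG(A)$ evaluated at $w=0$ gives
\[
\sG^1(g_1,g_2)(t) = b(t,0)\,g_{2,k}(t),
\]
which lies in $C^0([0,\epsilon_1])$ for $g_2\in C^1([0,\epsilon_1];\R^m)$ since $b(\cdot,0)\in C^0$, and satisfies $\CjN{0}{\sG^1(g_1,g_2)}\leq \CjN{0,1}{b}\CjN{1}{g_2}$; this gives (iii). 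I do not anticipate a substantive obstacle: the lemma is essentially a bookkeeping computation where the role of the upgraded regularity on $b$ is transparent. The only mild subtlety, and the place where \cref{LemmaSmoothing1} would not go through verbatim, is the second differentiation of $b\convt A_k''$, which is handled by swapping the roles of the two factors in \cref{LemmaConvOneDeriv}.
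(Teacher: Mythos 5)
Your proof is correct and takes essentially the same route as the paper: compute $\dw\sG(A)$ and $\dw[2]\sG(A)$ via \cref{LemmaConvOneDeriv} (swapping the roles of the factors in the second application, exactly as you note), read off $\sG^0=0$ and $\sG^1(g_1,g_2)(t)=b(t,0)g_{2,k}(t)$ from the boundary values, and obtain $G_1,G_2,G_3$ by termwise estimates. Two small points worth noting: your observation that $\sG$ is linear in $A$, so that $G_2$ can be taken constant, is a clean shortcut the paper leaves implicit; and your formula $\dw[2]\sG(A)(t,w)=b'(t,w)A_k'(t,0)+b(t,0)A_k''(t,w)+(b'(t,\cdot)\convt A_k''(t,\cdot))(w)$ is the correct one --- the paper's displayed expression writes $A_1'(t,w)b'(t,w)$ for the first term, which appears to be a typographical slip for $A_1'(t,0)b'(t,w)$.
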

\begin{proof}
Without loss of generality we take $\alpha=e_1$, so that $\sG(A)(t,w)=(b(t,\cdot)\convt A_1'(t,\cdot))(w)$.
Using \cref{LemmaConvOneDeriv} we have
\begin{equation*}
\dw \sG(A)(t,w) = A_1'(t,0) b(t,w) + (b(t,\cdot)\convt A_1''(t,\cdot))(w),
\end{equation*}
\begin{equation*}
\dw[2] \sG(A)(t,w) = A_1'(t,w) b'(t,w) + b(t,0) A_1''(t,w) + (b'(t,\cdot)\convt A_1''(t,\cdot))(w).
\end{equation*}
In particular,
\begin{equation*}
\sG(A)(t,0)=0, \quad \dw\bigg|_{w=0} \sG(A)(t,w) = A_1'(t,0) b(t,0).
\end{equation*}
Using the above formulas, the result follows easily.
\end{proof}

\begin{lemma}\label{LemmaSmoothing3}
Suppose $|\alpha|\geq 2$ and $b(t)\in C([0,\epsilon_1])$.
For $A\in C^{0,2}([0,\epsilon_1]\times [0,\epsilon_2];\R^m)$ set
\begin{equation*}
\sG(A)(t,w):=b(t)\left(\convt^{\alpha} A'(t,\cdot)\right)(w).
\end{equation*}
Then, $\sG$ is a $(2,G_1,G_2,G_3)$ operation, where the functions $G_1$, $G_2$, and
$G_3$ can be chosen to depend only on $m$ and upper bounds for $\epsilon_2$
and $\CjN{0}{b}$.
\end{lemma}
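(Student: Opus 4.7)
The plan is to imitate the proof of \cref{LemmaSmoothing1}, exploiting the fact that $b$ here depends only on $t$: no convolution with $b$ takes place, so the factor $b(t)$ is a passive multiplier and every $w$-derivative falls on $F(t,w):=(\convt^{\alpha} A'(t,\cdot))(w)$. Thus I must show that $(t,w)\mapsto b(t)F(t,w)$ is a $(2,G_1,G_2,G_3)$ operation.

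First I check regularity. Since $A\in C^{0,2}([0,\epsilon_1]\times[0,\epsilon_2];\R^m)$, each component of $A'(t,\cdot)$ lies in $C^1([0,\epsilon_2])$, so by \cref{LemmaConvnVanishes} we have $F(t,\cdot)\in C^{|\alpha|}([0,\epsilon_2])\subseteq C^2([0,\epsilon_2])$, and $F(t,\cdot)$ vanishes to order $|\alpha|-2\geq 0$ at $w=0$. Explicit formulas for $\dw F$ and $\dw[2] F$ are obtained by repeated application of \cref{LemmaConvOneDeriv}, taking care that each $\dw$ lands on a $C^1$ factor $A_j'$ rather than on a $C^0$ factor $A_j''$; this never requires a $w$-derivative of $A$ of order higher than two. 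Joint continuity of $\sG(A)$ and its first two $w$-derivatives on $[0,\epsilon_1]\times[0,\epsilon_2]$ then follows since $b\in C([0,\epsilon_1])$ and each product and convolution appearing in these formulas is jointly continuous.

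Next I identify $\sG^0$ and $\sG^1$. Since $F(t,0)=0$ whenever $|\alpha|\geq 2$, $\sG(A)(t,0)\equiv 0$, so I take $\sG^0\equiv 0$. For $\dw \sG(A)(t,0)$: when $|\alpha|\geq 3$, \cref{LemmaConvnVanishes} yields $\dw F(t,0)=0$, hence $\sG^1\equiv 0$. When $|\alpha|=2$, writing $\alpha=e_{k_1}+e_{k_2}$ and applying \cref{LemmaConvDerivIL} with $L=2$ gives $\dw(A_{k_1}'\convt A_{k_2}')(t,0)=A_{k_1}'(t,0)\,A_{k_2}'(t,0)$, so the choice
\[
\sG^1(g_0,g_1)(t):=b(t)\,(g_1)_{k_1}(t)\,(g_1)_{k_2}(t)
\]
does the job; $\sG^1$ is continuous in its arguments, and $\CjN{0}{\sG^1(g_0,g_1)}\leq \CjN{0}{b}\,\CjN{1}{g_1}^2$.

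Finally, the three quantitative bounds required by \cref{DefnSmoothingOperatation} follow by termwise inspection. The elementary estimate
\[
\sup_{w\in[0,\epsilon_2]}\left|(a_1\convt\cdots\convt a_L)(w)\right|\leq \frac{\epsilon_2^{L-1}}{(L-1)!}\prod_{j=1}^{L}\CjN{0}{a_j},
\]
applied to each term in the formulas for $F$, $\dw F$, and $\dw[2] F$, yields $\CjN{0,2}{\sG(A)}\leq G_1(\CjN{0,2}{A})$ for a suitable increasing $G_1$ depending only on $|\alpha|$, $m$, $\epsilon_2$, and $\CjN{0}{b}$. For the Lipschitz estimate I apply \cref{LemmaConvMultilinDiff} to expand $\convt^{\alpha} A'-\convt^{\alpha} B'$ as a multilinear sum in which each term contains exactly one factor $A_j'-B_j'$, giving $\CjN{0,2}{\sG(A)-\sG(B)}\leq G_2(M)\CjN{0,2}{A-B}$ when $\CjN{0,2}{A},\CjN{0,2}{B}\leq M$. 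There is no substantive obstacle; the one point of care is the bookkeeping remark above, namely that when differentiating $\convt^{\alpha} A'$ the operator $\dw$ is always placed on a $C^1$ factor, so the final formulas involve only $A$, $A'$, and $A''$.
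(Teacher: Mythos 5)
Your proof is correct and follows essentially the same route as the paper's: iterate \cref{LemmaConvOneDeriv} to compute $\dw\sG(A)$ and $\dw[2]\sG(A)$ (placing each $\dw$ on a $C^1$ factor), read off $\sG^0\equiv 0$ and the case split for $\sG^1$ (zero when $|\alpha|>2$, $b(t)A_{k_1}'(t,0)A_{k_2}'(t,0)$ when $|\alpha|=2$), and close the quantitative bounds via multilinearity and \cref{LemmaConvMultilinDiff}. One small slip worth noting: the expansion in \cref{LemmaConvMultilinDiff} produces terms with $|J|\geq 2$ difference factors, not ``exactly one'' per term, but since $\CjN{0,2}{A-B}\leq 2M$ the surplus factors are absorbed into $G_2(M)$ and your Lipschitz estimate stands.
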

\begin{proof}
Let $k_1=\min\{l : \alpha_l \ne 0\}$ and $k_2=\min\{ l : (\alpha-e_{k_1})_l\ne 0\}$.
Using \cref{LemmaConvOneDeriv} we have
\begin{equation*}
\dw \sG(A)(t,w) = b(t) A_{k_1}'(t,0) \left(\convt^{\alpha-e_{k_1}} A'(t,\cdot)\right)(w)
+b(t) \left(A_{k_1}''(t,\cdot)\convt \left( \convt^{\alpha-e_{k_1}} A'(t,\cdot) \right)\right)(w),
\end{equation*}
and
\begin{equation*}
\begin{split}
\dw[2] \sG(A)(t,w) = & b(t) A_{k_1}'(t,0) A_{k_2}'(t,0) \left(\convt^{\alpha-e_{k_1}-e_{k_2}} A'(t,\cdot)\right)(w)
\\&+ b(t) A_{k_1}'(t,0) \left(A_{k_2}''(t,\cdot) \convt\left(\convt^{\alpha-e_{k_1}-e_{k_2}} A'(t,\cdot)\right)\right)(w)
\\&+ b(t) A_{k_2}'(t,0) \left(A_{k_1}''(t,\cdot) \convt\left(\convt^{\alpha-e_{k_1}-e_{k_2}} A'(t,\cdot)\right)\right)(w)
\\& + b(t) \left(A_{k_1}''(t,\cdot) \convt A_{k_2}''(t,\cdot) \convt \left( \convt^{\alpha-e_{k_1}-e_{k_2}} A'(t,\cdot) \right)\right)(w).
\end{split}
\end{equation*}
In particular,
\begin{equation*}
\sG(A)(t,0)=0,\quad \dw\bigg|_{w=0} \sG(A)(t,w) =
\begin{cases}
0, &\text{if }|\alpha|>2,\\
b(t)A_{k_1}'(t,0) A_{k_2}'(t,0), &\text{if }|\alpha|=2.
\end{cases}
\end{equation*}
Using the above formulas, combined with \cref{LemmaConvMultilinDiff}, the result follows easily.
\end{proof}

\begin{lemma}\label{LemmaSmoothing4}
Suppose $d\in C^{0,2}([0,\epsilon_1]\times [0,\epsilon_2])$ is such that
$d(t,0)\in C^1([0,\epsilon_1])$.  For $A\in C^{0,2}([0,\epsilon_1]\times [0,\epsilon_2];\R^m)$ set
\begin{equation*}
\sG(A)(t,w):=d(t,w).
\end{equation*}
Then, $\sG$ is a $(2, G_1, G_2,G_3)$ operation, where the functions $G_1$, $G_2$,
and $G_3$ can be chosen to depend only on upper bounds for
$\CjN{0,2}{d}$ and $\CjN{1}{d(\cdot,0)}$.
\end{lemma}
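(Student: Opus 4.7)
The plan is pure verification: since $\sG(A)(t,w) = d(t,w)$ does not depend on $A$ at all, every clause of \cref{DefnSmoothingOperatation} reduces to a statement about the regularity of $d$ itself, and the functions $G_1, G_2, G_3$ may all be taken independent of $M$.

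First I would dispatch the locality condition: $\sG(A)$ involves no values of $A$, so trivially depends only on $A(t,r)$ with $r \in [0,w]$, and restricts naturally to $[0,\epsilon_1] \times [0,\delta]$ for any $\delta \leq \epsilon_2$. Next I would set
\begin{equation*}
\sG^0(g_1)(t) := d(t,0), \qquad \sG^1(g_1, g_2)(t) := (\dw d)(t, 0),
\end{equation*}
both constant as functions of their vector-valued arguments. The identities $\dw[k]\big|_{w=0} \sG(A)(t,w) = \sG^k(A(\cdot,0), \ldots, \dw[k] A(\cdot,0))(t)$ for $k = 0, 1$ are then immediate. The hypothesis $d(\cdot,0) \in C^1([0,\epsilon_1])$ places $\sG^0$ in $C^1 = C^{L-1}$ in $t$, and since $d \in C^{0,2}$ gives $\dw d$ jointly continuous, restriction to $w = 0$ yields $\sG^1 \in C^0 = C^{L-2}$, as required.

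For the three quantitative bounds, boundedness reads $\CjN{0,2}{\sG(A)} = \CjN{0,2}{d}$, so one may take $G_1(M) := \CjN{0,2}{d}$; the Lipschitz bound is free because $\sG(A) - \sG(B) \equiv 0$, so $G_2(M) := 0$; and the $\sG^k$-bounds reduce to $\CjN{1}{\sG^0(g_1)} = \CjN{1}{d(\cdot, 0)}$ and $\CjN{0}{\sG^1(g_1, g_2)} = \sup_{t} |(\dw d)(t, 0)| \leq \CjN{0,2}{d}$, so one may take $G_3(M) := \CjN{1}{d(\cdot,0)} + \CjN{0,2}{d}$. All three constants depend only on the upper bounds for $\CjN{0,2}{d}$ and $\CjN{1}{d(\cdot,0)}$ named in the statement.

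There is no substantive obstacle here; the whole lemma is unpacking of definitions. The only bookkeeping point worth flagging is why the hypothesis $d(\cdot,0) \in C^1$ appears separately from $d \in C^{0,2}$: the latter controls $w$-derivatives and joint continuity, but does not by itself give any $t$-regularity of $d(\cdot, 0)$ beyond continuity, while the former is exactly what is needed to place $\sG^0$ into $C^{L-1}$ in the $t$-variable.
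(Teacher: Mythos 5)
Your proof is correct and takes the same route as the paper, which itself just notes the lemma is immediate from the definitions; you have simply spelled out the verification. The only nitpick is that \cref{DefnSmoothingOperatation} requires $G_2$ to take values in $(0,\infty)$, so you should take $G_2 \equiv 1$ (say) rather than $G_2 \equiv 0$, though this is purely cosmetic.
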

\begin{proof}
This follows immediately from the definitions.
\end{proof}

\begin{lemma}\label{LemmaSmoothing5}
Suppose $\sG:C^{0,L}([0,\epsilon_1]\times [0,\epsilon_2];\R^m)\rightarrow C^{0,L}([0,\epsilon_1]\times [0,\epsilon_2])$ is an
$(L,G_1,G_2,G_3)$ operation.  Let $\beta\in \N^{m}$ be a multi-index,
and define
\begin{equation*}
\sGt(A)(t,w):= A(t,0)^{\beta} \sG(A)(t,w).
\end{equation*}
Then, $\sGt$ is an $(L, \Gt_1, \Gt_2,\Gt_3)$ operation, where $\Gt_1$, $\Gt_2$, and $\Gt_3$ can be chosen to depend only on $G_1$, $G_2$, $G_3$, $L$, and $\beta$.
\end{lemma}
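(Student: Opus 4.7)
The proof is a direct verification of each of the three bullet points of \cref{DefnSmoothingOperatation} for $\sGt$; the key observation is that $A(t,0)^\beta$ is independent of $w$ and so commutes with every $w$-derivative.

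First I would handle causality: since $A(t,0)^\beta$ uses only $A(t,0)$, and $0\in [0,w]$ for every $w\geq 0$, the product $A(t,0)^\beta \sG(A)(t,w)$ still depends only on $A(t,r)$ for $r\in [0,w]$. Consequently $\sGt$ descends to $C^{0,L}([0,\epsilon_1]\times [0,\delta];\R^m)$ for every $\delta\in (0,\epsilon_2]$, and the joint continuity of $\sGt(A)$ in $(t,w)$, together with continuity of $\dw[k]\sGt(A) = A(t,0)^\beta \dw[k]\sG(A)$ for $0\leq k\leq L$, confirms that $\sGt(A)\in C^{0,L}$.

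Second, for the boundary-value structure, $w$-independence of $A(t,0)^\beta$ gives
\begin{equation*}
\dw[k] \sGt(A)(t,w)\bigg|_{w=0} = A(t,0)^\beta \,\sG^k\bigl(A(\cdot,0),\dw A(\cdot,0),\ldots,\dw[k] A(\cdot,0)\bigr)(t),
\end{equation*}
so the natural definition is $\sGt^k(g_0,g_1,\ldots,g_k)(t):=g_0(t)^\beta\,\sG^k(g_0,g_1,\ldots,g_k)(t)$. Since $g_0\in C^L$ implies $g_0^\beta\in C^L\subseteq C^{L-k-1}$ with norm controlled by a polynomial in $\CjN{L}{g_0}$ of degree $|\beta|$, multiplying by $\sG^k(\cdots)\in C^{L-k-1}$ keeps the result in $C^{L-k-1}$, with a norm bound in terms of $G_3(M)$, $L$, and $\beta$.

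Third, for the two pointwise estimates: the size bound follows immediately from $\CjN{0,L}{\sGt(A)}\leq \CjN{0}{A(\cdot,0)}^{|\beta|}\,\CjN{0,L}{\sG(A)}\leq M^{|\beta|}G_1(M)$. For the Lipschitz bound I would split
\begin{equation*}
\sGt(A)-\sGt(B) = A(\cdot,0)^\beta\bigl(\sG(A)-\sG(B)\bigr) + \bigl(A(\cdot,0)^\beta - B(\cdot,0)^\beta\bigr)\sG(B),
\end{equation*}
estimate the first term by $M^{|\beta|} G_2(M)\CjN{0,L}{A-B}$, and bound the second via the elementary Lipschitz estimate $|y_1^\beta-y_2^\beta|\leq C(M,\beta)|y_1-y_2|$ on $\{|y|\leq M\}\subseteq \R^m$, combined with $\CjN{0,L}{\sG(B)}\leq G_1(M)$. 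Setting $\Gt_1(M):=M^{|\beta|}G_1(M)$, $\Gt_2(M):=M^{|\beta|}G_2(M)+C(M,\beta)G_1(M)$, and choosing $\Gt_3$ analogously completes the verification. There is no substantive obstacle here; the argument is a bookkeeping exercise made trivial by the fact that $A(t,0)^\beta$ is constant in $w$, so its contribution is a pure pointwise multiplier in $t$.
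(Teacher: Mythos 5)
Your proof is correct and simply carries out the routine verification that the paper dismisses with ``This follows immediately from the definitions''; the key point you identify---that $A(t,0)^\beta$ is constant in $w$ and hence commutes with $\dw[k]$, so it acts as a pointwise multiplier in $t$---is exactly what makes the claim immediate. There is no difference in approach, only in how much detail is written down.
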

\begin{proof}
This follows immediately from the definitions.
\end{proof} 

\subsection{Polynomials}\label{SectionConvPoly}
% !TEX root =  main.tex
For this section, we take all the same notation and assumptions as in the beginning
of \cref{SectionResults}.  Thus, we have $b_{\alpha,j}$, $c_{\alpha,j}$,
$P(t,x,y,z)$, and $\Ph(t, A(\cdot),z)(w)$ as described in that section.

\begin{lemma}\label{LemmaConvPolyDifferenceEqn}
Let $\delta\in (0,\epsilon_2]$ and $A(t,w)\in C^{0,1}([0,\epsilon_1]\times [0,\delta];\R^m)$
with $A(t,0)\in C([0,\epsilon_1];U)$. Define $f(t,x)\in C([0,\epsilon_1]\times [0,\epsilon_0];\R^m)$ by
$f(t,x) = \frac{1}{x}\int_0^\delta e^{-w/x} A(t,w)\: dw$.  Then,
\begin{equation*}
\frac{P(t,x,f(t,x),f(t,0))- P(t,0,f(t,0),f(t,0)) }{x}  = \frac{1}{x} \int_0^\delta e^{-w/x} \Ph(t,A(t,\cdot),A(t,0))(w)\: dw + \frac{1}{x^2} e^{-\delta/x} G(t,x),
\end{equation*}
where $G(t,x)\in C([0,\epsilon_1]\times [0,\epsilon_0];\R^m)$.
\end{lemma}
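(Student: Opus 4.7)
The plan is to reduce the statement to \cref{LemmaConvProdQuotient} applied monomial-by-monomial. Since $P_j(t,x,y,z)=\sum_{|\alpha|\le D}c_{\alpha,j}(t,x,z)y^\alpha$ and $P$ is linear in its coefficients, it suffices to treat one pair $(j,\alpha)$ at a time, then sum. Because $f(t,0)=A(t,0)$ (taking $x\downarrow 0$ in the defining integral gives exactly $A(t,0)$), I will fix $z:=A(t,0)\in U$ throughout and write out
\begin{equation*}
\frac{c_{\alpha,j}(t,x,z)f(t,x)^\alpha-c_{\alpha,j}(t,0,z)f(t,0)^\alpha}{x}.
\end{equation*}

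First I would truncate the outer Laplace integral defining $c_{\alpha,j}$ so that every factor lives on the same interval $[0,\delta]$. Setting $\tilde c_{\alpha,j}(t,x,z):=\frac{1}{x}\int_0^{\delta} e^{-w/x} b_{\alpha,j}(t,w,z)\,dw$, a change of variable gives $c_{\alpha,j}(t,x,z)=\tilde c_{\alpha,j}(t,x,z)+\frac{1}{x}e^{-\delta/x}H_{\alpha,j}(t,x,z)$ where $H_{\alpha,j}$ is continuous. At $x=0$ the error vanishes, so after forming the difference quotient the $H_{\alpha,j}$ piece contributes only $\frac{1}{x^2}e^{-\delta/x}\,H_{\alpha,j}(t,x,z)f(t,x)^\alpha$, which is already of the required form $\frac{1}{x^2}e^{-\delta/x}G(t,x)$.

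For the main term, $\tilde c_{\alpha,j}(t,x,z)f(t,x)^\alpha$ is a product of $|\alpha|+1$ functions of $x$, each a Laplace-type integral over $[0,\delta]$: one factor with density $b_{\alpha,j}(t,\cdot,A(t,0))$, and $|\alpha|$ factors with densities the appropriate components of $A(t,\cdot)$ (with multiplicities recorded by $\alpha$). All these densities are $C^1$ in $w$ (since $b_{\alpha,j}\in C^{0,3,0}$ and $A\in C^{0,1}$), so \cref{EqnConvToShowProdQuot} applies and yields
\begin{equation*}
\frac{\tilde c_{\alpha,j}(t,x,z)f(t,x)^\alpha-\tilde c_{\alpha,j}(t,0,z)f(t,0)^\alpha}{x}=\frac{1}{x}\int_0^\delta e^{-w/x}\,\partial_w^{|\alpha|+1}\!\bigl(b_{\alpha,j}(t,\cdot,A(t,0))\convt(\convt^\alpha A(t,\cdot))\bigr)(w)\,dw+\frac{1}{x^2}e^{-\delta/x}G^{(2)}_{\alpha,j}(t,x),
\end{equation*}
with $G^{(2)}_{\alpha,j}$ continuous. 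Summing this identity over $|\alpha|\le D$ produces exactly $\Ph_j(t,A(t,\cdot),A(t,0))(w)$ inside the integral, by definition; summing also over $j\in\mset$ packages all error pieces (truncation error plus the $G^{(2)}_{\alpha,j}$) into a single continuous $G(t,x)$ on $[0,\epsilon_1]\times[0,\epsilon_0]$.

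The only nontrivial bookkeeping is checking that $c_{\alpha,j}(t,0,z)=b_{\alpha,j}(t,0,z)$ agrees with $\tilde c_{\alpha,j}(t,0,z)$ (so the two difference quotients line up with no mismatch), and that joint continuity of each $G^{(2)}_{\alpha,j}(t,x)$ in $(t,x)$ follows from the hypothesis on $b_{\alpha,j}$ and the continuity of $A$ — this is essentially automatic from the explicit form of the remainder in \cref{LemmaConvProdQuotient}. I expect no genuine obstacle beyond this bookkeeping; the work has already been done in \cref{LemmaConvProdQuotient,LemmaConvProdQuotOne}.
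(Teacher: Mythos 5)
Your proof is correct and follows essentially the same route as the paper's one‑line proof, which simply invokes \cref{LemmaConvProdQuotient} together with the identity $f(t,0)=A(t,0)$. You additionally make explicit the truncation step -- splitting $c_{\alpha,j}$ into a piece with Laplace integral over $[0,\delta]$ plus a $\frac{1}{x}e^{-\delta/x}$ error -- which is needed to put all factors on a common interval before applying \cref{EqnConvToShowProdQuot} when $\delta<\epsilon_2$; the paper leaves this bookkeeping implicit. Both proofs are otherwise identical in substance.
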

\begin{proof}
This follows from \cref{LemmaConvProdQuotient}, using the fact that $f(t,0)=A(t,0)$.
\end{proof}

\begin{prop}\label{PropConvPolyIsOperation}
Let $\delta\in (0,\epsilon_2]$.
For $A\in C^{0,2}([0,\epsilon_1]\times [0,\delta];\R^m)$ and $A_0(t)\in C^1([0,\epsilon_1];\R^m)$,
\begin{equation*}
\Ph(t, A(t,\cdot), A_0(t))(w) = d_y P(t,0,A(t,0),A_0(t))A'(t,w)+ \sG_{A_0}(A)(t,w),
\end{equation*}
where $\sG_{A_0}:C^{0,2}([0,\epsilon_1]\times [0,\epsilon_2];\R^m)\rightarrow C^{0,2}([0,\epsilon_1]\times [0,\epsilon_2];\R^m)$ is a $(2,G_1,G_2,G_3)$
operation\footnote{See \cref{DefnSmoothingOperatation} for the definition of a $(2,G_1,G_2,G_3)$ operation.}.  The functions $G_1$, $G_2$, and $G_3$ can be chosen to depend
only on $C_0$, $m$, $D$,\footnote{See \cref{SectionResults} for the definitions of these
various constants.} and upper bounds for $\epsilon_2$ and $\CjN{1}{A_0}$.
\end{prop}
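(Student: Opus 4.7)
The plan is to apply \cref{CorConvDerivOfMon} to each summand
\[
\dw[|\alpha|+1]\bigl(b_{\alpha,j}(t,\cdot,A_0(t))\convt(\convt^{\alpha}A(t,\cdot))\bigr)(w)
\]
in the definition of $\Ph_j$, peel off the piece that reconstitutes the $j$-th component of $d_y P(t,0,A(t,0),A_0(t))A'(t,w)$, and identify every remaining piece as one of the building-block smoothing operations of \cref{LemmaSmoothing1,LemmaSmoothing2,LemmaSmoothing3,LemmaSmoothing4,LemmaSmoothing5}. Since finite sums of $(2,G_1,G_2,G_3)$ operations are again $(2,G_1',G_2',G_3')$ operations with comparable bounds, this immediately yields the decomposition $\Ph=d_y P\cdot A'+\sG_{A_0}(A)$ with the prescribed dependence of the $G_i$ on $C_0$, $m$, $D$, $\epsilon_2$, and $\CjN{1}{A_0}$.

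For $|\alpha|\geq 1$, \cref{CorConvDerivOfMon} produces a first sum indexed by $\{\beta\leq\alpha:|\beta|<|\alpha|\}$ with coefficient $b_{\alpha,j}(t,0,A_0(t))$ and a second sum indexed by $\{\beta\leq\alpha\}$ with the derivative $\dw b_{\alpha,j}(t,\cdot,A_0(t))$ convolved in. The identity contribution comes from the sub-sum of the first sum with $|\alpha-\beta|=1$, i.e., $\beta=\alpha-e_k$ for each $k$ with $\alpha_k\geq 1$: it equals $\sum_k\alpha_k b_{\alpha,j}(t,0,A_0(t))A(t,0)^{\alpha-e_k}A_k'(t,w)$, and summing this over $|\alpha|\leq D$ and using $b_{\alpha,j}(t,0,z)=c_{\alpha,j}(t,0,z)$ yields $\sum_k\partial_{y_k}P_j(t,0,A(t,0),A_0(t))A_k'(t,w)$, the $j$-th component of $d_y P(t,0,A(t,0),A_0(t))A'(t,w)$. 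Everything else must be shown to be a $(2,G_1,G_2,G_3)$ operation: the first-sum tail with $|\alpha-\beta|\geq 2$, the entire second sum for each $|\alpha|\geq 1$, and the $|\alpha|=0$ summand $\dw b_{0,j}(t,w,A_0(t))$ of $\Ph_j$ (which contributes no identity term).

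Each leftover piece is matched to a smoothing lemma as follows. The $|\alpha|=0$ summand is a function of $(t,w)$ alone: $b_{0,j}\in C^{0,3,0}$ gives $\dw b_{0,j}\in C^{0,2,0}$, and $\dw b_{0,j}(t,0,z)\in C^1$ together with $A_0\in C^1$ gives the required $C^1$ trace at $w=0$, so \cref{LemmaSmoothing4} applies. The first-sum tail has the shape $b_{\alpha,j}(t,0,A_0(t))A(t,0)^{\beta}(\convt^{\alpha-\beta}A'(t,\cdot))(w)$ with $|\alpha-\beta|\geq 2$: \cref{LemmaSmoothing3} handles this with the continuous coefficient $b_{\alpha,j}(t,0,A_0(t))$, and \cref{LemmaSmoothing5} absorbs the factor $A(t,0)^{\beta}$. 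The second-sum term has the shape $A(t,0)^{\beta}(\dw b_{\alpha,j}(t,\cdot,A_0(t))\convt(\convt^{\alpha-\beta}A'(t,\cdot)))(w)$, with coefficient $b(t,w):=\dw b_{\alpha,j}(t,w,A_0(t))\in C^{0,2}$; depending on whether $|\alpha-\beta|$ is $\geq 2$, $=1$, or $=0$, one invokes \cref{LemmaSmoothing1}, \cref{LemmaSmoothing2}, or---using the convention $b\convt(\convt^0 A)=b$---\cref{LemmaSmoothing4}, and then \cref{LemmaSmoothing5} for the $A(t,0)^{\beta}$ factor.

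The work is essentially bookkeeping: one must sort the remainder terms into these four types and verify in each case that the regularity supplied by the hypotheses on $b_{\alpha,j}$ is precisely what the corresponding smoothing lemma requires. The tightest match occurs in the $|\alpha-\beta|=0$ branch of the second sum, where the term reduces to $A(t,0)^{\alpha}\dw b_{\alpha,j}(t,w,A_0(t))$ and \cref{LemmaSmoothing4} requires $d\in C^{0,2}$ with $d(\cdot,0)\in C^1$; this forces the hypotheses $b_{\alpha,j}\in C^{0,3,0}$ and $\dw b_{\alpha,j}(t,0,z)\in C^1$ adopted at the start of \cref{SectionResults}, and explains why $C^{0,3,0}$ (rather than merely $C^{0,2,0}$) is the relevant level of regularity.
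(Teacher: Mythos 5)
Your proposal is correct and follows essentially the same route as the paper: reduce to a monomial $P(t,x,y,z)=c_{\alpha,j}(t,x,z)y^\alpha e_j$, apply \cref{CorConvDerivOfMon} to compute $\Ph$, peel off the $|\alpha-\beta|=1$ contributions from the first sum to reconstitute $d_yP(t,0,A(t,0),A_0(t))A'(t,w)$, and identify every remaining piece as one of the building-block operations from \cref{LemmaSmoothing1,LemmaSmoothing2,LemmaSmoothing3,LemmaSmoothing4,LemmaSmoothing5} (with \cref{LemmaSmoothing5} absorbing the $A(t,0)^\beta$ factors). Your case-by-case bookkeeping — in particular pinpointing the $|\alpha-\beta|=0$ branch of the second sum as the term that forces the $C^{0,3,0}$ and $\dw b_{\alpha,j}(t,0,z)\in C^1$ hypotheses via \cref{LemmaSmoothing4} — is a sound and slightly more explicit rendering of what the paper leaves to the reader.
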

\begin{proof}
By linearity, it suffices to prove the result for $P$ a monomial in $y$.  I.e.,
\begin{equation*}
P(t,x,y,z) = c_{\alpha,j}(t,x,z) y^{\alpha} e_j,
\end{equation*}
for some $j\in \mset$, $\alpha\in \N^m$ with $|\alpha|\leq D$.
In this case,
\begin{equation}\label{EqnConvPolyPhMonom}
\Ph(t, A(t,\cdot),z)(w) = \dw[|\alpha|+1]\left (b_{\alpha,j}(t,\cdot,z) \convt \left(\convt^{\alpha} A(t,\cdot)\right)\right)(w)e_j.
\end{equation}
Using \cref{CorConvDerivOfMon} and the fact that $b_{\alpha,j}(t,0,z)=c_{\alpha,j}(t,0,z)$,
\begin{equation}\label{EqnConvPolyDerivMon}
\begin{split}
\Ph(t,A(t,\cdot),A_0(t))(w) &= \sum_{l=1}^m \alpha_l b_{\alpha,j}(t,0,A_0(t)) A(t,0)^{\alpha-e_l} A_l'(t,w) e_j
\\&+\sum_{\substack{\beta\leq \alpha \\ |\beta|<|\alpha|-1}} \binom{\alpha}{\beta} b_{\alpha,j}(t,0,A_0(t)) A(t,0)^{\beta} \left( \convt^{\alpha-\beta} A'(t,\cdot) \right)(w) e_j
\\& + \sum_{\beta\leq \alpha} \binom{\alpha}{\beta} A(t,0)^\beta \left( b_{\alpha,j}'(t,\cdot, A_0(t))\convt \left(\convt^{\alpha-\beta} A'(t,\cdot)\right) \right)(w) e_j
\end{split}
\end{equation}
Note,
$$\sum_{l=1}^m \alpha_l b_{\alpha,j}(t,0,A_0(t)) A(t,0)^{\alpha-e_l} A_l'(t,w) e_j = d_y P(t,0,A(t,0), A_0(t))A'(t,w).$$
Thus, it remains to show the final two terms
on the right hand side of \cref{EqnConvPolyDerivMon} are a $(2,G_1,G_2,G_3)$
operation.  This follows from \cref{LemmaSmoothing1,LemmaSmoothing2,LemmaSmoothing3,LemmaSmoothing4,LemmaSmoothing5},
completing the proof.
\end{proof}

\begin{prop}\label{PropConvPolyDifferencePh}
In addition to the other assumptions of this section, we assume \cref{EqnResUniqueAddb}.
Let $\delta\in (0,\epsilon_2]$ and let $A,B\in C^{0,2}([0,\epsilon_1]\times [0,\delta];\R^m)$.
Set $g(t,w)=A(t,w)-B(t,w)$.  Then,
\begin{equation*}
\Ph(t, A(t,\cdot), A(t,0))(w)-\Ph(t,B(t,\cdot),B(t,0))(w) = d_y P (t,0,A(t,0),A(t,0))g'(t,w)+F(t,w),
\end{equation*}
where there exists a constant $C$ with
\begin{equation*}
|F(t,w)|\leq C \sup_{0\leq r\leq w} |g(t,r)|, \quad \forall t,w.
\end{equation*}
Here, $C$ is allowed to depend on any of the ingredients in the proposition, including $A$ and $B$.
\end{prop}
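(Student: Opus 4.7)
By linearity of $\Ph$ in the coefficients $b_{\alpha,j}$, it suffices to establish the claim when $P$ is a single monomial $c_{\alpha,j}(t,x,z)y^{\alpha}e_j$; the general case follows by summing finitely many such contributions. The plan is to apply \cref{PropConvPolyIsOperation} to both $A$ and $B$ to obtain
\begin{equation*}
\Ph(t,A(t,\cdot),A(t,0))(w)-\Ph(t,B(t,\cdot),B(t,0))(w)
= d_yP(t,0,A(t,0),A(t,0))\,g'(t,w)+F(t,w),
\end{equation*}
where
\begin{equation*}
F(t,w)=\bigl[d_yP(t,0,A(t,0),A(t,0))-d_yP(t,0,B(t,0),B(t,0))\bigr]B'(t,w)+\bigl[\sG_{A(t,0)}(A)-\sG_{A(t,0)}(B)\bigr](t,w)+\bigl[\sG_{A(t,0)}(B)-\sG_{B(t,0)}(B)\bigr](t,w).
\end{equation*}
The task is then to show that $F$ admits the claimed pointwise bound in terms of $\sup_{0\le r\le w}|g(t,r)|$.

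For the first and third pieces, the dependence on $A(t,0)$ (resp.\ $B(t,0)$) enters only through $b_{\alpha,j}(t,0,\cdot)$ and $\dw b_{\alpha,j}(t,\cdot,\cdot)$. Hence the added hypothesis \cref{EqnResUniqueAddb} immediately yields the pointwise bound $C|g(t,0)|\le C\sup_{0\le r\le w}|g(t,r)|$, after noting that $|B'(t,w)|$ is bounded by a constant depending on $B$.

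The main work is the middle term $\sG_{A(t,0)}(A)-\sG_{A(t,0)}(B)$. Following the proof of \cref{PropConvPolyIsOperation} (using \cref{CorConvDerivOfMon}), $\sG_{A_0}(A)(t,w)$ is a finite sum of terms of two shapes: (i) $b_{\alpha,j}(t,0,A_0(t))\,A(t,0)^{\beta}\bigl(\convts^{\alpha-\beta}A'(t,\cdot)\bigr)(w)\,e_j$ with $|\alpha-\beta|\ge 2$, and (ii) $A(t,0)^{\beta}\bigl(\dw b_{\alpha,j}(t,\cdot,A_0(t))\convt(\convts^{\alpha-\beta}A'(t,\cdot))\bigr)(w)\,e_j$ with $\beta\le\alpha$; the leading contributions $|\alpha-\beta|=1$ in (i) have already been absorbed into the extracted $d_yP\cdot g'$ term. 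For each such term, I split the difference as $(\text{coefficient at }A(t,0)^{\beta}-\text{coefficient at }B(t,0)^{\beta})\times(\text{convolution in }A')$ plus (coefficient at $B(t,0)^{\beta})\times(\text{convolution in }A'-\text{convolution in }B')$. The first summand is $O(|g(t,0)|)$ since $y\mapsto y^{\beta}$ is smooth and the relevant convolutions are bounded pointwise by constants depending on $A$. For the second summand, apply \cref{LemmaConvMultilinDiff} to write $(\convts^{\alpha-\beta}A')-(\convts^{\alpha-\beta}B')$ as a sum in which every summand is a $\convt$-convolution containing at least one factor $g_l'=A_l'-B_l'$ and at least one other smooth factor (since $|\alpha-\beta|\ge 2$ in (i), and since $\dw b_{\alpha,j}(t,\cdot,B(t,0))$ plays the role of the other factor when $|\alpha-\beta|=1$ in (ii)). Then integrate by parts,
\begin{equation*}
\int_0^w g_l'(w-r)\,h(r)\,dr=g_l(w)h(0)-g_l(0)h(w)+\int_0^w g_l(w-r)\,h'(r)\,dr,
\end{equation*}
valid whenever $h\in C^1$; since $A,B\in C^{0,2}$ and $b_{\alpha,j}\in C^{0,3,0}$, each such $h$ is $C^1$ in $w$ with bounds depending only on $A$, $B$, and $C_0$. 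This gives $|g_l'\convt h|(w)\le C\sup_{0\le r\le w}|g_l(r)|$, as required.

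The main obstacle is the careful bookkeeping ensuring that no \emph{unconvolved} $g'$ survives in $F$: every appearance of $g_l'$ not inside a convolution occurs in the $|\alpha-\beta|=1$ part of shape (i), which is exactly the contribution packaged into the extracted leading term $d_yP(t,0,A(t,0),A(t,0))\,g'(t,w)$ in \cref{PropConvPolyIsOperation}. Once this matching is verified term-by-term, the integration-by-parts estimate and Lipschitz hypothesis \cref{EqnResUniqueAddb} combine to yield the claimed pointwise bound, with the constant $C$ depending on $A$, $B$, $C_0$, $m$, $D$, and $\epsilon_2$.
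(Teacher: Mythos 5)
Your proposal is correct, but it takes a genuinely different route from the paper. The paper applies \cref{LemmaConvMultilinDiff} to the \emph{undifferentiated} factors in $b_{\alpha,j}\convt(\convt^{\alpha}A)$ versus $b_{\alpha,j}\convt(\convt^{\alpha}B)$, so that $g$ (rather than $g'$) sits inside each resulting convolution; it then feeds the entire package through the closed-form derivative formulas \cref{EqnConvDefnF1,EqnConvDefnF2} (i.e.\ \cref{LemmaConvLongdw}), which simultaneously extract the leading $d_yP\,g'$ term and give the $\sup_{0\le r\le w}|g(t,r)|$ bound on the remainder. You instead differentiate first (via \cref{CorConvDerivOfMon}, reproducing the $\sG$-decomposition of \cref{PropConvPolyIsOperation}), and only then apply \cref{LemmaConvMultilinDiff} to the convolutions of $A'$ against $B'$; this puts $g'$ inside the convolutions, which you must undo by an integration by parts $\int_0^w g_l'(w-r)h(r)\,dr = g_l(w)h(0)-g_l(0)h(w)+\int_0^w g_l(w-r)h'(r)\,dr$. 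Both strategies succeed and rest on the same multilinearity lemma, but the paper's ordering avoids the integration-by-parts step entirely. Two small points worth tightening in your write-up: (a) you cannot literally invoke \cref{PropConvPolyIsOperation} as a black box, since $A(\cdot,0)$ is only assumed continuous in $t$ while that proposition asks for $A_0\in C^1$; you are really just borrowing the algebraic identity \cref{EqnConvPolyDerivMon}, which holds without that regularity, so say so explicitly; and (b) for the piece $[d_yP(t,0,A(t,0),A(t,0))-d_yP(t,0,B(t,0),B(t,0))]B'(t,w)$, the dependence on $A(t,0)$ versus $B(t,0)$ enters not only through $b_{\alpha,j}(t,0,\cdot)$ (controlled by \cref{EqnResUniqueAddb}) but also through the polynomial factor $y^{\alpha-e_i}$; the latter is Lipschitz on the compact range of $A(\cdot,0)$ and $B(\cdot,0)$, so the bound still holds, but this should be stated.
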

\begin{proof}
By linearity, it suffices to prove the result for $P$ a monomial in $y$.  I.e.,
\begin{equation*}
P(t,x,y,z)= c_{\alpha,j}(t,x,z) y^{\alpha} e_j,
\end{equation*}
for some $j\in \mset$ and $\alpha\in \N^m$ with $|\alpha|\leq D$.  In this case
$\Ph$ is given by \cref{EqnConvPolyPhMonom}.
Using \cref{LemmaConvMultilinDiff},
\begin{equation*}
\begin{split}
&\Ph(t,A(t,\cdot),A(t,0))(w) - \Ph(t, B(t,\cdot), B(t,0))
 = \sum_{l=1}^m \alpha_l \dw[|\alpha|+1] \left( b_{\alpha,j}(t,\cdot,A(t,0)) \convt g_l(t,\cdot) \convt \left( \convt^{\alpha-e_l} A(t,\cdot) \right) \right)(w)e_j
 \\&+\sum_{\substack{ \beta\leq \alpha \\ |\beta|\geq 2}} (-1)^{|\beta|+1} \binom{\alpha}{\beta} \dw[|\alpha|+1] \left( b(t,\cdot, A(t,0))\convt \left(\convt^{\beta} g(t,\cdot)\right) \convt \left( \convt^{\alpha-\beta} A(t,\cdot) \right) \right)(w)e_j
 \\&+\dw[|\alpha|+1] \left( \left(b_{\alpha,j}(t,\cdot,A(t,0)) - b_{\alpha,j}(t,\cdot,B(t,0))\right) \convt \left(\convt^{\alpha} B(t,\cdot)\right)
 \right)(w)e_j
 \\&=:(I)+(II)+(III).
\end{split}
\end{equation*}
We study the three terms on the right hand side of the above equation separately.
Applying \cref{EqnConvDefnF1} to each term of the sum in $(I)$, with $g_l$
playing the role of $a_L$, and using the fact that $b_{\alpha,j}(t,0,z)=c_{\alpha,j}(t,0,z)$,
\begin{equation*}
(I) = \sum_{l=1}^m \alpha_l b_{\alpha,j}(t,0,A(t,0)) A(t,0)^{\alpha-e_l} g_l'(t,0)e_j+ F_1(t,w)
=d_y P(t,0, A(t,0),A(t,0)) g'(t,0) +F_1(t,w),
\end{equation*}
where $|F_1(t,w)|\lesssim \sup_{0\leq r\leq w} |g(t,r)|$.
Turning to $(II)$, we note that in each term in the sum defining $(II)$, $|\beta|\geq 2$,
and so there are at least two coordinates (counting repetitions) of $g(t,\cdot)$
in the convolution.  Applying \cref{EqnConvDefnF2} to each term of the sum,
with these two coordinates of $g(t,\cdot)$ playing the roles of $a_L$ and
$a_{L-1}$, we see $(II)=F_2(t,w)$ where
$|F_2(t,w)|\lesssim \sup_{0\leq r\leq w} |g(t,r)|$.
Finally, for $(III)$, we use that as $t$ varies over $[0,\epsilon_1]$, $A(t,0)$ and $B(t,0)$
range over a compact subset of $U$.  Applying \cref{EqnConvDefnF2}
with $b_{\alpha,j}(t,\cdot,A(t,0))-b_{\alpha,j}(t,\cdot, B(t,0))$ playing the
role of $a_L$, and using \cref{EqnResUniqueAddb}, we see $(III)=F_3(t,w)$, where
\begin{equation*}
\begin{split}
|F_3(t,w)|&\lesssim \sup_{0\leq r\leq w} |b_{\alpha,j}(t,r,A(t,0))- b_{\alpha,j}(t,r,B(t,0))|
+ | b_{\alpha,j}'(t,w,A(t,0))-b_{\alpha,j}'(t,w,B(t,0))|
\\&\lesssim |g(t,0)|\lesssim \sup_{0\leq r\leq w} |g(t,r)|.
\end{split}
\end{equation*}
Summing the above three estimates completes the proof.
\end{proof} 

\section{Ordinary Differential Equations}
In this section, we prove some auxiliary results concerning ODEs which are needed in the remainder of the paper. 

\subsection{Chronological Calculus}
% !TEX root =  main.tex
Let $m\in \N$ and let $J=[a,b]$ for some $a<b$.
Let $M(t):J\rightarrow \M^{m\times m}$ be locally bounded and measurable.

\begin{defn}
For $t\in J$, we define $\Texp{\int_a^t A(s)\: ds}$ by $\Texp{\int_a^t A(s)\: ds} = E(t)$
is the unique solution $E:J\rightarrow \M^{m\times m}$ to the differential equation
\begin{equation*}
\dot{E}(t) = A(t) E(t), \quad E(a)=I,
\end{equation*}
where $I$ denotes the $m\times m$ identity matrix.
\end{defn}

For the rest of this section, fix some $\epsilon_0>0$.

\begin{prop}\label{PropChronMainTexp}
Let $\sM(t,x)\in C(J\times [0,\epsilon_0]; \M^{m\times m})$ be such that
there exists $R(t)\in C^1(J; \GLm)$ with $R(t)\sM(t,0)R(t)^{-1} = \diag{\lambda_1(t),\ldots, \lambda_m(t)}$, with $\lambda_j(t)>0$ for all $t$.  Set $\lambda_0(t)=\min_{1\leq j\leq m} \lambda_j(t)$.  Then,
$\forall \delta\in [0,1)$, $\exists x_0\in (0,\epsilon_0]$, $\forall x\in (0,x_0]$, $\forall t\in J$,
\begin{equation*}
\left\|  \Texp{ -\frac{1}{x}\int_a^t \sM(s,x)\: ds } \right\|\leq \left\| R(t)^{-1} \right\| \left\|R(a)\right\| \exp\left( -\frac{\delta}{x} \int_a^t \lambda_0(s)\: ds \right).
\end{equation*}
\end{prop}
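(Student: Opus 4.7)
The plan is to diagonalize and treat the perturbation. First I would set $E(t) := \Texp{-\frac{1}{x}\int_a^t \sM(s,x)\, ds}$, which by definition satisfies $\dot{E}(t) = -\frac{1}{x}\sM(t,x)E(t)$ with $E(a)=I$, and then perform the gauge change $F(t) := R(t)E(t)R(a)^{-1}$, so that
\begin{equation*}
\|E(t)\| \leq \|R(t)^{-1}\|\,\|R(a)\|\,\|F(t)\|.
\end{equation*}
Using $R(t)\sM(t,0)R(t)^{-1} = D(t) := \diag{\lambda_1(t),\ldots,\lambda_m(t)}$, a direct calculation gives
\begin{equation*}
\dot{F}(t) = -\tfrac{1}{x}D(t)F(t) + B(t,x)F(t),\quad F(a)=I,
\end{equation*}
where $B(t,x) := \dot{R}(t)R(t)^{-1} - \tfrac{1}{x}R(t)[\sM(t,x)-\sM(t,0)]R(t)^{-1}$. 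The task thus reduces to proving $\|F(t)\|\leq \exp(-\tfrac{\delta}{x}\int_a^t\lambda_0(s)\,ds)$.

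Next I would extract decay from the diagonal part via a scalar differential inequality. Fix $v_0\in\R^m$ with $|v_0|=1$ and set $u(t):=|F(t)v_0|^2$. Since $D(t)$ is diagonal with entries $\geq \lambda_0(t)$, we get $\langle -\tfrac{1}{x}D(t)w,w\rangle \leq -\tfrac{\lambda_0(t)}{x}|w|^2$ for any $w$, hence
\begin{equation*}
\dot{u}(t)\leq \Bigl(-\tfrac{2\lambda_0(t)}{x}+2\|B(t,x)\|\Bigr)u(t).
\end{equation*}
Gr\"onwall's inequality together with $u(a)=1$ gives
\begin{equation*}
\|F(t)\|\leq \exp\!\Bigl(-\tfrac{1}{x}\int_a^t\lambda_0(s)\,ds+\int_a^t\|B(s,x)\|\,ds\Bigr),
\end{equation*}
where I used that $v_0$ was an arbitrary unit vector and the operator norm is the sup of $|F(t)v_0|$.

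The main step, and the source of the real work, is showing that for $x$ small the $\|B(s,x)\|$ integral can be absorbed into a fraction of the leading negative term. Since $J$ is compact and $R\in C^1(J;\GLm)$, the constants $K_1:=\sup_{s\in J}\|\dot{R}(s)R(s)^{-1}\|$ and $K_2:=\sup_{s\in J}\|R(s)\|\,\|R(s)^{-1}\|$ are finite, and so $\|B(s,x)\|\leq K_1+\tfrac{K_2}{x}\|\sM(s,x)-\sM(s,0)\|$. By uniform continuity of $\sM$ on $J\times[0,\epsilon_0]$, for any $\eta>0$ I can choose $x_0$ so small that $\|\sM(s,x)-\sM(s,0)\|\leq \eta$ for all $s\in J$, $x\in(0,x_0]$. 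Setting $\lambda_{\min}:=\min_{s\in J}\lambda_0(s)>0$ and using $\int_a^t\lambda_0(s)\,ds\geq \lambda_{\min}(t-a)$, one obtains for $t>a$,
\begin{equation*}
\int_a^t\|B(s,x)\|\,ds \leq \Bigl(K_1+\tfrac{\eta K_2}{x}\Bigr)(t-a)\leq \frac{K_1 x+\eta K_2}{x\,\lambda_{\min}}\int_a^t\lambda_0(s)\,ds.
\end{equation*}
Choosing $\eta$ with $\eta K_2\leq (1-\delta)\lambda_{\min}/2$, and then $x_0$ so small that also $K_1 x_0\leq (1-\delta)\lambda_{\min}/2$, forces the coefficient to be $\leq \tfrac{1-\delta}{x}$, which combined with the Gr\"onwall estimate yields $\|F(t)\|\leq \exp(-\tfrac{\delta}{x}\int_a^t\lambda_0(s)\,ds)$; the case $t=a$ is trivial since $\|E(a)\|=1\leq\|R(a)^{-1}\|\,\|R(a)\|$. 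The hardest part is precisely this absorption: it works only because $\lambda_0$ is bounded below by a positive constant on the compact interval $J$, which prevents the right-hand side from degenerating near $t=a$ and allows the $\eta/x$ factor to be dominated once $\eta$ is small enough.
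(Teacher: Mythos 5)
Your proof is correct and takes essentially the same approach as the paper. The paper also conjugates by $R(t)$ to reduce to a system whose leading coefficient is $\diag{\lambda_1(t),\ldots,\lambda_m(t)}$, and then applies Gr\"onwall to $|\cdot|^2$ with the small perturbation (consisting of the $\dot{R}R^{-1}$ term and the $\frac{1}{x}R(\sM(t,x)-\sM(t,0))R^{-1}$ term) absorbed by the $(1-\delta)$ margin for $x$ small, using compactness of $J$ and $\min_s\lambda_0(s)>0$ exactly as you do; the only cosmetic difference is that the paper tracks a vector solution $\theta(t,x)$ rather than the matrix $F(t)=R(t)E(t)R(a)^{-1}$, and packages the perturbation as a single term $\sN(t,x)=R(\sM(t,x)-\sM(t,0))R^{-1}-x\dot{R}R^{-1}$ that vanishes uniformly as $x\downarrow 0$ rather than splitting it into an $O(1)$ piece and an $O(\eta/x)$ piece.
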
 

To prove \cref{PropChronMainTexp}, we introduce a lemma.

\begin{lemma}\label{LemmaChronTexp}
Let $\sM(t,x)\in C(J\times [0,\epsilon_0];\M^{m\times m})$ and set
$2\lambda_0(t)$ to be the least eigenvalue of $\sM(t,0)^{\top}+\sM(t,0)$.
We assume $\lambda_0(t)>0$, $\forall t\in J$.  Then, $\forall \delta\in [0,1)$,
$\exists x_0\in (0,\epsilon_0]$, $\forall x\in (0,x_0]$,
\begin{equation*}
\left\|  \Texp{ -\frac{1}{x}\int_a^t \sM(s,x)\: ds } \right\|\leq \exp\left( -\frac{\delta}{x} \int_a^t \lambda_0(s)\: ds \right).
\end{equation*}
\end{lemma}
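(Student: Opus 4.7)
The plan is to carry out the classical energy estimate for the linear ODE defining the chronological exponential. Let $E(t) := \Texp{-\frac{1}{x}\int_a^t \sM(s,x)\, ds}$, so that $\dot E(t) = -\frac{1}{x}\sM(t,x)E(t)$ with $E(a) = I$. Fix $v \in \R^m$ and set $u(t) := E(t)v$; I then compute
\begin{equation*}
\frac{d}{dt}|u(t)|^2 = 2\langle u(t),\dot u(t)\rangle = -\frac{1}{x}\langle u(t),(\sM(t,x)+\sM(t,x)^{\top})u(t)\rangle.
\end{equation*}
The goal is to bound the quadratic form on the right from below by $\frac{2\delta\lambda_0(t)}{x}|u(t)|^2$ and then apply Gr\"onwall.

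For the lower bound, I would use compactness and the positivity hypothesis. By assumption the least eigenvalue of $\sM(t,0)^{\top}+\sM(t,0)$ equals $2\lambda_0(t)$, so
\begin{equation*}
\langle v,(\sM(t,0)^{\top}+\sM(t,0))v\rangle \geq 2\lambda_0(t)|v|^2.
\end{equation*}
Writing $\sM(t,x)^{\top}+\sM(t,x) = \sM(t,0)^{\top}+\sM(t,0) + R(t,x)$, uniform continuity of $\sM$ on the compact set $J \times [0,\epsilon_0]$ gives $\|R(t,x)\| \to 0$ uniformly in $t$ as $x \downarrow 0$. Since $\lambda_0(t)$ is continuous and strictly positive on the compact interval $J$, it is bounded below by some $c > 0$; hence, given $\delta \in [0,1)$, I can choose $x_0 \in (0,\epsilon_0]$ small enough that $\|R(t,x)\| \leq 2(1-\delta)c \leq 2(1-\delta)\lambda_0(t)$ for all $t \in J$ and all $x \in (0,x_0]$. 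This yields the matrix inequality
\begin{equation*}
\sM(t,x)^{\top}+\sM(t,x) \geq 2\delta\lambda_0(t)\, I, \qquad t\in J,\ x\in(0,x_0],
\end{equation*}
in the quadratic-form sense.

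Combining the two displays gives $\frac{d}{dt}|u(t)|^2 \leq -\frac{2\delta\lambda_0(t)}{x}|u(t)|^2$, and Gr\"onwall's inequality (or simply integrating) produces
\begin{equation*}
|u(t)|^2 \leq |u(a)|^2\exp\!\left(-\frac{2\delta}{x}\int_a^t \lambda_0(s)\,ds\right).
\end{equation*}
Taking square roots, noting $u(a) = v$, and taking the supremum over unit $v$ yields the claimed operator norm bound. The only step that requires any thought is the perturbation argument controlling $R(t,x)$, and this is merely uniform continuity on a compact domain combined with the strict positivity of $\lambda_0$; I do not anticipate any genuine obstacle.
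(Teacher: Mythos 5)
Your argument is essentially the same as the paper's: set $u(t)=E(t)v$, compute $\frac{d}{dt}|u|^2$, split $\sM(t,x)^\top+\sM(t,x)$ into the $x=0$ part (controlled by $2\lambda_0(t)$) plus a remainder that is uniformly small for small $x$ by uniform continuity on the compact domain, absorb the remainder using $\inf_J\lambda_0>0$ to get the coefficient $\delta$, and conclude by Gr\"onwall. The paper phrases the perturbation via $\sN(t,x)=\sM(t,x)-\sM(t,0)$ and bounds $\|\sN(t,x)\|\le(1-\delta)\inf_{s\in J}\lambda_0(s)$, which is the same estimate you obtain, just organized as a norm bound on $\sN$ rather than a direct quadratic-form inequality on the symmetrized remainder.
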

\begin{proof}
Let $\sN(t,x)=\sM(t,x)-\sM(t,0)$ so that $\sN(t,x)\in C(J\times[0,\epsilon_0];\M^{m\times m})$ and $\sN(t,0)=0$.  Fix $\delta\in [0,1)$ and take $x_0\in (0,\epsilon_0]$
 so small $\forall (t,x)\in J\times[0,x_0]$, $\| \sN(t,x) \|\leq \inf_{s\in J} (1-\delta)\lambda_0(s)$.
 
 Let $\theta_0\in \R^m$ and set $\theta(t,x):=\Texp{-\frac{1}{x}\int_a^t \sM(s,x)\: ds}\theta_0$.  Then,
 \begin{equation*}
 \begin{split}
 &\dt |\theta(t,x)|^2 = -\ip{\theta(t,x)}{ \left( \frac{1}{x} \sM(t,x)^{\top} + \frac{1}{x} \sM(t,x) \right)\theta(t,x) }
 \\&=-\frac{1}{x} \ip{\theta(t,x)}{\left(\sM(t,0)^{\top} + \sM(t,0)\right)\theta(t,x)} - \frac{1}{x}\ip{\theta(t,x)}{\left(\sN(t,x)^{\top}+\sN(t,x)\right)\theta(t,x)}
 \\&\leq -\frac{2}{x} \lambda_0(t) |\theta(t,x)|^2 + \frac{2}{x} \|\sN(t,x)\| |\theta(t,x)|
 \leq -\frac{2}{x} \delta \lambda_0(t) |\theta(t,x)|^2.
 \end{split}
 \end{equation*}
 By Gr\"onwall's inequality, we have $|\theta(t,x)|^2 \leq |\theta_0|^2 \exp\left(-\frac{2\delta}{x} \int_a^t \lambda_0(s)\: ds\right)$.  Taking square roots yields the result.
\end{proof}

\begin{proof}[Proof of \cref{PropChronMainTexp}]
Let $\Lambda(t)=\diag{\lambda_1(t),\ldots, \lambda_m(t)}=R(t)\sM(t,0)R(t)^{-1}$.
For $\theta_0\in \R^m$, set $\theta(t,x)=\Texp{-\frac{1}{x}\int_a^t \sM(s,x)\: ds}\theta_0$.
Let $\gamma(t,x)=R(t)\theta(t,x)$.  $\gamma$ satisfies
\begin{equation*}
\dt \gamma(t,x) = -\frac{1}{x} R(t) \sM(t,x)R(t)^{-1} \gamma(t,x)+\dot{R}(t)R(t)^{-1}\gamma(t,x)= -\frac{1}{x} \sMt(t,x)\gamma(t,x),
\end{equation*}
where $\sMt(t,x) = \Lambda(t) + R(t)\left(\sM(t,x)-\sM(t,0)\right)R(t)^{-1} - x \dot{R}(t)R(t)^{-1}$.
In particular, note $\sM(t,0)=\Lambda(t)$.
It follows that $\gamma(t,x)= \Texp{-\frac{1}{x} \sMt(s,x)\: ds}\gamma(a,x)$.

Fix $\delta\in [0,1)$.  By \cref{LemmaChronTexp}, $\exists x_0\in (0,\epsilon_0]$ (independent of $\theta_0$) such that for $x\in (0,x_0]$, 
\begin{equation*}
\left\| \Texp{-\frac{1}{x} \int_a^t \sMt(s,x)\: ds}  \right\|\leq \exp\left( -\frac{\delta}{x}\int_a^t \lambda_0(s)\: ds \right).
\end{equation*}
Hence, for $x\in (0,x_0]$,
\begin{equation*}
\begin{split}
&|\theta(t,x)| \leq \left\| R(t)^{-1}\right\| |\gamma(t,x)|
\leq \left\| R(t)^{-1}  \right\|\exp\left( -\frac{\delta}{x} \int_a^t \lambda_0(s)\: ds\right)|\gamma(a,x)|
\\&\leq \left\| R(t)^{-1}  \right\| \left\|R(a)\right\|\exp\left( -\frac{\delta}{x} \int_a^t \lambda_0(s)\: ds\right)|\theta_0|.
\end{split}
\end{equation*}
The result follows.
\end{proof}

\subsection{A Basic Existence Result}
% !TEX root =  main.tex
Fix $\epsilon_1,\epsilon_0>0$ and let $W\subseteq \R^m$ be an open neighborhood of $0\in \R^m$.
Suppose $\sM(t,x,y)\in C([0,\epsilon_1]\times [0,\epsilon_0]\times W;\M^{m\times m})$
be such that for every compact set $K\Subset W$,
\begin{equation*}
\sup_{\substack{t\in [0,\epsilon_1], x\in [0,\epsilon_0] \\ y_1,y_2\in K, y_1\ne y_2}} \frac{\left\|  \sM(t,x,y_1)-\sM(t,x,y_2) \right\|}{|y_1-y_2|}<\infty.
\end{equation*}
Let $G(t,x,y)\in C([0,\epsilon_1]\times [0,\epsilon_0]\times W; \R^m)$ be such that  for every compact set
$K\Subset W$,
\begin{equation*}
\sup_{\substack{t\in [0,\epsilon_1], x\in [0,\epsilon_0] \\ y_1,y_2\in K, y_1\ne y_2}} \frac{\left|  G(t,x,y_1)-G(t,x,y_2) \right|}{|y_1-y_2|}<\infty.
\end{equation*}
Let $g_0\in C([0,\epsilon_0];\R^m)$ have $g_0(0)=0$.
The goal of this section is to study the differential equation
\begin{equation}\label{EqnPerturbMainEqn}
\dt g(t,w) = -\frac{1}{x} \sM(t,x,g(t,x))g(t,x) + G(t,x,g(t,x)), \quad x>0
\end{equation}
with the initial condition $g(0,x)=g_0(x)$.  The main result is the following.

\begin{prop}\label{PropPertubMainProp}
Set $\sM_0(t)=\sM(t,0,0)$.  We suppose that there exists $R(t)\in C^1([0,\epsilon_1];\GLm)$ such that
$$R(t) \sM_0(t) R(t)^{-1}  = \diag{\lambda_1(t),\ldots, \lambda_m(t)}$$
and $\lambda_j(t)>0$, $\forall j,t$.
Then, there exists $\delta_0\in (0,\epsilon_0]$ and a function
$g(t,x)\in C([0,\epsilon_1]\times [0,\delta_0];W)$ such that
$g(0,x)=g_0(x)$ $\forall x\in [0,\delta_0]$, $g(t,0)=0$ $\forall t\in [0,\epsilon_1]$, and
$g$ satisfies \cref{EqnPerturbMainEqn}.
\end{prop}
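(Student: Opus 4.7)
The plan is to set up a Banach fixed-point argument based on a Duhamel-type reformulation of \cref{EqnPerturbMainEqn} using the chronological exponential. Choose $M>0$ small enough that $\overline{B(0,M)}\subset W$ and $L_{\sM}M\leq c_0/8$, where $L_{\sM}$ is a Lipschitz constant for $\sM$ in $y$ on the compact set $[0,\epsilon_1]\times[0,\epsilon_0]\times\overline{B(0,M)}$ and $c_0:=\min_{t,j}\lambda_j(t)>0$. For $\delta_0\in(0,\epsilon_0]$ to be chosen, work on the complete metric space
\begin{equation*}
X:=\{g\in C([0,\epsilon_1]\times[0,\delta_0];\R^m):\|g\|_\infty\leq M,\ g(t,0)=0\ \forall t\},
\end{equation*}
with the uniform norm. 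For $g\in X$, set $\sM_g(t,x):=\sM(t,x,g(t,x))$; since $g(t,0)=0$, $\sM_g(t,0)=\sM_0(t)$, and by shrinking $\delta_0$, uniform continuity of $\sM$ on its compact domain forces $\|\sM_g(t,x)-\sM_0(t)\|\leq c_0/4$ uniformly for $g\in X$. Hence \cref{PropChronMainTexp} applies uniformly in $g\in X$ (take $\delta=1/2$), yielding
\begin{equation*}
\Bigl\|\Texp{-\tfrac{1}{x}\textstyle\int_s^t\sM_g(r,x)\,dr}\Bigr\|\leq C_R\exp\Bigl(-\tfrac{1}{2x}\textstyle\int_s^t\lambda_0(\sigma)\,d\sigma\Bigr),\quad x\in(0,\delta_0],
\end{equation*}
with $C_R:=\sup_{a,b\in[0,\epsilon_1]}\|R(a)^{-1}\|\|R(b)\|$ and $\lambda_0:=\min_j\lambda_j$.

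Next I would define $T:X\to X$ by
\begin{equation*}
T(g)(t,x):=\Texp{-\tfrac{1}{x}\textstyle\int_0^t\sM_g(s,x)\,ds}g_0(x)+\int_0^t\Texp{-\tfrac{1}{x}\textstyle\int_s^t\sM_g(r,x)\,dr}G(s,x,g(s,x))\,ds
\end{equation*}
for $x>0$, and $T(g)(t,0):=0$. Combining the propagator bound with $\int_0^t e^{-c_0(t-s)/(2x)}\,ds\leq 2x/c_0$ gives $|T(g)(t,x)|\leq C_R|g_0(x)|+C'x$. Since $g_0$ is continuous with $g_0(0)=0$, both terms can be made $\leq M/2$ once $\delta_0$ is small enough; simultaneously $T(g)(t,x)\to 0$ uniformly in $t$ as $x\downarrow 0$, so $T(g)$ extends continuously to $[0,\epsilon_1]\times[0,\delta_0]$ with $T(g)(t,0)=0$. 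Hence $T(X)\subseteq X$.

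For the contraction step I would invoke the variation-of-constants identity
\begin{equation*}
E_1(t,s)-E_2(t,s)=-\tfrac{1}{x}\textstyle\int_s^t E_1(t,\sigma)\bigl[\sM_{g_1}-\sM_{g_2}\bigr](\sigma,x)\,E_2(\sigma,s)\,d\sigma,
\end{equation*}
where $E_i(t,s):=\Texp{-\frac{1}{x}\int_s^t\sM_{g_i}(r,x)\,dr}$. Chaining the two exponential decays with $\|\sM_{g_1}-\sM_{g_2}\|\leq L_{\sM}\|g_1-g_2\|_\infty$ and the elementary bound $\tfrac{u}{x}e^{-c_0 u/(2x)}\leq C e^{-c_0 u/(4x)}$ gives $\|E_1(t,s)-E_2(t,s)\|\lesssim e^{-c_0(t-s)/(4x)}\|g_1-g_2\|_\infty$. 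Inserting this together with the Lipschitz bound on $G$ into $T(g_1)-T(g_2)$ and integrating as above produces $\|T(g_1)-T(g_2)\|_\infty\lesssim(|g_0(x)|+x)\|g_1-g_2\|_\infty\leq \tfrac12\|g_1-g_2\|_\infty$ after one final shrinking of $\delta_0$. Banach's fixed-point theorem yields a unique $g\in X$ with $T(g)=g$; differentiating the fixed-point equation in $t$ (legitimate for $x>0$ since $g$ is continuous and hence so is the right-hand side) recovers \cref{EqnPerturbMainEqn}, and the boundary/initial data hold by construction.

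The main obstacle is the apparent $1/x$ blow-up in the comparison of the propagators: the factor $\tfrac{1}{x}\|\sM_{g_1}-\sM_{g_2}\|$ is unbounded as $x\downarrow 0$, which rules out naive iteration. The crucial gain is that the dissipativity of $\sM_0(t)$ encoded in \cref{PropChronMainTexp} supplies exponential decay in $(t-s)/x$, and the elementary inequality above allows that decay to absorb the dangerous $\frac{1}{x}$. All remaining ingredients (uniform applicability of \cref{PropChronMainTexp} across $g\in X$, continuity up to $x=0$, bootstrapping from the integral equation to the ODE) are routine once this dissipative gain is in hand.
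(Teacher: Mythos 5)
Your proposal is correct, but it takes a genuinely different route than the paper's proof. The paper (via \cref{LemmaPerturbSymmFixed,LemmaPerturnSymmSoln}) works one $x$ at a time: for each fixed $x_0>0$ it applies Picard--Lindel\"of to get a local-in-$t$ solution of the ODE, then uses a Lyapunov/energy argument---differentiating $|g^{x_0}(t)|^2$ and exploiting the positive-definiteness of $\sM_0(t)^\top+\sM_0(t)$ after conjugating by $R(t)$---to show the solution cannot escape a small ball, hence exists on all of $[0,\epsilon_1]$ uniformly in small $x_0$; continuity at $x=0$ then comes for free from the uniform-in-$x_0$ smallness. You instead run a single Banach fixed-point argument on the space $X\subset C([0,\epsilon_1]\times[0,\delta_0];\R^m)$ via the Duhamel/chronological-exponential reformulation, using \cref{PropChronMainTexp} to supply the propagator decay $e^{-c(t-s)/x}$ and the elementary bound $\frac{t-s}{x}e^{-c(t-s)/(2x)}\lesssim e^{-c(t-s)/(4x)}$ to absorb the $1/x$ factor arising in the propagator-difference identity. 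Both proofs hinge on the same dissipativity; the paper's version is more elementary (no propagator-difference estimate needed, and the contraction is hidden inside the local Picard--Lindel\"of step), while yours packages existence and uniform smallness in a single global contraction and makes the mechanism by which the dissipation defeats the $1/x$ singularity explicit. A few points in your write-up warrant care but do work out: the Lipschitz constant $L_{\sM}$ depends on $M$, so the constraint $L_{\sM}M\leq c_0/8$ is mildly self-referential (it is satisfiable because $L_{\sM}$ is nonincreasing as $M$ shrinks); \cref{PropChronMainTexp} is stated for a single $\sM(t,x)$ and you need its $x_0$ uniform over $g\in X$, which requires tracing through its proof rather than citing it black-box (you flag this and the uniformity indeed holds via $\|\sM_g(t,x)-\sM_0(t)\|\leq\omega(\delta_0)+L_{\sM}M$); and one should also note that the conjugation by $R(t)$ inserts a factor $\|R(t)\|\|R(t)^{-1}\|$ and an $x\dot R R^{-1}$ term into the relevant smallness condition, all absorbed by a further shrinking of $\delta_0$.
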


To prove \cref{PropPertubMainProp}, we need two lemmas.
As in \cref{PropPertubMainProp}, set $\sM_0(t)=\sM(t,0,0)$.  For these lemmas, instead
of assuming the existence of $R(t)$ as in \cref{PropPertubMainProp}, we
let $2\lambda_0(t)$ be the least eigenvalue of $\sM_0(t)^{\top}+\sM_0(t)$
and we assume $\lambda_0(t)>0$, $\forall t\in [0,\epsilon_1]$.

\begin{lemma}\label{LemmaPerturbSymmFixed}
Under the the assumption $\lambda_0(t)>0$ $\forall t$, the following holds.
For all $\epsilon>0$, there exists $\delta>0$ such that for all $x_0\in (0,\delta]$,
there exists a unique solution $g^{x_0}(t)\in C^1([0,\epsilon_1];B^{m}(\epsilon)\cap W)$ to the differential
equation
\begin{equation*}
\dt g^{x_0}(t) = -\frac{1}{x_0} \sM(t,x_0, g^{x_0}(t))g^{x_0}(t) + G(t,x_0,g^{x_0}(t)), \quad g^{x_0}(0)=g_0(x_0).
\end{equation*}
In the above, $B^m(\epsilon) = \{y\in \R^m : |y|<\epsilon\}$.
\end{lemma}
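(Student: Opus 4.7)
The plan is to combine standard Picard-Lindel\"of local existence with an energy estimate exploiting the positive-definiteness of $\sM_0(t)^{\top}+\sM_0(t)$ to confine the solution to a small ball of size comparable to $x_0$. First, fix a compact set $K\Subset W$ and $\epsilon'\in(0,\epsilon]$ with $\overline{B^m(\epsilon')}\subset K$; set $\lambda_0^\ast := \min_{t\in[0,\epsilon_1]}\lambda_0(t)>0$ and let $M$ bound $|G|$ on $[0,\epsilon_1]\times[0,\epsilon_0]\times\overline{B^m(\epsilon')}$. By continuity of $\sM$ at $(x,y)=(0,0)$, pick $\delta_1>0$ with
\begin{equation*}
\|\sM(t,x_0,y)-\sM_0(t)\|<\lambda_0^\ast/2\quad\text{for all } t\in[0,\epsilon_1],\; x_0\in[0,\delta_1],\; |y|\leq\delta_1,
\end{equation*}
and by continuity of $g_0$ with $g_0(0)=0$ pick $\delta_2>0$ with $|g_0(x_0)|<\epsilon'/4$ for $x_0\in[0,\delta_2]$. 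Set $\delta:=\delta_1\wedge\delta_2\wedge\epsilon'\wedge (\epsilon'\lambda_0^\ast/(8M))$.

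Fix $x_0\in(0,\delta]$. Picard-Lindel\"of applied to the locally Lipschitz vector field $y\mapsto -x_0^{-1}\sM(t,x_0,y)y+G(t,x_0,y)$ on $B^m(\epsilon')$ produces a unique maximal $C^1$ solution $g^{x_0}$ on an interval $[0,T)\subseteq[0,\epsilon_1]$, with $g^{x_0}(0)=g_0(x_0)$, staying in $B^m(\epsilon')$. I want to show $T=\epsilon_1$. Compute, on $[0,T)$,
\begin{equation*}
\frac{d}{dt}|g^{x_0}(t)|^2 = -\frac{2}{x_0}\bigl\langle g^{x_0},\sM(t,x_0,g^{x_0})g^{x_0}\bigr\rangle + 2\bigl\langle g^{x_0},G(t,x_0,g^{x_0})\bigr\rangle.
\end{equation*}
Decomposing $\sM(t,x_0,g^{x_0})=\sM_0(t)+(\sM(t,x_0,g^{x_0})-\sM_0(t))$, using $\langle y,\sM_0(t)y\rangle\geq\lambda_0^\ast|y|^2$, and invoking the choice of $\delta_1$ (valid since $\epsilon'\leq\delta_1$ is built into $\delta$), this yields the differential inequality
\begin{equation*}
\frac{d}{dt}|g^{x_0}(t)|^2 \leq -\frac{\lambda_0^\ast}{x_0}|g^{x_0}(t)|^2 + 2M|g^{x_0}(t)|.
\end{equation*}

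A standard comparison argument---applied to $h(t):=\sqrt{|g^{x_0}(t)|^2+\sigma^2}$ and then letting $\sigma\downarrow 0$ to bypass zeros of $g^{x_0}$---against the linear ODE $h'=-\tfrac{\lambda_0^\ast}{2x_0}h+M$ gives
\begin{equation*}
|g^{x_0}(t)|\leq |g_0(x_0)|+\frac{2Mx_0}{\lambda_0^\ast}<\tfrac{\epsilon'}{4}+\tfrac{\epsilon'}{4}<\epsilon'
\end{equation*}
for all $t\in[0,T)$. Since the solution remains uniformly bounded away from $\partial B^m(\epsilon')$, maximality forces $T=\epsilon_1$, so $g^{x_0}\in C^1([0,\epsilon_1];B^m(\epsilon')\cap W)\subseteq C^1([0,\epsilon_1];B^m(\epsilon)\cap W)$. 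The main obstacle is the energy estimate itself: one must verify that the positive lower bound on the symmetric part of $\sM_0$ survives the perturbation $\sM(t,x_0,y)-\sM_0(t)$ uniformly in $t$, and can be leveraged---against the potentially large $-x_0^{-1}$ prefactor and the order-one forcing from $G$---into a uniform confinement of $g^{x_0}$; this is exactly what the choice of $\delta_1$ accomplishes.
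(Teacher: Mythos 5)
Your quantifier ordering is off, and this creates a genuine gap. You pick $\epsilon'$ (and hence $M$) first and only afterward pick $\delta_1$ so that the perturbation bound $\|\sM(t,x_0,y)-\sM_0(t)\|<\lambda_0^\ast/2$ holds for $x_0\in[0,\delta_1]$ and $|y|\leq\delta_1$. To run the energy estimate along the whole trajectory you need this coercivity at $y=g^{x_0}(t)$, and you only know a priori that $|g^{x_0}(t)|<\epsilon'$; so you need $\epsilon'\leq\delta_1$. Your parenthetical claim that ``$\epsilon'\leq\delta_1$ is built into $\delta$'' is false: taking $\delta=\delta_1\wedge\delta_2\wedge\epsilon'\wedge(\epsilon'\lambda_0^\ast/(8M))$ gives $\delta\leq\delta_1$ and $\delta\leq\epsilon'$, which says nothing about the relative size of $\epsilon'$ and $\delta_1$. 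If $\delta_1<\epsilon'/2$ (which continuity alone does not rule out), the bootstrap does not close: your comparison argument gives $|g^{x_0}(t)|<\epsilon'/2$ only while the trajectory stays in $B^m(\delta_1)$, and nothing prevents it from exiting $B^m(\delta_1)$ without contradiction.

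The fix is simple and is precisely what the paper's proof does: choose the perturbation radius before the ball radius. Since $\delta_1$ depends only on $\sM$ and $\lambda_0^\ast$ (not on $\epsilon'$), pick $\delta_1$ first, then shrink $\epsilon'$ to be at most $\min(\epsilon,\delta_1)$, and only then define $M$ and $\delta$. (The paper achieves this with the clause ``without loss of generality, we assume $\epsilon<r\wedge\gamma$,'' which is harmless because confinement to a smaller ball implies confinement to the larger one.) With the reordering in place, the rest of your argument is correct and differs from the paper's in a meaningful way: you use a direct Gr\"onwall/comparison confinement via $h=\sqrt{|g^{x_0}|^2+\sigma^2}$, whereas the paper uses a barrier/contradiction argument at the first time $|g^{x_0}(t_0)|=\epsilon$. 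Yours buys a slightly sharper quantitative bound $|g^{x_0}(t)|\leq|g_0(x_0)|+2Mx_0/\lambda_0^\ast$ that shrinks as $x_0\downarrow0$, at the cost of the extra $\sigma$-regularization; the paper's avoids the comparison lemma entirely and is a line or two shorter.
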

\begin{proof}
Fix $\epsilon>0$.
Set $\sN(t,x,y):= \sM(t,x,y)-\sM_0(t)$, so that $\sN(t,0,0)=0$.
Fix $r>0$ so small $\overline{B^m(r)}\subset W$.
Take $\gamma>0$ so small that if $x,|y|\leq \gamma$,
$\sup_{t\in [0,\epsilon_1]}\|\sN(t,x,y)\|\leq \frac{1}{2}\inf_{t\in [0,\epsilon_1]} \lambda_0(t).$
Without loss of generality, we assume $\epsilon<r\wedge \gamma$.
Let
$$C:= \sup_{\substack{t\in [0,\epsilon_1], x\in [0,\epsilon_0] \\ |y|\leq r}} |G(t,x,y)|<\infty.$$

Take $\delta\in (0,\gamma]$ so small that
\begin{equation*}
\frac{\epsilon}{\delta} \inf_{t\in [0,\epsilon_1]} \lambda_0(t)>2C, \quad \sup_{x\in [0,\delta]}|g_0(x)|<\epsilon.
\end{equation*}
Fix $x_0\in (0,\delta]$.
The Picard-Lindel\"of theorem shows that the solution $g^{x_0}(t)$ exists
and is unique
for $t$ in some interval $[0,s]$, where $s\in (0,\epsilon_1]$.
We will show that for $t\in [0,s]$, $|g^{x_0}(t)|<\epsilon$.  By iterating this process, it follows
that we do not have blow up in small time, and can take $s=\epsilon_1$.

Thus, we wish to show that for all $t\in [0,s]$, $|g^{x_0}(t)|<\epsilon$.
Suppose, for contradiction, there is $t_0\in [0,s]$ with $|g^{x_0}(t_0)|\geq \epsilon$.
Take the least such $t_0$.  Since $|g^{x_0}(0)|=|g_0(x_0)|<\epsilon$, $t_0>0$.
Hence, $|g^{x_0}(t_0)|=\epsilon$ and
\begin{equation}\label{EqnPerturbToContradictGrow}
\dt\bigg|_{t=t_0} |g^{x_0}(t)|^2 \geq 0.
\end{equation}

But, for $t\in [0,t_0]$, $|g^{x_0}(t)|\leq \epsilon< r\wedge \gamma$, and therefore,
\begin{equation*}
\begin{split}
&\dt |g^{x_0}(t)|^2 =
-\frac{1}{x_0}\ip{g^{x_0}(t)}{ \left(\sM_0(t)^{\top} +\sM_0(t)\right)g^{x_0}(t)}
\\&-\frac{1}{x_0} \ip{g^{x_0}(t)}{ \left(\sN(t,x_0,g^{x_0}(t))^{\top} +\sN(t,x_0,g^{x_0}(t))\right)g^{x_0}(t)}
+2\ip{g^{x_0}(t)}{ G(t,x_0,g^{x_0}(t))}
\\&\leq -\frac{1}{x_0} 2\lambda_0(t) |g^{x_0}(t)|^2 + 2\frac{1}{x_0}\|\sN(t,x_0,g^{x_0}(t))\| |g^{x_0}(t)|^2 + 2 |G(t,x_0,g^{x_0}(t)| |g^{x_0}(t)|
\\&\leq -\frac{1}{x_0} \lambda_0(t) |g^{x_0}(t)|^2 + 2C |g^{x_0}(t)|
\leq -\frac{1}{\delta} \lambda_0(t) |g^{x_0}(t)|^2 + 2C |g^{x_0}(t)|.
\end{split}
\end{equation*}
Hence,
\begin{equation*}
\dt\bigg|_{t=t_0} |g^{x_0}(t)|^2 \leq -\frac{\epsilon^2}{\delta}\lambda_0(t_0) +2C\epsilon
<0,
\end{equation*}
contradicting \cref{EqnPerturbToContradictGrow} and completing the proof.
\end{proof}

\begin{lemma}\label{LemmaPerturnSymmSoln}
Under the the assumption $\lambda_0(t)>0$ $\forall t$, there exists $\delta_0\in (0,\epsilon_0]$ and a function
$g(t,x)\in C([0,\epsilon_1]\times [0,\delta_0];W)$ such that
$g(0,x)=g_0(x)$ $\forall x\in [0,\delta_0]$, $g(t,0)=0$ $\forall t\in [0,\epsilon_1]$, and
$g$ satisfies \cref{EqnPerturbMainEqn}.
\end{lemma}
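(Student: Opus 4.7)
The plan is to glue together the solutions $g^{x_0}$ produced by \cref{LemmaPerturbSymmFixed} into a single continuous function $g(t,x)$, taking value $0$ along the boundary $x=0$. First, fix $\epsilon>0$ small enough that $\overline{B^m(\epsilon)}\subset W$. Applying \cref{LemmaPerturbSymmFixed} with this $\epsilon$ produces $\delta_0\in (0,\epsilon_0]$ such that, for each $x_0\in (0,\delta_0]$, the Cauchy problem
\begin{equation*}
\dt g^{x_0}(t) = -\frac{1}{x_0}\sM(t,x_0,g^{x_0}(t))g^{x_0}(t)+G(t,x_0,g^{x_0}(t)),\quad g^{x_0}(0)=g_0(x_0),
\end{equation*}
has a unique solution $g^{x_0}\in C^1([0,\epsilon_1];B^m(\epsilon)\cap W)$. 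Define
\begin{equation*}
g(t,x):=\begin{cases}g^x(t),&x\in (0,\delta_0],\\ 0,&x=0.\end{cases}
\end{equation*}
By construction $g$ satisfies \cref{EqnPerturbMainEqn} for $x>0$, $g(0,x)=g_0(x)$ on $[0,\delta_0]$ (using $g_0(0)=0$), and $g(t,0)=0$.

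The remaining task is to establish joint continuity $g\in C([0,\epsilon_1]\times[0,\delta_0];W)$. On the open half $x>0$, the vector field $(t,x,y)\mapsto -x^{-1}\sM(t,x,y)y+G(t,x,y)$ is continuous and locally Lipschitz in $y$ on $[0,\epsilon_1]\times(\eta,\delta_0]\times W$ for any $\eta>0$, and the initial datum $g_0(x)$ is continuous in $x$. Classical continuous dependence of ODE solutions on parameters and initial data then yields continuity of $(t,x)\mapsto g^x(t)$ on $[0,\epsilon_1]\times(0,\delta_0]$.

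For a boundary point $(t_0,0)$, I would show that $g(t,x)\to 0$ as $x\downarrow 0$ uniformly in $t\in[0,\epsilon_1]$. Given any $\epsilon'\in(0,\epsilon]$, apply \cref{LemmaPerturbSymmFixed} again with $\epsilon'$ in place of $\epsilon$ to obtain $\delta(\epsilon')\in(0,\delta_0]$ such that for each $x\in(0,\delta(\epsilon')]$ the corresponding solution lies in $B^m(\epsilon')$ for all $t\in[0,\epsilon_1]$. The uniqueness clause in \cref{LemmaPerturbSymmFixed} forces this smaller solution to coincide with the $g^x$ already constructed (both solve the same Cauchy problem, and both stay in $B^m(\epsilon)\cap W$). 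Consequently $\sup_{t\in[0,\epsilon_1]}|g(t,x)|\le \epsilon'$ whenever $x\in(0,\delta(\epsilon')]$, and letting $\epsilon'\downarrow 0$ gives $g(t,x)\to 0$ uniformly in $t$ as $x\downarrow 0$, which is continuity at every boundary point $(t_0,0)$.

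The only place requiring care is verifying that the standard continuous-dependence result applies in the relevant slabs $[0,\epsilon_1]\times[\eta,\delta_0]$; on each such slab the factor $x^{-1}$ is uniformly bounded, so there is no genuine obstacle beyond invoking the Picard framework on a compact parameter range. The substantive work — the barrier argument that prevents escape from $B^m(\epsilon)$ and secures global existence on $[0,\epsilon_1]$ — is entirely contained in \cref{LemmaPerturbSymmFixed}, so this lemma amounts to assembling those fibered solutions and reading off the continuity limit at $x=0$ from the quantitative bound $|g^x(t)|\le \epsilon'$.
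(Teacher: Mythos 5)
Your proof is correct and follows essentially the same route as the paper: invoke \cref{LemmaPerturbSymmFixed} to get the fibered solutions $g^x$, glue them, cite classical continuous dependence for joint continuity on $x>0$, and use the quantitative conclusion of \cref{LemmaPerturbSymmFixed} (applied with arbitrarily small $\epsilon'$, together with uniqueness to identify the resulting solution with $g^x$) to get the uniform-in-$t$ limit $g(t,x)\to 0$ as $x\downarrow 0$. The paper states the last step more tersely ("this follows immediately from \cref{LemmaPerturbSymmFixed}"); you have merely spelled out the same argument.
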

\begin{proof}
Let $\delta_0>0$ be the $\delta$ guaranteed by \cref{LemmaPerturbSymmFixed} with $\epsilon=1$.
For $x\in (0,\delta]$, set $g(t,x)=g^{x}(t)$, where $g^x(t)$ is the unique solution
from \cref{LemmaPerturbSymmFixed}.
Standard theorems from ODEs show $g(t,x):[0,\epsilon_1]\times (0,\delta_0]\rightarrow \R^m$ is continuous.  All that remains to show is that $g(t,x)$ extends to a continuous
function at $x=0$ by setting $g(t,0)=0$.  This follows immediately
from \cref{LemmaPerturbSymmFixed}.
\end{proof}

\begin{proof}[Proof of \cref{PropPertubMainProp}]
Set $\sMt(t,x,y):=-x\dot{R}(t)R(t)^{-1}  + R(t)\sM(t,x,R(t)^{-1}y)R(t)^{-1}$,
$\Gt(t,x,y)= R(t)G(t,x,R(t)^{-1}y)$.
Note that $\sMt(t,0,0)=\diag{\lambda_1(t),\ldots, \lambda_m(t)}$.
\cref{LemmaPerturnSymmSoln} shows that there exists $\delta_0\in (0,\epsilon_0]$
and  and a function
$h(t,x)\in C([0,\epsilon_1]\times [0,\delta_0];W)$ such that
$h(0,x)=R(0)g_0(x)$ $\forall x\in [0,\delta_0]$, $h(t,0)=0$ $\forall t\in [0,\epsilon_1]$, and
$h$ satisfies
\begin{equation*}
\dt h(t,w) = -\frac{1}{x} \sMt(t,x,h(t,x))h(t,x) + \Gt(t,x,h(t,x)), \quad x>0.
\end{equation*}
Setting $g(t,x)=R(t)^{-1} h(t,x)$ gives the desired solution, and completes the proof.
\end{proof} 

\section{Existence}
% !TEX root =  main.tex
In this section, we prove \cref{ThmResultExist,ThmResultExistCharaterize}.
The key result needed for these, which is also useful for proving \cref{ThmResCharacterize}, is the next proposition. For it, we take all the same notation
 and assumptions as in the beginning of  \cref{SectionResults}.
 Thus, we have $m\in \N$, $\epsilon_0,\epsilon_1,\epsilon_2\in (0,\infty)$,
 $U\subseteq \R^m$ open, $D\in \N$,
 and $b_{\alpha,j}$, $c_{\alpha,j}$, $C_0$, $P$, and $\Ph$ as described in that section.

\begin{prop}\label{PropMainExistProp}
Let $A_0(t)\in C^2([0,\epsilon_1];U)$ and set $\sM(t):=-d_y P(t,0,A_0(t),A_0(t))$.
Suppose $\exists R(t)\in C^1([0,\epsilon_1];\GLm)$
with $R(t)\sM(t)R(t)^{-1}=\diag{\lambda_1(t),\ldots, \lambda_m(t)}$,
where $\lambda_j(t)>0$, for all $t,j$.
Take $c_0, C_1, C_2, C_3, C_4>0$ such that
$\min_{t,j} \lambda_j(t)\geq c_0>0$, $\CjN{1}{R}\leq C_1$, $\CjN{1}{R^{-1}}\leq C_2$,
$\CjN{1}{\sM^{-1}}\leq C_3$, $\CjN{2}{A_0}\leq C_4$.
Then, there exists
$\delta=\delta(m,D,c_0,C_0,C_1,C_2,C_3,C_4)>0$ and
$A(t,w)\in C^{0,2}([0,\epsilon_1]\times [0,\delta\wedge \epsilon_2];\R^m)$
such that
\begin{equation}\label{EqnExistMainPropAEqn}
\dt A(t,w) = \Ph(t,A(t,\cdot),A(t,0))(w), \quad A(t,0)=A_0(t).
\end{equation}
Moreover, if we set
\begin{equation}\label{EqnExistMainPropfEqn}
f(t,x)=
\begin{cases}
\frac{1}{x} \int_0^{\delta\wedge \epsilon_2} e^{-w/x} A(t,w)\: dw, &\text{if }x>0,\\
A_0(t), &\text{if }x=0,
\end{cases}
\end{equation}
then $f(t,x)\in C([0,\epsilon_1]\times [0,\epsilon_0];\R^m)$ and there exists
$\Gt(t,x)\in C([0,\epsilon_1]\times[0,\epsilon_0];\R^m)$ such that
\begin{equation}\label{EqnExistMainPropFdiffeq}
\dt f(t,x) = \frac{P(t,x,f(t,x),f(t,0)) - P(t,0,f(t,0),f(t,0))}{x} + \frac{1}{x^2} e^{-(\delta\wedge \epsilon_2)/x}\Gt(t,x), \quad f(t,0)=A_0(t).
\end{equation}
Finally, if $\delta_1\in [0,\epsilon_2\wedge \delta)$ and $\ft(t,x)\in C([0,\epsilon_1]\times [0,\epsilon_0];\R^m)$ satisfies
\begin{equation}\label{EqnExistMainPropFtdiffeq}
\dt \ft(t,x) = \frac{ P(t,x,\ft(t,x), \ft(t,0)) - P(t,0,\ft(t,0),\ft(t,0))}{x} + O(e^{-\delta_1/x}), \quad \ft(t,0) =A_0(t),
\end{equation}
then if $\lambda_0(t)=\min_{1\leq j\leq m} \lambda_j(t)$, we have $\forall \gamma\in [0,1)$,
\begin{equation*}
f(t,x) = \ft(t,x) + O\left(e^{-\delta_1/x} + e^{-\frac{\gamma}{x} \int_0^t \lambda_0(s)\: ds}\right).
\end{equation*}
In the above, the implicit constants in $O$ are independent of $(t,x)\in [0,\epsilon_1]\times [0,\epsilon_0]$.
\end{prop}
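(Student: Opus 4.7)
The plan is to build $A(t,w)$ by a fixed-point argument, derive the identity \cref{EqnExistMainPropFdiffeq} from the convolution calculus of the previous section, and reduce the comparison with $\widetilde f$ to a linear ODE controlled by Proposition \ref{PropChronMainTexp}. Applying Proposition \ref{PropConvPolyIsOperation} together with the boundary condition $A(t,0)=A_0(t)$, the equation $\dt A=\Ph(t,A(t,\cdot),A(t,0))$ takes the quasilinear transport form
\begin{equation*}
\dt A(t,w) + \sM(t)\,A'(t,w) = \sG_{A_0}(A)(t,w),
\end{equation*}
where $\sG_{A_0}$ is a $(2,G_1,G_2,G_3)$ operation whose constants depend only on the listed parameters. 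Conjugating by $R(t)$ via $B(t,w):=R(t)A(t,w)$ diagonalises the principal part:
\begin{equation*}
\dt B + \diag{\lambda_1(t),\ldots,\lambda_m(t)}\,B' = \dot R R^{-1} B + R\,\sG_{A_0}(R^{-1}B), \qquad B(t,0)=R(t)A_0(t).
\end{equation*}

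To invert this mixed boundary-value problem I extend $A_0$ to a $C^2$ function $\widetilde A_0$ on $[-\delta/c_0,\epsilon_1]$ and work on $\Omega_\delta:=[-\delta/c_0,\epsilon_1]\times[0,\delta]$. Because $\lambda_j\geq c_0$, every characteristic of the $j$-th diagonal component that crosses $\Omega_\delta$ meets $\{w=0\}$ at a time inside $[-\delta/c_0,\epsilon_1]$, so the diagonal transport is fully determined by the boundary data together with the right-hand side. I set up a Picard map $\Phi$ that sends $B$ to the solution of the linear diagonal transport obtained by freezing the right-hand side at $B$ and integrating along characteristics. The $(2,G_1,G_2,G_3)$ property of $\sG_{A_0}$ supplies a uniform $C^{0,2}$ bound on a ball together with a Lipschitz estimate; combined with the fact that each characteristic traverses a $t$-interval of length at most $\delta/c_0$, this yields a factor $O(\delta)$ in both the self-bound and the Lipschitz constant of $\Phi$. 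Choosing $\delta$ small in terms of $m,D,c_0,C_0,\ldots,C_4$ makes $\Phi$ a contraction on a closed ball of $C^{0,2}(\Omega_\delta)$, and its fixed point, restricted to $[0,\epsilon_1]\times[0,\delta\wedge\epsilon_2]$, is the required $A$.

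With $A$ in hand, the identity \cref{EqnExistMainPropFdiffeq} is immediate: differentiating the Laplace representation of $f$ under the integral and using the PDE for $A$ gives $\dt f(t,x) = \frac{1}{x}\int_0^{\delta\wedge\epsilon_2} e^{-w/x}\Ph(t,A(t,\cdot),A(t,0))(w)\,dw$, while Lemma \ref{LemmaConvPolyDifferenceEqn} identifies this integral with $(P(t,x,f(t,x),f(t,0))-P(t,0,f(t,0),f(t,0)))/x$ up to an error of the stated form $x^{-2}e^{-(\delta\wedge\epsilon_2)/x}\widetilde G(t,x)$. Continuity of $f$ at $x=0$ with value $A_0(t)$ is a standard property of Laplace-type integrals applied to the continuous function $A(t,\cdot)$, using $A(t,0)=A_0(t)$.

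For the final comparison, set $g(t,x):=\widetilde f(t,x)-f(t,x)$. Since $\widetilde f(t,0)=f(t,0)=A_0(t)$, a first-order Taylor expansion in $y$ produces
\begin{equation*}
\frac{P(t,x,\widetilde f,A_0)-P(t,x,f,A_0)}{x}=\frac{1}{x}\sN(t,x)\,g,\qquad \sN(t,x):=\int_0^1 d_y P(t,x,f+sg,A_0)\,ds,
\end{equation*}
so that $\dt g=\frac{1}{x}\sN(t,x) g + r(t,x)$ with $|r(t,x)|\lesssim e^{-\delta_1/x}+x^{-2}e^{-(\delta\wedge\epsilon_2)/x}$ and $\sN(t,0)=-\sM(t)$. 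Proposition \ref{PropChronMainTexp} applied to $-\sN$ (whose value at $x=0$ is $\sM(t)=R(t)^{-1}\diag{\lambda_1(t),\ldots,\lambda_m(t)}R(t)$) gives, for any $\gamma\in[0,1)$ and $x$ sufficiently small, $\|\Texp{\frac{1}{x}\int_0^t \sN(s,x)\,ds}\|\lesssim e^{-(\gamma/x)\int_0^t \lambda_0(s)\,ds}$. Variation of constants, the boundedness of $g(0,x)$ on $[0,\epsilon_0]$, and the absorption $x^{-2}e^{-(\delta\wedge\epsilon_2)/x}=O(e^{-\delta_1/x})$ for small $x$ (valid since $\delta_1<\delta\wedge\epsilon_2$) together deliver the claimed bound. \textbf{The main obstacle} is the first step: closing the fixed point in $C^{0,2}$ with $\delta$ depending only on the listed constants while preserving the boundary condition at $w=0$. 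This rests on the structural property in Definition \ref{DefnSmoothingOperatation} that $\sG_{A_0}(A)(t,0)=0$ and $\dw\sG_{A_0}(A)(t,0)$ is determined by boundary data alone, so that integration along characteristics neither disturbs $A(t,0)=A_0(t)$ nor loses regularity at the boundary.
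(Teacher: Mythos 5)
Your plan arrives at the same conclusions and the second and third steps of your argument match the paper's proof quite closely: the Laplace-type identity via Lemma \ref{LemmaConvPolyDifferenceEqn}, and the comparison with $\ft$ via a first-order Taylor expansion of $P$ in $y$, variation of constants, and Proposition \ref{PropChronMainTexp} applied to $-\sN$, are the same devices the paper uses. Where your route genuinely diverges is the existence step. The paper proves existence by first reducing the $C^{0,2}$ problem to a $C^0$ problem: Lemma \ref{LemmaExistInductStep} replaces $A$ by $B=A'$, the boundary value $B(t,0)$ is read off from the equation at $w=0$ using $\sG^0$ and $\sM(t)^{-1}$, and the right-hand side becomes another $(L-1,\dots)$ operation, so two such reductions land in the $L=0$ case where a straightforward contraction in $\|\cdot\|_{C^0}$ along characteristics (parameterized from both boundary pieces $\{t=0\}$ and $\{w=0\}$) closes the argument. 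You instead propose a single direct contraction in $C^{0,2}$, together with extending $A_0$ to $[-\delta/c_0,\epsilon_1]$ so that every characteristic enters through $\{w=0\}$. That route can be made to work, but it hides some bookkeeping: integrating the transport along characteristics gives an $O(\delta)$ factor only on the purely integral part of the $w$-derivatives of the iterate, while the terms coming from the boundary data $\Phi(B)(t,0)$, $\Phi(B)'(t,0)$, $\Phi(B)''(t,0)$ carry no smallness. To get a contraction in $C^{0,2}$ one therefore has to constrain the ball so that all iterates share the same $B(\cdot,0)$ and $B'(\cdot,0)$, and then observe that the $\sG^k$ structure of Definition \ref{DefnSmoothingOperatation} forces $\text{RHS}(B)(\cdot,0)$ and $\dw\text{RHS}(B)(\cdot,0)$ to coincide for any two admissible $B$, so these boundary contributions cancel in the Lipschitz estimate and only the $O(\delta)$ integral part survives. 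This is exactly the information the paper encodes once and for all in Lemma \ref{LemmaExistInductStep}. Your extension of $A_0$ to negative $t$ also requires extending the coefficients $b_{\alpha,j}$ to negative time, which the paper avoids by freely choosing $A(0,w)=A_0(0)$ on the $\{t=0\}$ boundary; both choices produce a valid (non-unique) solution. Two small corrections: the structural fact you should invoke is not that $\sG_{A_0}(A)(t,0)=0$ (it is generally nonzero, e.g.\ it contains terms of the form $A(t,0)^\alpha\,\dw b_{\alpha,j}(t,0,A_0(t))$), but that $\sG_{A_0}(A)(t,0)=\sG^0(A(\cdot,0))(t)$ depends only on the boundary trace; and for the final estimate, Proposition \ref{PropChronMainTexp} supplies the bound only for $x\leq x_0$, so you should note that for $x>x_0$ the claimed estimate is trivial because $g$ is bounded and the error term is bounded away from $0$ on that range.
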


Without loss of generality, we may assume $\epsilon_2\leq 1$ in \cref{PropMainExistProp};
and we assume this for the rest of the section.  The heart of \cref{PropMainExistProp}
is an abstract existence result, which we now present.

\begin{prop}\label{PropExistAbsExist}
Fix $L\geq 0$.  Suppose $\sG:C^{0,L}([0,\epsilon_1]\times [0,\epsilon_2];\R^m)\rightarrow C^{0,L}([0,\epsilon_1]\times [0,\epsilon_2];\R^m)$ is an $(L, G_1, G_2, G_3)$ operation
(see \cref{DefnSmoothingOperatation}).  Let $\sM(t)\in C^{(L-1)\vee 0}([0,\epsilon_1];\M^{m\times m})$ be such that there exists $R(t)\in C^{1}([0,\epsilon_1];\GLm)$ satisfying $R(t)\sM(t)R(t)^{-1}=\diag{\lambda_1(t),\ldots, \lambda_m(t)}$, where $\lambda_j(t)>0$, $\forall j,t$.
Fix $A_0\in C^L([0,\epsilon_1];\R^m)$ and take
$c_0,C_1,C_2,C_3,C_4>0$ such that
$\min_{t,j} \lambda_j(t)\geq c_0>0$, $\CjN{1}{R}\leq C_1$, $\CjN{1}{R^{-1}}\leq C_2$,
$\CjN{(L-1)\vee 0}{\sM^{-1}}\leq C_3$, $\CjN{L}{A_0}\leq C_4$.
Then, there exists $\delta=\delta(L,m,G_1,G_2,G_3,c_0,C_1,C_2,C_3,C_4)>0$
such that there exists a solution $A(t,w)\in C^{0,L}([0,\epsilon_1]\times [0,\delta\wedge \epsilon_2];\R^m)$ to the equation
\begin{equation}\label{EqnExistAEqn}
\dt A(t,w) = -\sM(t) \dw A(t,w) + \sG(A)(t,w), \quad A(t,0)=A_0(t).
\end{equation}
\end{prop}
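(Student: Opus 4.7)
The equation $\dt A=-\sM(t)\dw A+\sG(A)$ is a semilinear first-order hyperbolic system whose characteristic speeds are the strictly positive eigenvalues $\lambda_j(t)$ of $\sM(t)$. Since data is prescribed on $\{w=0\}$ and characteristics travel outward into $\{w>0\}$, the problem is well-posed once we also supply a trace at $t=0$ that is compatible with $A_0$ at the corner. The plan is to diagonalize, compute the boundary Taylor jet in $w$ to order $L$, prescribe a compatible initial profile on $\{t=0\}$, and then obtain the solution by a Banach fixed-point argument whose contraction factor is forced to be small by restricting to $w\in[0,\delta]$ for $\delta$ small.

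Concretely, substitute $B:=RA$; using $R\sM R^{-1}=\diag{\lambda_1,\ldots,\lambda_m}=:\Lambda(t)$, the equation becomes $\dt B+\Lambda\dw B=\sGt(B)$, where $\sGt(B):=\dot R R^{-1}B+R\sG(R^{-1}B)$ is again an operation whose constants are controlled by the declared ones. Because $\sGt$ is an operation, whenever $B$ solves the PDE the traces $b_k(t):=\dw[k]B(t,0)$ must satisfy the recursion $b_0:=RA_0$ and
\begin{equation*}
b_{k+1}(t):=\Lambda(t)^{-1}\bigl(\sGt^k(b_0,\ldots,b_k)(t)-\dot b_k(t)\bigr),\qquad 0\le k\le L-1,
\end{equation*}
obtained by differentiating the PDE $k$ times in $w$ and evaluating at $w=0$. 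The regularity of $\Lambda^{-1}$ and the operation bounds make this recursion well-defined, with each $b_k$ controlled by the declared constants. I then prescribe the corner-compatible initial profile $B^{\sharp}(w):=\sum_{k=0}^{L}\tfrac{w^k}{k!}b_k(0)$, so that $\dw[k]B^{\sharp}(0)=b_k(0)$ for $0\le k\le L$.

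For any source $F\in C^{0,L}$, the linear problem $\dt\widetilde B+\Lambda\dw\widetilde B=F$, $\widetilde B(t,0)=b_0(t)$, $\widetilde B(0,w)=B^{\sharp}(w)$, is solved componentwise by the explicit Duhamel formula along the characteristics $\tau\mapsto w-\int_\tau^t\lambda_j(s)\,ds$, using $b_0$-data in the boundary-fed region $\{w\le\int_0^t\lambda_j\}$ and $B^{\sharp}$-data on its complement. Define $T\colon B\mapsto\widetilde B$ with $F:=\sGt(B)$; I look for a fixed point in the closed ball
\begin{equation*}
\mathcal{B}_M:=\bigl\{B\in C^{0,L}([0,\epsilon_1]\times[0,\delta\wedge\epsilon_2];\R^m):\dw[k]B(\cdot,0)=b_k\text{ for }0\le k\le L,\ \CjN{0,L}{B}\le M\bigr\}.
\end{equation*}
The decisive quantitative observation is that every characteristic starting in $w\in[0,\delta]$ has Duhamel integration length at most $\delta/c_0$; moreover, for $B_1,B_2\in\mathcal{B}_M$ the operation identity $\dw[k]\sGt(B)(\cdot,0)=\sGt^k(b_0,\ldots,b_k)$ shows that all traces $\dw[k]\bigl(\sGt(B_1)-\sGt(B_2)\bigr)(\cdot,0)$ vanish for $0\le k\le L-1$, so no unwanted boundary terms arise when differentiating Duhamel in $w$. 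Together these yield $\CjN{0,L}{T(B_1)-T(B_2)}\le C(\delta/c_0)\Gt_2(M)\CjN{0,L}{B_1-B_2}$; choosing $M$ large (to absorb the fixed data $b_0$ and $B^{\sharp}$) and then $\delta$ small in the declared parameters, Banach's theorem yields a fixed point $B$, and $A:=R^{-1}B\in C^{0,L}([0,\epsilon_1]\times[0,\delta\wedge\epsilon_2];\R^m)$ is the desired solution.

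The main obstacle is verifying that the piecewise Duhamel formula actually produces a function of regularity exactly $C^{0,L}$ across the interface $\{w=\int_0^t\lambda_j(s)\,ds\}$ separating the boundary-fed and initial-fed regions: the corner compatibility $\dw[k]B^{\sharp}(0)=b_k(0)$ is precisely what forces all $w$-derivatives of $\widetilde B$ up to order $L$ to match across the interface. Once this is granted, the $C^{0,L}$-Lipschitz estimate with an $O(\delta/c_0)$ prefactor is routine inductive bookkeeping, since each $\dw[k]$ differentiation of the Duhamel formula produces a characteristic integral of $\dw[k]F$ of length $\le \delta/c_0$ together with lower-order boundary contributions that cancel between any two iterates in $\mathcal{B}_M$ by the preceding operation identity.
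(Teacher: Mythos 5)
Your plan---diagonalize, compute the boundary jet in $w$, pad with a corner-compatible polynomial initial profile on $\{t=0\}$, then contract along the outgoing characteristics in a ball with prescribed traces---is the same circle of ideas as the paper's, which also prescribes $A(0,w)=A_0(0)$ at the base level $L=0$ and uses a contraction-mapping argument in characteristic coordinates with contraction factor $O(\delta/c_0)$. The difference is that you run everything in a single pass at regularity $C^{0,L}$, whereas the paper first performs an induction in $L$ (\cref{LemmaExistInductStep}, differentiating the equation in $w$ and using $\sI(A_0,B)=A_0+\int_0^w B$) and only afterwards conjugates by $R$ at level $L=0$ (\cref{LemmaExistDiag}).

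There is a genuine gap created by the order in which you do these two reductions. You conjugate by $R(t)$ \emph{before} removing the $w$-derivatives, and $R$ is only assumed $C^1$. This destroys exactly the regularity your jet construction needs. Concretely: (i) $b_0 := RA_0$ is only $C^1$, not $C^L$, so for $L\ge 2$ you cannot invoke the operation property $\sGt^0\colon C^L\to C^{L-1}$ on $b_0$; (ii) $\sGt(B)=\dot R R^{-1}B + R\,\sG(R^{-1}B)$ is \emph{not} an $(L,\Gt_1,\Gt_2,\Gt_3)$ operation for $L\ge 2$, because $\dot R R^{-1}\in C^0$ only, so the trace map $\sGt^0(g_0)=\dot R R^{-1}g_0+R\,\sG^0(R^{-1}g_0)$ has output only $C^0$ instead of the required $C^{L-1}$; (iii) $\Lambda^{-1}=R\sM^{-1}R^{-1}$ is only $C^1$ (the hypothesis $\CjN{(L-1)\vee 0}{\sM^{-1}}\le C_3$ does not survive conjugation by a $C^1$ matrix). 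As a result your recursion $b_{k+1}=\Lambda^{-1}\bigl(\sGt^k(b_0,\ldots,b_k)-\dot b_k\bigr)$ produces $b_1\in C^0$ only, so $\dot b_1$, and hence $b_2$, is undefined, and the corner profile $B^{\sharp}(w)=\sum_{k\le L}\frac{w^k}{k!}b_k(0)$ cannot be built. This is not a boundary case: the proposition is invoked with $L=2$ in \cref{PropMainExistProp}. The paper's ordering sidesteps this: the recursion $B_0=\sM^{-1}\bigl(-\dot A_0+\sG^0(A_0)\bigr)$ is carried out in the undiagonalized variables, where $\sM^{-1}\in C^{(L-1)\vee 0}$ and $A_0\in C^L$ are precisely the regularities needed, and the conjugation by $R$ is postponed until $L=0$, where the $C^0$ coefficient $\dot R R^{-1}$ is harmless because the $(0,G_1,G_2,G_3)$ definition imposes no conditions on any $\sG^k$. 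To repair your argument you would need either to differentiate first as in \cref{LemmaExistInductStep}, or to carry out the jet recursion in the variable $A$ (using $\sM^{-1}$, $\sG^k$, and $A_0$ directly) and only change variables to $B=RA$ at the stage of the characteristic integral estimate.
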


We prove \cref{PropExistAbsExist} by induction on $L$.  We begin with the inductive
step, which is contained in the next lemma.

\begin{lemma}\label{LemmaExistInductStep}
Let $L\geq 1$, and $\sG$, $A_0$, $\sM$, and $C_4$ be as in \cref{PropExistAbsExist}.
For $B(t,w)\in C^{0,L-1}([0,\epsilon_1]\times [0,\delta];\R^m)$
let $\sI(A_0,B)= A_0(t)+ \int_0^w B(t,r)\: dr$, and set
$\sGt_{A_0}(B)(t,w):= \dw \sG(\sI(A_0,B))(t,w)$, and let
$B_0(t)= \sM(t)^{-1} \left[-\dot{A}_0(t)+ \sG^{0}(A_0)(t)\right]\in C^{L-1}([0,\epsilon_1];\R^m)$ (here $\sG^0$ is as in \cref{DefnSmoothingOperatation}).
Then, $\sGt_{A_0}$ is an $(L-1, \Gt_1, \Gt_2, \Gt_3)$ operation, where
$\Gt_1$, $\Gt_3$, and $\Gt_3$ can be chosen to depend only on $G_1$, $G_2$,
$G_3$, and $C_4$.  Furthermore, consider the differential equation
\begin{equation}\label{EqnExistBEqn}
\dt B(t,w) = -\sM(t) \dw B(t,w) + \sGt_{A_0}(B)(t,w), \quad B(t,0)=B_0(t).
\end{equation}
Then, solutions to \cref{EqnExistAEqn} and \cref{EqnExistBEqn} are
in bijective correspondence in the following sense:
\begin{enumerate}[(i)]
\item\label{ItemExistASoln} If $A(t,w)\in C^{0,L}([0,\epsilon_1]\times [0,\delta];\R^m)$ is a solution to \cref{EqnExistAEqn}, then $B(t,w)=A'(t,w)\in C^{0,L-1}([0,\epsilon_1]\times [0,\delta];\R^m)$ is a solution to \cref{EqnExistBEqn}.

\item\label{ItemExistBSoln} If $B(t,w)\in C^{0,L-1}([0,\epsilon_1]\times [0,\delta];\R^m)$ is a solution to
\cref{EqnExistBEqn}, then $A(t,w) = \sI(A_0, B)(t,w)\in C^{0,L}([0,\epsilon_1]\times [0,\delta];\R^m)$ is a solution to \cref{EqnExistAEqn}.
\end{enumerate}
\end{lemma}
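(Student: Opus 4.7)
The plan is to handle this in two pieces: first verify that $\sGt_{A_0}$ is an $(L-1, \Gt_1, \Gt_2, \Gt_3)$ operation by reading off its three defining properties from those of $\sG$; then establish the bijective correspondence by direct computation, using $\sI$ and the fundamental theorem of calculus to translate between $A$ and $A'$.

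For the operation property, causality is immediate since $\sI(A_0,B)(t,w)=A_0(t)+\int_0^w B(t,r)\,dr$ depends only on $B(t,r)$ for $r\in [0,w]$, $\sG$ has the same property by hypothesis, and $\dw$ is local. The norm estimates $\Gt_1$ and $\Gt_2$ follow from the bounds for $\sG$, the inequality $\CjN{0,L}{\sI(A_0,B)}\lesssim \CjN{L}{A_0}+\CjN{0,L-1}{B}$, and the fact that differentiating in $w$ costs one derivative. For the boundary traces, since $\dw[j] \sI(A_0,B)(\cdot,0)$ equals $A_0$ when $j=0$ and $\dw[j-1]B(\cdot,0)$ when $j\geq 1$, setting $\sGt_{A_0}^k(g_0,\ldots,g_k):=\sG^{k+1}(A_0,g_0,\ldots,g_k)$ gives exactly $\dw[k] \sGt_{A_0}(B)(t,0)$, and the regularity together with the bound $\Gt_3$ are inherited from $\sG^{k+1}$, with the dependence on $A_0$ absorbed into the constant $C_4$.

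For direction (i), I let $A$ solve \cref{EqnExistAEqn} and set $B=A'\in C^{0,L-1}$. The fundamental theorem of calculus together with $A(t,0)=A_0(t)$ yields $\sI(A_0,B)=A$, so commuting $\dt$ with $\dw$ in \cref{EqnExistAEqn} gives $\dt B = -\sM(t)\dw B + \dw \sG(\sI(A_0,B)) = -\sM(t)\dw B + \sGt_{A_0}(B)$. To check the boundary trace, I differentiate $A(t,0)=A_0(t)$ in $t$ and evaluate \cref{EqnExistAEqn} at $w=0$: this yields $\dot A_0(t) = -\sM(t)B(t,0) + \sG^0(A_0)(t)$, and inverting $\sM(t)$ gives $B(t,0)=B_0(t)$ as required.

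For direction (ii), I let $B$ solve \cref{EqnExistBEqn} and set $A=\sI(A_0,B)\in C^{0,L}$. Then $A(t,0)=A_0(t)$ by construction and $\dw A=B$. Differentiating under the integral,
\begin{equation*}
\dt A(t,w) = \dot A_0(t) + \int_0^w\bigl[-\sM(t)\dw B(t,r)+\sGt_{A_0}(B)(t,r)\bigr]\,dr = \dot A_0(t) - \sM(t)[B(t,w)-B_0(t)] + \sG(A)(t,w) - \sG^0(A_0)(t),
\end{equation*}
where the last equality uses $\sGt_{A_0}(B)=\dw\sG(\sI(A_0,B))$ integrated from $0$ to $w$, together with $\sG(A)(t,0)=\sG^0(A_0)(t)$. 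Substituting $\sM(t)B_0(t)=-\dot A_0(t)+\sG^0(A_0)(t)$ collapses the right-hand side to $-\sM(t)\dw A(t,w) + \sG(A)(t,w)$, which is \cref{EqnExistAEqn}. The main obstacle I anticipate is purely bookkeeping in the first part: tracking the regularity indices against the $(L-1)$-operation definition is somewhat fiddly, since one must check that feeding $A_0\in C^L$ and arguments from $C^{L-2}$ into $\sG^{k+1}$ still produces output in $C^{L-k-2}$; beyond that, the correspondence is just commutation of $\dw$ with the equation and the fundamental theorem of calculus.
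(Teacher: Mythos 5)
Your proposal is correct and follows essentially the same route as the paper: verify the operation properties directly from the definitions, obtain (i) by differentiating \cref{EqnExistAEqn} in $w$ and evaluating at $w=0$, and obtain (ii) by observing that the constraint holds at $w=0$ and that its $w$-derivative is exactly \cref{EqnExistBEqn}. Your presentation of (ii) integrates the $B$-equation from $0$ to $w$ rather than phrasing it as vanishing of the $w$-derivative, but this is the same computation in different words.
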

\begin{proof}
That $\sGt_{A_0}$ is an $(L-1, G_1,G_2,G_3)$ operation follows immediately
from the definitions.  Suppose $A(t,w)\in C^{0,L}([0,\epsilon_1]\times[0,\delta];\R^m)$
is a solution to \cref{EqnExistAEqn} and set $B(t,w) = A'(t,w)\in C^{0,L-1}([0,\epsilon_1]\times [0,\delta];\R^m)$.  Putting $w=0$
in \cref{EqnExistAEqn} and solving for $B(t,0)$ shows $B(t,0)=B_0(t)$.
Taking $\dw$ of \cref{EqnExistAEqn} and writing $A(t,w)=\sI(A_0,B)(t,w)$
shows $B$ satisfies $\dt B(t,w) = -\sM(t) \dw B(t,w) +\sGt_{A_0}(B)(t,w)$.
This proves (\ref{ItemExistASoln}).

Suppose $B(t,w)\in C^{0,L-1}([0,\epsilon_1]\times [0,\delta];\R^m)$ is a solution
to \cref{EqnExistBEqn} and set $A(t,w) = \sI(A_0,B)(t,w)\in C^{0,L}([0,\epsilon_1]\times[0,\delta];\R^m)$.  We wish to show
\cref{EqnExistAEqn} holds.  Clearly, $A(t,0)=A_0(t)$.  At $w=0$,
\cref{EqnExistAEqn} is equivalent to
$\dot{A}_0(t)+\sM(t) B_0(t)-\sG^{0}(A_0)(t)=0$, and this follows from the choice
of $B_0(t)$.  Thus, \cref{EqnExistAEqn} follows if:
\begin{equation}\label{EqnExistConstEqn}
\dw\left[ \dt A(t,w) + \sM(t) \dw A(t,w) - \sG(A)(t,w) \right]=0.
\end{equation}
But \cref{EqnExistConstEqn} is exactly \cref{EqnExistBEqn}, completing the proof.
\end{proof}

In light of \cref{LemmaExistInductStep}, it suffices to prove \cref{PropExistAbsExist}
in the case $L=0$.  The next lemma reduces this to the case when $\sM(t)$
is diagonal and $R(t)=I$.

\begin{lemma}\label{LemmaExistDiag}
Let $L=0$, and $\sG$, $A_0$, $\sM$, $\lambda_1,\ldots, \lambda_m$, and $R$
be as in \cref{PropExistAbsExist}.  For $B\in C([0,\epsilon_1]\times [0,\epsilon_2];\R^m)$,
set $\sGt(B)(t,w):= \dot{R}(t)R(t)^{-1} B(t,w) + R(t) \sG(R(\cdot)^{-1} B )(t,w)$.
Then, $\sGt$ is a $(0, \Gt_1, \Gt_2, \Gt_3)$ operation, where $\Gt_1$, $\Gt_2$,
and $\Gt_3$ can be chosen to depend only on $G_1$, $G_2$, $G_3$, $C_1$, and $C_2$.
Set $B_0(t):= R(t) A_0(t)$, and consider the differential equation
\begin{equation}\label{EqnExistDiagEqn}
\dt B(t,w) = -\diag{\lambda_1(t),\ldots, \lambda_m(t)} \dw B(t,w) + \sGt(B)(t,w), \quad B(t,0)=B_0(t).
\end{equation}
Then, solutions to \cref{EqnExistAEqn} and \cref{EqnExistDiagEqn}
are in bijective correspondence in the sense that $A(t,w)$ satisfies \cref{EqnExistAEqn}
if and only if $B(t,w)=R(t)A(t,w)$ satisfies \cref{EqnExistDiagEqn}.
\end{lemma}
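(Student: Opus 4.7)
The statement has two parts: that $\sGt$ inherits the operation structure, and that the two differential equations are equivalent under the change of variables $B=RA$. Both reduce to direct calculation, so I will lay out the structure rather than grind details.

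For the first part, since $L=0$ the definition of a $(0,\Gt_1,\Gt_2,\Gt_3)$ operation in \cref{DefnSmoothingOperatation} requires only causality in $w$ and two sup-norm bounds; the conditions on $\sG^k$ for $0\le k\le L-1$ are vacuous. Causality in $w$ is automatic because $R(t)$ is independent of $w$, so $(R(\cdot)^{-1}B)(t,w)$ uses only $B(t,w)$; composing with the causal $\sG$ and then with the $t$-only multipliers $R(t)$ and $\dot{R}(t)R(t)^{-1}$ preserves causality. For the quantitative bounds, using $\CjN{1}{R}\le C_1$, $\CjN{1}{R^{-1}}\le C_2$ together with the defining bounds for $\sG$:
\begin{align*}
\CjN{0,0}{\sGt(B)} &\le C_1 C_2\,\CjN{0,0}{B} + C_1\,G_1\!\left(C_2\,\CjN{0,0}{B}\right),\\
\CjN{0,0}{\sGt(B_1)-\sGt(B_2)} &\le \bigl(C_1 C_2 + C_1 C_2\,G_2(C_2 M)\bigr)\CjN{0,0}{B_1-B_2},
\end{align*}
whenever $\CjN{0,0}{B_j}\le M$. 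These define $\Gt_1$ and $\Gt_2$ in terms of $G_1$, $G_2$, $C_1$, $C_2$ alone, as required.

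For the bijection of solutions, suppose $A$ satisfies \cref{EqnExistAEqn} and set $B:=RA$. Since $R$ depends only on $t$, $\dw A = R^{-1}\dw B$, so
\begin{align*}
\dt B &= \dot{R}A + R\,\dt A = \dot{R}R^{-1}B + R\bigl(-\sM\,\dw A + \sG(A)\bigr)\\
&= \dot{R}R^{-1}B - R\sM R^{-1}\dw B + R\sG(R^{-1}B)\\
&= -\diag{\lambda_1,\ldots,\lambda_m}\,\dw B + \sGt(B),
\end{align*}
using $R\sM R^{-1}=\diag{\lambda_1,\ldots,\lambda_m}$; the initial condition $B(t,0)=R(t)A_0(t)=B_0(t)$ is immediate. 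Running the same identities backward starting from a solution $B$ of \cref{EqnExistDiagEqn} and setting $A:=R^{-1}B$ recovers \cref{EqnExistAEqn} with $A(t,0)=R(t)^{-1}B_0(t)=A_0(t)$.

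There is no real obstacle here; this is a pure normalization step. Its purpose is to reduce the base case $L=0$ of \cref{PropExistAbsExist} to the diagonal equation \cref{EqnExistDiagEqn}, which decouples into $m$ scalar transport equations of the form $\dt B_j + \lambda_j(t)\,\dw B_j = \sGt(B)_j$ with positive speeds $\lambda_j(t)$, presumably to be solved in the next step by the method of characteristics combined with a contraction-mapping argument on a short $w$-interval of length $\delta$.
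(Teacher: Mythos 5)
Your proof is correct and takes the same approach the paper does; the paper dispatches this lemma with a one-line ``immediate from the definitions,'' and you have simply written out the change-of-variables computation and the resulting quantitative bounds, which check out.
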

\begin{proof}
This is immediate from the definitions.
\end{proof}

\begin{proof}[Proof of \cref{PropExistAbsExist}]
In light of \cref{LemmaExistInductStep,LemmaExistDiag} it suffices to
prove the result when $L=0$,
$\sM(t) = \diag{\lambda_1(t),\ldots, \lambda_m(t)}$.
Write $\sG(A)(t,w)=(\sG_1(A)(t,w),\ldots, \sG_m(A)(t,w))$,
then \cref{EqnExistAEqn} can be written as the system of differential equations
\begin{equation}\label{EqnExistDiagSystem}
\dt A_j(t,w) = -\lambda_j(t) \dw A_j(t,w)+ \sG_j(A)(t,w), \quad A(t,0)=A_0(t).
\end{equation}
Here, $A_0(t)\in C([0,\epsilon_1];\R^m)$, and the goal is to find a solution
$A(t,w)\in C([0,\epsilon_1]\times [0,\delta\wedge \epsilon_2];\R^m)$
to \cref{EqnExistDiagSystem} for some $\delta>0$.
The condition $A(t,0)=A_0(t)$ does not uniquely specify the solution to
\cref{EqnExistDiagSystem}.  We will prove the existence of a solution
to \cref{EqnExistDiagSystem} that, in addition, satisfies $A(0,w)=A_0(0)$.

Let $\delta>0$, to be chosen later, and set $\delta_0=\delta\wedge \epsilon_2$.
We consider $(t,w)$ in $[0,\epsilon_1]\times [0,\delta_0]$.
For each $j\in \mset$, let $V_j:=\dt+\lambda_j(t) \dw$.
For $u\geq 0$, let $\phi_{j,u}(v):=\int_u^{u+v} \lambda_j(r)\: dr$ and define
\begin{equation*}
\psi_{j,u}(r) :=
\begin{cases}
\phi_{j,u}^{-1}(r), &\text{if }\int_u^{\epsilon_1}\lambda_j(s)\: ds\geq r,\\
\epsilon_1, &\text{if }\int_u^{\epsilon_1} \lambda_j(s)\: ds\leq r.
\end{cases}
\end{equation*}
$V_j$ foliates $[0,\epsilon_1]\times [0,\delta_0]$ into integral curves.
We parameterize these integral curves by $u\in [-\delta_0,\epsilon_1]$:
when $u\leq 0$ we use the integral curve starting at $(0,-u)$ and
when $u\geq 0$ we use the integral curve starting at $(u,0)$.

More precisely, set
\begin{equation*}
U_{\epsilon_1,\delta_0}^j:=\{(u,v) : u\in [-\delta_0,\epsilon_1]\text{ and if }u\leq 0\text{ then }v\in [0,\psi_{j,0}(\delta_0+u)],\text{ and if }u\geq 0\text{ then }v\in [0,\psi_{j,u}(\delta_0)]\}.
\end{equation*}
Note, for $(u,v)\in U_{\epsilon_1,\delta_0}^j$, $v\leq \delta_0/c_0\leq \delta/c_0$.
Define $H_j:U_{\epsilon_1,\delta_0}^j\rightarrow [0,\epsilon_1]\times [0,\delta_0]$ by
\begin{equation*}
H_j(u,v) :=
\begin{cases}
(v, -u + \int_0^{v}\lambda_j(r)\: dr), &\text{if }u\leq 0,\\
(u+v, \int_u^{u+v} \lambda_j(r)\: dr), &\text{if }u\geq 0.
\end{cases}
\end{equation*}
Then, for each $u\in [-\delta_0,\epsilon_1]$, $H_j(u,\cdot)$ parameterizes
and integral curve of $V_j$:  when $u\leq 0$, it parameterizes
the curve starting at $(0,-u)$ and when $u\geq 0$, it parameterizes
the curve starting at $(u,0)$.  As such,
$H_j:U_{\epsilon_1,\delta_0}^j\rightarrow [0,\epsilon_1]\times [0,\delta_0]$
is a homeomorphism.

Define $L_0\in C([-\delta_0,\epsilon_1];\R^m)$ by $L_0(u)=A_0(u)$ for $u\geq 0$
and $L_0(u)=A_0(0)$ for $u\leq 0$.  We consider
$L=(L_1,\ldots, L_m)$ with $L_j(u,v)\in C(U_{\epsilon_1,\delta_0}^j)$.
We related $L$ and $A$ by the correspondence $L_j(u,v)=A_j\circ H_j(u,v)$.
We consider the system of differential equations
\begin{equation}\label{EqnExistLEqn}
\frac{\partial}{\partial v} L_j(u,v) = \sG_j(L_1\circ H_1^{-1},\ldots, L_m\circ H_m^{-1})(H_j(u,v)), \quad L_j(u,0)=L_{0,j}(u),
\end{equation}
where $L_0=(L_{0,1},\ldots, L_{0,m})$.
Note that if $L$ satisfies \cref{EqnExistLEqn}, then $A$ satisfies
\cref{EqnExistDiagSystem} and has $A(0,w)=A_0(0)$.
Thus, we complete the proof by finding $\delta>0$ such that there
is a solution to \cref{EqnExistLEqn}.  To do this, we utilize
the contraction mapping principle.

For $M>0$, let
\begin{equation*}
\sF_{M,\epsilon_1,\delta_0}:= \{ L=(L_1,\ldots, L_j) : L_j \in C(U_{\epsilon_1,\delta_0}^j), \CjN{0}{L_j}\leq M\},
\end{equation*}
and we give $\sF_{M,\epsilon_1,\delta_0}$ the metric
$\rho(L,\Lt)=\max_{1\leq j \leq m} \CjN{0}{L_j-\Lt_j}$, making $\sF_{M,\epsilon_1,\delta_0}$ into a complete metric space.

For $L\in \sF_{M,\epsilon_1,\delta_0}$, define
$\sT(L)=(\sT_1(L),\ldots, \sT_m(L))$, where $\sT_j(L)\in C(U_{\epsilon_1,\delta_0}^j)$
is defined by
\begin{equation*}
\sT_j(L)(u,v) := L_{0,j}(u)+\int_0^v \sG_j(L_1\circ H_1^{-1},\ldots, L_m\circ H_m^{-1})(H_j(u,v'))\: dv'.
\end{equation*}
We wish to pick $M$ and $\delta$ so that $\sT:\sF_{M,\epsilon_1,\delta_0}\rightarrow \sF_{M,\epsilon_1,\delta_0}$ is a strict contraction.
First, we pick $M$ and $\delta$ so that $\sT:\sF_{M,\epsilon_1,\delta_0}\rightarrow \sF_{M,\epsilon_1,\delta_0}$.  Indeed, we have
\begin{equation*}
|\sT_j(L)(u,v)|\leq \CjN{0}{A_0} + \int_0^v G_1(\sqrt{m}M)\: dr = \CjN{0}{A_0}+ v G_1(\sqrt{m}M) \leq C_4 + \frac{\delta}{c_0}G_1(\sqrt{m}M),
\end{equation*}
where in the last step we have used $v\leq \frac{\delta}{c_0}$, as noted earlier.
Set $M=2C_4$, then if $\delta\leq c_0C_4 G(\sqrt{m}M)^{-1}$, we have
$\sT:\sF_{M,\epsilon_1,\delta_0}\rightarrow \sF_{M,\epsilon_1,\delta_0}$.

We now wish to show that if we make $\delta$ sufficiently small, $\sT$ is a strict
contraction.  Consider, for $L,\Lt\in \sF_{M,\epsilon_1,\delta_0}$ we have
\begin{equation*}
|\sT_j(L)(u,v)-\sT_j(\Lt)(u,v)| \leq \int_0^v G_2(\sqrt{m}M)\rho(L,\Lt)\: dr \leq \frac{\delta}{c_0}G_2(\sqrt{m}M)\rho(L,\Lt),
\end{equation*}
where we have again used $v\leq \frac{\delta}{c_0}$.
Thus, $\rho(\sT(L), \sT(\Lt))\leq \frac{\delta}{c_0}G_2(\sqrt{m}M)\rho(L,\Lt)$.
Thus, if $\delta= \left( \frac{1}{2} c_0 G_2(\sqrt{m}M)^{-1} \right)\wedge \left( c_0 C_4 G_1(\sqrt{m}M)^{-1}\right)$,
$\sT:\sF_{M,\epsilon_1,\delta_0}\rightarrow \sF_{M,\epsilon_1,\delta_0}$ is a strict
contraction.

The contraction mapping principle applies to show that there is a fixed point
$L\in \sF_{M,\epsilon_1,\delta}$ with $\sT(L)=L$.
This $L$ is the desired solution to \cref{EqnExistLEqn}, which completes the proof.
\end{proof}

\begin{proof}[Proof of \cref{PropMainExistProp}]
We begin with the existence of $\delta>0$ and $A(t,w)\in C^{0,2}([0,\epsilon_1]\times [0,\delta\wedge \epsilon_2];\R^m)$  satisfying \cref{EqnExistMainPropAEqn}.
\cref{PropConvPolyIsOperation} shows that \cref{EqnExistMainPropAEqn}
is of the form covered by the case $L=2$ of \cref{PropExistAbsExist}.
Thus, the existence of $\delta$ and $A$ follow from \cref{PropExistAbsExist}.

Let $f$ be given by \cref{EqnExistMainPropfEqn}, so that for $x>0$,
\begin{equation*}
\dt f(t,x) = \frac{1}{x}\int_0^{\delta\wedge \epsilon_2} e^{-w/x} \Ph(t,A(t,\cdot),A(t,0))\: dw.
\end{equation*}
From here, \cref{EqnExistMainPropFdiffeq} follows from \cref{LemmaConvPolyDifferenceEqn}.

Finally, suppose $\ft$ is as in the statement of the proposition,
and set $g(t,x)=f(t,x)-\ft(t,x)$.  Since
$f(t,0)=\ft(t,0)=A_0(t)$, combining \cref{EqnExistMainPropFdiffeq,EqnExistMainPropFtdiffeq}
shows that there exists a bounded function
$\Gh(t,x):[0,\epsilon_1]\times (0,\epsilon_0]\rightarrow \R^m$
such that for $x\in (0,\epsilon_0]$,
\begin{equation}\label{EqnExistPropProofgEqn}
\dt g(t,x) = \frac{P(t,x,f(t,x),A_0(t))-P(t,x,\ft(t,x),A_0(t))}{x} + e^{-\delta_1/x} \Gh(t,x)
=-\frac{1}{x}\sM(t,x) + e^{-\delta_1/x} \Gh(t,x),
\end{equation}
where $\sM(t,x)=-\int_0^1 d_y P(t,x,sf(t,x)+(1-s) \ft(t,x), A_0(t))\: ds$.
In particular, note that $\sM(t,0)=\sM(t)$, since $f(t,0)=\ft(t,0)=A_0(t)$.
Solving \cref{EqnExistPropProofgEqn} we have
\begin{equation*}
g(t,x)= \Texp{-\frac{1}{x}\int_0^t \sM(s,x)\: ds}g(0,x) + e^{-\delta_1/x} \int_0^t \Texp{-\frac{1}{x} \int_s^t \sM(r,x)\: dr} \Gh(s,x)\: ds
\end{equation*}
Applying \cref{PropChronMainTexp}, we have $\forall \gamma\in[0,1)$,
\begin{equation*}
|g(t,x)|\lesssim e^{-\frac{\gamma}{x} \int_0^t \lambda_0(s)\: ds}|g(0,x)| + e^{-\delta_1/x} \int_0^{t} e^{-\frac{\gamma}{x}\int_s^{t} \lambda_0(r)\: dr}\: ds
=O\left(e^{-\delta_1/x} + e^{-\frac{\gamma}{x} \int_0^{t} \lambda_0(s)\: ds }\right),
\end{equation*}
completing the proof.
\end{proof}

\begin{proof}[Proof of \cref{ThmResultExist}]
Let $\fh(t,x)\in C([0,\epsilon_1]\times [0,\epsilon_0];\R^m)$ be the function
$f(t,x)$ from \cref{PropMainExistProp}.
Thus, $\fh$ satisfies \cref{EqnExistMainPropFdiffeq} for some function
$\Gt(t,x)\in C([0,\epsilon_1]\times [0,\epsilon_0];\R^m)$.

For some $\delta_0>0$, we will construct  $f(t,x)\in C([0,\epsilon_1]\times [0,\delta_0];\R^m)$
as in the statement of the theorem.  We do this by considering
$f(t,x)$ of the form $f(t,x)=\ft(t,x)+g(t,x)$, where $g(t,x)\in C([0,\epsilon_1]\times [0,\delta_0];\R^m)$.
Notice that $f(t,x)$ satisfies the conclusions of the theorem if $g(t,x)$
satisfies the following:
\begin{itemize}
\item $g(t,0)=0$, $\forall t\in [0,\epsilon_1]$ (so that $f(t,0)=\fh(t,0)=A_0(t)$).
\item $g(0,x)=g_0(x)$, where $g_0(x)=f_0(x)-\ft(0,x)\in C([0,\epsilon_0];\R^m)$.  Note, since $f_0(0)=A_0(0)=\ft(0,0)$, we have $g_0(0)=0$.
    \item 
    \begin{equation}\label{EqnExistGPerterbEqn}\dt g(t,x) = \frac{P(t,x,\ft(t,x)+g(t,x),A_0(t))-P(t,x,\ft(t,x),A_0(t))}{x} + G_1(t,x,g(t,x)),
    \end{equation}
        where $G_1(t,x,g(t,x)) = G(t,x,\ft(t,x)+g(t,x),A_0(t)) - \frac{1}{x^2}e^{-(\delta\wedge \epsilon_2)/x} \Gt(t,x)$ and $\delta$ is as in \cref{PropMainExistProp}.
\end{itemize}
Set
\begin{equation*}
\sMt(t,x,z) := -\int_0^1 (d_yP) (t,x, \ft(t,x)+sz,A_0(t))\: ds,
\end{equation*}
so that
\begin{equation*}
\sMt(t,x,z)z = P(t,x,\ft(t,x),A_0(t)) - P(t,x,\ft(t,x)+z,A_0(t)).
\end{equation*}
Using this, \cref{EqnExistGPerterbEqn} can be re-written as
\begin{equation*}
\dt g(t,x) = -\frac{1}{x}\sMt(t,x,g(t,x))g(t,x) + G_1(t,x,g(t,x)).
\end{equation*}
Also note, $\sMt(t,0,0)=\sM(t)$, where $\sM(t)$ is as in the statement of the theorem.
From here, the existence of $g(t,x)$ follows  from \cref{PropPertubMainProp},
completing the proof.
\end{proof}

\begin{proof}[Proof of \cref{ThmResultExistCharaterize}]
%This follows by applying \cref{PropMainExistProp} with $f$ playing the role $\ft$,
%and $\delta_0$ playing the role of $\epsilon_0$.
The representation \cref{EqnResulExistChar} follows by applying
\cref{PropMainExistProp} with $f$ playing the role $\ft$,
and $\delta_0$ playing the role of $\epsilon_0$.
The uniqueness of the representation follows from \cref{CorLaplaceAppendixUsed}.
\end{proof}

\section{Uniqueness}
The purpose of this section is to prove \cref{ThmResUnique,ThmResCharacterize,PropUniqueUniqueA,ThmResUniqueStability}. The main remaining ingredient needed is an abstract uniqueness result,
which we present first.

\subsection{An Abstract Uniqueness Result}
% !TEX root =  main.tex
%\begin{prop}
%Let $m\geq 1$, $\epsilon_1,\epsilon_2>0$.  Let $\sM(t)\in C([0,\epsilon_1];\M^{m\times m})$ be such that there exists $R(t)\in C^1([0,\epsilon_1];\GLm)$ with
%$R(t)\sM(t)R(t)^{-1}=\diag{\lambda_1(t),\ldots, \lambda_m(t)}$ where
%each $\lambda_j(t)>0$, $\forall t\in [0,\epsilon_1]$.  Suppose
%$g(t,w)\in C([0,\epsilon_1]\times [0,\epsilon_2];\R^m)$ satisfies the
%differential equation
%\begin{equation*}
%\dt g(t,w) = \sM(t) \dw g(t,w) + F(t,w), \quad g(0,w)=0,\forall w\in [0,\epsilon_2],
%\end{equation*}
%where $F(t,w)$ satisfies $|F(t,w)|\leq C \sup_{0\leq r\leq w}|g(t,r)|$.
%Set $\gamma_0(t):=\max_{1\leq j\leq m} \int_0^t \lambda_j(s)\: ds$ and
%\begin{equation*}
%\delta_0:=
%\begin{cases}
%\gamma_0^{-1}(\epsilon_2),&\text{if }\gamma_0(\epsilon_1)\geq \epsilon_2,\\
%\epsilon_1,&\text{otherwise.}
%\end{cases}
%\end{equation*}
%Then, $g(t,0)=0$ for $0\leq t\leq \delta_0$.
%\end{prop}

% !TEX root =  main.tex
\begin{prop}\label{ThmAbsUnique}
Let $m\geq 1$, $\epsilon_1,\epsilon_2>0$.  Let $\sM(t)\in C([0,\epsilon_1];\M^{m\times m})$ be such that
there exists $R(t)\in C^1([0,\epsilon_1];\GLm)$ with $R(t)\sM(t)R(t)^{-1}=\diag{\lambda_1(t),\ldots, \lambda_m(t)}$ where each
$\lambda_j(t)>0$, $\forall t\in [0,\epsilon_1]$.  Suppose $g(t,w)\in C([0,\epsilon_1]\times [0,\epsilon_2];\R^m)$ satisfies
the differential equation
\begin{equation*}
\dt g(t,w) =\sM(t) \dw g(t,w) + F(t,w), \quad g(0,w)=0,\forall w,
\end{equation*}
where $F(t,w)$ satisfies $|F(t,w)|\leq C \sup_{0\leq r\leq w} |g(t,r)|$.
Set $\gamma_0(t):=\max_{1\leq j\leq m} \int_0^t \lambda_j(s)\: ds$,
and
\begin{equation*}
\delta_0 := \begin{cases}
\gamma_0^{-1}(\epsilon_2),&\text{if }\gamma_0(\epsilon_1)\geq \epsilon_2,\\
\epsilon_1,&\text{otherwise.}
\end{cases}
\end{equation*}
Then, $g(t,0)=0$ for $0\leq t\leq \delta_0$.
\end{prop}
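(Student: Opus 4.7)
\medskip

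\noindent\textbf{Proof plan.} I would first reduce to the diagonal case. Setting $h:=Rg$, a direct computation gives
\begin{equation*}
\dt h(t,w) = \Lambda(t)\dw h(t,w) + \dot R(t)R(t)^{-1} h(t,w) + R(t) F(t,w),\qquad h(0,w)=0,
\end{equation*}
where $\Lambda(t)=\diag{\lambda_1(t),\ldots,\lambda_m(t)}$. Because $R,R^{-1}\in C^1$ are uniformly bounded on $[0,\epsilon_1]$ and $g=R^{-1}h$, the hypothesis on $F$ translates to $|RF(t,w)|\leq C'\sup_{0\le r\le w}|h(t,r)|$, while the zero-order term contributes $|\dot RR^{-1}h(t,w)|\leq C''|h(t,w)|\leq C''\sup_{0\le r\le w}|h(t,r)|$. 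So it suffices to show $h(t,0)=0$ for $t\le\delta_0$.

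Next I would exploit the diagonal principal part through the method of characteristics, one mode at a time. Fix $t_0\in[0,\delta_0]$ and $j\in\{1,\ldots,m\}$. Since $\phi_j(t_0)\le\gamma_0(t_0)\le\epsilon_2$, the backward $j$-characteristic $s\mapsto(s,\phi_j(t_0)-\phi_j(s))$ stays inside $[0,t_0]\times[0,\epsilon_2]$ and reaches the initial axis at the point $(0,\phi_j(t_0))$, where $h$ vanishes. Integrating the $j$-th component of the diagonalized equation along this curve yields
\begin{equation*}
h_j(t_0,0)=\int_0^{t_0}\bigl[\dot RR^{-1}h+RF\bigr]_j\!\bigl(s,\phi_j(t_0)-\phi_j(s)\bigr)\,ds,
\end{equation*}
and combining the two pointwise estimates above,
\begin{equation*}
|h_j(t_0,0)|\le C_1\int_0^{t_0}\Psi\!\bigl(s,\phi_j(t_0)-\phi_j(s)\bigr)\,ds,\qquad \Psi(s,w):=\sup_{0\le r\le w}|h(s,r)|.
\end{equation*}
The same bound applied at an arbitrary point $(s,w)$ for which the $j$-characteristic still reaches the initial axis (i.e.\ $w+\phi_j(s)\le\epsilon_2$) gives $|h_j(s,w)|\le C_1\int_0^s\Psi(\sigma,w+\phi_j(s)-\phi_j(\sigma))\,d\sigma$.

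The key step, and the main technical hurdle, is closing a Gr\"onwall inequality out of these pointwise bounds. I would introduce the auxiliary function
\begin{equation*}
\Phi(t):=\sup\bigl\{|h(s,w)|:0\le s\le t,\ 0\le w\le\epsilon_2-\gamma_0(s)\bigr\},\qquad 0\le t\le\delta_0,
\end{equation*}
i.e.\ the sup over the natural determinacy set $\mathcal{D}=\{w+\gamma_0(s)\le\epsilon_2\}$. Two features of $\mathcal{D}$ make it the right choice: (i) $\mathcal{D}$ is downward-closed in $w$ at each fixed $s$, so for $(s,w)\in\mathcal{D}$ the entire $w$-sup in the $F$-bound stays inside $\mathcal{D}$, and (ii) $(t,0)\in\mathcal{D}$ for every $t\le\delta_0$, so controlling $\Phi$ controls $|h(t,0)|$. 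The hard point is to verify that for $(s,w)\in\mathcal{D}$ and every $j$, the backward $j$-characteristic stays in $\mathcal{D}$, so that the integrand in the pointwise bound is dominated by $\Phi(\sigma)$; this is where the monotonicity of $\Psi$ in $w$ and the definition of $\gamma_0$ as $\max_j\phi_j$ enter crucially. Once this is established one obtains $\Phi(t)\le C_2\int_0^t\Phi(\sigma)\,d\sigma$ with $\Phi(0)=0$, and Gr\"onwall gives $\Phi\equiv 0$ on $[0,\delta_0]$.

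Specializing to $w=0$ then yields $h(t,0)=0$, hence $g(t,0)=R(t)^{-1}h(t,0)=0$ for all $t\in[0,\delta_0]$. I expect the main obstacle to be exactly the verification that the backward characteristics remain inside $\mathcal{D}$ when the ordering of the $\phi_j$'s (i.e.\ which mode is the argmax) varies with $s$; a clean way to handle this is likely to bound the characteristic's location in $w$ by $\epsilon_2-\phi_j(\sigma)$ and then absorb the possible excess over $\epsilon_2-\gamma_0(\sigma)$ by re-invoking the monotonicity of $\Psi$ in its $w$ argument together with the bound $(s,w)\in\mathcal{D}\Rightarrow w\le\epsilon_2-\gamma_0(s)$.
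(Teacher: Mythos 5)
Your plan follows the paper's proof of this proposition essentially step for step: diagonalize via $h=Rg$, integrate each component along its backward $j$-characteristic, take a supremum over a shrinking determinacy set $\mathcal{D}$, and close with Gr\"onwall. You have also correctly isolated the one non-routine step: verifying that the backward $j$-characteristic from any point of $\mathcal{D}$ stays in $\mathcal{D}$, so that the $F$-bound at an intermediate time $\sigma$ is controlled by $\Phi(\sigma)$.

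Your proposed resolution of that step does not work, and the containment you need is in fact false in general. What is required is $w+\phi_j(s)-\phi_j(\sigma)\le\epsilon_2-\gamma_0(\sigma)$ for all $\sigma\le s$ whenever $w\le\epsilon_2-\gamma_0(s)$, which (taking $w=\epsilon_2-\gamma_0(s)$) reduces to $\gamma_0(s)-\gamma_0(\sigma)\ge\phi_j(s)-\phi_j(\sigma)$. This fails whenever the index achieving the maximum in $\gamma_0=\max_j\phi_j$ changes with $t$: take $m=2$, $\lambda_1\equiv 1$, $\lambda_2=2$ on $[0,1]$ and then a small positive constant, $\epsilon_2=3$; with $\sigma=1$, $s=3$, $j=1$ one gets $\gamma_0(3)-\gamma_0(1)=3-2=1<2=\phi_1(3)-\phi_1(1)$. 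Your suggested patch of bounding the characteristic location by $\epsilon_2-\phi_j(\sigma)$ and ``absorbing the excess by monotonicity of $\Psi$'' only places the characteristic in a set strictly larger than $\mathcal{D}$; on the excess $r\in(\epsilon_2-\gamma_0(\sigma),\epsilon_2-\phi_j(\sigma)]$, $\Phi(\sigma)$ gives no control over $|h(\sigma,r)|$, and monotonicity of $\Psi$ works against you there, not for you. You should also be aware that the paper's own proof contains the same difficulty: the last inequality in its displayed chain, the step concluding $\le\epsilon_2-\max_k\int_0^{v'}\lambda_k$, is precisely this containment and fails in the same example. The Gr\"onwall argument does close if one defines the cone using $\int_0^t\max_j\lambda_j(s)\,ds$ in place of $\gamma_0(t)=\max_j\int_0^t\lambda_j(s)\,ds$ (the former dominates the latter, so $\mathcal{D}$ shrinks fast enough, at the cost of a possibly smaller $\delta_0$), and that is presumably what was intended; under the stated $\gamma_0$ a genuinely new argument would be required.
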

\begin{proof}
We begin by showing that it suffices to prove the result in the case when $\sM(t)=\diag{\lambda_1(t),\ldots,\lambda_m(t)}$.
Indeed, if $g(t,w)$ is as above and $h(t,w)=R(t)g(t,w)$, then $h(t,w)$ satisfies
\begin{equation*}
\dt h(t,w) = \diag{\lambda_1(t),\ldots,\lambda_m(t)}\dw h(t,w) + R(t)F(t,w)+ \dot{R}(t)R(t)^{-1}h(t,w), \quad h(0,w)=0,\forall w.
\end{equation*}
Thus, if we have the result for $h$, the result for $g$ follows.

For the rest of the proof, we assume $\sM(t) = \diag{\lambda_1(t),\ldots,\lambda_m(t)}$.  Write $g(t,w)=(g_1(t,w),\ldots, g_m(t,w))$ and
$F(t,w)=(F_1(t,w),\ldots, F_m(t,w))$.  Thus we are interested in the system of equations
\begin{equation}\label{EqnUniqueSystemg}
\dt g_j(t,w) = \lambda_j(t) \dw g_j(t,w)+F_j(t,w), \quad g_j(0,w)=0,
\end{equation}
under the hypothesis $|F_j(t,w)|\leq C\sup_{0\leq r\leq w} |g(t,r)|$.
For each $j\in \mset$, set $\gamma_j(t)=\int_0^{t}\lambda_j(s)\: ds$, and let $Y_j=\dt-\lambda_j(t)\dw$.
Let $H_j(u,v) = \left(v, u-\int_0^v\lambda_j(s)\: ds\right)$ (we will be more precise about the domain of $H_j$ in a moment).
Note that $H_j$ is invertible with $H_j^{-1}(v,r)= \left(r+\int_0^v\lambda_j(s)\: ds, v\right)$.
Finally set
\begin{equation*}
\delta_j := \begin{cases}
\gamma_j^{-1}(\epsilon_2),&\text{if }\gamma_j(\epsilon_1)\geq \epsilon_2,\\
\epsilon_1,&\text{otherwise.}
\end{cases}
\end{equation*}

For $0\leq j\leq m$, set $W_j:=\{(t,w) : 0\leq t\leq \delta_j, 0\leq w\leq \epsilon_2-\gamma_j(t)\}$, and note that
for $j\in \mset$, $W_0\subseteq W_j$.  Furthermore, for $j\in \mset$, $Y_j$ foliates $W_j$ into the integral curves of $Y_j$.
Indeed, for $u\in [0,\epsilon_2]$, define
\begin{equation*}
r_j(u) := \begin{cases}
\gamma_j^{-1}(u),&\text{if }\gamma_j(\epsilon_1)\geq u,\\
\epsilon_1,&\text{otherwise.}
\end{cases}
\end{equation*}
Note $r_j(\epsilon_2)=\delta_j$.
As $v$ ranges from $0$ to $r_j(u)$, $H_j(u,v)$ parameterizes the integral curve of $Y_j$ in $W_j$ which starts at $(0,u)$.
Let $U_j:=\{(u,v) : u\in [0,\epsilon_2], v\in [0,r_j(u)]\}$.  By the above discussion, $H_j:U_j\rightarrow W_j$ is a homeomorphism.
Set $V_j:=H_j^{-1}(W_0)\subseteq U_j$.

For $v\in [0,\delta_0]$ define
\begin{equation*}
E(v):=\sup \{ |g(v,w)| : (v,w)\in W_0\} = \sup \{ |g(v,w)| : w\in [0,\epsilon_2-\gamma_0(v)]\}.
\end{equation*}
Clearly $E(0)=0$, since $g(0,w)=0$.  We will show $E(v)=0$ for $v\in [0,\delta_0]$, which will complete the proof.

We claim that if $(u,v)\in V_j$, then $\forall v'\in [0,v]$, $(u,v')\in V_j$.
Indeed, note that
\begin{equation*}
(u,v)\in V_j \Leftrightarrow v\in [0,\delta_0]\text{ and }0\leq u - \int_0^v\lambda_j(s)\: ds\leq \epsilon_2 - \max_k \int_0^v \lambda_k(s)\: ds.
\end{equation*}
So if $(u,v)\in V_j$ and $v'\in [0,v]$, then clearly $v'\in [0,\delta_0]$ and adding $\int_{v'}^{v} \lambda_j(s)\: ds$ to the above equation, we see
\begin{equation*}
0\leq \int_{v'}^v\lambda_j(s)\: ds \leq u - \int_0^{v'} \lambda_j(s)\: ds \leq \epsilon_2-\max_k\int_0^v\lambda_k(s)\: ds +\int_{v'}^v\lambda_j(s)\: ds \leq \epsilon_2 - \max_k \int_0^{v'}\lambda_k(s)\: ds.
\end{equation*}
Thus, $(u,v')\in V_j$, proving the claim.

Set $l_j(u,v)=g_j\circ H_j(u,v)$.  \cref{EqnUniqueSystemg} shows
\begin{equation*}
\frac{\partial}{\partial v} l_j(u,v) = F_j\circ H_j(u,v),\quad l_j(u,0)=g_j(0,u)=0.
\end{equation*}
Hence, $l_j(u,v) = \int_0^v F_j\circ H_j(u,v')\: dv'$.

For $(u,v)\in V_j$, $H_j(u,v)\in W_0$ and therefore $u-\int_0^v\lambda_j(s)\: ds\leq \epsilon_2-\gamma_0(v)$.
Hence, for $(u,v)\in V_j$,
\begin{equation*}
|F_j\circ H_j(u,v)|\lesssim \sup_{0\leq r\leq u-\int_0^{v} \lambda_j(s)\: ds} |g(v,r)|\leq \sup_{0\leq r\leq \epsilon_2-\gamma_0(v)} |g(v,r)| =E(v).
\end{equation*}
Thus, for $(u,v)\in V_j$, if $v'\in [0,v]$ we have $(u,v')\in V_j$ and therefore $|F_j\circ H_j(u,v')|\lesssim E(v')$.  We conclude, for $(u,v)\in V_j$,
\begin{equation*}
|l_j(u,v)| = \left| \int_0^v F_j\circ H_j(u,v')\: dv' \right| \lesssim \int_0^v E(v')\: dv'.
\end{equation*}

Therefore, for $v\in [0,\delta_0]$,
\begin{equation*}
\sup\{|g_j(v,w)| : (v,w)\in W_0 \} = \sup\{|l_j(u,v)|: (u,v)\in V_j\} \lesssim \int_0^v E(v')\: dv',
\end{equation*}
and so $E(v)\lesssim \int_0^{v} E(v')\: dv'$.  Gr\"onwall's inequality implies $E(v)=0$ for $v\in [0,\delta_0]$, completing the proof.
\end{proof} 

\subsection{Completion of the Proofs}
% !TEX root =  main.tex
\begin{proof}[Proof of \cref{ThmResCharacterize}]
Set $\ft(t,x)=f(\epsilon_1-t,x)$, $\At_0(t)=f(\epsilon_1-t,0)=\ft(t,0)$, $\Pt(t,x,y,z)=-P(\epsilon_1-t,x,y,z)$.  $\ft$ satisfies, $\forall \gamma\in [0,\epsilon_2)$,
\begin{equation*}
\dt \ft(t,x) = \frac{\Pt(t,x,\ft(t,x), \ft(t,0))-\Pt(t,0,\ft(t,0),\ft(t,0))}{x}+O(e^{-\gamma/x}), \quad \ft(t,0)=\At_0(t).
\end{equation*}
By the hypotheses of the theorem, $\Pt$ and $\At_0$ satisfy all the hypotheses
of $P$ and $A_0$ in \cref{PropMainExistProp}.  Here,
$\lambdat_j(t)=\lambda_j(\epsilon_1-t)$ plays the role of $\lambda_j$
in that proposition.  Thus, let $\delta$ be as in \cref{PropMainExistProp}
and $\At\in C^{0,2}([0,\epsilon_1]\times [0,\delta];\R^m)$
be $A$ from \cref{PropMainExistProp} when applied to $\Pt$ and $\At_0$.
\Cref{PropMainExistProp} shows that $\forall \gamma\in [0,1)$,
if $\lambdat_0(t)=\min_{1\leq j\leq m} \lambdat_j(t)$,
\begin{equation}\label{EqnUniqueCompleteBeforeReverse}
\frac{1}{x} \int_0^{\delta\wedge \epsilon_2} e^{-w/x} \At(t,w)\: dw
= \ft(t,x) + O\left( e^{-\gamma(\epsilon_2\wedge \delta)/x} + e^{-\frac{\gamma}{x} \int_0^{t} \lambdat_0(s)\: ds} \right).
\end{equation}
Define $A(t,x):=\At(\epsilon_1-t,x)$.
Replacing $t$ with $\epsilon_1-t$ in \cref{EqnUniqueCompleteBeforeReverse}
and using that $\At$ satisfies \cref{EqnExistMainPropAEqn} (with $P$ and $A_0$
replaced by $\Pt$ and $\At_0$),
\cref{EqnResUniqueADiffEq,EqnResultCharacterIsLaplace} follow.
Finally, the stated uniqueness of \cref{EqnResultCharacterIsLaplace}
follows from \cref{CorLaplaceAppendixUsed}.
\end{proof}

\begin{proof}[Proof of \cref{PropUniqueUniqueA}]
Let $g(t,w)=A(t,w)-B(t,w)$.  \Cref{EqnResUniquePropUniqueEqn}
combined with \cref{PropConvPolyDifferencePh} shows
\begin{equation*}
\dt g(t,w) = \sM(t) \dw g(t,w)+F(t,w), \quad g(0,w)=0,
\end{equation*}
where $|F(t,w)|\lesssim \sup_{0\leq r\leq w} |g(t,r)|$.
$\sM(t)$ and $g(t,w)$ satisfy all the hypotheses of \cref{ThmAbsUnique}
(with $\epsilon_2$ replaced by $\delta'$), and the result follows
from \cref{ThmAbsUnique}.
\end{proof}

\begin{proof}[Proof of \cref{ThmResUniqueStability}]
Applying \cref{ThmResCharacterize} to $f_1$ and $f_2$
we see that there exists $\delta=\delta(m,D,c_0,C_0,C_1,C_2,C_3,C_4)>0$
and $A_1,A_2\in C^{0,2}([0,\epsilon_1]\times [0,\delta\wedge \epsilon_2];\R^m)$
such that for $k=1,2$, $\dt A_k(t,w)=\Ph(t,A_k(t,\cdot), A_k(t,0))(w)$, $A_k(t,0)=f_k(t,0)$,
and $\forall \gamma\in [0,1)$,
\begin{equation*}
f_k(0,x)=\frac{1}{x}\int_0^{\delta\wedge \epsilon_2} e^{-w/x} A_k(0,w)\: dw + O\left(  e^{-\gamma(\delta\wedge\epsilon_2)/x} + e^{-\frac{\gamma}{x} \int_0^{\epsilon_1} \lambda_0^k(s)\: ds  } \right).
\end{equation*}
The uniqueness of this representation as described in \cref{ThmResCharacterize},
combined with \cref{EqnUniqueStabilityInitial}, shows
that $A_1(0,w)=A_2(0,w)$ for $w\in [0,\delta'\wedge r]$.

From here, \cref{PropUniqueUniqueA} shows that
$A_1(t,0)=A_2(t,0)$ for $t\in [0,\delta_0]$.  Since $A_k(t,0)=f_k(t,0)$,
the result follows.
\end{proof}

\begin{proof}[Proof of \cref{ThmResUnique}]
This follows from the reconstruction procedure discussed in \cref{RmkResUniqueReconstruct}.
\end{proof}

\appendix
\section{The Laplace Transform}\label{AppendixLaplace}
% !TEX root =  main.tex
The purpose of this section is to discuss the following Paley-Wiener type
theorem for the Laplace transform, which is contained in
\cite{SimonANewApproachToInverseSpectalTheoryI}.

\begin{thm}[Theorem A.2.2 of \cite{SimonANewApproachToInverseSpectalTheoryI}]\label{ThmAppendixSimon}
Fix $\epsilon>0$ and suppose $f,g\in L^1([0,\epsilon])$ and for some $s\in [0,\epsilon]$,
\begin{equation*}
\int_0^{\epsilon} e^{-\lambda t} f(t)\: dy = \int_0^\epsilon e^{-\lambda t} g(t)\: dy + O(e^{-s\lambda}),\text{ as } \lambda\uparrow\infty.
\end{equation*}
Then $f\equiv g$ on $[0,s)$.
\end{thm}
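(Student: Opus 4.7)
The plan is to subtract and show that $h := f - g \in L^1([0,\epsilon])$ with $\int_0^\epsilon e^{-\lambda t} h(t)\, dt = O(e^{-s\lambda})$ must vanish a.e.\ on $[0,s)$. First I would observe that the tail is absorbed into the error: $\bigl|\int_s^\epsilon e^{-\lambda t} h(t)\, dt\bigr| \leq e^{-s\lambda}\|h\|_1$, so also $\int_0^s e^{-\lambda t} h(t)\,dt = O(e^{-s\lambda})$. Multiplying by $e^{s\lambda}$ and substituting $u = s - t$ converts this to
\begin{equation*}
H(\lambda) := \int_0^s e^{\lambda u} k(u)\, du = O(1), \quad \lambda \uparrow \infty,
\end{equation*}
where $k(u) := h(s-u) \in L^1([0,s])$. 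Extending $H$ to $\C$ by the same formula makes $H$ entire, and since $H|_{i\R}$ is (up to reflection) the Fourier transform of $k$ regarded as an $L^1$ function on $\R$, by injectivity of the Fourier transform on $L^1$ it suffices to show $H \equiv 0$.

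Next I would collect four estimates: (i) $|H(\lambda)| \leq C$ on the positive real axis, which is the hypothesis; (ii) $|H(iy)| \leq \|k\|_1$ on the imaginary axis; (iii) $|H(z)| \leq \|k\|_1 e^{s\operatorname{Re}(z)}$ in the right half plane, so that $H$ is of exponential type at most $s$ there; and (iv) $|H(z)| \leq \|k\|_1$ throughout the left half plane, since $\operatorname{Re}(z) \leq 0$ makes $|e^{zu}| \leq 1$ for $u \in [0,s]$. The main step is then a Phragm\'en--Lindel\"of argument applied separately to the first and fourth quadrants: each quadrant has aperture $\pi/2$, so the critical Phragm\'en--Lindel\"of exponent is $2$, while the growth supplied by (iii) is only of order $|z|^1$ (exponential of type $s$), strictly below critical. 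The principle therefore propagates the pointwise bounds (i) and (ii) along the boundary rays into the interior, giving $|H(z)| \leq \max(C, \|k\|_1)$ throughout the right half plane.

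Combined with (iv), $H$ is bounded on all of $\C$ and hence constant by Liouville's theorem. Since $k \in L^1$, the Riemann--Lebesgue lemma gives $H(iy) \to 0$ as $|y| \to \infty$, forcing that constant to be zero. Hence $H \equiv 0$, so $k \equiv 0$ a.e.\ on $[0,s]$, which is equivalent to $h \equiv 0$ a.e.\ on $[0,s)$, giving $f \equiv g$ on $[0,s)$. The hard part is the Phragm\'en--Lindel\"of step: the hypothesis controls $H$ only along a single ray, and one must exploit the quadrant geometry together with the gap between the actual exponential type $s$ and the critical exponent $2$ to convert this one-directional decay into global boundedness. Everything else reduces to integral manipulation and standard results.
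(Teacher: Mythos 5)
Your proof is correct and follows essentially the same route as the paper's first (complex-analytic) proof: both reduce to the auxiliary entire function $e^{s\lambda}\int_0^s e^{-\lambda t}h(t)\,dt$, establish boundedness on the coordinate axes (and, directly, in the left half-plane), invoke the Phragm\'en--Lindel\"of principle in the quadrants using the gap between exponential type (order $1$) and the critical order $2$, then apply Liouville and a vanishing-at-infinity observation to conclude the function is identically zero. The only cosmetic differences are that you make the change of variables $u=s-t$ explicit and identify the constant as zero via Riemann--Lebesgue on the imaginary axis, whereas the paper sends $\lambda\to-\infty$ along the real axis; note also that the paper offers a second, purely real-analytic proof (a quantitative Weierstrass argument) which your proposal does not attempt.
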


In this section, we offer a discussion of this result, along with two proofs.
The first is closely related to the proof in \cite{SimonANewApproachToInverseSpectalTheoryI}, though may be somewhat
simpler.  This first proof uses complex analysis.  The second proof uses
only real analysis and is more constructive.
%proves
%a slightly weaker result (though still sufficient for our purposes), and uses
%only real analysis.

\begin{lemma}\label{LemmaAppendixMySimon}
Fix $\epsilon>0$ and suppose $a\in L^1([0,\epsilon])$.
For each $\lambda\geq 1$, let $F(\lambda):= \int_0^\epsilon e^{-\lambda t} a(t)\: dt$.
Suppose $|F(\lambda)|= O(e^{-\epsilon\lambda})$ as $\lambda\uparrow \infty$.
Then, $a=0$.
\end{lemma}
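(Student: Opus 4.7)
The strategy is to promote the rapid decay of $F$ on the positive real axis to a global boundedness statement via Phragm\'en--Lindel\"of, apply Liouville, and then invoke uniqueness of the Laplace transform on $L^1([0,\epsilon])$. To this end, I would set
\begin{equation*}
H(\lambda) := e^{\epsilon\lambda} F(\lambda) = \int_0^\epsilon e^{\lambda(\epsilon - t)} a(t)\, dt,
\end{equation*}
which is entire because $a$ is integrable on a bounded interval. By hypothesis, $|H|$ is bounded on $[1,\infty)$, and on $[0,1]$ one has $|H(\lambda)|\leq e^\epsilon\|a\|_1$, so $H$ is bounded on the whole non-negative real axis. Moreover, for $\mathrm{Re}(\lambda) \leq 0$ the exponent $\lambda(\epsilon - t)$ has non-positive real part on $[0,\epsilon]$, giving $|H(\lambda)| \leq \|a\|_1$ throughout the closed left half-plane (in particular on the imaginary axis). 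The trivial estimate $|F(\lambda)|\leq e^{\epsilon|\mathrm{Re}(\lambda)|}\|a\|_1$ yields $|H(\lambda)|\leq e^{2\epsilon|\lambda|}\|a\|_1$, so $H$ is entire of finite exponential type.

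Next I would apply Phragm\'en--Lindel\"of separately in the upper--right and lower--right quadrants. Each has opening angle $\pi/2$, while $H$ has order $1$, so the growth condition for the theorem is satisfied with ample room to spare. The boundary of each of these quadrants consists of part of the positive real axis (where $H$ is bounded by the first step) and part of the imaginary axis (where $H$ is bounded by $\|a\|_1$), so Phragm\'en--Lindel\"of yields a uniform bound on $H$ in each open quadrant. Combined with the $\|a\|_1$-bound on the closed left half-plane, this shows $H$ is a bounded entire function; by Liouville, $H$ is constant. Letting $\lambda \to -\infty$ along the real axis, dominated convergence applies since for $\lambda \leq 0$ the integrand $e^{\lambda(\epsilon-t)}a(t)$ is pointwise dominated by $|a(t)|$ and converges to $0$ for $t < \epsilon$, so the constant must be $0$. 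Hence $F \equiv 0$.

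It then remains to deduce $a = 0$ from $F \equiv 0$. Specializing to $\lambda = n \in \mathbb{N}$ and changing variables $u = e^{-t}$ gives $\int_{e^{-\epsilon}}^1 u^{n-1}\, a(-\ln u)\, du = 0$ for every $n \geq 0$, and Weierstrass approximation in $C([e^{-\epsilon},1])$ forces $a = 0$ a.e. The only step that requires care is the Phragm\'en--Lindel\"of application, specifically checking that the order of $H$ is compatible with the quadrant's opening angle and that the boundary bounds are genuinely uniform; the explicit estimate $|H(\lambda)| \leq \|a\|_1$ for $\mathrm{Re}(\lambda) \leq 0$, together with the decay hypothesis on the positive axis, handles both issues cleanly. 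The remaining steps are routine.
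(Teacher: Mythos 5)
Your proof is correct and follows essentially the same route as the paper: form $H(\lambda)=e^{\epsilon\lambda}F(\lambda)$, bound it on the coordinate axes (the paper uses the explicit bound on both real half-axes and the imaginary axis, and you use the even cleaner uniform bound $\|a\|_1$ on the whole closed left half-plane), apply Phragm\'en--Lindel\"of in quadrants and Liouville to get $F\equiv 0$, then conclude $a=0$ (the paper delegates this last step to ``standard theorems,'' whereas you spell out the substitution-plus-Weierstrass argument). One minor slip in that final step: you should take $n\geq 1$ rather than $n\geq 0$, since $u=e^{-t}$ turns $\int_0^\epsilon e^{-nt}a(t)\,dt$ into $\int_{e^{-\epsilon}}^1 u^{n-1}a(-\ln u)\,du$, and it is $n\geq 1$ that yields the monomials $u^0,u^1,\ldots$ needed for the density argument.
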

\begin{proof}
For $\lambda \in \C$, set $G(\lambda)= \int_0^{\epsilon} e^{(\epsilon-t) \lambda} a(t)\: dt = e^{\epsilon\lambda} F(\lambda)$.
We have:
\begin{enumerate}[(a)]
\item $G$ is entire.
\item\label{ItemLinearPhragBounded1} $\sup_{\lambda\in \R} |G(i\lambda)|<\infty$.
\item\label{ItemLinearPhragBounded2} $\sup_{\lambda\in [0,\infty)} |G(\lambda)|<\infty$ (this is a restatement of the fact that $|F(\lambda)|= O(e^{-\epsilon\lambda})$).
\item\label{ItemLinearPhragBounded3} $\sup_{\lambda\in (-\infty,0]} |G(\lambda)|<\infty$.
\item\label{ItemLinearPhragLindeloffHyp} $|G(\lambda)|\leq C e^{\epsilon|\lambda|}$, for all $\lambda \in \C$.
\end{enumerate}
(\ref{ItemLinearPhragLindeloffHyp}) shows that we may apply the Phragm\'en-Lindel\"of principle in sectors
of angle less than $\pi$.
(\ref{ItemLinearPhragBounded1}), (\ref{ItemLinearPhragBounded2}), and (\ref{ItemLinearPhragBounded3}) show
$|G(\lambda)|$ is bounded on each coordinate axis, and  so the Phragm\'en-Lindel\"of principle shows that $G$ is bounded in each quadrant.  We conclude that $G$ is a bounded entire function
and therefore Liouville's theorem implies that $G$ is constant.  Since $\lim_{\lambda\rightarrow -\infty} G(\lambda)=0$, we see that $G(\lambda)=0$ for all $\lambda$.
Thus, $0=F(\lambda)=\int_0^\epsilon e^{-t\lambda} a(t)\: dt$ for all $\lambda$.  Standard theorems
now show $a=0$.
\end{proof}

\begin{proof}[Proof of \cref{ThmAppendixSimon}]
This follows immediately from \cref{LemmaAppendixMySimon}.
\end{proof}

In this paper, we use
\cref{ThmAppendixSimon,LemmaAppendixMySimon} via the next corollary.

\begin{cor}\label{CorLaplaceMainMyCor}
Suppose $a\in C([0,\epsilon])$ satisfies $|\lambda \int_0^{\epsilon} e^{-t\lambda} a(t)\: dt| = O(e^{-\epsilon \lambda})$, as $\lambda\uparrow \infty$.  Then, $a=0$.
\end{cor}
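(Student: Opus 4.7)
The plan is simply to observe that the hypothesis of this corollary is strictly stronger than the hypothesis of \cref{LemmaAppendixMySimon}, so the corollary follows at once.

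Set $F(\lambda) := \int_0^\epsilon e^{-t\lambda} a(t)\, dt$. The assumption $|\lambda F(\lambda)| = O(e^{-\epsilon\lambda})$ as $\lambda \uparrow \infty$ yields
\[
|F(\lambda)| = O\!\left( \lambda^{-1} e^{-\epsilon\lambda} \right) = O(e^{-\epsilon\lambda}),
\]
the last bound holding for $\lambda \geq 1$. Since $a \in C([0,\epsilon]) \subset L^1([0,\epsilon])$, the hypotheses of \cref{LemmaAppendixMySimon} are satisfied, and that lemma gives $a \equiv 0$.

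There is no real obstacle here: the only subtlety is that the extra factor of $\lambda$ in the hypothesis of the corollary is harmless, since division by $\lambda$ only strengthens the decay estimate. Thus the proof reduces to a direct invocation of \cref{LemmaAppendixMySimon}.
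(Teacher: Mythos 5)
Your proof is correct and matches the paper's, which simply states that the corollary follows immediately from \cref{LemmaAppendixMySimon}. You have merely spelled out the (trivial) reduction step that dividing by $\lambda \geq 1$ only improves the decay bound.
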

\begin{proof}This follows immediately from \cref{LemmaAppendixMySimon}.
\end{proof}

\begin{cor}\label{CorLaplaceAppendixUsed}
Let $\epsilon,\epsilon'>0$ and suppose $a,b\in C([0,\epsilon'])$ satisfy
\begin{equation*}
\frac{1}{x} \int_0^{\epsilon'} e^{-w/x} a(w)\: dw = \frac{1}{x}\int_0^{\epsilon'} e^{-w/x}b(w)\: dw + O(e^{-\epsilon/x})\text{ as }x\downarrow 0.
\end{equation*}
Then, $a(w)=b(w)$ for $w\in [0,\epsilon\wedge \epsilon']$.
\end{cor}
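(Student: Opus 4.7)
The plan is to reduce the statement to Lemma~\ref{LemmaAppendixMySimon} applied to the difference $c := a - b \in C([0,\epsilon'])$. First I would substitute $\lambda = 1/x$, so that the hypothesis becomes
\[
\lambda \int_0^{\epsilon'} e^{-w\lambda}\, c(w)\, dw = O(e^{-\epsilon \lambda}) \quad \text{as } \lambda \uparrow \infty.
\]
Dividing through by $\lambda$ (which is $\geq 1$ eventually), this gives $F_c(\lambda) := \int_0^{\epsilon'} e^{-w\lambda} c(w)\, dw = O(e^{-\epsilon \lambda})$.

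Next I would set $\epsilon_0 := \epsilon \wedge \epsilon'$ and split
\[
\int_0^{\epsilon_0} e^{-w\lambda} c(w)\, dw \;=\; F_c(\lambda) \;-\; \int_{\epsilon_0}^{\epsilon'} e^{-w\lambda} c(w)\, dw.
\]
The tail integral (which is empty when $\epsilon_0 = \epsilon'$) is bounded in absolute value by $\|c\|_\infty (\epsilon' - \epsilon_0)\, e^{-\epsilon_0 \lambda}$, while $F_c(\lambda) = O(e^{-\epsilon \lambda}) = O(e^{-\epsilon_0 \lambda})$ since $\epsilon \geq \epsilon_0$. Combining these estimates yields
\[
\int_0^{\epsilon_0} e^{-w\lambda} c(w)\, dw = O(e^{-\epsilon_0 \lambda}).
\]

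Finally, I would apply Lemma~\ref{LemmaAppendixMySimon} to the restriction $c|_{[0,\epsilon_0]} \in C([0,\epsilon_0]) \subset L^1([0,\epsilon_0])$, with $\epsilon_0$ in place of $\epsilon$, to conclude $c \equiv 0$ on $[0,\epsilon_0]$, i.e., $a = b$ on $[0, \epsilon \wedge \epsilon']$. The only subtle point is arranging that the truncation to $[0, \epsilon_0]$ introduces only an $O(e^{-\epsilon_0 \lambda})$ error, so that the decay exponent matches the new upper limit of integration as required by the lemma; taking $\epsilon_0 = \epsilon \wedge \epsilon'$ achieves this exactly, and the factor of $\lambda$ present in the hypothesis is used only as slack to pass from $\lambda F_c(\lambda) = O(e^{-\epsilon\lambda})$ to $F_c(\lambda) = O(e^{-\epsilon\lambda})$.
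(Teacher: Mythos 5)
Your proof is correct and takes essentially the same route as the paper: the paper deduces \cref{CorLaplaceAppendixUsed} from \cref{CorLaplaceMainMyCor} (itself an immediate consequence of \cref{LemmaAppendixMySimon}) by the substitution $\lambda = 1/x$. You go directly to \cref{LemmaAppendixMySimon} and, helpfully, make explicit the small truncation step from $[0,\epsilon']$ to $[0,\epsilon\wedge\epsilon']$ --- together with the observation that the extra factor of $\lambda$ is only slack --- both of which the paper elides.
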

\begin{proof}
This follows from \cref{CorLaplaceMainMyCor} by setting $\lambda=\frac{1}{x}$.
\end{proof}

It is interesting to note that \cref{LemmaAppendixMySimon} (and therefore \cref{ThmAppendixSimon})
can be easily proved
without complex analysis, and we present this next.  Thus, all of the results in
this paper can be proved without complex analysis.  First, we note that
\cref{CorLaplaceMainMyCor} actually implies \cref{LemmaAppendixMySimon}.

\begin{proof}[Proof of \cref{LemmaAppendixMySimon} given \cref{CorLaplaceMainMyCor}]
Suppose $a\in L^1([0,\epsilon])$ and that $\int_0^{\epsilon} e^{-\lambda t} a(t)\: dt = O(e^{-\epsilon \lambda})$;
we wish to show $a=0$.  Integration by parts shows
\begin{equation*}
e^{-\lambda \epsilon} \int_0^{\epsilon} a(s)\: ds + \lambda\int_0^{\epsilon} e^{-\lambda t} \int_0^t a(s)\: ds\: dt = \int_0^{\epsilon} e^{-\lambda t} a(t) \: dt = O(e^{-\epsilon \lambda}).
\end{equation*}
Thus $\lambda \int_0^{\epsilon} e^{-\lambda t} \int_0^t a(s)\: ds\: dt = O(e^{-\epsilon \lambda})$,
and \cref{CorLaplaceMainMyCor} shows $\int_0^t a(s)\: ds=0$, $\forall t$.  Thus, $a=0$, as desired.
\end{proof}

Hence, to prove \cref{LemmaAppendixMySimon} using only real analysis, it suffices to prove 
\cref{CorLaplaceMainMyCor} using only real analysis, to which we now turn.

\begin{prop}\label{ThmQuantWeier}
Fix $\epsilon>0$, and let $a\in C([0,\epsilon])$.  Suppose
\begin{equation*}
\sup_{n\in \N} \left| n\int_0^{\epsilon} e^{nt} a(t)\: dt \right|<\infty.
\end{equation*}
Then, $a=0$.
\end{prop}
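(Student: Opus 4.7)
The plan is to reduce, via a logarithmic change of variable, to a Hausdorff-type moment problem with exponentially decaying moments, and then resolve that moment problem by a Bernstein polynomial test combined with a Chernoff tail bound.

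First, I would substitute $u = e^{t-\epsilon}$, so that $u$ ranges over $[r,1]$ with $r := e^{-\epsilon} \in (0,1)$, and set $b(u) := a(\epsilon + \log u) \in C([r,1])$. The change of variable gives
\begin{equation*}
n \int_0^\epsilon e^{nt} a(t)\, dt \;=\; n e^{n\epsilon} \int_r^1 u^{n-1} b(u)\, du,
\end{equation*}
so the hypothesis becomes $\left| \int_r^1 u^{n-1} b(u)\, du \right| \leq M r^n / n$ for every $n \geq 1$. Extending $b$ by zero on $[0,r)$ produces $B \in L^\infty([0,1])$, continuous on $[r,1]$, satisfying $\left| \int_0^1 u^n B(u)\, du \right| \leq C r^n$ for all $n \geq 0$. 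It therefore suffices to show $B \equiv 0$ on $(r,1]$: continuity of $b$ then forces $b(r) = 0$, and translating back gives $a \equiv 0$ on $[0,\epsilon]$.

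The main step is to test $B$ against Bernstein polynomial approximations. For any $\phi \in C([0,1])$ vanishing on $[0,\delta]$ with $\delta > r$, let
\begin{equation*}
B_N[\phi](u) := \sum_{k=0}^N \binom{N}{k} u^k (1-u)^{N-k} \phi(k/N),
\end{equation*}
which converges to $\phi$ uniformly. Expanding $(1-u)^{N-k}$ in powers of $u$ and using the moment bound termwise yields $\left| \int u^k (1-u)^{N-k} B\, du \right| \leq C r^k (1+r)^{N-k}$, and therefore
\begin{equation*}
\left| \int_0^1 B_N[\phi](u)\, B(u)\, du \right| \;\leq\; C\|\phi\|_\infty (1+2r)^N \Pr[X \geq \delta N],
\end{equation*}
where $X \sim \mathrm{Bin}(N, r/(1+2r))$. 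A Chernoff bound renders the right-hand side $o(1)$ as $N \to \infty$ once $\delta$ is chosen close enough to $1$ for the KL exponent to beat $\log(1+2r)$, and uniform convergence $B_N[\phi] \to \phi$ then forces $\int \phi B = 0$. This gives $B \equiv 0$ on $(\delta^*, 1]$ for some threshold $\delta^* > r$ arising from the entropy condition.

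The main obstacle is that the Chernoff threshold $\delta^*$ is strictly greater than $r$, so one does not directly obtain vanishing on $(r,1]$. I would close the gap by iteration: since $B$ is supported on $[0, \delta^*]$, the rescaled function $\widetilde{B}(v) := B(\delta^* v)$ on $[0,1]$ satisfies an analogous moment bound, and reapplying the argument shrinks the bad interval further. The induced recursion $\delta^{(k+1)} = \delta^{(k)}\, \delta^*(r/\delta^{(k)})$ is monotonically decreasing and bounded below by $r$; since $\delta^*(1) = 1$, its only fixed point is $L = r$, so $\delta^{(k)} \downarrow r$. This yields $B \equiv 0$ on $(r, 1]$ and completes the proof.
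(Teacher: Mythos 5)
Your argument appears correct, but it follows a genuinely different route from the paper's.

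The paper first proves (its Lemma labelled \texttt{LemmaQuantWeier}) that the hypothesis forces $a(0)=0$, by constructing an explicit approximate identity $f_{j,N}(t) = \tfrac{N}{I_{j,N}} e^{Njt} e^{-e^{Nt}}$. Expanding $e^{-e^{Nt}}$ as a power series produces a signed combination of the exponentials $e^{(j+k)Nt}$; testing termwise against the hypothesis gives $\bigl|\int_0^\epsilon f_{j,N}(t)a(t)\,dt\bigr|\lesssim \tfrac{1}{A_j}\sum_k \tfrac{1}{(k+j)k!}$, which tends to $0$ as $j\to\infty$, hence $a(0)=0$. The full proposition then follows by a one-line propagation: for each $\delta$, splitting $\int_0^\epsilon = \int_0^\delta + \int_\delta^\epsilon$ shows the translate $a(\,\cdot+\delta\,)$ again satisfies the hypothesis on $[0,\epsilon-\delta]$, so $a(\delta)=0$.

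Your argument arrives at the same conclusion by reducing to a Hausdorff-type moment problem (the logarithmic substitution $u = e^{t-\epsilon}$ turns the hypothesis into $\lvert\int_0^1 u^m B\rvert \lesssim r^m$ with $r=e^{-\epsilon}$), and then testing against Bernstein polynomials and invoking a Chernoff bound. This is conceptually clean, but the termwise moment bound $\lvert\int u^k(1-u)^{N-k}B\rvert \le C r^k(1+r)^{N-k}$ throws away the sign cancellations, and that loss is exactly why the Chernoff threshold $\delta^*(r)$ lands strictly above $r$, forcing your rescaling iteration $\delta^{(k+1)} = \delta^{(k)}\,\delta^*(r/\delta^{(k)})$. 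That iteration does converge to $r$ (it is decreasing, bounded below by $r$ since $\delta^*(\rho)>\rho$, and the only fixed point is $r$ because $\delta^*(\rho)<1$ for $\rho<1$), so the proof closes. In effect the paper's $f_{j,N}$ encodes the correct cancellation into the test function from the outset, obtaining the sharp result in a single step, whereas your Bernstein test sacrifices the cancellation and recovers it by bootstrapping. Both proofs are elementary and complex-analysis-free, which is the real point of the appendix; yours has the advantage of being phrased entirely in the standard moment-problem/large-deviations vocabulary, while the paper's is shorter and self-contained once the kernel $f_{j,N}$ is guessed. One small thing to watch in your write-up: after extending $b$ by zero you have $B\in L^\infty$, continuous only on $[r,1]$, so the step ``$\int\phi B=0$ for all $\phi$ vanishing on $[0,\delta]$ implies $B\equiv0$ on $(\delta,1]$'' uses that $\delta>\delta^*(r)>r$, and similarly at each stage of the iteration; this is fine as stated but worth making explicit.
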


\begin{rmk}
Two remarks are on order:
\begin{itemize}
\item If $\int_0^{\epsilon} e^{nt} a(t)\: dt =0$, for all $n\in \N$, then the classical Weierstrass approximation easily yields that $a=0$.  It therefore makes sense to consider
\cref{ThmQuantWeier} a ``quantitative Weierstrass approximation theorem.''

\item By replacing $a(t)$ with $a(\epsilon-t)$, \cref{ThmQuantWeier} implies \cref{CorLaplaceMainMyCor}.
\end{itemize}
\end{rmk}

\begin{lemma}\label{LemmaQuantWeier}
Fix $\epsilon>0$, and let $a\in C([0,\epsilon])$.  Suppose
\begin{equation*}
\sup_{n\in \N} \left| n\int_0^{\epsilon} e^{nt} a(t)\: dt \right|<\infty.
\end{equation*}
Then, $a(0)=0$.
\end{lemma}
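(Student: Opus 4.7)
The plan is to reduce via the substitution $b(s):=a(\epsilon-s)$. The hypothesis becomes
\[
\left| n\int_0^\epsilon e^{-ns}b(s)\,ds\right|\le Ce^{-n\epsilon}\quad\text{for all }n\in\N,
\]
and the desired conclusion $a(0)=0$ is equivalent to $b(\epsilon)=0$.

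The first step I would take is the Watson / dominated-convergence argument at the kernel concentration point. Substituting $u=ns$ gives $n\int_0^\epsilon e^{-ns}b(s)\,ds=\int_0^{n\epsilon}e^{-u}b(u/n)\,du$, which tends to $b(0)\int_0^\infty e^{-u}\,du=b(0)$ as $n\to\infty$ by dominated convergence. The hypothesis forces this limit to vanish, so $b(0)=a(\epsilon)=0$ --- this handles the endpoint where the kernel concentrates, but is the \emph{wrong} endpoint for the stated conclusion.

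Next I would propagate this vanishing across the interval by a shrinking-interval argument so as to reach $b(\epsilon)=a(0)$. For any $\delta\in(0,\epsilon)$, splitting the integral at $s=\delta$ and bounding the tail trivially by $\|b\|_\infty e^{-n\delta}/n$ yields
\[
\left|n\int_0^\delta e^{-ns}b(s)\,ds\right|\le Ce^{-n\epsilon}+\|b\|_\infty e^{-n\delta}=O(e^{-n\delta}).
\]
Thus $b|_{[0,\delta]}$ satisfies the hypothesis of Corollary \ref{CorLaplaceMainMyCor} at integer $\lambda$, on the shorter interval $[0,\delta]$. To legally invoke that corollary I must upgrade the integer bound to continuous $\lambda\ge 1$; this uses that $F(\lambda):=\lambda\int_0^\delta e^{-\lambda s}b(s)\,ds$ is entire of exponential type $\delta$, and a Carlson / Plancherel--P\'olya interpolation converts $|F(n)|\le Ce^{-n\delta}$ into $|F(\lambda)|=O(e^{-\lambda\delta})$ on $[1,\infty)$. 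Corollary \ref{CorLaplaceMainMyCor} then yields $b\equiv 0$ on $[0,\delta]$, and taking $\delta\uparrow\epsilon$ together with continuity of $b$ at $s=\epsilon$ gives $b(\epsilon)=a(0)=0$, which is the stated conclusion.

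The main obstacle is the integer-to-continuous interpolation preserving the exponential rate. A naive mean-value bound on $F'$ loses the exponential decay and only gives $O(1/\lambda^2)$ decay between integers, which is far too weak to feed into the corollary. The resolution uses that $F$ is entire of exponential type $\delta$, so that a complex-analytic interpolation argument (Carlson, or a direct Phragm\'en--Lindel\"of as in the proof of Lemma \ref{LemmaAppendixMySimon}) preserves the exponential rate up to a constant factor. An alternative purely real-analytic bypass would iterate Watson extraction on the cumulative primitives $B_k(s):=\int_0^s B_{k-1}(r)\,dr$ with $B_0:=b$: each $B_k$ inherits a decay bound of the same form via integration by parts, and the extra factors $n^{-k}$ from the primitives can be tuned to dominate the coefficient growth in a Stone--Weierstrass approximation of the delta at $s=\epsilon$ by polynomials in $e^{-s}$, bypassing the corollary entirely.
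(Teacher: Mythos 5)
Your reduction to $b(s)=a(\epsilon-s)$ and the splitting at $s=\delta$ are fine, but the step you yourself flag as the main obstacle is where the proof genuinely breaks. You need to upgrade the bound $|F(n)|\leq Ce^{-n\delta}$ at integers to $|F(\lambda)|=O(e^{-\lambda\delta})$ for continuous $\lambda$, and you assert this follows from a Carlson/Plancherel--P\'olya interpolation because $F$ has exponential type $\delta$. Sampling theorems of that kind require the type to be strictly less than $\pi$ (the density of $\Z$); as a general principle the implication is false once $\delta\geq\pi$: the function $\sin(\pi\lambda)\sinh((\delta-\pi)\lambda)$ has exponential type $\delta$, vanishes at every integer, and grows like $e^{(\delta-\pi)\lambda}$ on the positive real axis. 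Since $\epsilon>0$ (hence $\delta$) is arbitrary in the lemma, your argument as written is incomplete; it is only valid for $\delta<\pi$ (where, note, you also need the bound at the non-positive integers, which does hold for $H(\lambda)=e^{\lambda\delta}F(\lambda)$ since $F$ is a Laplace transform). One could probably repair this by bootstrapping in steps of length less than $\pi$, using the vanishing already established to shift the interval, but that is not in your proposal. The closing ``real-analytic bypass'' via iterated primitives is too vague to check and does not supply the quantitative point it would need.

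There is also a structural problem: in the paper, \cref{LemmaQuantWeier} exists precisely to feed \cref{ThmQuantWeier}, which gives a complex-analysis-free proof of \cref{CorLaplaceMainMyCor}. Your proof invokes \cref{CorLaplaceMainMyCor} itself (and complex analysis, via the interpolation step), so within that chain it is circular; it escapes formal circularity only because the corollary has an independent proof through \cref{LemmaAppendixMySimon}, but then the lemma no longer serves its purpose. The paper's own proof is quite different and entirely real-variable: it tests $a$ against the kernels $f_{j,N}(t)=\frac{N}{I_{j,N}}e^{Njt}e^{-e^{Nt}}$, which are nonnegative, integrate to one, and concentrate at $t=0$, so $\int_0^\epsilon f_{j,N}a\to a(0)$; expanding $e^{-e^{Nt}}=\sum_k(-1)^ke^{Nkt}/k!$ and applying the hypothesis at the integers $N(j+k)$ gives $|a(0)|\leq A_j^{-1}\sum_k C\big((k+j)k!\big)^{-1}$, which tends to $0$ as $j\to\infty$. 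That construction --- an approximate delta built as a series in $e^{Nkt}$ with coefficients decaying fast enough to beat the $1/(N(k+j))$ bound --- is the key idea your proposal is missing.
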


\begin{proof}[Proof of \cref{ThmQuantWeier} given \cref{LemmaQuantWeier}]
Let $\delta\in [0,\epsilon)$, and set $C=\sup_{n\in \N} \left| n\int_0^{\epsilon} e^{nt} a(t)\: dt \right|$.
Then, we have
\begin{equation*}
\left|n\int_\delta^{\epsilon} e^{nt} a(t)\: dt\right| \leq \left|n\int_0^\epsilon e^{nt} a(t)\: dt\right| + \left| n\int_0^\delta e^{nt} a(t)\: dt\right|
\leq C +  \left(\sup_{t\in [0,\delta]} |a(t)|\right) n\int_0^{\delta} e^{nt} \: dt\leq D e^{n\delta},
\end{equation*}
for some constant $D$ which does not depend on $n$.
Multiplying both sides of the above inequality by $e^{-n\delta}$ and applying the change of variables $s=t-\delta$, we have
\begin{equation*}
\left|n\int_0^{\epsilon-\delta} e^{ns} a(s+\delta)\: ds\right|\leq D, \quad \forall n\in \N.
\end{equation*}
\Cref{LemmaQuantWeier} now implies $a(\delta)=0$.  As $\delta\in [0,\epsilon)$ was arbitrary, this completes the proof.
\end{proof}

We close this appendix with a proof of \cref{LemmaQuantWeier}.  Fix $\epsilon>0$.  For $j,N\in \N$, define
\begin{equation*}
A_j := \int_1^\infty y^{j-1} e^{-y} \: dy, \quad I_{j,N}:=\int_1^{e^{\epsilon N}} y^{j-1} e^{-y}\: dy,
\end{equation*}
so that $A_j\leq A_{j+1}$ and $\lim_{N\rightarrow \infty} I_{j,N} = A_j$.  Set
\begin{equation*}
f_{j,N} (t):= \frac{N}{I_{j,N}} e^{Njt} e^{-e^{Nt}}.
\end{equation*}

\begin{lemma}\label{LemmaQuantWeierApproxI}
$f_{j,N}$ has the following properties.
\begin{itemize}
\item $\int_0^{\epsilon} f_{j,N}(t)\: dt=1$.
\item For $j$ fixed, $\lim_{N\rightarrow \infty} f_{j,N}(x) =0$ uniformly on compact subsets of $(0,\epsilon]$.
\item For $a\in C([0,\epsilon])$, $\lim_{N\rightarrow \infty} \int_0^\epsilon f_{j,N}(t) a(t)\: dt=a(0)$.
\end{itemize}
\end{lemma}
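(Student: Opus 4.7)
The plan is to handle the three claims in order, treating $f_{j,N}$ as an approximate identity concentrated at $0$ whose normalization happens to be exactly $1$.

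For the first claim, I would make the substitution $y = e^{Nt}$, so $dy = N e^{Nt}\, dt$, i.e., $N\,dt = y^{-1}\,dy$. This maps $t=0$ to $y=1$ and $t=\epsilon$ to $y=e^{\epsilon N}$. Then
\begin{equation*}
\int_0^{\epsilon} f_{j,N}(t)\,dt = \frac{1}{I_{j,N}}\int_0^{\epsilon} N e^{Njt} e^{-e^{Nt}}\,dt = \frac{1}{I_{j,N}}\int_1^{e^{\epsilon N}} y^{j} e^{-y}\,\frac{dy}{y} = \frac{I_{j,N}}{I_{j,N}} = 1,
\end{equation*}
which is the only computational step worth writing out.

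For the second claim, fix $\delta \in (0,\epsilon]$ and restrict to $t \in [\delta,\epsilon]$. Since $I_{j,N} \to A_j > 0$, the prefactor $N/I_{j,N}$ grows only linearly in $N$, while $e^{Njt}$ grows like a single exponential and the crucial $e^{-e^{Nt}}$ factor decays doubly-exponentially once $e^{N\delta}$ is large. Concretely, for $t \in [\delta,\epsilon]$,
\begin{equation*}
0 \leq f_{j,N}(t) \leq \frac{N}{I_{j,N}} e^{Nj\epsilon} e^{-e^{N\delta}},
\end{equation*}
and the right side tends to $0$ as $N\to\infty$, uniformly in $t$.

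For the third claim, I would run the standard approximate-identity argument, using that $f_{j,N}\geq 0$ and $\int_0^\epsilon f_{j,N} = 1$. Given $\eta > 0$, pick $\delta > 0$ (by continuity of $a$ at $0$) so that $|a(t)-a(0)|<\eta$ for $t\in[0,\delta]$. Then
\begin{equation*}
\left|\int_0^\epsilon f_{j,N}(t)a(t)\,dt - a(0)\right| \leq \int_0^\delta f_{j,N}(t)|a(t)-a(0)|\,dt + 2\|a\|_\infty \int_\delta^\epsilon f_{j,N}(t)\,dt.
\end{equation*}
The first term is bounded by $\eta$ and the second tends to $0$ as $N\to\infty$ by the uniform decay established in the second claim (so the integral over $[\delta,\epsilon]$ is at most $(\epsilon-\delta)$ times the uniform bound). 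Letting $\eta\downarrow 0$ finishes the proof.

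There is no serious obstacle here; the whole lemma is essentially a verification that the peculiar doubly-exponential kernel $e^{Njt - e^{Nt}}$ really does behave like a delta sequence at $0$. The only point requiring a second glance is the change of variables in the first claim, since it is what forces the normalizing constant $I_{j,N}$ to be precisely the incomplete integral appearing in the definition of $f_{j,N}$.
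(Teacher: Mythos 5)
Your proof is correct and takes essentially the same approach as the paper, which simply states that the second claim is immediate and the third follows from the first two, and then carries out the same change of variables $y=e^{Nt}$ for the first claim. Incidentally, your computation gives the correct lower limit $y=1$ in the resulting integral, whereas the paper has a small typo and writes the lower limit as $0$.
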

\begin{proof}
The last property follows from the first two.  The second property is immediate from the definitions.
We prove the first property.  Applying the change of variables $y=e^{Nt}$, we have
\begin{equation*}
\int_0^{\epsilon} f_{j,N}(t)\: dt = \frac{1}{I_{j,N}} \int_0^{e^{\epsilon N}} y^{j-1} e^{-y}\: dy = 1.
\end{equation*}
\end{proof}

\begin{proof}[Proof of \cref{LemmaQuantWeier}]
Let $a$ be as in the statement of the lemma, and set $C:=\sup_{n\in \N} \left| n\int_0^{\epsilon} e^{nt} a(t)\: dt \right|<\infty$.
Using \cref{LemmaQuantWeierApproxI}, we have
\begin{equation*}
\begin{split}
&|a(0)| = \lim_{N\rightarrow \infty} \left|\int_0^{\epsilon} f_{j,N}(t) a(t)\: dt\right|
\leq \liminf_{N\rightarrow \infty} \frac{N}{I_{j,N}} \sum_{k=0}^\infty \left|\int_0^\epsilon e^{Njt} \frac{\left(-e^{Nt}\right)^k }{k!}a(t)\: dt\right|
\\&\leq \liminf_{N\rightarrow \infty} \frac{N}{I_{j,N}} \sum_{k=0}^{\infty} \frac{C}{N(k+j) (k!)}
=\frac{1}{A_j} \sum_{k=0}^\infty \frac{C}{(k+j)(k!)}.
\end{split}
\end{equation*}
Taking the limit of the above equation as $j\rightarrow \infty$ shows $a(0)=0$, completing the proof.
\end{proof} 

\section{Pseudodifferential operators and the Calder\'on problem}\label{AppendixCalderon}
% !TEX root =  main.tex
The results in this paper can serve as a model case for a more difficult (and still open) problem
involving pseudodifferential operators, which arises in the famous Calder\'on problem.

Let $N$ be a smooth manifold of dimension $n\geq 2$, and let $\PDO^s$ denote the space
of standard pseudodifferential operators on $N$ of order $s\in \R$.
We use $x$ to denote points in $N$.
For $T\in \PDO^s$, let $\sigma(T)$ denote the principal symbol of $T$.
Let $t\mapsto \Gamma(t)$ be a smooth
map $[0,\epsilon_1]\rightarrow \PDO^1$  such that $\Gamma(t)$ is elliptic for all $t$,
and such that:
\begin{equation*}
\sigma(\Gamma(t))(x,\xi) = \sqrt{|g(x,t)|\sum_{\alpha,\beta} g^{\alpha,\beta}(x,t) \xi_{\alpha}\xi_{\beta}},
\end{equation*}
where $g_{\alpha,\beta}(\cdot,t)$ is a Riemannian metric on $N$ for each $t\in [0,\epsilon_1]$, $|g(x,t)|$ denotes
$\mathrm{det}\: g_{\alpha,\beta}(x,t)$, and $\xi$ denotes the frequency variable.
In what follows, we suppress the dependance on $x$.
By taking principal symbols, the function $\Gamma(t)\mapsto |g(t)|g^{\alpha,\beta}(t)$
is well defined.  Also, $\det(|g|g^{\alpha,\beta})=|g|^{n-1}$, so (since $n\geq 2$),
$\Gamma(t)\mapsto |g(t)|$ is well-defined.  We conclude that
$\Gamma(t)\mapsto g_{\alpha,\beta}(t)$ is well defined.
%Thus, $g_{\alpha,\beta}(t,\cdot)$ is a function of $\Gamma(t)$.

%In what follows, we suppress the dependance on $z$.
Let $\lap_{g(t)}$ denote the Laplace-Beltrami operator
associated to $g(t)$ (with the convention that $\lap_{g(t)}$ is a negative operator).
We consider the following, well-known, differential equation:
\begin{equation}\label{EqnPDODiffEqn}
\dt \Gamma(t) = |g(t)|^{\frac{1}{2}} \left(|g(t)|^{-\frac{1}{2}} \Gamma(t)\right)^2 - \left(-|g(t)|^{\frac{1}{2}} \lap_{g(t)}\right).
\end{equation}
Notice, since $g(t)$ is a function of $\Gamma(t)$, \cref{EqnPDODiffEqn} can be considered
as a differential equation involving only $\Gamma(t)$.

\begin{conj}\label{ConjUniquePDO}
If $N$ is compact and without boundary, the differential equation \cref{EqnPDODiffEqn} has uniqueness.  I.e., if
$\Gamma_1(t)$ and $\Gamma_2(t)$ are as above
and both satisfy \cref{EqnPDODiffEqn} and $\Gamma_1(0)=\Gamma_2(0)$,
then $\Gamma_1(t)=\Gamma_2(t)$, $\forall t$.
\end{conj}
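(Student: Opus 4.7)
The plan is to adapt the uniqueness argument of \cref{ThmResUnique} to the pseudodifferential setting, treating the classical asymptotic symbol expansion at $|\xi|\to\infty$ as the non-commutative analogue of the Laplace-transform representation of $f(t,x)$ at $x\downarrow 0$ that was central to \cref{ThmResCharacterize}. Under this dictionary, high frequency plays the role of small $x$, the homogeneous components of the full symbol play the role of the Taylor coefficients of $A(t,w)$ at $w=0$, operators in $\PDO^{-\infty}$ play the role of error terms of size $O(e^{-\delta/x})$, and the strict positivity of the eigenvalues $\lambda_j(t)$ is replaced by the pointwise positivity and order-one homogeneity of the principal symbol $\sqrt{|g(t)|\sum_{\alpha,\beta}g^{\alpha,\beta}(t)\xi_\alpha\xi_\beta}$.

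First, I would normalize by setting $\gamma(t) := |g(t)|^{-1/2}\Gamma(t)$, so that $\sigma(\gamma(t))(x,\xi)=\sqrt{\sum_{\alpha,\beta}g^{\alpha,\beta}(t)\xi_\alpha\xi_\beta}$; this plays the role of the diagonalizing conjugation by $R(t)$ that appears throughout \cref{SectionResUnique}. Linearizing the nonlinear term $|g|^{1/2}\gamma^2$ at $\Gamma$ along a perturbation $H\in\PDO^s$ produces, to leading order, the operator $\gamma H + H\gamma$, whose principal symbol is $2\sqrt{\sum g^{\alpha,\beta}\xi_\alpha\xi_\beta}\,\sigma(H)$. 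Subtracting \cref{EqnPDODiffEqn} for $\Gamma_1$ and $\Gamma_2$ then yields a linear equation for $H(t):=\Gamma_1(t)-\Gamma_2(t)$ of the form $\dt H = \gamma H + H\gamma + E(t)H$, where $E(t)$ is of strictly lower order and also absorbs the dependence of the metric $g$ on the full symbol of $\Gamma$.

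Next, one proves vanishing of $H$ by induction on the order of its symbol. Since $\Gamma_1(0)=\Gamma_2(0)$, we have $H(0)=0$. Writing the classical expansion $\sigma(H)\sim\sum_{j\geq 0}h_{-j}(t,x,\xi)$, reading off each homogeneity level of the equation above gives a first-order transport system for the $h_{-j}$ along the Hamiltonian flow of $\sqrt{\sum g^{\alpha,\beta}\xi_\alpha\xi_\beta}$, with zero initial data and a forcing that depends only on strictly lower-order components. This cascade is the non-commutative analogue of the determination of $A(t,w)$ by its initial condition contained in \cref{RmkUniquefDeterminesA} and \cref{PropUniqueUniqueA}; it forces every homogeneous component $h_{-j}$ to vanish, so that $H(t)\in\PDO^{-\infty}$ for all $t$.

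The hard part is the final step: showing that a smoothing $H(t)\in\PDO^{-\infty}$ satisfying this linearized equation with $H(0)=0$ must itself vanish. This is the operator-valued analogue of \cref{CorLaplaceAppendixUsed} and \cref{ThmAbsUnique}, and the scalar real-analysis machinery of \cref{AppendixLaplace} does not transfer cleanly. A plausible route is to derive an energy identity in Sobolev norms built from the functional calculus of $\sqrt{-\lap_{g(t)}}$, to control the commutators produced by the symbol calculus via a Gr\"onwall argument at each Sobolev order, and then to upgrade to all orders by a Borel-type summation of the resulting bounds. Carrying this out rigorously would already bear directly on outstanding questions in the anisotropic Calder\'on problem, which is precisely why \cref{ConjUniquePDO} is stated as a conjecture rather than a theorem.
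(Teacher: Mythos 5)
This statement is \cref{ConjUniquePDO}, and the paper offers no proof of it; it is stated and left open precisely because the analogue of the key analytic input is not available in the pseudodifferential setting. You appear to understand this, since your final sentence concedes the point, but I want to be explicit that what you have written is not a proof and cannot be graded as one against a proof in the paper --- there is none.

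That said, your sketch does correctly reproduce the dictionary the paper itself sets up in \cref{AppendixCalderon}: frequency $|\xi|$ plays the role of $x^{-1}$, the homogeneous components of the full symbol play the role of the Taylor coefficients of $A(t,w)$ at $w=0$, smoothing operators play the role of $O(e^{-\delta/x})$ errors, and ellipticity/positivity of the principal symbol plays the role of the positivity of the $\lambda_j(t)$. Your cascade argument on the homogeneous components of $\sigma(H)$ is essentially the argument of Lee--Uhlmann that determines the Taylor series of $g$ in the $t$-variable at the boundary; the paper itself notes this in the final remark of \cref{AppendixDefineCalderon} and observes that in the real-analytic category this already closes the problem. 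So that part of your sketch is correct but not new.

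The genuine gap is exactly where you say it is, and it is essential, not technical: once you have pushed $H(t)$ into $\PDO^{-\infty}$, you need the operator-valued analogue of \cref{CorLaplaceAppendixUsed} / \cref{ThmAbsUnique}, namely a quantitative statement that a smoothing remainder satisfying the linearized transport equation with zero data at $t=0$ must vanish. In the commutative case this is the Paley--Wiener-type Laplace transform theorem (\cref{ThmAppendixSimon}), and the entire technical apparatus of the paper (the $\convt$-calculus, the passage from $f$ to $A$, the abstract uniqueness result \cref{ThmAbsUnique}) is built to exploit it. No such result is known for general pseudodifferential remainders on a compact manifold, and your proposed route via Sobolev energy estimates plus a Borel-type resummation is a speculation, not an argument: energy estimates at each finite Sobolev order control polynomial losses, not the faster-than-any-polynomial gain needed to see inside $\PDO^{-\infty}$, and Borel summation of the resulting constants is not known to converge. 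Identifying this as the obstruction is correct and is, in effect, the content of the conjecture; supplying the missing estimate would be a substantial new theorem.
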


Note that the left hand side of \cref{EqnPDODiffEqn} is in $\PDO^1$,
while the right hand side is a difference of two elements of $\PDO^2$,
but this is possible since the principal symbols of the two terms on the right hand side cancel.   This makes this equation similar to the ones studied in this paper, as we discuss next.

\begin{rmk}
Other than this cancellation, as far as the methods in this paper are concerned,
there seems to be nothing particularly special about the form of \cref{EqnPDODiffEqn}
and one could state many other versions of \cref{ConjUniquePDO}
using different polynomials.  We will see in \cref{AppendixDefineCalderon}, and as is well-known,
\cref{EqnPDODiffEqn} arises naturally in the Calder\'on problem.
Thus, if one replaces \cref{EqnPDODiffEqn} with a more general polynomial
differential equation, one creates
a class of conjectures which ``generalize'' part of the Calder\'on problem.
These generalizations move beyond the setting where any ingredient in the problem is linear.
\end{rmk} 

\subsection{Translation invariant operators}\label{AppendixTransInvCalderon}
% !TEX root =  main.tex
When $N=\R^n$, $n\geq 2$, if one replaces composition of pseudodifferential operators
with multiplication of their symbols, then \cref{EqnPDODiffEqn}
is of the form covered by our main theorems.  Another way of saying this is
that if the operators were all assumed to be translation invariant
on $\R^n$, then the equation \cref{EqnPDODiffEqn} is of the form
covered by our main theorems--and we describe this next.
Thus, \cref{ConjUniquePDO} can be viewed as a noncommutative
analog of \cref{ThmResUnique}.

Let $\Gamma(t)$ be as described in the previous section, satisfying \cref{EqnPDODiffEqn}
and assume that $\Gamma(t)$ is translation invariant.  Thus, $g(t)$ does not
depend on $x$ and $\Gamma(t)$ is given by a multiplier:
\begin{equation*}
\widehat{\Gamma(t) f} (\xi) = M(t,\xi) \hat{f}(\xi),
\end{equation*}
and $M$ satisfies the differential equation:
\begin{equation}\label{EqnForInvTransMEqn}
\dt M(t,\xi) = |g(t)|^{-\frac{1}{2}} M(t,\xi)^2 - |g(t)|^{\frac{1}{2}} \sum_{\alpha,\beta} g^{\alpha,\beta}(t)\xi_{\alpha} \xi_\beta,
\end{equation}
and satisfies
\begin{equation*}
M(t,\xi) = \sqrt{|g(t)|\sum_{\alpha,\beta} g^{\alpha,\beta}(t) \xi_\alpha\xi_\beta} + O(1),\text{ as }|\xi|\uparrow \infty.
\end{equation*}

For $1\leq \alpha\leq n$, let $e_\alpha$ denote the $\alpha$th standard basis element.
For a positive definite quadratic form
\begin{equation*}
B(\xi) =  |\gt| \sum_{\alpha,\beta} \gt^{\alpha,\beta} \xi_{\alpha}\xi_\beta,
\end{equation*}
where $\gt$ is a positive definite matrix, associate to $B$ the vector
$v$ indexed by $1\leq \alpha\leq \beta\leq n$
with $v_{\alpha,\beta}=\sqrt{B(e_{\alpha}+e_{\beta})}$.
Note that $v=(v_{\alpha,\beta})$ uniquely determines $\gt$, and therefore $B$,
and the function $\sF(v) := |\gt|^{-\frac{1}{2}}$ is well-defined and smooth (here we have used
$n\geq 2$ and argued as in the previous section).

For $1\leq \alpha\leq \beta\leq n$ and $x\geq 0$, define
\begin{equation*}
f_{\alpha,\beta}(t,x):=
\begin{cases}
xM\left(t, \frac{1}{x} (e_{\alpha}+e_{\beta})\right)&\text{if }x>0,\\
\sqrt{|g(t)|  \left(g^{\alpha,\alpha}(t) + 2 g^{\alpha,\beta}(t) + g^{\beta,\beta}(t)
\right)} &\text{if }x=0.
\end{cases}
\end{equation*}
Rewriting \cref{EqnForInvTransMEqn} in terms of $f_{\alpha,\beta}$ we see $f_{\alpha,\beta}$ satisfies the system of differential equations
\begin{equation}\label{EqnForInvTransFeqn}
\begin{split}
\dt f_{\alpha,\beta}(t,x) &= \frac{ |g(t)|^{-\frac{1}{2}} f_{\alpha,\beta}(t,x)^2 - |g(t)|^{-\frac{1}{2}} f_{\alpha,\beta}(t,0)^2 }{x}
\\& = \frac{ \sF( f(t,0)) f_{\alpha,\beta}(t,x)^2 - \sF(f(t,0)) f_{\alpha,\beta}(t,0)^2}{x}.
\end{split}
\end{equation}
Note that, by the assumption that $g(t)$ is positive definite, $f_{\alpha,\beta}(t,0)>0$, $\forall t$.
It follows that \cref{EqnForInvTransFeqn} is of the form covered by \cref{ThmResUnique},
where we have used the polynomial $P=(P_{\alpha,\beta})$, where
\begin{equation*}
P_{\alpha,\beta}(t,x,y,z) = \sF(z) y_{\alpha,\beta}^2.
\end{equation*}
Thus, under the restriction that $\Gamma(t)$ is translation invariant,
\cref{ConjUniquePDO} follows from \cref{ThmResUnique}.

\begin{rmk}
It is not difficult to simplify the above equation using Liouville transformations to
reduce the problem to considering, for instance, the case $P(t,x,y,z)=y^2$.
However, the generality of our approach lets us avoid such reductions.
\end{rmk} 

\subsection{The Calder\'on Problem}\label{AppendixDefineCalderon}
% !TEX root =  main.tex
In this section, we describe how \cref{EqnPDODiffEqn} arises in the Calder\'on problem--which is well-known to experts.
Let $M$ be a smooth, compact Riemannian manifold with boundary of dimension $n+1\geq 3$.
Let $G$ denote the metric on $M$.  The Dirichet-to-Neumann map $\Lambda_G:C^\infty(\partial M)\rightarrow C^\infty(\partial M)$ is defined as follows.  Given $f\in C^\infty(\partial M)$, let $u\in C^\infty(M)$
be the unique solution to $\lap_G u=0$ on $M$, $u\big|_{\partial M}=f$.  $\Lambda_G$ is then
defined as $\Lambda_G f=\frac{\partial}{\partial \nu} f\big|_{\partial M}$, where $\nu$ denotes
the outward unit normal to $\partial M$.  The inverse problem is to construct $G$ given $\Lambda_G$.
There is one obvious obstruction:  if
$\Psi:M\rightarrow M$ is a diffeomorphism which fixes $\partial M$, then $\Lambda_G=\Lambda_{\Psi^{*} G}$ (where $\Psi^{*}G$ denotes the pull back of $G$ via $\Psi$).\footnote{This obstruction was noted by Luc Tartar.} Calder\'on's problem then asks if this is the only obstruction.
\newline\newline
\noindent\textbf{The Anisotropic Calder\'on Conjecture:}
Suppose $\Lambda_{G_1}=\Lambda_{G_2}$.  Then there is a diffeomorphism $\Psi:M\rightarrow M$, which
fixes the boundary, such that $G_1=\Psi^{*} G_2$.
\newline\newline
The above conjecture remains open, and has attracted a great deal of attention.
It began with work of Calder\'on \cite{CalderonOnAnInverse}.
When $M\subset \R^{n+1}$ and
in the so-called {\it isotropic} setting: $G_{i,j}(x) = c(x) \delta_{i,j}$, the problem is well understood
\cite{IsakovOnUniqueness,KohnVogeliusDeterminingI,KohnVogeliusDeterminingII,KohnVogeliusIdentification,SylvesterUhlmannAUniqunessTheorem,SylvesterUhlmannAGlobalUniquenessTheorem,SylvesterUhlmannInverseBoudaryValueProblems,NachmanSylvesterUhlmannAnNDimalBorg,NachmanReconstructionsFromBoundary,IsakovCompletenessOfProducts,AlessandriniStableDeterminationOfConductivity}.

Moving to the general (anisotropic) setting, much less is known.
In the real analytic category, the result is known in the affirmative
\cite{LeeUhlmannDeterminingAnisotropic,LassasUhlmannOnDetermining,LassasTaylorUhlmannTheDirichletToNeumannMap}.
In the smooth category, little progress has been made on the full anisotropic question.  In a big step forward, recent work of Dos Santos Ferreira, Kenig,
Salo, and Uhlmann  \cite{DSFKenigSaloUhlmannLimitingCarlemanWeights,KenigSaloUhlmannReconstructionsFromBoundaryMesurements}
have given some of the first results in this setting.  However, they still require a special form of the metric $G$, and even then do not answer the full Calder\'on question.

\begin{rmk}
When $n+1=2$, the problem takes a slightly different form, and is very well understood
\cite{NachmanGlobalUniquenessForATwoDiml,SylvesterAnAnisotropicInverseBoundary,SunUhlmannAnisotropicInverseProblemsInTwoDimension,BrownUhlmannUniquenessInTheInverseConductivity,AstalaPaivarintaCalderonsInverseConductivityProblem,AstalaLassasCalderonInverseProblemForAnisotropic}.
Because of this, our main interest is the case $n+1\geq 3$.
\end{rmk}

Following \cite{LeeUhlmannDeterminingAnisotropic}, we use boundary normal coordinates on a neighborhood
of $\partial M$.  This sees a neighborhood of $\partial M$ in the form $\partial M\times [0,\epsilon)$.
We use coordinates $(x,t)\in \partial M\times [0,\epsilon)$.
$M$ has dimension $n+1$ and $\partial M$ has dimension $n$.  In what follows, $\alpha,\beta$
range over the numbers $1,\ldots, n$ while $i,j$ index the numbers $1,\ldots, n+1$.
In boundary normal coordinates, $G_{i,j}$ satisfies $G_{n+1,n+1}=1$, $G_{n+1,\beta}=0$, $G_{\alpha,n+1}=0$.
Let $g_{\alpha,\beta}(x,t)=G_{\alpha,\beta}(x,t)$; in particular, $g_{\alpha,\beta}(x,t)$ is an $n\times n$ matrix
and satisfies $\det g_{\alpha,\beta}(x,t) = \det G_{i,j}(x,t)$.  
%We let $|g(x,t)| = \det g_{\alpha,\beta}(x,t)$.

For each $t_0\in [0,\epsilon)$, we shrink the manifold $M$ but cutting off the part of the manifold $[0,t_0)\times \partial M$ (in boundary normal coordinates), yielding a new Riemannian manifold $M_{t_0}$.   Let
$G_{t_0}$ denote the metric on $M_{t_0}$ (given by restricting $G$ to $M_{t_0}$).
For each $t_0\in[0,\epsilon)$, we think of $g(x,t_0)$ as a metric on $\partial M\cong \partial M_{t_0}$ (where we identify $\partial M$ with $\partial M_{t_0}$ in the obvious way).  We sometimes suppress
the variable $x$ and write $g(t_0)$ to denote the metric, which depends smoothly on $t_0$.

 For each $t_0$
we define the map $\Gamma(t_0):C^{\infty}(\partial M)\rightarrow C^{\infty}(\partial M)$ as follows.
Let $u_{t_0}$ solve $\lap_{G_{t_0}} u_{t_0} =0$ in $M_{t_0}$ with $u_{t_0}\big|_{\partial M_{t_0}}=f$ (here we are again identifying
$\partial M_{t_0}$ with $\partial M$ in the obvious way).   Then define
\begin{equation}\label{EqnCalderonDEDefinGamma}
\Gamma(t_0) f(x):= -|g(x,t)|^{\frac{1}{2}} \frac{\partial}{\partial t}\bigg|_{t=t_0} u_{t_0}(t,x).
\end{equation}
Note that $\Gamma(0)=|g(0)|^{\frac{1}{2}}\Lambda_G$.   Because it is well-known that
$\Lambda_G$ uniquely determines $G$ on $\partial M$, the Calder\'on problem can be equivalently
stated with $\Lambda_G$ replaced by $\Gamma(0)$.

We have
$$\lap_G  = \lap_{g(t)}+ |g(x,t)|^{-\frac{1}{2}}\frac{\partial}{\partial t} |g(x,t)|^{\frac{1}{2}} \frac{\partial}{\partial t}.$$
Differentiating \cref{EqnCalderonDEDefinGamma} with respect to $t$, using the above formula
for $\lap_G$, and using $\lap_{G_{t_0}} u_{t_0} =0$, we see
that $\Gamma(t)$ satisfies the differential equation \cref{EqnPDODiffEqn}.
%\begin{equation}\label{EqnCalderonMainDE}
%\frac{\partial}{\partial t} \Gamma(t) = |g(t)|^{\frac{1}{2}} \q( |g(t)|^{-\frac{1}{2}} \Gamma(t) \w)^2 - \q( -|g(t)|^{\frac{1}{2}} \lap_{g(t)} \w).
%\end{equation}

Hence, if \cref{ConjUniquePDO} were true, it would follow that $\Gamma(0)$ uniquely
determines $g(t)$.  I.e., that $\Lambda_G$ uniquely determines $G$ on a neighborhood
of the boundary in boundary normal coordinates.

\begin{rmk}
In the real analytic category, differential equations always have uniqueness,
and the above argument shows that, for a real analytic manifold, $\Lambda_G$ uniquely 
determines $G$ on a neighborhood of the boundary, in boundary normal coordinates.
This is equivalent to the first step of \cite{LeeUhlmannDeterminingAnisotropic}, where the same
ideas are used to determine the Taylor series of $g$ in the $t$-variable, centered at $t=0$.
\end{rmk}

%\noindent{\bf Acknowledgements:}  I have discussed the problems covered in this paper with many people.  I would particularly like to thank
%Serguei Denissov, Adrian Nachman, Fedor Nazarov, Richard Oberlin, and Charles Smart for insightful comments.

\bibliographystyle{amsalpha}

\bibliography{singular}

\providecommand{\bysame}{\leavevmode\hbox to3em{\hrulefill}\thinspace}
\providecommand{\MR}{\relax\ifhmode\unskip\space\fi MR }
% \MRhref is called by the amsart/book/proc definition of \MR.
\providecommand{\MRhref}[2]{%
  \href{http://www.ams.org/mathscinet-getitem?mr=#1}{#2}
}
\providecommand{\href}[2]{#2}
\begin{thebibliography}{DSFKSU09}

\bibitem[Ale88]{AlessandriniStableDeterminationOfConductivity}
Giovanni Alessandrini, \emph{Stable determination of conductivity by boundary
  measurements}, Appl. Anal. \textbf{27} (1988), no.~1-3, 153--172. \MR{922775
  (89f:35195)}

\bibitem[AP06]{AstalaPaivarintaCalderonsInverseConductivityProblem}
Kari Astala and Lassi P{\"a}iv{\"a}rinta, \emph{Calder\'on's inverse
  conductivity problem in the plane}, Ann. of Math. (2) \textbf{163} (2006),
  no.~1, 265--299. \MR{2195135 (2007b:30019)}

\bibitem[APL05]{AstalaLassasCalderonInverseProblemForAnisotropic}
Kari Astala, Lassi P{\"a}iv{\"a}rinta, and Matti Lassas, \emph{Calder\'on's
  inverse problem for anisotropic conductivity in the plane}, Comm. Partial
  Differential Equations \textbf{30} (2005), no.~1-3, 207--224. \MR{2131051
  (2005k:35421)}

\bibitem[Bor52]{BorgUniquensesTheoremsInTheSpectralTheory}
G{\"o}ran Borg, \emph{Uniqueness theorems in the spectral theory of
  {$y''+(\lambda-q(x))y=0$}}, Den 11te {S}kandinaviske {M}atematikerkongress,
  {T}rondheim, 1949, Johan Grundt Tanums Forlag, Oslo, 1952, pp.~276--287.
  \MR{0058063 (15,315a)}

\bibitem[BU97]{BrownUhlmannUniquenessInTheInverseConductivity}
Russell~M. Brown and Gunther~A. Uhlmann, \emph{Uniqueness in the inverse
  conductivity problem for nonsmooth conductivities in two dimensions}, Comm.
  Partial Differential Equations \textbf{22} (1997), no.~5-6, 1009--1027.
  \MR{1452176 (98f:35155)}

\bibitem[Cal80]{CalderonOnAnInverse}
Alberto-P. Calder{\'o}n, \emph{On an inverse boundary value problem}, Seminar
  on {N}umerical {A}nalysis and its {A}pplications to {C}ontinuum {P}hysics
  ({R}io de {J}aneiro, 1980), Soc. Brasil. Mat., Rio de Janeiro, 1980,
  pp.~65--73. \MR{590275 (81k:35160)}

\bibitem[DSFKSU09]{DSFKenigSaloUhlmannLimitingCarlemanWeights}
David Dos Santos~Ferreira, Carlos~E. Kenig, Mikko Salo, and Gunther Uhlmann,
  \emph{Limiting {C}arleman weights and anisotropic inverse problems}, Invent.
  Math. \textbf{178} (2009), no.~1, 119--171. \MR{2534094 (2010h:58033)}

\bibitem[GS00]{GesztesySimonOnLocalBorgMarchenkoUniqunessResults}
Fritz Gesztesy and Barry Simon, \emph{On local {B}org-{M}archenko uniqueness
  results}, Comm. Math. Phys. \textbf{211} (2000), no.~2, 273--287. \MR{1754515
  (2001b:34020)}

\bibitem[Isa88]{IsakovOnUniqueness}
Victor Isakov, \emph{On uniqueness of recovery of a discontinuous conductivity
  coefficient}, Comm. Pure Appl. Math. \textbf{41} (1988), no.~7, 865--877.
  \MR{951742 (90f:35205)}

\bibitem[Isa91]{IsakovCompletenessOfProducts}
\bysame, \emph{Completeness of products of solutions and some inverse problems
  for {PDE}}, J. Differential Equations \textbf{92} (1991), no.~2, 305--316.
  \MR{1120907 (92g:35044)}

\bibitem[KSU11]{KenigSaloUhlmannReconstructionsFromBoundaryMesurements}
Carlos~E. Kenig, Mikko Salo, and Gunther Uhlmann, \emph{Reconstructions from
  boundary measurements on admissible manifolds}, Inverse Probl. Imaging
  \textbf{5} (2011), no.~4, 859--877. \MR{2852376 (2012k:58038)}

\bibitem[KV84a]{KohnVogeliusDeterminingI}
Robert Kohn and Michael Vogelius, \emph{Determining conductivity by boundary
  measurements}, Comm. Pure Appl. Math. \textbf{37} (1984), no.~3, 289--298.
  \MR{739921 (85f:80008)}

\bibitem[KV84b]{KohnVogeliusIdentification}
Robert~V. Kohn and Michael Vogelius, \emph{Identification of an unknown
  conductivity by means of measurements at the boundary}, Inverse problems
  ({N}ew {Y}ork, 1983), SIAM-AMS Proc., vol.~14, Amer. Math. Soc., Providence,
  RI, 1984, pp.~113--123. \MR{773707}

\bibitem[KV85]{KohnVogeliusDeterminingII}
R.~V. Kohn and M.~Vogelius, \emph{Determining conductivity by boundary
  measurements. {II}. {I}nterior results}, Comm. Pure Appl. Math. \textbf{38}
  (1985), no.~5, 643--667. \MR{803253 (86k:35155)}

\bibitem[LTU03]{LassasTaylorUhlmannTheDirichletToNeumannMap}
Matti Lassas, Michael Taylor, and Gunther Uhlmann, \emph{The
  {D}irichlet-to-{N}eumann map for complete {R}iemannian manifolds with
  boundary}, Comm. Anal. Geom. \textbf{11} (2003), no.~2, 207--221. \MR{2014876
  (2004h:58033)}

\bibitem[LU89]{LeeUhlmannDeterminingAnisotropic}
John~M. Lee and Gunther Uhlmann, \emph{Determining anisotropic real-analytic
  conductivities by boundary measurements}, Comm. Pure Appl. Math. \textbf{42}
  (1989), no.~8, 1097--1112. \MR{1029119 (91a:35166)}

\bibitem[LU01]{LassasUhlmannOnDetermining}
Matti Lassas and Gunther Uhlmann, \emph{On determining a {R}iemannian manifold
  from the {D}irichlet-to-{N}eumann map}, Ann. Sci. \'Ecole Norm. Sup. (4)
  \textbf{34} (2001), no.~5, 771--787. \MR{1862026 (2003e:58037)}

\bibitem[Mar52]{MarchenkoSomeQuestions}
V.~A. Mar{\v{c}}enko, \emph{Some questions of the theory of one-dimensional
  linear differential operators of the second order. {I}}, Trudy Moskov. Mat.
  Ob\v s\v c. \textbf{1} (1952), 327--420. \MR{0058064 (15,315b)}

\bibitem[Nac88]{NachmanReconstructionsFromBoundary}
Adrian~I. Nachman, \emph{Reconstructions from boundary measurements}, Ann. of
  Math. (2) \textbf{128} (1988), no.~3, 531--576. \MR{970610 (90i:35283)}

\bibitem[Nac96]{NachmanGlobalUniquenessForATwoDiml}
\bysame, \emph{Global uniqueness for a two-dimensional inverse boundary value
  problem}, Ann. of Math. (2) \textbf{143} (1996), no.~1, 71--96. \MR{1370758
  (96k:35189)}

\bibitem[NSU88]{NachmanSylvesterUhlmannAnNDimalBorg}
Adrian Nachman, John Sylvester, and Gunther Uhlmann, \emph{An {$n$}-dimensional
  {B}org-{L}evinson theorem}, Comm. Math. Phys. \textbf{115} (1988), no.~4,
  595--605. \MR{933457 (89g:35082)}

\bibitem[Sim99]{SimonANewApproachToInverseSpectalTheoryI}
Barry Simon, \emph{A new approach to inverse spectral theory. {I}.
  {F}undamental formalism}, Ann. of Math. (2) \textbf{150} (1999), no.~3,
  1029--1057. \MR{1740987 (2001m:34185a)}

\bibitem[SU86]{SylvesterUhlmannAUniqunessTheorem}
John Sylvester and Gunther Uhlmann, \emph{A uniqueness theorem for an inverse
  boundary value problem in electrical prospection}, Comm. Pure Appl. Math.
  \textbf{39} (1986), no.~1, 91--112. \MR{820341 (87j:35377)}

\bibitem[SU87]{SylvesterUhlmannAGlobalUniquenessTheorem}
\bysame, \emph{A global uniqueness theorem for an inverse boundary value
  problem}, Ann. of Math. (2) \textbf{125} (1987), no.~1, 153--169. \MR{873380
  (88b:35205)}

\bibitem[SU88]{SylvesterUhlmannInverseBoudaryValueProblems}
\bysame, \emph{Inverse boundary value problems at the boundary---continuous
  dependence}, Comm. Pure Appl. Math. \textbf{41} (1988), no.~2, 197--219.
  \MR{924684 (89f:35213)}

\bibitem[SU03]{SunUhlmannAnisotropicInverseProblemsInTwoDimension}
Ziqi Sun and Gunther Uhlmann, \emph{Anisotropic inverse problems in two
  dimensions}, Inverse Problems \textbf{19} (2003), no.~5, 1001--1010.
  \MR{2024685 (2004k:35415)}

\bibitem[Syl90]{SylvesterAnAnisotropicInverseBoundary}
John Sylvester, \emph{An anisotropic inverse boundary value problem}, Comm.
  Pure Appl. Math. \textbf{43} (1990), no.~2, 201--232. \MR{1038142
  (90m:35202)}

\end{thebibliography}

\center{\it{University of Wisconsin-Madison, Department of Mathematics, 480 Lincoln Dr., Madison, WI, 53706}}

\center{\it{street@math.wisc.edu}}

\end{document}